 \newtheorem{theorem}{Theorem}[section]
 \newtheorem{corollary}[theorem]{Corollary}
 \newtheorem{lemma}[theorem]{Lemma}
 \newtheorem{proposition}[theorem]{Proposition}
\newtheorem{definition}[theorem]{Definition}
\newtheorem{remark}[theorem]{Remark}
\newtheorem{example}[theorem]{Example}
\newtheorem{fact*}{Fact}
\newcommand\half{\tfrac 12}
\newcommand{\M}{\mathcal{M}}
\newcommand{\T}{\mathbb{T}}
\newcommand{\X}{\mathcal{X}}
\newcommand{\F}{\mathcal{F}}
\newcommand\FF{\begin{bmatrix} F_{ij}\end{bmatrix}_1^2}
\newcommand{\E}{\mathcal{E}}
\newcommand{\D}{\mathbb{D}}
\newcommand{\C}{\mathbb{C}}
\newcommand{\G}{\mathcal{G}}
\newcommand{\schur}{\mathcal{S}_2}
\newcommand{\ip}[2]{\left\langle #1, #2 \right\rangle}
\newcommand{\inv}{^{-1}}
\newcommand{\ph}{\varphi}
\renewcommand\phi{\varphi}
\newcommand\ga{\gamma}
\newcommand\la{\lambda}
\newcommand\beq{\begin{equation}}
\newcommand\eeq{\end{equation}}
\newcommand\df{\stackrel{\mathrm{def}}{=}}
\newcommand\nn{\nonumber}
\newcommand\bbm{\begin{bmatrix}}
\newcommand\ebm{\end{bmatrix}}
\newcommand\bpm{\begin{pmatrix}}
\newcommand\epm{\end{pmatrix}}
\numberwithin{equation}{section}
\DeclareMathOperator{\holf}{Hol}
\DeclareMathOperator{\trf}{tr}
\DeclareMathOperator{\idf}{id}
\DeclareMathOperator{\realf}{Re}
\DeclareMathOperator{\rankf}{rank}
\DeclareMathOperator{\spn}{span\,}
\DeclareMathOperator{\LNff}{Left\,N}
\DeclareMathOperator{\LSff}{Left\,S}
\DeclareMathOperator{\LEff}{Lower\,E}
\DeclareMathOperator{\LWff}{Lower\,W}
\DeclareMathOperator{\UWff}{Upper\,W}
\DeclareMathOperator{\UEff}{Upper\,E}
\DeclareMathOperator{\SWff}{SW}
\DeclareMathOperator{\SEff}{SE}
\DeclareMathOperator{\RSff}{Right\,S}
\DeclareMathOperator{\RNff}{Right\,N}
\newcommand{\hol}[2]{\holf\,(#1,#2)}
\newcommand{\tr}[1]{\trf\,#1}
\newcommand{\id}{\idf} 
\newcommand{\re}[1]{\realf\,#1}
\newcommand{\rank}[1]{\rankf\,(#1)}
\newcommand{\LNf}{\LNff_{\G}}
\newcommand{\LSf}{\LSff_{\G}}
\newcommand{\LEf}{\LEff_{\G}}
\newcommand{\LWf}{\LWff_{\G}}
\newcommand{\UWf}{\UWff}
\newcommand{\UEf}{\UEff}
\newcommand{\SWf}{\SWff_{\G}}
\newcommand{\SEf}{\SEff}
\newcommand{\RSf}{\RSff}
\newcommand{\RNf}{\RNff}
\newcommand{\LN}[1]{\LNf\,(#1)}
\newcommand{\LS}[1]{\LSf\,(#1)}
\newcommand{\LE}[1]{\LEf\,(#1)}
\newcommand{\LW}[1]{\LWf\,(#1)}
\newcommand{\UW}[1]{\UWf\,(#1)}
\newcommand{\UE}[1]{\UEf\,(#1)}
\newcommand{\SW}[1]{\SWf\,(#1)}
\newcommand{\SE}[1]{\SEf\,(#1)}
\newcommand{\RS}[1]{\RSf\,(#1)}
\newcommand{\RN}[1]{\RNf\,(#1)}
\newcommand{\LNfn}[1]{\LNff_{#1}}
\newcommand{\LSfn}[1]{\LSff_{#1}}
\newcommand{\LEfn}[1]{\LEff_{#1}}
\newcommand{\LWfn}[1]{\LWff_{#1}}
\newcommand{\SWfn}[1]{\SWff_{#1}}
\newcommand{\LNn}[2]{\LNff_{#1}\,(#2)}
\newcommand{\LSn}[2]{\LSff_{#1}\,(#2)}
\newcommand{\LEn}[2]{\LEff_{#1}\,(#2)}
\newcommand{\LWn}[2]{\LWff_{#1}\,(#2)}
\newcommand{\SWn}[2]{\SWff_{#1}\,(#2)}
\begin{document}

\title[A rich structure for $\Gamma$ and $\E$]{ A rich structure related to the construction of analytic matrix functions}

\author{D. C. Brown, Z. A. Lykova and N. J. Young}

\thanks{The first author  was supported by an EPSRC DTA grant. The second and third  authors were partially supported by the UK Engineering and Physical Sciences Research Council grants  EP/N03242X/1. }

\date{5th December, 2016}
\subjclass[2010]{90C22, 30E05, 93D21, 93B50, 47N10, 47N70}
\keywords{ $H^\infty$ control, interpolation, spectral radius, spectral Nevanlinna-Pick, realization theory, Hilbert space model, Schur class, symmetrized bidisc, tetrablock}

\begin{abstract} 
We study certain interpolation problems for analytic $2\times 2$ matrix-valued functions on the unit disc.  We obtain a new solvability criterion for one such problem, a special case of  the $\mu$-synthesis problem from robust control theory.  
For certain domains $\mathcal X$ in $\mathbb{C}^2$ and $\mathcal{C}^3$ we describe a rich structure of interconnections between four objects: the set of analytic functions from the disc into $\mathcal X$,  the $2\times 2$ matricial Schur class, 
the Schur class of the bidisc, and the set of pairs of positive kernels on the bidisc subject to a boundedness condition.
This rich structure 
combines with the classical realisation formula and Hilbert space models in the sense of Agler to give an effective method for the construction of the required interpolating functions.
\end{abstract}

\maketitle
\section*{Contents} \label{contents}

\ref{intro}. Introduction \hfill  Page \pageref{intro}

\ref{symmetrized}. The symmetrized bidisc $\G$\hfill \pageref{symmetrized}

\ref{tetrablock}. The tetrablock $\E$ \hfill \pageref{tetrablock}

\ref{sectrealform}. A realisation formula  \hfill \pageref{sectrealform} 

\ref{kernels-maps}. 
Relations between $\cal{S}^{2\times2}$ and the set of analytic kernels on $\mathbb{D}^2$ \hfill \pageref{kernels-maps}

\hspace*{1cm} \ref{UEf}. The map $\UEf:\cal{S}^{2\times2}\to\cal{R}_{1}$  \hfill \pageref{UEf}

\hspace*{1cm} \ref{UWf}.
Procedure $UW$ and the set-valued  map $\UWf:\cal{R}_{11}\to\cal{S}^{2\times2}$
 \hfill \pageref{UWf}

\hspace*{1cm} \ref{RSf}. The map $\RSf:\cal{R}_1\to\cal{S}_2$
 \hfill \pageref{RSf}

\hspace*{1cm} \ref{RNf}.
The map $\RNf:\cal{S}_2\to\cal{R}_1$\hfill \pageref{RNf}

\ref{relsgammasect}.    Relations between $\hol{\mathbb{D}}{\Gamma}$ and other objects in the rich saltire
 \hfill \pageref{relsgammasect}

\hspace*{1cm} \ref{LNfandLSf}. The maps $\LNf:\hol{\mathbb{D}}{\Gamma}\to\cal{S}^{2\times2}$ and
$\LSf:\cal{S}^{2\times2}\to\hol{\mathbb{D}}{\Gamma}$
 \hfill \pageref{LNfandLSf}

\hspace*{1cm} \ref{LEf-G}. The map $\LEf:\hol{\mathbb{D}}{\Gamma}\to\cal{S}_2$ \hfill \pageref{LEf-G}

\hspace*{1cm} \ref{LWf-G}. The map $\LWf:\cal{S}_2^{b=c}\to \hol{\mathbb{D}}{\Gamma}$  \hfill \pageref{LWf-G}

\hspace*{1cm} \ref{SWf-G}. The map $\SWf:\cal{R}_{11}\to\hol{\mathbb{D}}{\Gamma}$
 \hfill \pageref{SWf-G}

\ref{relstetrasect}. Relations between $\hol{\mathbb{D}}{\overline{\mathcal{E}}}$ and other objects in the rich saltire
\hfill \pageref{relstetrasect}

\hspace*{1cm} \ref{LNf-E-sect}.  The map $\LNfn{\E}:\hol{\mathbb{D}}{\overline{\mathcal{E}}}\to\cal{S}^{2\times2}$ \hfill \pageref{LNf-E-sect}

\hspace*{1cm} \ref{LS-E}. The map $\LSfn{\E}:\cal{S}^{2\times2}\to\hol{\mathbb{D}}{\overline{\mathcal{E}}}$
\hfill \pageref{LS-E}

\hspace*{1cm} \ref{LEf-LWf-E}
The maps $\LEfn{\E}:\hol{\mathbb{D}}{\overline{\mathcal{E}}}\to \cal{S}_2^{\mathrm{lf}}$
and $\LWfn{\E}:\cal{S}_2^{\mathrm{lf}} \to\hol{\mathbb{D}}{\overline{\mathcal{E}}}$ \hfill \pageref{LEf-LWf-E}

\ref{crit_tetr}. A criterion for the solvability  of
the  $\mu_{\mathrm{Diag}}$-synthesis
 problem
\hfill \pageref{crit_tetr}

\ref{Proc_SW}. Construction of all interpolating functions
in $\hol{\mathbb{D}}{\overline{\mathcal{E}}}$
\hfill \pageref{Proc_SW}

 References \hfill\pageref{bibliog}

\section{Introduction}\label{intro}

Engineering provides some hard challenges for classical analysis.  In signal processing and, in particular, control theory, one often needs to construct analytic matrix-valued functions on the unit disc $\D$ or right half-plane subject to finitely many interpolation conditions and to some subtle boundedness requirements.  The resulting problems are close in spirit to the classical Nevanlinna-Pick problem, but established operator- or function-theoretic methods which succeed so elegantly for the classical problem do not seem to help for even minor variants.  For example, this is so for the {\em spectral Nevanlinna-Pick  problem} \cite{BFT1,NJY11}, which  is to construct an analytic square-matrix-valued function $F$ in $\D$
that satisfies a finite collection of interpolation conditions and the boundedness condition
\[
\sup_{\la\in\D} r(F(\la)) \leq 1 \quad \mbox{ for all }\la\in\D.
\]
This problem is a special case of the {\em $\mu$-synthesis problem} of $H^\infty$ control, which is recognised as a hard and important problem in the theory of robust control \cite{Do,DuPa}.  Even the special case of the spectral Nevanlinna-Pick problem for $2\times 2$ matrices awaits a definitive analytic theory.

A major difficulty in $\mu$-synthesis problems is to describe the analytic maps from $\D$ to a suitable domain ${\mathcal X} \subset  \C^n$ or its closure $\overline{\mathcal X}$.   In the classical theory $\mathcal X$ is a matrix ball, and the {\em realisation formula} presents the general analytic map from $\D$ to $\mathcal X$ in terms of a contractive operator on Hilbert space; this formula provides a powerful approach to a variety of interpolation problems.  In the $\mu$ variants $\mathcal X$ can be unbounded, nonconvex, inhomogeneous and non-smooth, properties which present difficulties both for an operator-theoretic approach and for standard methods in several complex variables.

In this paper we exhibit, for certain naturally arising domains ${\mathcal X}$, a rich structure  of interconnections between
four naturally arising objects of analysis in the context of $2\times 2$ analytic matrix functions on $\D$. This rich structure 
combines with the classical realisation formula and Hilbert space models in the sense of Agler to give an effective method of constructing functions in the space 
 ${\mathrm{Hol}}(\D, \overline{\X})$ of analytic maps from $\D$ to $\overline{\X}$, and thereby of obtaining solvability criteria for two cases of the $\mu$-synthesis problem.

The rich structure is summarised in the following diagram, which we call {\em the rich saltire\footnote{A heraldic term meaning an ordinary formed by a bend and a bend sinister crossing like a St. Andrew's cross (Concise Oxford Dictionary)} for the domain $\mathcal{X}$}. 
\begin{equation}\label{rich_str}
\xymatrixcolsep{7pc}
\xymatrix@R+36pt{
\quad \quad \cal{S}^{2\times 2} \quad \quad
\ar@<-3pt>[d]_{\LSfn{\X}}  
\ar[rd]^{ \SEf\quad\quad\quad\quad\quad\quad\quad} 
 \ar@<-3pt>[r]_-{\UEf}  &
  \quad \quad \cal{R}_{1}  \quad \quad 
 \ar@<-3pt>[l]_-{\UWf}  \ar@<-3pt>[d]_{\RSf} 
 \ar[ld]_{\quad\quad\quad\quad\quad\quad\quad\SWfn{\X}}\\ 
\quad \quad\hol{\mathbb{D}}{\overline{\X}} \quad \quad
 \ar@<-3pt>[u]_{\LNfn{\X}}  
 \ar@<-3pt>[r]_-{\LEfn{\X}}  & 
 \quad \quad \cal{S}_2 \quad \quad 
\ar@<-3pt>[l]_-{\LWfn{\X}} \ar@<-3pt>[u]_{\RNf} 
}
\end{equation}
The objects are defined as follows: 

$\cal{S}^{2\times 2}$ is the $2\times2$ matricial Schur class of the disc, that is, the set of analytic $2\times2$ matrix functions $F$ on $\D$ such that $\|F(\la)\| \le 1$ for all $\la \in \D$;

$\cal{S}_2$ is the Schur class of the bidisc $\mathbb{D}^2$, that is, ${\mathrm{Hol}}(\D^2, \overline{\D})$,
and 

$\cal{R}_{1}$ is the set of pairs $(N,M)$ of analytic kernels on $\mathbb{D}^2$ 
such that the kernel defined by
\[ 
(z,\lambda,w,\mu) \mapsto 1-(1-\overline{w}z)N(z,\lambda,w,\mu)-(1-\overline{\mu}\lambda)M(z,\lambda,w,\mu),
\]
for all $z,\lambda,w,\mu\in\mathbb{D}$, is positive semidefinite on $\mathbb{D}^2$ and is of rank $1$. 

The arrows in diagram \eqref{rich_str} denote mappings and correspondences that will be described in Sections 4 to 7.

In this paper we consider the rich saltire for two  domains $\X$: the symmetrised bidisc 
and the tetrablock, defined below.
Whereas $\cal{S}^{2\times2}$ and $\cal{S}_2$ are classical objects that have been much studied,
$\hol{\mathbb{D}}{\overline{\X}}$ and $\cal{R}$ have been introduced and studied within the last two decades in connection with special cases of the robust stabilisation problem.  The maps in  the upper northeast triangle of the rich saltire for a domain $\X$ do not depend on $\X$.

The {\em closed symmetrised bidisc} is defined to be the set
\[
\Gamma=\{(z+w,zw): |z|\leq1, \, |w|\leq 1\}.
\]
The {\em tetrablock} is the domain
\[
\E = \{ x \in \C^3: \quad 1-x_1 z - x_2 w + x_3 zw \neq 0 \mbox{  whenever }
|z| \le 1, |w| \le 1 \}.
\]
The closure of $\E$ is denoted by $\bar\E$.

The symmetrised bidisc arises naturally in the study of the spectral Nevanlinna-Pick problem for $2\times 2$ matrix functions.  In a similar way, the tetrablock arises from another special case of the $\mu$-synthesis problem for $2\times 2$ matrix functions \cite{NJY11}.    Define
\[
{\mathrm{Diag}} \stackrel{\mathrm{def}}{=} \left\{ \begin{bmatrix} z&0\\ 0& w\end{bmatrix}: z,w \in \C \right\}
\]
and, for a $2 \times 2$-matrix $A$,
\[
\mu_{\mathrm{Diag}} (A) = \left( \inf \{ \|X\| : X \in {\mathrm{Diag}}, 1-AX  \mbox{ is singular} \} \right)^{-1}.
\]

{\bf The  $\mu_{\mathrm{Diag}}$-synthesis
 problem}: given points $\la_1, \dots, \la_n \in \D$ and target matrices $W_1, \dots, W_n \in \C^{2 \times 2}$ one seeks an analytic $2 \times 2$-matrix-valued function $F$ such that 
\[
F(\la_j)= W_j \quad \mbox{  for } j=1,\dots,n,  \mbox{  and }
\]
\[
\mu_{\mathrm{Diag}} (F(\la)) < 1, \; \text{ for all } \; \la \in \D.
\]
This problem is  equivalent to the interpolation problem for ${\mathrm{Hol}}(\D, \E)$ studied in this paper; see \cite[Theorem 9.2]{AWY07}. 
Here  ${\mathrm{Hol}}(\D, \E)$ is the space of analytic maps from the unit disc $\D$ to $\E$.

In the case of the symmetrised bidisc a number of components of the 
rich saltire for $\Gamma$ were presented  by Agler and two of the present authors in \cite{ALY13}.
 Aspects of the rich saltire for $\Gamma$ were used in \cite[Theorem 1.1]{ALY13} to prove a solvability criterion for the $2\times 2$ spectral Nevanlinna-Pick  interpolation problem.  In this paper we give the final picture of the rich saltire for the symmetrised bidisc.

In the case of the tetrablock, with the aid of the rich saltire we obtain a solvability criterion for the $\mu_{\mathrm{Diag}}$-synthesis  problem. 
A strategy to obtain the solvability criterion is as follows.
Reduce the problem to an interpolation problem in the set of analytic functions from the disc to the tetrablock, 
induce a duality between the set ${\mathrm{Hol}}(\D, \E)$ and $\cal{S}_2$, then use Hilbert space models for $\cal{S}_2$ to obtain 
necessary and sufficient conditions for solvability. 

The main result of this paper is the existence of the rich saltire, and the principal application thereof is  the equivalence of (1) and (3) in the following assertion.

\begin{theorem} \label{NPspectral_tetr}
Let $\la_1, \dots, \la_n$ be distinct points in $\D$, let $W_1,\dots, W_n$ be  $2\times 2$ complex matrices such that
$(W_j)_{11}(W_j)_{22} \neq \det W_j$ for each $j$, and let
$(x_{1j},x_{2j},x_{3j})=((W_j)_{11},(W_j)_{22},\det W_j)$
 for each $j$.  The following three conditions are equivalent.
\begin{enumerate}[\em (1)]
\item  There exists an analytic $2\times 2$ matrix function $F$ in $\D$ such that 
\beq \label{interp-matr-main}
F(\la_j) = W_j \quad \mbox{ for }\quad j=1,\dots,n,
\eeq
and 
\beq \label{spec-cond-main}
\mu_{\mathrm{Diag}}(F(\la)) \leq 1 \quad \mbox{ for all } \quad \la \in\D.
\eeq
\item There exists a rational function $x:\mathbb{D}\to\overline{\mathcal{E}}$ such that
\beq\label{inq1.2-intr}
\qquad x(\lambda_j)=(x_{1j},x_{2j},x_{3j})\text{ for }j=1,\dots,n.
\eeq
\item  For some distinct points  $z_1,z_2, z_3$ in $\D$, there exist positive $3n$-square matrices $N=[N_{il,jk}]^{n,3}_{i,j=1,l,k=1}$ of rank at most $1$,
and $M=[M_{il,jk}]^{n,3}_{i,j=1,l,k=1}$ such that
\beq\label{inq1.5-intr}
\qquad \left[1-\overline{\frac{z_lx_{3i}-x_{1i}}{x_{2i}z_l-1}}\frac{z_kx_{3j}-x_{1j}}{x_{2j}z_k-1}\right] \geq
\left[(1-\overline{z_l}z_k)N_{il,jk}\right]+
\left[(1-\overline{\lambda_i}\lambda_j)M_{il,jk}\right].
\eeq
\end{enumerate}
\end{theorem}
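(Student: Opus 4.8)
The plan is to prove the cycle $(1)\Rightarrow(3)\Rightarrow(2)\Rightarrow(1)$; all three equivalences then follow, and the content lies in the passages through the rich saltire for $\E$ (Section~\ref{LEf-LWf-E}) and the Hilbert space model of Section~\ref{sectrealform}. I begin with the elementary reduction underlying $(1)\Leftrightarrow(2)$ and with $(2)\Rightarrow(1)$. For a $2\times 2$ matrix $A$ and $X=\diag(z,w)$ one computes $\det(I-AX)=1-A_{11}z-A_{22}w+(\det A)zw$, so that $\mu_{\mathrm{Diag}}(A)\le 1$ if and only if $(A_{11},A_{22},\det A)\in\overline{\E}$; see \cite[Theorem 9.2]{AWY07}. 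Hence if $F$ satisfies \eqref{interp-matr-main}--\eqref{spec-cond-main}, then $x_F:=\bigl((F)_{11},(F)_{22},\det F\bigr)$ is an analytic map $\D\to\overline{\E}$ with $x_F(\la_j)=(x_{1j},x_{2j},x_{3j})$. Conversely, the hypothesis $(W_j)_{11}(W_j)_{22}\ne\det W_j$ is exactly $(W_j)_{12}(W_j)_{21}\ne0$, so there is an analytic non-vanishing $h$ on $\D$ with $h(\la_j)=(W_j)_{21}$; given a rational $x:\D\to\overline{\E}$ satisfying \eqref{inq1.2-intr}, the function
\[
F:=\diag(1,h)\begin{bmatrix} x_1 & x_1x_2-x_3\\ 1 & x_2\end{bmatrix}\diag(1,h^{-1})
\]
is analytic, has $F(\la_j)=W_j$, and satisfies $\mu_{\mathrm{Diag}}(F(\la))\le1$ for all $\la$, since $\mu_{\mathrm{Diag}}$ is invariant under conjugation by diagonal matrices and the middle factor $G$ has $\bigl(G_{11},G_{22},\det G\bigr)=x$. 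Thus $(2)\Rightarrow(1)$, and $(1)$ holds precisely when the $\hol{\D}{\overline{\E}}$-interpolation problem with data \eqref{inq1.2-intr} is solvable; the rationality in $(2)$ will be produced by the proof of $(3)\Rightarrow(2)$.

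For $(1)\Rightarrow(3)$, take an analytic $x:\D\to\overline{\E}$ with \eqref{inq1.2-intr} (e.g.\ $x=x_F$) and set $\Psi:=\LEn{\E}{x}\in\mathcal S_2^{\mathrm{lf}}\subseteq\mathcal S_2$, using the saltire map $\LEfn{\E}$, so that $\Psi(z,\la)=(z\,x_3(\la)-x_1(\la))/(x_2(\la)z-1)$, which for each fixed $\la$ is linear-fractional in $z$. The Agler model for the Schur class of the bidisc (Section~\ref{sectrealform}) yields positive kernels $N,M$ on $\D^2$ with
\[
1-\overline{\Psi(z,\la)}\,\Psi(w,\mu)=(1-\overline w z)\,N(z,\la,w,\mu)+(1-\overline\mu\la)\,M(z,\la,w,\mu),
\]
and, because $\Psi$ is linear-fractional in the first variable, the first-variable state space of its model is at most one-dimensional, so $N$ may be taken of rank one; that is, $(N,M)=\RN{\Psi}\in\mathcal R_{11}$. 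Pick any three distinct points $z_1,z_2,z_3\in\D$ and restrict the identity to the $3n$ points $(z_l,\la_i)$: the restrictions $[N_{il,jk}]$ and $[M_{il,jk}]$ are positive $3n$-square matrices, $[N_{il,jk}]$ of rank at most one, the left-hand side becomes the Pick matrix on the left of \eqref{inq1.5-intr}, and the identity gives \eqref{inq1.5-intr} with equality, hence $(3)$.

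For $(3)\Rightarrow(2)$, let $D\ge0$ be the $3n$-square difference of the two sides of \eqref{inq1.5-intr}. Since the $\la_i$ are distinct, the $3n$-square matrix with $(il,jk)$-entry $(1-\overline{\la_i}\la_j)^{-1}$ is positive semidefinite, so its Schur product with $D$ is a positive semidefinite matrix $E$ with $D_{il,jk}=(1-\overline{\la_i}\la_j)E_{il,jk}$; replacing $M$ by $M+E$ we may assume \eqref{inq1.5-intr} holds with equality. We then have an Agler pair on the grid $\{z_1,z_2,z_3\}\times\{\la_1,\dots,\la_n\}$ for the values $v_{li}:=(z_lx_{3i}-x_{1i})/(x_{2i}z_l-1)$, with $N$-part of rank at most one. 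A lurking-isometry argument as in Section~\ref{sectrealform} produces $\Psi\in\mathcal S_2$ with $\Psi(z_l,\la_i)=v_{li}$, and the rank-one constraint forces the first-variable state space to be at most one-dimensional, so $\Psi$ is rational and lies in $\mathcal S_2^{\mathrm{lf}}$. Applying the saltire map $\LWfn{\E}$ (equivalently, extending to a member of $\mathcal R_{11}$ over $\D^2$ and applying $\SWfn{\E}$) returns a rational $x\in\hol{\D}{\overline{\E}}$ with $\Psi=\LEn{\E}{x}$, i.e.\ $\Psi(z,\la)=(z\,x_3(\la)-x_1(\la))/(x_2(\la)z-1)$. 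Finally, for each $i$ the function $z\mapsto\Psi(z,\la_i)$ is rational of degree at most one and agrees, at the three distinct points $z_1,z_2,z_3$, with $z\mapsto(z\,x_{3i}-x_{1i})/(x_{2i}z-1)$, which is non-constant because $x_{1i}x_{2i}\ne x_{3i}$; hence the two functions coincide, and since a non-constant function of the form $(z\,\xi_3-\xi_1)/(\xi_2 z-1)$ determines $(\xi_1,\xi_2,\xi_3)$, we get $x(\la_i)=(x_{1i},x_{2i},x_{3i})$, which is \eqref{inq1.2-intr}. This closes the cycle.

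The main obstacle is the step in $(3)\Rightarrow(2)$ that promotes the finite, rank-constrained data of \eqref{inq1.5-intr} to a genuine linear-fractional Schur function on the bidisc with a finite-dimensional realisation, and then transports it back to $\hol{\D}{\overline{\E}}$. This is exactly where the rich saltire for $\E$ (the maps $\LEfn{\E}$, $\LWfn{\E}$, $\SWfn{\E}$, $\RNf$, $\RSf$ and the distinguished subclasses $\mathcal S_2^{\mathrm{lf}}$ and $\mathcal R_{11}$) must be combined with the Agler model of Section~\ref{sectrealform}; the rank-one hypothesis on $N$ is what simultaneously forces the linear-fractional form in the first variable and, through the three points $z_1,z_2,z_3$, allows the tetrablock interpolation data to be recovered. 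A subsidiary point, handled by the finite-dimensionality of the model space, is that the interpolant $x$ so constructed is rational, as required by $(2)$.
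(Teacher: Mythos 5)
Your proposal is correct, and the analytic core (realisation formula, lurking isometry, identification of two M\"obius maps agreeing at the three points $z_1,z_2,z_3$) is the same as the paper's, which derives the theorem from Theorem \ref{criterion_tetr} plus the reduction to interpolation in $\hol{\mathbb{D}}{\overline{\mathcal{E}}}$. Two of your steps are genuinely different and worth recording. First, you prove $(2)\Rightarrow(1)$ by the explicit diagonal conjugation $F=\diag(1,h)\,G\,\diag(1,h^{-1})$ of a companion-type lift $G$ of $x$, using the invariance of $\mu_{\mathrm{Diag}}$ under diagonal similarity and the identity $\det(I-AX)=1-A_{11}z-A_{22}w+(\det A)zw$; the paper instead routes $(1)\Leftrightarrow(2)$ through \cite[Theorem 9.2]{AWY07} (Theorem \ref{musynthequivfortetra}), which is stated for the strict inequalities $\mu_{\mathrm{Diag}}<1$, so your elementary argument is if anything better matched to the non-strict condition \eqref{spec-cond-main}. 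Second, in $(3)\Rightarrow(2)$ you upgrade the inequality \eqref{inq1.5-intr} to an equality by adding to $M$ the Schur product of the slack with the positive matrix $[(1-\overline{\lambda_i}\lambda_j)^{-1}]$, and then run a single lurking-isometry (hence, in finite dimensions, unitary) argument that delivers a rational interpolant in one pass; the paper instead treats the inequality with a lurking contraction to get $(iv)\Rightarrow(i)$ and recovers rationality only by going around the cycle $(i)\Rightarrow(iii)\Rightarrow(ii)$. Your shortcut is valid and slightly more economical.

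The one place where your write-up is thinner than the paper is the claim in $(1)\Rightarrow(3)$ that the Agler pair for $\Psi=\LEn{\E}{x}$ ``may be taken with $N$ of rank one because $\Psi$ is linear fractional in the first variable.'' Theorem \ref{nonconsaglerthm} gives no rank control whatsoever, and the rank-one decomposition is precisely the content of Theorem \ref{leftntetrawelldefinedprop} combined with Proposition \ref{1-calfcalf}: one must construct the specific contractive lift $F=\LNn{\E}{x}$ (via the inner--outer factorisation of $x_1x_2-x_3$ and the inequalities of Proposition \ref{cond_Ebar}, which is where membership of $x(\lambda)$ in $\overline{\mathcal{E}}$ is actually used) in order to exhibit $N(z,\lambda,w,\mu)=\overline{\gamma(\mu,w)}\gamma(\lambda,z)$ explicitly. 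With that construction invoked at this point, the proposal is complete.
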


This result is a part of  Theorem \ref{criterion_tetr}, which we establish in Section \ref{crit_tetr}, and  \cite[Theorem 9.2]{AWY07} (Theorem \ref{musynthequivfortetra}).
The necessary and sufficient condition for the existence of a solution of the $\mu_{\mathrm{Diag}}$-synthesis  problem for $2 \times 2$ matrix functions with $n > 2$ interpolation points is given in terms of the existence of positive $3n$-square matrices $N, M$ satisfying a certain linear matrix inequality in the data, but with the constraint that $N$ have rank $1$. 
This kind of optimization problem can be addressed with the aid of numerical algorithms (for example, \cite{boyd}), though we observe that, on account of the rank constraint, it is not a convex problem.

The paper is organized as follows.  Sections \ref{symmetrized} and  \ref{tetrablock} describe the basic properties of the symmetrized bidisc $\Gamma$ and 
the tetrablock $\E$ respectively. They also present known results on the reduction of a $2\times 2$ spectral Nevanlinna-Pick problem to an interpolation problem in
the space ${\mathrm{Hol}}(\D,\Gamma)$ of analytic functions from $\D$ to $\Gamma$,
and on the reduction of a $\mu_{\mathrm{Diag}}$-synthesis problem to an interpolation problem in
the space ${\mathrm{Hol}}(\D,\E)$ of analytic functions from $\D$ to $\E$.
In Section \ref{sectrealform}  we construct maps between the sets $\cal{S}^{2\times2}$ and  $\cal{S}_2$ using the linear fractional transformation $\cal{F}_{F(\lambda)}(z)$, $\lambda, z\in\mathbb{D}$, for $F\in\cal{S}^{2\times2}$.
Relations between $\cal{S}^{2\times2}$ and the set of analytic kernels on $\mathbb{D}^2$ are given in Section \ref{kernels-maps}. Section \ref{relsgammasect} presents the rich saltire  \eqref{rich_str_G} for  the symmetrised bidisc.
The rich saltire for the tetrablock \eqref{rich_str_E} is described in Section \ref{relstetrasect}. Here we present a duality between the space ${\mathrm{Hol}}(\D,\E)$ and a subset of the Schur class $\cal{S}_2$ of the bidisc. In Section \ref{crit_tetr} we use Hilbert space models for functions in $\cal{S}_2$ to obtain necessary and sufficient conditions for solvability of the interpolation problem in the space ${\mathrm{Hol}}(\D,\E)$.

The closed unit disc in $\C$ will be denoted by $\Delta$ and the unit circle by $\T$.  The complex conjugate transpose of a matrix $A$ will be written $A^*$.  The symbol $I$ will denote an identity operator or an identity matrix, according to context.
The $C^*$-algebra of $2 \times 2$ complex matrices will be denoted by $\M_2(\C)$. 

\section{The symmetrized bidisc $\G$}\label{symmetrized}

The {\em open} and {\em closed symmetrized bidiscs} are the subsets
\beq\label{defG}
\G=\{(z+w,zw): |z| < 1, \, |w| < 1 \}
\eeq
and 
\beq\label{defGamma}
\Gamma = \{(z+w,zw): |z| \leq 1, \, |w| \leq 1 \}
\eeq
of $\C^2$.
The sets $\G$ and $\Gamma$ are relevant to the $2\times 2$ spectral Nevanlinna-Pick problem because, for a $2\times 2$ matrix $A$, if
$r(\cdot)$ denotes the spectral radius of a matrix,
\[
r(A) < 1 \Leftrightarrow (\tr A, \det A) \in \G
\]
and
\beq\label{rGam}
r(A) \leq 1 \Leftrightarrow (\tr A, \det A) \in \Gamma.
\eeq

Accordingly, if $F$ is an analytic $2\times 2$ matrix function on $\D$ satisfying $r(F(\la)) \leq 1$ for all $\la \in\D$ then the function $(\tr F, \det F)$ belongs to the space ${\mathrm{Hol}}(\D,\Gamma)$ of analytic functions from $\D$ to $\Gamma$.  A converse statement also holds: every $\ph\in
{\mathrm{Hol}}(\D,\Gamma)$ lifts to an analytic $2\times 2$ matrix function $F$ on $\D$ such that $(\tr F,\det F)= \ph$ and consequently $r(F(\la)) \leq 1$ for all $\la \in\D$ \cite[Theorem 1.1]{AY04T}.  The $2\times 2$ spectral Nevanlinna-Pick problem can therefore be reduced to an interpolation problem in ${\mathrm{Hol}}(\D,\Gamma)$.  There is a slight complication in the case that any of the target matrices are scalar multiples of the identity matrix; for simplicity we shall exclude this case in the present paper. 

The relation \eqref{rGam} scales in an obvious way: for $\rho > 0$,
\[
r(A) \leq \rho \Leftrightarrow (\tr A, \det A) \in \rho \cdot\Gamma
\]
where 
\[
\rho\cdot(s,p) \df (\rho s,\rho^2 p) \quad \mbox{ and }\quad   \rho \cdot\Gamma\df \{\rho\cdot(s,p):(s,p)\in\Gamma\}.
\]

The following result is \cite[Proposition 3.1]{ALY13};  it is a refinement of \cite[Theorem 1.1]{AY04T}.
\begin{theorem}\label{reducetoGamma}
Let $\la_1,\dots,\la_n$ be distinct points in $\D$ and let $W_1,\dots,W_n$ be $2\times 2$ matrices, none of them a scalar multiple of the identity.  The following two statements are equivalent.
\begin{enumerate}[\em (1)]
\item There exists a rational ${2\times 2}$ matrix function $F$, analytic in $\D$, such that 
\[
F(\la_j) = W_j \quad \mbox{ for } j=1,\dots, n
\]
and
\beq\label{rineq}
 \sup_{\la\in\D}r(F(\la)) < 1;
\eeq
\item there exists a rational function $h\in{\mathrm{Hol}}(\D, \G)$ such that
\beq\label{hatnodes}
h(\la_j) = (\tr W_j, \det W_j) \quad \mbox{ for } j= 1,\dots,n,
\eeq
and $h(\D)$ is relatively compact in $\G$.
\end{enumerate}
\end{theorem}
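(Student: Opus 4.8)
The plan is to establish the two implications $(1)\Rightarrow(2)$ and $(2)\Rightarrow(1)$ separately, using in each direction the correspondence between the spectral radius condition on a $2\times2$ matrix and membership in $\G$ or $\Gamma$ recorded in \eqref{rGam} and the strict version above it. First I would prove the easier direction, $(1)\Rightarrow(2)$. Given a rational $2\times2$ matrix function $F$, analytic in $\D$, with $\sup_{\la\in\D} r(F(\la)) < 1$, set $h = (\tr F, \det F)$. Then $h$ is rational and analytic in $\D$, and $h(\la_j) = (\tr W_j, \det W_j)$ for each $j$. It remains to check that $h$ maps $\D$ into $\G$ and that $h(\D)$ is relatively compact in $\G$. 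Because $F$ is rational and $\sup_{\la\in\D}r(F(\la))<1$, there is in fact $\rho<1$ with $r(F(\la))\le\rho$ for all $\la\in\D$ (the rational function $F$ extends continuously to a neighbourhood of $\Delta$, or at worst one uses a slightly larger disc); then by the scaled relation $r(A)\le\rho \Leftrightarrow (\tr A,\det A)\in\rho\cdot\Gamma$ we get $h(\D)\subseteq\rho\cdot\Gamma$, which is a compact subset of $\G$ since $\rho<1$. Hence $h(\D)$ is relatively compact in $\G$, giving (2).

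For the converse, $(2)\Rightarrow(1)$, I would invoke the lifting result \cite[Theorem 1.1]{AY04T} quoted in the text: every $\ph\in{\mathrm{Hol}}(\D,\Gamma)$ lifts to an analytic $2\times2$ matrix function $F$ on $\D$ with $(\tr F,\det F)=\ph$ and $r(F(\la))\le 1$ throughout $\D$. Starting from a rational $h\in{\mathrm{Hol}}(\D,\G)$ with $h(\la_j)=(\tr W_j,\det W_j)$ and $h(\D)$ relatively compact in $\G$, the hypothesis that no $W_j$ is a scalar multiple of the identity is what allows one to interpolate the full matrix values $W_j$, not merely their traces and determinants: a $2\times2$ matrix which is not a scalar multiple of $I$ is determined up to conjugacy by $(\tr,\det)$, and one can pre-compose the lift with a suitable rational conjugation (a Blaschke-type adjustment, finite at the nodes) to arrange $F(\la_j)=W_j$ exactly. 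The relative compactness of $h(\D)$ in $\G$ then yields $\rho<1$ with $h(\D)\subseteq\rho\cdot\Gamma$, and by the scaled relation $r(F(\la))\le\rho<1$ for all $\la\in\D$, so $\sup_{\la\in\D}r(F(\la))<1$. Finally, rationality of $F$ is inherited from rationality of $h$ together with the rationality of the lifting construction and of the conjugating factor.

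I expect the main obstacle to be the converse direction, and within it the step of upgrading a lift realizing the right $(\tr,\det)$ data to one realizing the actual matrix targets $W_j$ while preserving rationality and analyticity. The lifting theorem of \cite{AY04T} gives an $F$ with the correct symmetrized values at the nodes, but conjugating it to hit each $W_j$ on the nose requires building a rational matrix function $U$ on $\D$, invertible at the interpolation nodes, with prescribed conjugation behaviour at finitely many points; one must check that this can be done without destroying the bound $r(F(\la))<1$ — which is automatic since conjugation does not change eigenvalues — and without introducing poles in $\D$. A secondary technical point is passing from the open condition (relative compactness of $h(\D)$ in $\G$, resp. the strict supremum bound) to a single constant $\rho<1$; this is routine for rational functions, whose images of $\D$ have closures contained in $\G$ precisely when they are bounded away from the boundary $\partial\G$. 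Since the statement is attributed to \cite[Proposition 3.1]{ALY13} as a refinement of \cite[Theorem 1.1]{AY04T}, I anticipate the authors' proof will largely cite these sources and concentrate on the rationality and the exact-interpolation bookkeeping.
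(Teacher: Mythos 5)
The paper does not actually prove this theorem: it is quoted verbatim from \cite[Proposition 3.1]{ALY13}, so your proposal has to be judged against what a complete argument would require. Your direction (1)$\Rightarrow$(2) is correct and even simpler than you make it: $\rho:=\sup_{\la\in\D}r(F(\la))<1$ already serves as the scaling constant, and $h(\D)\subseteq\rho\cdot\Gamma\subset\G$ with $\rho\cdot\Gamma$ compact; no boundary extension of $F$ is needed. Likewise, extracting $\rho<1$ from relative compactness of $h(\D)$ in the converse is fine, since the open sets $\rho\cdot\G$, $\rho<1$, exhaust $\G$, and the bound $r(F(\la))\le\rho$ then follows from $(\tr{F(\la)},\det F(\la))=h(\la)\in\rho\cdot\Gamma$ whatever lift $F$ is used.

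The genuine gap is in the step ``lift, then pre-compose with a rational conjugation.'' First, the lifting theorem you invoke as a black box only guarantees $(\tr{F},\det F)=h$; at a node $\la_j$ where $W_j$ is non-scalar but has a repeated eigenvalue (i.e.\ $(\tr{W_j})^2=4\det W_j$, a nontrivial Jordan block), the lift may satisfy $F(\la_j)=\mu I$ with $2\mu=\tr{W_j}$. Such a value is \emph{not} similar to $W_j$, and no conjugation can repair it; one must instead control the construction of the lift itself (e.g.\ use the companion-matrix lift $\la\mapsto\bigl[\begin{smallmatrix}0&-p(\la)\\1&s(\la)\end{smallmatrix}\bigr]$, whose values are never scalar, or the entrywise construction of \cite{AY04T} that interpolates the matrix entries of $W_j$ directly via a factorisation $bc=ad-p$ with prescribed orders of vanishing). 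Second, even granting that each $F(\la_j)$ is similar to $W_j$, you assert but do not construct a single rational $U$, analytic and pointwise invertible on all of $\D$, realising the $n$ prescribed conjugations; Lagrange interpolation of the similarity matrices gives no control over the zeros of $\det U$ inside $\D$, and this lemma needs a proof. Finally, the quoted lifting theorem does not assert that the lift is rational (indeed the normalised lift $\LNf$ of Section \ref{LNfandLSf} involves an outer square root $e^{C/2}$, which destroys rationality), so rationality of $F$ must come from choosing a rational lift explicitly rather than being ``inherited from the lifting construction.'' These are exactly the points that \cite[Proposition 3.1]{ALY13} and \cite[Theorem 1.1]{AY04T} are devoted to, so the proposal as written does not close them.
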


Certain rational functions play a central role in the analysis of $\Gamma$.

\begin{definition}\label{defPhi}
The function $\Phi$ is defined for $(z, s, p) \in \mathbb{C}^3$ such that $zs\neq 2$ by
\beq\label{defPhi-formula}
\Phi(z,s,p) = \frac{2zp-s}{2-zs} = -\half s+ \frac{(p-\tfrac 14 s^2)z}{1-\half s z}.
\eeq
\end{definition}

In particular, $\Phi$ is defined and analytic on $\mathbb{D} \times
\Gamma$ (since $|s| \le 2$ when $(s, p) \in \Gamma$),
 $\Phi$ extends analytically to  $(\Delta\times \Gamma) \setminus \{(z, 2\bar z, \bar z^2): z\in\T \}$.   See \cite{AY2} for an account of how $\Phi$ arises from operator-theoretic considerations.  The $1$-parameter family $\Phi(\omega, \cdot), \; \omega\in\T,$ comprises the set of {\em magic functions} of the domain $\G$.  The notion of magic functions of a domain is explained in \cite{AY08}, 
but for this paper all we shall need is the fact that
\[
\Phi(\D \times \Gamma) \subset \Delta
\]
and a converse statement: if $w \in \C^2$ and $|\Phi(z, w)| \le 1$ for all $z \in \D$ then $w \in \Gamma$;
see for example \cite[Theorem 2.1]{AY04} (the result is also contained in  \cite[Theorem 2.2]{AY1} in a different notation).

A {\em $\Gamma$-inner function} is the analogue for ${\mathrm{Hol}}(\D,\Gamma)$ of inner functions in the Schur class.
A good understanding of rational $\Gamma$-inner functions is likely to play a part in any future solution of the finite interpolation problem for ${\mathrm{Hol}}(\D, \Gamma)$, since such a problem has a solution if and only if it has a rational $\Gamma$-inner solution  (for example, \cite[Theorem 4.2]{Cost05} or \cite[Theorem 8.1]{ALY13}). 

\begin{definition}\label{Gam-in-funct} A  {\em $\Gamma$-inner function} is an analytic function $h : \D \to \Gamma$ such that,
for almost all $\lambda \in \T$ (with respect to Lebesgue measure),
 the radial limit
\begin{equation}\label{radialG}
\lim_{r \to 1-} h(r \lambda) \; \text{ exists and belongs to} \; b \Gamma, 
\end{equation}
where $b\Gamma$ denotes the distinguished boundary of $\Gamma$.
\end{definition}
By Fatou's Theorem, the radial limit (\ref{radialG}) exists for almost all 
 $\lambda \in \T$ with respect to Lebesgue measure. 
The  distinguished boundary $b\Gamma$ of $\G$ (or $\Gamma$)  is the \v{S}ilov boundary of the algebra of continuous functions on $\Gamma$ that are analytic in $\G$.  It is the symmetrisation of the 2-torus:
$$
b\Gamma= \{ (z+w,zw): |z|=|w|=1\}.
$$
The {\em royal variety} $\mathcal{R}=\{(2z,z^2):|z| < 1\}$ plays an  important role in the theory of $\Gamma$-inner functions.

\section{The tetrablock $\E$}\label{tetrablock}

The \emph{open} and {\em closed tetrablock} are   the subsets 
\beq\label{defE}
\mathcal{E}:=\{(x_1,x_2,x_3)\in\mathbb{C}^3:1-x_1z-x_2w+x_3zw\neq0\text{ for all }z,w\in\overline{\mathbb{D}}\}
\eeq
and
\beq\label{defEbar}
\overline{\mathcal{E}}:=\{(x_1,x_2,x_3)\in\mathbb{C}^3:1-x_1z-x_2w+x_3zw\neq0\text{ for all }z,w\in \mathbb{D}\}
\eeq
of $\C^3$.

The tetrablock was introduced in  \cite{AWY07}
and is related to the $\mu_{\mathrm{Diag}}$-synthesis problem.
The following theorem was proved in \cite[Theorem 9.2]{AWY07}.

\begin{theorem}\label{musynthequivfortetra}
Let $\lambda_1,\dots,\lambda_n$ be distinct points in $\mathbb{D}$ and let 
$W_j=\begin{bmatrix}w_{11}^j&w_{12}^j\\w_{21}^j&w_{22}^j\end{bmatrix},\;$ $j=1,\dots, n$,  be $2\times2$ matrices such that $ w_{11}^j w_{22}^j \neq  \det W_j$ and  $\;\mu_{\mathrm{Diag}}(W_j) < 1$, $j=1,\dots, n$.
The following conditions are equivalent.
\begin{enumerate}[\em (1)]
\item There exists an analytic ${2\times 2}$ matrix function $F$ on $\D$, such that 
\[
F(\la_j) = W_j \quad \mbox{ for } j=1,\dots, n
\]
and
\beq\label{mu-ineq}
\sup_{\la\in\D} \mu_{\mathrm{Diag}}(F(\lambda))  < 1;
\eeq
\item there exists an analytic function $\phi \in{\mathrm{Hol}}(\D, \E)$ such that
\beq\label{x-ineq}
\phi (\la_j) = (w_{11}^j,w_{22}^j,\det{W_j}) \quad \mbox{ for } j= 1,\dots,n.
\eeq
\end{enumerate}
\end{theorem}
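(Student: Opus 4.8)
The plan is to reduce the theorem to one elementary pointwise identity together with a holomorphic factorisation. For a $2\times2$ matrix $A=\begin{bmatrix}a_{11}&a_{12}\\a_{21}&a_{22}\end{bmatrix}$ and $X=\diag(z,w)\in{\mathrm{Diag}}$ one has
\[
\det(I-AX)=1-a_{11}z-a_{22}w+(\det A)\,zw,\qquad \|X\|=\max(|z|,|w|).
\]
Hence $I-AX$ is singular for some $X\in{\mathrm{Diag}}$ with $\|X\|\le 1$ precisely when the polynomial $1-a_{11}z-a_{22}w+(\det A)zw$ has a zero in $\overline{\D}\times\overline{\D}$, i.e.\ precisely when $(a_{11},a_{22},\det A)\notin\E$; a short compactness argument then gives
\[
\mu_{\mathrm{Diag}}(A)<1\iff(a_{11},a_{22},\det A)\in\E,\qquad \mu_{\mathrm{Diag}}(A)\le 1\iff(a_{11},a_{22},\det A)\in\overline{\E}.
\]
In particular $\mu_{\mathrm{Diag}}$ descends to a continuous function on $\C^3$ of the triple $(a_{11},a_{22},\det A)$, equal to $1$ on $\partial\E$. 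Granting this, $(1)\Rightarrow(2)$ is immediate: if $F$ is analytic on $\D$ with $F(\la_j)=W_j$ and $\sup_{\la}\mu_{\mathrm{Diag}}(F(\la))\le c<1$, then $\phi:=(F_{11},F_{22},\det F)$ is analytic, $\phi(\la_j)=(w_{11}^j,w_{22}^j,\det W_j)$, and $\phi(\la)\in\E$ for every $\la$ (indeed $\phi(\D)$ lies in the compact set $\{x\in\C^3:\mu_{\mathrm{Diag}}(x)\le c\}\subset\E$), so $\phi\in{\mathrm{Hol}}(\D,\E)$.

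For $(2)\Rightarrow(1)$ the core is a holomorphic lifting lemma. Let $\phi=(\phi_1,\phi_2,\phi_3)\in{\mathrm{Hol}}(\D,\E)$ with $\phi(\la_j)=(w_{11}^j,w_{22}^j,\det W_j)$, and put $g:=\phi_1\phi_2-\phi_3$, an analytic function on $\D$. The non-degeneracy hypothesis $w_{11}^jw_{22}^j\neq\det W_j$ forces $g(\la_j)=w_{11}^jw_{22}^j-\det W_j=w_{12}^jw_{21}^j\neq0$, so $g\not\equiv0$ and both $w_{12}^j$ and $w_{21}^j$ are nonzero. I will construct analytic functions $a,b$ on $\D$ with
\[
ab=g,\qquad a(\la_j)=w_{12}^j,\qquad b(\la_j)=w_{21}^j\quad(j=1,\dots,n);
\]
then $F:=\begin{bmatrix}\phi_1&a\\b&\phi_2\end{bmatrix}$ is analytic on $\D$, $F(\la_j)=W_j$, and $(F_{11},F_{22},\det F)=(\phi_1,\phi_2,\phi_1\phi_2-ab)=\phi$, so $\mu_{\mathrm{Diag}}(F(\la))<1$ for every $\la\in\D$ by the pointwise equivalence. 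To obtain $a,b$, write $g=Bh$ with $B$ analytic on $\D$ carrying exactly the zeros of $g$ counted with multiplicity (a Weierstrass product on $\D$; alternatively, since $\overline{\E}$ is compact, $g\in H^\infty$ and one may take $B$ to be the product of its Blaschke and singular inner factors) and $h$ zero free on $\D$; as $\D$ is simply connected, $h=e^{G}$ for some analytic $G$ on $\D$. Since $g(\la_j)\neq0$ we have $B(\la_j)\neq0$, so let $U$ be the polynomial of degree $<n$ with $U(\la_j)=\log\!\big(w_{12}^j/B(\la_j)\big)$, a branch chosen once at each node, and set $a:=Be^{U}$ and $b:=e^{G-U}$. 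Then $ab=Be^{G}=g$, $a(\la_j)=B(\la_j)e^{U(\la_j)}=w_{12}^j$, and $b(\la_j)=e^{G(\la_j)}/e^{U(\la_j)}=g(\la_j)/\big(B(\la_j)e^{U(\la_j)}\big)=w_{21}^j$, as required.

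At this stage $(2)$ has produced an analytic $F$ with $F(\la_j)=W_j$ and $\mu_{\mathrm{Diag}}(F(\la))<1$ at every individual $\la\in\D$; what is missing for $(1)$ is the uniform bound $\sup_{\la}\mu_{\mathrm{Diag}}(F(\la))<1$. Because $\mu_{\mathrm{Diag}}$ is continuous on $\C^3$ and equals $1$ on $\partial\E$, it suffices to arrange that the image $\phi(\D)$ is relatively compact in $\E$ before applying the lifting lemma: then $\phi(\D)\subset\{x\in\C^3:\mu_{\mathrm{Diag}}(x)\le c\}$ for some $c<1$, whence $\sup_{\la}\mu_{\mathrm{Diag}}(F(\la))\le c<1$. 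Thus the real content of $(2)\Rightarrow(1)$ is the assertion that, under the hypothesis $w_{11}^jw_{22}^j\neq\det W_j$, solvability of the interpolation problem $\psi(\la_j)=(w_{11}^j,w_{22}^j,\det W_j)$ within ${\mathrm{Hol}}(\D,\E)$ implies solvability by a $\psi$ with relatively compact image (equivalently, by a rational function whose image is a compact subset of $\E$). This regularisation step is the one I expect to be the main obstacle: the naive trick of precomposing a given solution with a self-map of $\D$ that fixes the nodes and has relatively compact image is unavailable (the corresponding Pick inequality fails), so one must instead exploit the finer structure of the tetrablock --- its magic functions $x\mapsto(x_3z-x_1)/(x_2z-1)$, the duality between ${\mathrm{Hol}}(\D,\E)$ and a subclass of the Schur class of the bidisc, and the Hilbert space models developed in the later sections --- and it is precisely here that the hypothesis excluding the degenerate locus $\{x_1x_2=x_3\}$ is used. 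With this lemma in hand, each of the two implications in fact yields a solution with relatively compact image, so the two conditions are equivalent.
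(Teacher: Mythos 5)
Your pointwise reduction is sound: the determinant identity, the equivalence $\mu_{\mathrm{Diag}}(A)<1\Leftrightarrow(a_{11},a_{22},\det A)\in\E$, the implication (1)$\Rightarrow$(2), and the factorisation $g=ab$ with $a(\la_j)=w_{12}^j$, $b(\la_j)=w_{21}^j$ all work, and they lift any $\phi\in{\mathrm{Hol}}(\D,\E)$ with $\phi(\la_j)=(w_{11}^j,w_{22}^j,\det W_j)$ to an analytic $F$ with $F(\la_j)=W_j$ and $\mu_{\mathrm{Diag}}(F(\la))<1$ at every point of $\D$. (The paper gives no proof of this theorem; it is quoted from \cite{AWY07}, so the only comparison available is with the statement itself.) What your argument actually establishes is therefore the equivalence of (2) with the \emph{pointwise} form of (1), in which \eqref{mu-ineq} is weakened to $\mu_{\mathrm{Diag}}(F(\la))<1$ for all $\la\in\D$.

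The step you defer --- that solvability in ${\mathrm{Hol}}(\D,\E)$ yields a solution with relatively compact range, which is exactly what $\sup_\la\mu_{\mathrm{Diag}}(F(\la))<1$ amounts to --- is a genuine gap, and under the stated hypotheses it cannot be closed, so your suspicion that this is the crux was well founded. Indeed, since $|x_3|\le1$ on $\overline{\E}$ (Proposition \ref{cond_Ebar}(6)), the substitution $(z,w)\mapsto(z/c,w/c)$ shows that $\mu_{\mathrm{Diag}}(A)\le c<1$ forces $|\det A|\le c^2$; hence condition (1) forces $\det F$ to be an analytic function on $\D$ of supremum norm strictly less than $1$ interpolating $\det W_j$ at $\la_j$. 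Now take $n=2$, $\la_1=0$, $\la_2=\tfrac12$, $m(\la)=(\la+\tfrac14)/(1+\tfrac14\la)$ and $W_j=\begin{bmatrix}0&1\\-m(\la_j)&0\end{bmatrix}$. The hypotheses hold, since $w_{11}^jw_{22}^j=0\neq\det W_j=m(\la_j)$ and $\mu_{\mathrm{Diag}}(W_j)=|m(\la_j)|^{1/2}<1$, and (2) holds with $\phi=(0,0,m)\in{\mathrm{Hol}}(\D,\E)$; but the Pick matrix of the scalar data $0\mapsto\tfrac14$, $\tfrac12\mapsto\tfrac23$ is positive and singular, so $m$ is the \emph{unique} function analytic and bounded by $1$ on $\D$ with these values, and $\sup_\D|m|=1$; consequently no analytic function of supremum norm $<1$ interpolates $\det W_1,\det W_2$, and (1) fails. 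So the theorem is true in the pointwise form your construction proves (and in the non-strict form with $\overline{\E}$ and $\mu_{\mathrm{Diag}}\le1$, which is the version Theorem \ref{NPspectral_tetr} actually needs), but the ``$\sup$'' form stated here would require relative compactness of $\phi(\D)$ to be built into (2) or some further hypothesis; no regularisation lemma of the kind you were hoping for can hold in this generality.
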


The following functions play a central role in the analysis of the tetrablock \cite{AWY07}. 
\begin{definition}\label{defPsiUpsilon}
The functions $\Psi,\Upsilon:\mathbb{C}^4\to\mathbb{C}$\index{$\Upsilon$} are defined for $(z,x_1,x_2,x_3) \in \C^4$ such that  $x_2z\neq1$ and $x_1z\neq1$ respectively by 
\[\Psi(z,x_1,x_2,x_3)=\frac{x_3z-x_1}{x_2z-1}\;\text{ and }\;\Upsilon(z,x_1,x_2,x_3)=\frac{x_3z-x_2}{x_1z-1}.\] 
\end{definition}
In particular 
$\Psi$ and $\Upsilon$ are defined and analytic  everywhere except when 
$x_2z=1$ and $x_1z=1$ respectively. Note that, for $x \in \C^3$ such that 
$x_1x_2=x_3$, the functions  $\Psi(\cdot,x)$ and $\Upsilon(\cdot,x)$ are constant and equal to $x_1$ and $x_2$ respectively. In this paper we will use the function $\Psi$ to define certain maps in  the rich saltire of the tetrablock.
By \cite[Theorem 2.4]{AWY07}, we have the following statement.

\begin{proposition}\label{cond_Ebar}
Let $x=(x_1,x_2,x_3)\in\mathbb{C}^3$. The following are equivalent.
\begin{enumerate}[\em (1)]
\item $ x\in \overline{\mathcal{E}}$;
\item $|\Upsilon(z,x)|\leq1$  for all $z\in\mathbb{D}$ and if $x_1x_2=x_3$ then, in addition, $|x_1|\leq 1$;
\item $ |\Psi(z,x)|\leq 1$ for all $z\in\mathbb{D}$ and if $x_1x_2=x_3$  then, in addition, $|x_2|\leq 1$;
\item $ |x_2-\overline{x_1}x_3|+|x_1x_2-x_3|\leq1-|x_1|^2$ and if $x_1x_2=x_3$ then in addition $|x_2|\leq1$;
\item $ |x_1-\overline{x_2}x_3|+|x_1x_2-x_3|\leq1-|x_2|^2$ and if $x_1x_2=x_3$ then in addition $|x_1|\leq1$;
\item $|x_1|^2+|x_2|^2-|x_3|^2+2|x_1x_2-x_3|\leq1$ and $|x_3|\leq1$;
\item there is a $ 2\times2$  matrix $A=[a_{ij}]_{i,j=1}^2$ such that $\|A\|\leq 1 $ and $x=(a_{11},a_{22},\det{A})$;
\item there is a symmetric $2\times2$  matrix 
$A=[a_{ij}]_{i,j=1}^2$ such that $\|A\|\leq 1 $ and $x=(a_{11},a_{22},\det{A})$.
\end{enumerate}
\end{proposition}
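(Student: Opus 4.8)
The plan is to prove the cycle of implications $(1)\Rightarrow(3)\Rightarrow(4)\Rightarrow(6)\Rightarrow(8)\Rightarrow(7)\Rightarrow(1)$ and to obtain conditions $(2)$ and $(5)$ for free from the symmetry of all eight conditions under the interchange $x_1\leftrightarrow x_2$: this interchange swaps $\Psi(\cdot,x)$ with $\Upsilon(\cdot,x)$ and replaces the polynomial $1-x_1z-x_2w+x_3zw$ by itself with $z$ and $w$ interchanged, so $(1)\Leftrightarrow(3)$ applied to the flipped point gives $(1)\Leftrightarrow(2)$, and likewise $(4)$ gives $(5)$. Throughout, the degenerate locus $x_1x_2=x_3$ must be separated off: there $1-x_1z-x_2w+x_3zw=(1-x_1z)(1-x_2w)$, so $x\in\overline{\mathcal E}$ iff $|x_1|\le1$ and $|x_2|\le1$, while $\Psi(\cdot,x)$ and $\Upsilon(\cdot,x)$ are the constants $x_2$ and $x_1$; this is exactly why $(2)$--$(5)$ carry the clause ``if $x_1x_2=x_3$ then $|x_j|\le1$'', and with it all eight conditions are immediately equivalent on this locus. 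From here on assume $x_1x_2\neq x_3$.

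For $(1)\Leftrightarrow(3)$ I would fix $w\in\D$ and read $1-x_1z-x_2w+x_3zw=(1-x_2w)-z(x_1-x_3w)$ as an affine function of $z$ whose only possible zero is $z=(1-x_2w)/(x_1-x_3w)$; requiring this to lie outside $\D$ for every $w\in\D$ (the case $x_1-x_3w=0$ being harmless, since then $1-x_2w=(x_3-x_1x_2)/x_3\neq0$ by non-degeneracy) is exactly $|x_3w-x_1|\le|x_2w-1|$ for all $w\in\D$. If $|x_2|\ge1$ this inequality fails near $w=1/x_2$, where the numerator does not vanish, so $x\notin\overline{\mathcal E}$; and for the same reason $\Psi(\cdot,x)$ is unbounded on $\D$, so $(3)$ also fails. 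If $|x_2|<1$ the inequality reads precisely $|\Psi(w,x)|\le1$ for all $w\in\D$. Hence $(1)\Leftrightarrow(3)$, and by symmetry $(1)\Leftrightarrow(2)$.

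For $(3)\Leftrightarrow(4)\Leftrightarrow(6)$: when $(3)$ holds we have just seen that $|x_2|<1$, so $\Psi(\cdot,x)$ extends analytically across $\T$ and the maximum principle replaces $(3)$ by $|x_3\zeta-x_1|\le|x_2\zeta-1|$ for all $\zeta\in\T$; squaring and maximising the resulting real-linear expression over $\T$ gives the single scalar inequality $2|x_2-\overline{x_1}x_3|\le1-|x_1|^2+|x_2|^2-|x_3|^2$. The key algebraic identity
\[
|x_1x_2-x_3|^2-|x_2-\overline{x_1}x_3|^2=(1-|x_1|^2)\bigl(|x_3|^2-|x_2|^2\bigr)
\]
then lets one pass, by elementary manipulation of the nonnegative quantities $a=|x_2-\overline{x_1}x_3|$, $b=|x_1x_2-x_3|$, $t=1-|x_1|^2$ (which satisfy $b^2-a^2=t(|x_3|^2-|x_2|^2)$), between this inequality, the inequality $a+b\le t$ of $(4)$, and the inequality of $(6)$; en route one reads off $|x_1|\le1$, $|x_2|<1$ and $|x_3|\le1$. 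The same identity with $x_1$ and $x_2$ interchanged delivers $(5)$.

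For $(6)\Rightarrow(8)\Rightarrow(7)\Rightarrow(1)$ I would take
\[
A=\bbm x_1 & (x_1x_2-x_3)^{1/2}\\ (x_1x_2-x_3)^{1/2} & x_2\ebm,
\]
which is symmetric with $(a_{11},a_{22},\det A)=(x_1,x_2,x_3)$; since $I-A^*A$ is $2\times2$ Hermitian, $\|A\|\le1$ is equivalent to $\tr{(I-A^*A)}\ge0$ and $\det(I-A^*A)\ge0$, and here $\det(I-A^*A)=1-|x_1|^2-|x_2|^2+|x_3|^2-2|x_1x_2-x_3|$ and $\tr{(I-A^*A)}=2-|x_1|^2-|x_2|^2-2|x_1x_2-x_3|$, both nonnegative under $(6)$ (using $|x_3|\le1$ for the trace), which gives $(8)$; $(8)\Rightarrow(7)$ is immediate. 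For $(7)\Rightarrow(1)$ one uses $\det(I-\diag(z,w)A)=1-a_{11}z-a_{22}w+(\det A)zw$ together with $\|\diag(z,w)A\|<1$ for $z,w\in\D$. To close the cycle one can instead argue $(7)\Rightarrow(6)$ directly, replacing a general contraction realising $x$ by the symmetric matrix with the same diagonal and determinant --- still a contraction because $2|a_{12}a_{21}|\le|a_{12}|^2+|a_{21}|^2$ --- and then reading $(6)$ off from $\det(I-A^*A)\ge0$ and $|x_3|=|\det A|\le\|A\|^2\le1$. I expect the genuine obstacle to be none of these computations but the uniform control of the degenerate and boundary configurations --- the locus $x_1x_2=x_3$ and the points with $|x_1|$ or $|x_2|$ at least $1$, where $\Psi(\cdot,x)$ or $\Upsilon(\cdot,x)$ collapses to a constant or picks up an uncancelled pole and the bare inequalities $|\Psi|\le1$, $|\Upsilon|\le1$ no longer detect membership in $\overline{\mathcal E}$; turning each implication of the cycle into a genuine equivalence is essentially the task of disposing of these cases by hand.
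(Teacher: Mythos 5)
This proposition is not proved in the paper at all: it is quoted verbatim from \cite[Theorem 2.4]{AWY07}, so there is no in-paper argument to compare against. Your cycle $(1)\Rightarrow(3)\Rightarrow(4)\Rightarrow(6)\Rightarrow(8)\Rightarrow(7)\Rightarrow(1)$, with $(2)$ and $(5)$ obtained from the $x_1\leftrightarrow x_2$ symmetry, is a sound architecture, and the affine-in-$z$ analysis for $(1)\Leftrightarrow(3)$, the trace/determinant computation for $(6)\Rightarrow(8)$, and the identity $\det(I-\diag(z,w)A)=1-a_{11}z-a_{22}w+(\det A)zw$ for $(7)\Rightarrow(1)$ all check out. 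But two points need repair. First, on the locus $x_1x_2=x_3$ you have the constants backwards: $\Psi(z,x)=x_1(x_2z-1)/(x_2z-1)=x_1$ and $\Upsilon(z,x)=x_2$ (the paper records this just after Definition \ref{defPsiUpsilon}). This is not cosmetic: with your values, condition $(3)$ on that locus would assert $|x_2|\leq1$ twice and place no constraint on $x_1$, so $(1)\Leftrightarrow(3)$ would be false there. With the correct values the extra clause in $(3)$ supplies precisely the missing bound $|x_2|\leq1$ and the degenerate case closes as you intend.

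Second, the ``elementary manipulation'' in $(3)\Rightarrow(4)$ conceals a genuine two-branch issue. With $a=|x_2-\overline{x_1}x_3|$, $b=|x_1x_2-x_3|$, $t=1-|x_1|^2$, your boundary inequality $2a\leq t+|x_2|^2-|x_3|^2$ combined with the (correct) identity $b^2-a^2=t(|x_3|^2-|x_2|^2)$ yields $b^2\leq(t-a)^2$, which is equivalent to ``$a+b\leq t$ \emph{or} $a-b\geq t$''; only the first branch is $(4)$, and the second must be excluded. It can be: the identity $x_2(1-|x_1|^2)=(x_2-\overline{x_1}x_3)-\overline{x_1}(x_1x_2-x_3)$ gives $|x_2|t\leq a+|x_1|b$, so $a\geq t+b$ would force $t(1-|x_2|)\leq -b(1-|x_1|)<0$, contradicting $|x_1|<1$, $|x_2|<1$ and $b>0$, all of which you have already extracted from $(3)$ in the nondegenerate case. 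A companion identity, $x_3(1-|x_1|^2)=x_1(x_2-\overline{x_1}x_3)-(x_1x_2-x_3)$, is likewise needed to read off $|x_3|\leq1$ in $(4)\Rightarrow(6)$, since $b\leq t$ alone does not bound $|x_3|$ by $1$. With these two identities inserted, the proof is complete.
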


By  \cite[Theorem 2.9]{AWY07}, $\overline{\mathcal{E}}$ is polynomially convex, and so  the distinguished boundary $b\overline{\mathcal{E}}$ 
of $\overline{\mathcal{E}}$ exists and is the \u{S}ilov boundary of  the algebra $\cal{A}(\mathcal{E})$ of continuous functions
on $\overline{\mathcal{E}}$ that are analytic on $\mathcal{E}$. 
We have the following alternative descriptions of $b\mathcal{E}$ \cite[Theorem 7.1]{AWY07}.
 
\begin{theorem}\label{conditionsforbE}
Let $x=(x_1,x_2,x_3)\in\mathbb{C}^3$. The following are equivalent.
\begin{align*}
\textup{(i)}\quad& x\in b\overline{\mathcal{E}};\\
\textup{(ii)}\quad& x\in\overline{\mathcal{E}}\;\text{ and }\;|x_3|=1;\\
\textup{(iii)}\quad & x_1=\overline{x_2}x_3,\; |x_3|=1\;\text{ and }\;|x_2|\leq1;\\
\textup{(iv)}\quad& \text{either }x_1x_2\neq x_3\text{ and }\Psi(\cdot,x)\;\text{ is an automorphism of }\;\mathbb{D}\;\text{ or }x_1x_2=x_3
\text{ and }\\
\quad&|x_1|=|x_2|=|x_3|=1;\\
\textup{(v)}\quad& x\; \text{ is a peak point of }\overline{\mathcal{E}};\\
\textup{(vi)}\quad& \;\text{there is a }\;2\times2\;\text{ unitary matrix }\;U=
\begin{bmatrix} u_{ij} \end{bmatrix}_1^2 \; \text{ such that }\; x=(u_{11},u_{22},\det U);\\
\textup{(vii)}\quad& \;\text{there is a symmetric }2\times2\text{ unitary matrix } U=\begin{bmatrix} u_{ij} \end{bmatrix}_1^2 
\;\text{ such that }\;\\
 ~ & x=(u_{11},u_{22},\det U).
\end{align*}
\end{theorem}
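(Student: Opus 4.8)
The plan is to establish the seven conditions as a single cycle of implications, organised around two clusters: the ``concrete'' conditions (ii), (iii), (iv), (vi), (vii), interchanged by elementary linear algebra and one–variable function theory on top of Proposition~\ref{cond_Ebar}; and the ``uniform–algebra'' conditions (i) and (v), tied to (ii) through \v{S}ilov–boundary arguments together with the description of $\overline{\mathcal E}$ as the image of the closed unit ball $\mathfrak B$ of $\M_2(\C)$ under the polynomial map $\pi(A)=(a_{11},a_{22},\det A)$ (Proposition~\ref{cond_Ebar}(7)).

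For the concrete cluster I would argue as follows. The equivalence (vi)$\Leftrightarrow$(vii) comes from conjugating a $2\times2$ matrix by a diagonal unitary, which fixes the diagonal entries, the determinant and the norm but can be chosen to equalise the off-diagonal entries — the device already used for (7)$\Leftrightarrow$(8) of Proposition~\ref{cond_Ebar}. For (ii)$\Leftrightarrow$(vi): by Proposition~\ref{cond_Ebar}(7) a point of $\overline{\mathcal E}$ is $(a_{11},a_{22},\det A)$ with $\|A\|\le1$, and $|x_3|=|\det A|=1$ forces both singular values of $A$ to equal $1$, i.e.\ $A$ unitary; the converse is clear. For (ii)$\Rightarrow$(iii) I would use $U^{-1}=U^{*}$ to get $u_{22}=\overline{u_{11}}\det U$, hence (since $|x_3|=1$) $x_1=\overline{x_2}x_3$, with $|x_2|=|u_{22}|\le1$; conversely, substituting $x_1=\overline{x_2}x_3$ into $\Psi(\cdot,x)$ gives $\Psi(z,x)=-x_3(z-\overline{x_2})/(1-x_2 z)$, which is $-x_3$ times a Blaschke factor if $|x_2|<1$ and the constant $\overline{x_2}x_3$ if $|x_2|=1$, so $|\Psi(\cdot,x)|\le1$ on $\mathbb D$ and the extra condition of Proposition~\ref{cond_Ebar}(3) holds; thus $x\in\overline{\mathcal E}$ and $|x_3|=1$, proving (iii)$\Rightarrow$(ii). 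The equivalence (ii)$\Leftrightarrow$(iv) drops out of the same formula: in the standard form of (iii), $\Psi(\cdot,x)$ is an automorphism of $\mathbb D$ exactly when $|x_2|<1$, which is exactly when $x_1x_2\ne x_3$, while the case $|x_2|=1$ is precisely the degenerate alternative $|x_1|=|x_2|=|x_3|=1$, $x_1x_2=x_3$; the reverse implication is a direct check against Proposition~\ref{cond_Ebar}(3).

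For the uniform–algebra cluster, (v)$\Rightarrow$(i) is the standard fact that every peak point of a compact set belongs to each of its closed boundaries, in particular to the \v{S}ilov boundary $b\overline{\mathcal E}$. For (i)$\Rightarrow$(ii) I would show that $E:=\{x\in\overline{\mathcal E}:|x_3|=1\}$ is itself a closed boundary for $\mathcal A(\mathcal E)$, so that minimality of $b\overline{\mathcal E}$ gives $b\overline{\mathcal E}\subseteq E$. That $E$ is a boundary follows by composing with $\pi$: since $\pi$ maps $\mathfrak B$ onto $\overline{\mathcal E}$ and its interior into $\mathcal E$, the map $f\mapsto f\circ\pi$ embeds $\mathcal A(\mathcal E)$ isometrically into the algebra of functions continuous on $\mathfrak B$ and holomorphic on its interior, and every function in that algebra attains its norm on the unitary group — through any $A=VDW$ with $D=\diag(\sigma_1,\sigma_2)$ and $\|A\|\le1$ passes the analytic disc $z\mapsto V\diag\bigl(\tfrac{z+\sigma_1}{1+\sigma_1 z},\tfrac{z+\sigma_2}{1+\sigma_2 z}\bigr)W$, whose boundary values are unitary. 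As $\pi$ of the unitaries equals $E$ by (ii)$\Leftrightarrow$(vi), this completes (i)$\Rightarrow$(ii).

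The main obstacle is (ii)$\Rightarrow$(v): every $x^0\in E$ is a peak point of $\overline{\mathcal E}$. If $x^0$ lies in the degenerate stratum, $x^0=(\beta,\gamma,\beta\gamma)$ with $|\beta|=|\gamma|=1$, the affine function $p(x)=\tfrac14\bigl(2+\overline\beta x_1+\overline\gamma x_2\bigr)$ works: on $\overline{\mathcal E}$ we have $|x_1|,|x_2|\le1$, so $|p|\le1$ with equality only if $x_1=\beta$, $x_2=\gamma$, and then $x_3=\beta\gamma$ by Proposition~\ref{cond_Ebar}(4). For a non-degenerate $x^0$, linear functionals of the coordinates have norm strictly below $1$ at $x^0$, and a quadratic peaking function is needed; at the model point $(0,0,1)$ the function $p(x)=\tfrac12\bigl(1+x_3-x_1x_2\bigr)$ works, since $|x_1x_2-x_3|\le1$ on $\overline{\mathcal E}$ by Proposition~\ref{cond_Ebar}(6), whence $|p|\le1$, and $|p|=1$ forces $x_1x_2-x_3=-1$, which by Proposition~\ref{cond_Ebar}(6) again forces $x_1=x_2=0$, $x_3=1$. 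A general non-degenerate point of $b\overline{\mathcal E}$ would then be treated by moving $(0,0,1)$ to it with an element of $\operatorname{Aut}(\overline{\mathcal E})$, using that the automorphism group of $\mathcal E$ (computed in \cite{AWY07}) acts transitively on the non-degenerate stratum of $b\overline{\mathcal E}$; alternatively one can write down a peaking function at such a point directly. Carrying out this last step — establishing the homogeneity of the non-degenerate part of $b\overline{\mathcal E}$, or equivalently producing the peaking functions there — is where essentially all the remaining effort goes; the other implications are routine.
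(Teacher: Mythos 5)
First, a point of reference: the paper does not prove this theorem at all — it is quoted from \cite[Theorem 7.1]{AWY07} — so there is no internal proof to compare against, and I am assessing your argument on its own terms. Most of it is correct. The cluster (ii)$\Leftrightarrow$(iii)$\Leftrightarrow$(iv)$\Leftrightarrow$(vi)$\Leftrightarrow$(vii) is handled completely: the singular-value argument for (ii)$\Leftrightarrow$(vi), the identity $u_{22}=\overline{u_{11}}\det U$ from $U^{-1}=U^*$, and the Blaschke-factor form $\Psi(z,x)=-x_3(z-\overline{x_2})/(1-x_2z)$ under (iii) are all right. Two small repairs are needed: in (iv)$\Rightarrow$(ii) you must still deduce $|x_3|=1$, which Proposition~\ref{cond_Ebar}(3) does not give — it follows because the coefficient $x_3$ in the normalised form of a disc automorphism $\frac{x_3z-x_1}{x_2z-1}$ is forced to be unimodular; and in (i)$\Rightarrow$(ii) your analytic disc $z\mapsto V\diag\bigl(\tfrac{z+\sigma_1}{1+\sigma_1z},\tfrac{z+\sigma_2}{1+\sigma_2z}\bigr)W$ fails to map $\D$ into the \emph{open} ball when some $\sigma_i=1$, so $f\circ\pi$ need not be holomorphic along it; replace $A$ by $rA$ with $r<1$ and let $r\to1$.

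The genuine gap is in (ii)$\Rightarrow$(v). You produce correct peak functions for the degenerate stratum $(\beta,\gamma,\beta\gamma)$, $|\beta|=|\gamma|=1$, and for the single non-degenerate point $(0,0,1)$, but the general non-degenerate point of $b\overline{\mathcal{E}}$, i.e. $(\overline{a}\omega,a,\omega)$ with $|a|<1$, $|\omega|=1$, is deferred either to transitivity of $\operatorname{Aut}(\mathcal{E})$ on that stratum or to an unspecified direct construction, and neither is supplied. This is not a routine omission: no affine or obviously quadratic functional of $(x_1,x_2,x_3)$ peaks there, and the transitivity route requires exhibiting explicit self-maps of $\mathcal{E}$ (induced by M\"obius substitutions in $z$ and $w$ in the defining function $1-x_1z-x_2w+x_3zw=\det(I-A\diag(z,w))$), proving they are automorphisms extending homeomorphically to $\overline{\mathcal{E}}$, and checking they carry $(0,0,1)$ to an arbitrary non-degenerate boundary point — precisely the content of the section of \cite{AWY07} on linear fractional maps of $\mathcal{E}$ that precedes their Theorem 7.1. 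As it stands, condition (v) is not tied into the chain of equivalences, so the theorem is not yet proved; everything else in your proposal is sound.
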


By \cite[Corollary 7.2]{AWY07},
$b\overline{\mathcal{E}}$ is homeomorphic to $\overline{\mathbb{D}}\times\mathbb{T}$.
By a \emph{peak point} of $\overline{\mathcal{E}}$ we mean a point $p$ for which there is a function $f\in\cal{A}(\mathcal{E})$
such that $f(p)=1$ and $|f(x)|<1$ for all $x\in\overline{\mathcal{E}}\setminus\{p\}$.

\begin{definition} \label{E-in-funct}  An  {\em $\overline{\mathcal{E}}$-inner function} is an analytic function $\phi : \D \to \overline{\mathcal{E}}$ such that the radial limit
\begin{equation}\label{radialE}
\lim_{r \to 1-} \phi(r \lambda) \mbox{ exists and belongs to } b\overline{\mathcal{E}}
\end{equation}
for almost all $\lambda \in \T$.
\end{definition}

By Fatou's Theorem, the radial limit (\ref{radialE}) exists for almost all 
 $\lambda \in \T$ with respect to Lebesgue measure. 
Note that, for an $\overline{\mathcal{E}}$-inner function  $\phi= (\phi_1, \phi_2, \phi_3) : \D \to \overline{\mathcal{E}}$,  $\phi_3$ is an inner function on $\mathbb{D}$ in the classical sense.

A finite interpolation problem for ${\mathrm{Hol}}(\D,\overline{\mathcal{E}})$ has a solution if and only if it has a rational $\Gamma$-inner solution -- see  Theorem \ref{criterion_tetr}.


\section{A realisation formula}\label{sectrealform} 

In this section we construct maps between the sets $\cal{S}^{2\times2}$ and  $\cal{S}_2$.
For Hilbert spaces $H,G,U$ and $V$, an operator $P$ such that 
\[P=\begin{bmatrix} P_{11} & P_{12} \\ P_{21} & P_{22} \end{bmatrix}:H\oplus U\to G\oplus V\]
and an operator $X:V\to U$ for which $I-P_{22}X$ is invertible,
we denote by $\cal{F}_P(X)$\index{$\cal{F}_P(X)$} the linear fractional transformation 
\[
\cal{F}_P(X):=P_{11}+P_{12}X(I-P_{22}X)^{-1}P_{21}
\]
$\cal{F}_P(X)$ is an operator from $H$ to $G$. 

The following standard identity is a matter of verification.
\begin{proposition}\label{calF} 
Let $H,G,U$ and $V$ be Hilbert spaces. Let 
\[
P=\begin{bmatrix} P_{ij} \end{bmatrix}_1^2\text{ and }
Q=\begin{bmatrix} Q_{ij} \end{bmatrix}_1^2
\]
 be operators from $H\oplus U$ to $G\oplus V$.
Let $X$ and $Y$ be operators from $V$ to $U$ for which $I-P_{22}X$ and $I-Q_{22}Y$ are invertible.
Then
\begin{align*}I-\cal{F}_Q(Y)^*\cal{F}_P(X)= & \,Q^*_{21}(I-Y^*Q^*_{22})^{-1}(I-Y^*X)(I-P_{22}X)^{-1}P_{21}\\
&\quad+\begin{bmatrix}I& Q^*_{21}(I-Y^*Q^*_{22})^{-1}Y^* \end{bmatrix}(I-Q^*P)
\begin{bmatrix} I\\X(I-P_{22}X)^{-1}P_{21} \end{bmatrix}.\end{align*}
\end{proposition}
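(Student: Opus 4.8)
The statement to be proved is Proposition~\ref{calF}, an algebraic identity for linear fractional transformations $\cal{F}_P(X)$ and $\cal{F}_Q(Y)$. The plan is to verify it by a direct computation, organised so that the bookkeeping stays manageable. First I would introduce the abbreviations $R = (I-P_{22}X)^{-1}P_{21}$ and $S = (I - Q_{22}Y)^{-1}Q_{21}$, so that $\cal{F}_P(X) = P_{11} + P_{12}XR$ and $\cal{F}_Q(Y) = Q_{11} + Q_{12}YS$, and I would note that $R = P_{21} + P_{22}XR$ and correspondingly $S^* = Q_{21}^* + S^* Q_{22}^* Y^*$ (equivalently $S^* = Q_{21}^*(I - Y^* Q_{22}^*)^{-1}$, which is exactly the factor appearing on the right-hand side). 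These resolvent identities are the only real inputs; everything else is linear algebra.

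The core step is then to expand $I - \cal{F}_Q(Y)^* \cal{F}_P(X)$ and match it with the right-hand side. I would write
\[
I - \cal{F}_Q(Y)^*\cal{F}_P(X) = I - (Q_{11}^* + S^* Y^* Q_{12}^*)(P_{11} + P_{12} X R),
\]
and separately expand the proposed right-hand side, whose second term is $\begin{bmatrix} I & S^* Y^* \end{bmatrix}(I - Q^* P)\begin{bmatrix} I \\ XR \end{bmatrix}$ after substituting $S^* Q_{21}^*{}^{-1}\cdots$ — more precisely, using $Q_{21}^*(I - Y^* Q_{22}^*)^{-1} = S^*$, the right-hand side becomes
\[
S^*(I - Y^* X) R + \begin{bmatrix} I & S^* Y^* \end{bmatrix}(I - Q^* P)\begin{bmatrix} I \\ X R \end{bmatrix}.
\]
Expanding the block product gives $I - Q_{11}^* P_{11} - Q_{11}^* P_{12} X R - S^* Y^* Q_{12}^* P_{11} - S^* Y^* Q_{12}^* P_{12} X R$ plus $S^* R - S^* Y^* X R$. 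The task is to see that this equals $I - Q_{11}^* P_{11} - Q_{11}^* P_{12} X R - S^* Y^* Q_{12}^*(P_{11} + P_{12} X R)$, i.e. that the leftover terms $S^* R - S^* Y^* X R - S^* Y^* Q_{12}^* P_{12} X R$ together with the block contributions collapse correctly. Here I would use $S^* = Q_{21}^* + S^* Q_{22}^* Y^*$ to rewrite $S^* R$ and then $R = P_{21} + P_{22} X R$ to rewrite $Q_{21}^* R$ — this is exactly where the two resolvent identities do their work, converting the ``extra'' $S^* R$ term into pieces that cancel against the block-matrix expansion.

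The main obstacle — really the only one — is keeping the many cross terms straight; there is no conceptual difficulty, only the risk of a sign or transpose error. To control this I would recommend organising the verification by grouping terms according to which ``corner'' factor ($I$, $X R$, $S^*$, $S^* Y^*$) they carry, and checking the four resulting groups separately; alternatively one can observe that the identity is bilinear and polynomial in the entries and hence it suffices to verify it formally, treating all operators as non-commuting indeterminates subject only to the invertibility of $I - P_{22}X$ and $I - Q_{22}Y$. Either way the proof is a finite, mechanical check, and I would simply state that the identity follows ``by a straightforward if tedious computation using the resolvent identities $R = (I-P_{22}X)^{-1}P_{21}$ and $S^* = Q_{21}^*(I - Y^*Q_{22}^*)^{-1}$,'' since the paper itself flags the result as ``a matter of verification.''
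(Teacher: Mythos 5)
Your proposal is correct: the paper offers no proof at all (it declares the identity ``a matter of verification''), and your direct expansion using the two resolvent identities $R=P_{21}+P_{22}XR$ and $S^*=Q_{21}^*+S^*Y^*Q_{22}^*$ (with $S^*=Q_{21}^*(I-Y^*Q_{22}^*)^{-1}$) is exactly the intended mechanical check; the leftover terms $S^*R-Q_{21}^*(P_{21}+P_{22}XR)-S^*Y^*Q_{22}^*(P_{21}+P_{22}XR)$ do collapse to zero as you indicate. This is essentially the same (implicit) approach as the paper's.
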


\begin{proposition}\label{calFleq1}
Let $H,G,U$ and $V$ be Hilbert spaces. Let $P=\begin{bmatrix} P_{11} & P_{12} \\ P_{21} & P_{22} \end{bmatrix}$ 
be an operator from $H\oplus U$ to $G\oplus V$
and let $X:V\to U$ be an operator for which $I-P_{22}X$ is invertible.
Then if $\|X\|\leq1$ and $\|P\|\leq1$ 
we have $\|\cal{F}_{P}(X)\|\leq1$. 
\end{proposition}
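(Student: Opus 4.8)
The plan is to derive the desired norm bound directly from the identity in Proposition \ref{calF}, specialising to the case $Q=P$, $Y=X$ and to the relevant Hilbert spaces. Taking $Q=P$ and $Y=X$ in that identity gives
\[
I-\cal{F}_P(X)^*\cal{F}_P(X)=P^*_{21}(I-X^*P^*_{22})^{-1}(I-X^*X)(I-P_{22}X)^{-1}P_{21}
+\begin{bmatrix}I& P^*_{21}(I-X^*P^*_{22})^{-1}X^* \end{bmatrix}(I-P^*P)
\begin{bmatrix} I\\X(I-P_{22}X)^{-1}P_{21} \end{bmatrix}.
\]
First I would observe that $I-P_{22}X$ being invertible makes $\cal{F}_P(X)$ a well-defined bounded operator from $H$ to $G$, so the left-hand side is a bounded self-adjoint operator on $H$, and it suffices to show it is positive semidefinite, i.e.\ that $\ip{(I-\cal{F}_P(X)^*\cal{F}_P(X))h}{h}\geq 0$ for every $h\in H$.

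The key step is then to read off positivity term by term from the right-hand side. The first summand has the form $T^*(I-X^*X)T$ where $T=(I-P_{22}X)^{-1}P_{21}$, and since $\|X\|\leq 1$ we have $I-X^*X\geq 0$, so this summand is a positive operator. For the second summand, write $S:H\to V\oplus U$ for the operator $S=\begin{bmatrix} I\\ X(I-P_{22}X)^{-1}P_{21}\end{bmatrix}$ (matching the block decomposition of the domain of $P$), and note that the row operator multiplying $(I-P^*P)$ on the left is exactly $S^*$: indeed its first block is $I$ and its second block is $P^*_{21}(I-X^*P^*_{22})^{-1}X^*=\left(X(I-P_{22}X)^{-1}P_{21}\right)^*$. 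Hence the second summand equals $S^*(I-P^*P)S$, which is positive because $\|P\|\leq 1$ gives $I-P^*P\geq 0$. Adding the two positive operators shows $I-\cal{F}_P(X)^*\cal{F}_P(X)\geq 0$, which is precisely $\|\cal{F}_P(X)\|\leq 1$.

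I do not anticipate a serious obstacle here; the whole argument is a direct specialisation of Proposition \ref{calF} plus the two observations that $\|X\|\leq 1\Rightarrow I-X^*X\geq 0$ and $\|P\|\leq 1\Rightarrow I-P^*P\geq 0$. The only point requiring a moment's care is verifying that the left and right "outer" factors in the second term are genuinely adjoint to one another, so that the term really is of the form $S^*(I-P^*P)S$ and hence manifestly positive; this is the routine bookkeeping with the blocks $(I-X^*P^*_{22})^{-1}$ versus $(I-P_{22}X)^{-1}$ and the identity $\bigl((I-P_{22}X)^{-1}\bigr)^*=(I-X^*P^*_{22})^{-1}$, which follows by taking adjoints of $I-P_{22}X$. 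With that in hand the conclusion is immediate.
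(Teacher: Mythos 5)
Your proposal is correct and follows essentially the same route as the paper: the paper also specialises Proposition \ref{calF} to $Q=P$, $Y=X$, sets $A=(I-P_{22}X)^{-1}P_{21}$ and $B=\begin{bmatrix} I\\XA\end{bmatrix}$, and reads off $I-\cal{F}_P(X)^*\cal{F}_P(X)=A^*(I-X^*X)A+B^*(I-P^*P)B\geq 0$ from $\|X\|\leq 1$ and $\|P\|\leq 1$. Your extra care in checking that the left factor of the second term is genuinely the adjoint of the right factor is exactly the bookkeeping the paper leaves implicit.
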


\begin{proof}
By Proposition \ref{calF},
\begin{align*}I-\cal{F}_P(X)^*\cal{F}_P(X)= & \,P^*_{21}(I-X^*P^*_{22})^{-1}(I-X^*X)(I-P_{22}X)^{-1}P_{21}\\
&\quad+\begin{bmatrix} I&P^*_{21}(I-X^*P^*_{22})^{-1}X^* \end{bmatrix}(I-P^*P)
\begin{bmatrix} I\\X(I-P_{22}X)^{-1}P_{21} \end{bmatrix}.\end{align*}
Let $A=(I-P_{22}X)^{-1}P_{21}:H\to V$ and
\[B=\begin{bmatrix} I\\X(I-P_{22}X)^{-1}P_{21} \end{bmatrix}=\begin{bmatrix} I\\XA \end{bmatrix}:H\to H\oplus U.\]
Then \[I-\cal{F}_P(X)^*\cal{F}_P(X)=A^*(I-X^*X)A+B^*(I-P^*P)B.\]
By assumption, $\|X\|\leq1$ and $\|P\|\leq1$, and so
 \[I-X^*X\geq0\text{ and }I-P^*P\geq0.\]
 Hence, by
\cite[Theorem 4.2.2 (iii)]{KaRi1},
$I-\cal{F}_P(X)^*\cal{F}_P(X)\geq0$. Therefore, $\|\cal{F}_P(X)\|\leq1$, as required.
\end{proof}

Recall that $\cal{S}^{2\times 2}$ is the set of
analytic maps $F:\mathbb{D}\to\cal{M}_{2}(\mathbb{C})$ such that $\|F(\lambda)\|\leq1$  for every $\lambda\in\mathbb{D}$.
For each $F=\FF
\in\cal{S}^{2\times2}$, 
we define functions $\gamma$ and $\eta$ by
\begin{equation}\label{gamma-eta-2}
\gamma(\lambda,z)=(1-F_{22}(\lambda)z)^{-1}F_{21}(\lambda)\index{$\gamma(\lambda,z)$}\;
\text{ and } \;\eta(\lambda,z)=
\begin{bmatrix}1\\z(1-F_{22}(\lambda)z)^{-1}F_{21}(\lambda) \end{bmatrix}=\begin{bmatrix}1\\z\gamma(\lambda,z) \end{bmatrix}
\end{equation}
for all $\lambda\in\mathbb{D}$ and $z\in\mathbb{C}$ such that $1-F_{22}(\lambda)z\neq0$.

\begin{proposition} \label{1-calfcalf}
Let $F=\FF\in\cal{S}^{2\times2}.$ 
Then 
\[1-\cal{F}_{F(\mu)}(w)^*\cal{F}_{F(\lambda)}(z)=\overline{\gamma(\mu,w)}(1-\overline{w}z)\gamma(\lambda,z)+
\eta(\mu,w)^*(I-F(\mu)^*F(\lambda))\eta(\lambda,z)\]
for all $\mu,\lambda\in\mathbb{D}$ and $w,z\in\mathbb{C}$ such that
$1-F_{22}(\mu)w\neq0$ and $1-F_{22}(\lambda)z\neq0$.
Moreover, $|\cal{F}_{F(\lambda)}(z)|\leq1$ for all $\lambda\in\mathbb{D}$ and $z\in\overline{\mathbb{D}}$ such that 
$1-F_{22}(\lambda)z\neq0$. 
\end{proposition}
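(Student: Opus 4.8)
The plan is to derive the asserted identity directly from Proposition~\ref{calF} by a judicious choice of the abstract Hilbert spaces and operators, and then to read off the scalar boundedness statement as an immediate corollary via Proposition~\ref{calFleq1}.

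First I would specialize Proposition~\ref{calF} to the situation at hand. Take $H=G=\C$, $U=\C^2$, $V=\C$, and set $P=F(\lambda)$, $Q=F(\mu)$ regarded as operators $\C\oplus\C^2\to\C\oplus\C$ in the obvious block form, with $P_{11}=F_{11}(\lambda)$, $P_{12}=\begin{bmatrix} F_{12}(\lambda) & 0\end{bmatrix}$ acting appropriately — here I must be slightly careful about how the $2\times2$ matrix $F(\lambda)$ is to be viewed as a $2\times2$ operator block matrix, but since $\cal{F}_{F(\lambda)}(z)$ is precisely the linear fractional transformation attached to the $2\times2$ matrix with scalar entries $F_{ij}(\lambda)$ and scalar argument $z$, the bookkeeping is forced. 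Put $X=z$, $Y=w$, both scalars. The hypotheses $1-F_{22}(\lambda)z\neq0$ and $1-F_{22}(\mu)w\neq0$ are exactly what is needed for $I-P_{22}X$ and $I-Q_{22}Y$ to be invertible. Then Proposition~\ref{calF} yields
\[
1-\cal{F}_{F(\mu)}(w)^*\cal{F}_{F(\lambda)}(z)= Q_{21}^*(I-w^*Q_{22}^*)^{-1}(1-\overline{w}z)(I-P_{22}z)^{-1}P_{21}+\begin{bmatrix}1 & Q_{21}^*(I-w^*Q_{22}^*)^{-1}w^*\end{bmatrix}(I-Q^*P)\begin{bmatrix}1\\ z(I-P_{22}z)^{-1}P_{21}\end{bmatrix}.
\]
The next step is to recognize the pieces of this expression in terms of $\gamma$ and $\eta$. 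By definition \eqref{gamma-eta-2}, $\gamma(\lambda,z)=(1-F_{22}(\lambda)z)^{-1}F_{21}(\lambda)=(I-P_{22}z)^{-1}P_{21}$, and similarly $\gamma(\mu,w)=(I-Q_{22}w)^{-1}Q_{21}$, so the first summand collapses to $\overline{\gamma(\mu,w)}(1-\overline{w}z)\gamma(\lambda,z)$. For the second summand, the column $\begin{bmatrix}1\\ z\gamma(\lambda,z)\end{bmatrix}$ is exactly $\eta(\lambda,z)$, and the row $\begin{bmatrix}1 & \overline{w}\,\overline{\gamma(\mu,w)}\end{bmatrix}$ is exactly $\eta(\mu,w)^*$; together with $Q^*P=F(\mu)^*F(\lambda)$ this gives $\eta(\mu,w)^*(I-F(\mu)^*F(\lambda))\eta(\lambda,z)$. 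Substituting these identifications produces the claimed formula.

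For the final sentence, set $\mu=\lambda$ and $w=z$ with $z\in\overline{\D}$. Then the identity reads $1-|\cal{F}_{F(\lambda)}(z)|^2=\overline{\gamma(\lambda,z)}(1-|z|^2)\gamma(\lambda,z)+\eta(\lambda,z)^*(I-F(\lambda)^*F(\lambda))\eta(\lambda,z)$. The first term is $(1-|z|^2)|\gamma(\lambda,z)|^2\geq0$ because $|z|\leq1$, and the second term is nonnegative because $\|F(\lambda)\|\leq1$ implies $I-F(\lambda)^*F(\lambda)\geq0$. Hence $1-|\cal{F}_{F(\lambda)}(z)|^2\geq0$, i.e.\ $|\cal{F}_{F(\lambda)}(z)|\leq1$. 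Alternatively this is just Proposition~\ref{calFleq1} applied with $P=F(\lambda)$ and $X=z$. I expect the only real obstacle to be the notational one of pinning down precisely how the scalar-entried $2\times2$ matrix $F(\lambda)$ and scalar $z$ are cast into the operator framework of Proposition~\ref{calF} so that the block entries line up with $\gamma$ and $\eta$; once that dictionary is fixed, the proof is a direct substitution with no further analysis required.
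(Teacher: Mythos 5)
Your proof is correct and follows essentially the same route as the paper: the paper likewise specializes Proposition \ref{calF} with $P=F(\lambda)$, $Q=F(\mu)$, $X=z$, $Y=w$, identifies the resulting terms with $\gamma$ and $\eta$, and deduces the bound from Proposition \ref{calFleq1}. The only slip is your choice of spaces — the paper takes $H=G=U=V=\C$ (all block entries of $F(\lambda)$ are scalars), not $U=\C^2$ — but since you then compute with scalar entries throughout, this is purely notational and does not affect the argument.
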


\begin{proof}
Let $H=G=U=V=\mathbb{C}$, $P=F(\lambda)$, $Q=F(\mu)$, $X=z$ and $Y=w$
in Proposition \ref{calF}. Then 
\begin{align*}
1-\cal{F}_{F(\mu)}(w)^*&\cal{F}_{F(\lambda)}(z)\\
=&\,\overline{F_{21}(\mu)}(1-\overline{w}\overline{F_{22}(\mu)})^{-1}(1-\overline{w}z)(1-F_{22}(\lambda)z)^{-1}
F_{21}(\lambda)\\
&\,+\begin{bmatrix} 1&\overline{F_{21}(\mu)}(1-\overline{w}\overline{F_{22}(\mu)})^{-1}\overline{w} 
\end{bmatrix}(I-F(\mu)^*F(\lambda))\begin{bmatrix} 1\\z(1-F_{22}(\lambda)z)^{-1}F_{21}(\lambda) 
\end{bmatrix}\\
=&\,\overline{\gamma(\mu,w)}(1-\overline{w}z)\gamma(\lambda,z)+\eta(\mu,w)^*(I-F(\mu)^*F(\lambda))\eta(\lambda,z)
\end{align*}
for all $\mu,\lambda\in\mathbb{D}$ and $w,z\in\mathbb{C}$ such that
$1-F_{22}(\mu)w\neq0$ and $1-F_{22}(\lambda)z\neq0$.
Since $F\in\cal{S}^{2\times2}$ we have $\|F(\lambda)\|\leq1$ for all $\lambda\in\mathbb{D}$.
Hence, by Proposition \ref{calFleq1}, 
$|\cal{F}_{F(\lambda)}(z)|\leq1$ for all $\lambda\in\mathbb{D}$ and $z\in\overline{\mathbb{D}}$ such that $1-F_{11}(\lambda)z\neq0$,
as required. 
\end{proof}

\begin{remark}\label{calFholo} {\em
If we take $U=V=\mathbb{C}^n$ and $X=\lambda$, $\lambda \in \D$, in Proposition \ref{calFleq1} then we deduce that 
$$\cal{F}_P(\lambda)=P_{11}+P_{12}\lambda(I-P_{22}\lambda)^{-1}P_{21}$$
 is analytic on $\mathbb{D}$, since 
$I-P_{22}\lambda$ is invertible for all $\lambda\in\mathbb{D}$.

Thus, for $F=\FF\in\cal{S}^{2\times2}$, the linear fractional transformation 
$\cal{F}_{F(\lambda)}(z)$ is given by 
\[\cal{F}_{F(\lambda)}(z):=F_{11}(\lambda)+F_{12}(\lambda)z(1-F_{22}(\lambda)z)^{-1}F_{21}(\lambda),\]
where $\lambda\in\mathbb{D}$ and $z\in\mathbb{C}$ is such that $1-F_{22}(\lambda)z\neq0$.
}
\end{remark}

\begin{definition}\label{SEf_all}
The map
$$\SEf:\cal{S}^{2\times2}\to\cal{S}_2$$
is given by
\[\SE{F}(z,\lambda):=-\cal{F}_{F(\lambda)}(z), \; z,\lambda\in\mathbb{D}.\] 
\end{definition}

\begin{proposition}
The map $\SEf$ is well defined.
\end{proposition}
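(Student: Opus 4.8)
The plan is to verify the three things implicit in the claim that $\SEf$ is well defined: that for each $F\in\cal{S}^{2\times2}$ the formula $\SE{F}(z,\lambda)=-\cal{F}_{F(\lambda)}(z)$ is meaningful at every point of $\mathbb{D}^2$; that the resulting function is analytic on $\mathbb{D}^2$; and that it takes values in $\overline{\mathbb{D}}$, so that indeed $\SE{F}\in\cal{S}_2={\mathrm{Hol}}(\mathbb{D}^2,\overline{\mathbb{D}})$. Since $F$ uniquely determines its entries $F_{11},F_{12},F_{21},F_{22}$, there is no ambiguity of representation to address, and these three points are all that is required.

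First I would check that $1-F_{22}(\lambda)z\neq0$ for all $\lambda,z\in\mathbb{D}$, which by Remark \ref{calFholo} is exactly what is needed for $\cal{F}_{F(\lambda)}(z)$ to be defined. Since $F(\lambda)$ is a contraction for every $\lambda\in\mathbb{D}$, its $(2,2)$ entry satisfies $|F_{22}(\lambda)|\le\|F(\lambda)\|\le1$; hence for $z\in\mathbb{D}$ one has $|F_{22}(\lambda)z|\le|z|<1$, so $1-F_{22}(\lambda)z\neq0$. Thus $\SE{F}(z,\lambda)$ is defined for every $(z,\lambda)\in\mathbb{D}^2$.

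Next, for analyticity I would observe that $F_{11},F_{12},F_{21},F_{22}$ are analytic on $\mathbb{D}$, that $(z,\lambda)\mapsto 1-F_{22}(\lambda)z$ is analytic and, by the previous step, nowhere zero on $\mathbb{D}^2$, so that its reciprocal is analytic on $\mathbb{D}^2$. Consequently
\[
\cal{F}_{F(\lambda)}(z)=F_{11}(\lambda)+F_{12}(\lambda)z(1-F_{22}(\lambda)z)^{-1}F_{21}(\lambda)
\]
is a sum of products of functions analytic on $\mathbb{D}^2$, hence analytic there, and so is $\SE{F}$.

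Finally, to obtain the bound I would apply Proposition \ref{calFleq1} (equivalently, the last assertion of Proposition \ref{1-calfcalf}) with $H=G=U=V=\mathbb{C}$, $P=F(\lambda)$ and $X=z$: since $\|F(\lambda)\|\le1$, $|z|\le1$ and $1-F_{22}(\lambda)z\neq0$, this gives $|\cal{F}_{F(\lambda)}(z)|\le1$, and therefore $|\SE{F}(z,\lambda)|=|\cal{F}_{F(\lambda)}(z)|\le1$ for all $(z,\lambda)\in\mathbb{D}^2$. Combining the three steps yields $\SE{F}\in\cal{S}_2$. I do not expect a genuine obstacle here, since all the analytic content is already supplied by the preceding propositions; the only point that repays a moment's care is the non-vanishing of $1-F_{22}(\lambda)z$ on $\mathbb{D}^2$, on which both the definedness of the formula and its joint analyticity ultimately rest.
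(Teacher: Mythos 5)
Your proposal is correct and follows essentially the same route as the paper: the paper's proof likewise invokes Remark \ref{calFholo} for analyticity and Proposition \ref{1-calfcalf} (via Proposition \ref{calFleq1}) for the bound $|\cal{F}_{F(\lambda)}(z)|\le 1$. Your additional explicit check that $1-F_{22}(\lambda)z\neq 0$ on $\mathbb{D}^2$ is a detail the paper records elsewhere (just before Proposition \ref{UEfwelldef}) and is a sensible point to make explicit.
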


\begin{proof}
Let $F\in\cal{S}^{2\times2}$. By Remark \ref{calFholo}, 
$\SE{F}$ is analytic on $\mathbb{D}^2$. 
By Proposition \ref{1-calfcalf}, for all $z\in\mathbb{D}$, 
\[|\cal{F}_{F(\lambda)}(z)|\leq 1\text{ for all }\lambda\in\mathbb{D}.\] Hence $\SE{F}(z,\lambda)\in\overline{\mathbb{D}}$ for all 
$z,\lambda\in\mathbb{D}$. Therefore $\SE{F}\in\cal{S}_2$ as required.
\end{proof}

\begin{remark} {\em In Definition \ref{SEf_all}, when either $F_{21}=0$ or $F_{12}=0$, the function 
\[\SEf(F)(z,\lambda)=-\cal{F}_{F(\lambda)}(z)=-F_{11}(\lambda),\]
 is independent of $z$, and so in general the map $\SEf$ can lose some information about $F$. However, 
in the case of the symmetrised bidisc, {\em no} information is lost; see Remark \ref{SE-LeftNG}. 
}
\end{remark}
\section{Relations between $\cal{S}^{2\times2}$ and the set of analytic kernels on $\mathbb{D}^2$} \label{kernels-maps}

Basic notions and statements on analytic kernels can be found in the book \cite{AMc02} and in    Aronszajn's  paper \cite{aron}.

Let  $N$ and $M$ be analytic kernels on $\mathbb{D}^2$, and let $K_{N,M}$ be the hermitian symmetric function on $\mathbb{D}^2\times\mathbb{D}^2$ given by 
\[K_{N,M}(z,\lambda,w,\mu)=1-(1-\overline{w}z)N(z,\lambda,w,\mu)-(1-\overline{\mu}\lambda)M(z,\lambda,w,\mu)\]
for all $z,\lambda,w,\mu\in\mathbb{D}$.

We define the set $\cal{R}_1$ to be 
\begin{align}\label{R_1}
\cal{R}_1:=
\{(N,M): \; N,M,K_{N,M}\text{ are analytic kernels on }\mathbb{D}^2\text{ and }K_{N,M} \text{ is of rank $1$}\}.\end{align}


\subsection{The map $\UEf:\cal{S}^{2\times2}\to\cal{R}_{1}$}
\label{UEf}

For every $F=\FF\in\cal{S}^{2\times2}$ we  define functions
$\gamma$ and $\eta$ by equations 
\begin{equation}\label{gamma-eta-1}
\gamma(\lambda,z):=(1-F_{22}(\lambda)z)^{-1}F_{21}(\lambda)
\;\text{ and }\;\eta(\lambda,z):=\begin{bmatrix}1\\z\gamma(\lambda,z) \end{bmatrix}.
\end{equation}
The functions $N_F$ and $M_F$ on $\mathbb{D}^2\times\mathbb{D}^2$ are given
by
\[N_F(z,\lambda,w,\mu)=\overline{\gamma(\mu,w)}\gamma(\lambda,z)\text{ and }
M_F(z,\lambda,w,\mu)=\eta(\mu,w)^*\frac{I-F(\mu)^*F(\lambda)}{1-\overline{\mu}\lambda}\eta(\lambda,z)\]
for all $z,\lambda,w,\mu\in\mathbb{D}$. Note that, for $z,\lambda,w,\mu\in\mathbb{D}$, 
$1-F_{22}(\lambda)z\neq0$ and $1-F_{22}(\mu)w\neq0$, since $|F_{22}(\lambda)|\leq1$ and $|F_{22}(\mu)|\leq1$.
Hence both $N_F$ and $M_F$ are well defined. 

\begin{proposition}\label{UEfwelldef} 
Let $F\in\cal{S}^{2\times2}$ be such that $F_{21} \neq 0$. Then the maps $N_F$ and $M_F$ are analytic kernels on $\mathbb{D}^2$, $N_F$ is of rank $1$,  and $(N_F, M_F) \in \cal{R}_1$.
\end{proposition}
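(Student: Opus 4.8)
The plan is to extract all three assertions from the single identity of Proposition~\ref{1-calfcalf}, supplemented by the analyticity observation of Remark~\ref{calFholo} and the classical positivity property of the matricial Schur class. First I would dispose of the elementary facts about $N_F$. Since $|F_{22}(\lambda)|\le\|F(\lambda)\|\le 1$ for every $\lambda\in\mathbb{D}$, we have $1-F_{22}(\lambda)z\neq 0$ for all $(z,\lambda)\in\mathbb{D}^2$, so the functions $\gamma$ and $\eta$ of \eqref{gamma-eta-1} are holomorphic on $\mathbb{D}^2$. Hence $N_F(z,\lambda,w,\mu)=\overline{\gamma(\mu,w)}\,\gamma(\lambda,z)$ is hermitian symmetric, holomorphic in $(z,\lambda)$, anti-holomorphic in $(w,\mu)$, and positive semidefinite with associated reproducing-kernel space $\C\gamma$; and since $F_{21}\neq 0$ forces $\gamma\not\equiv 0$, this space is one-dimensional, i.e.\ $N_F$ is an analytic kernel of rank $1$. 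This is the only place the hypothesis $F_{21}\neq 0$ is used.

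Next I would check that $M_F$ is an analytic kernel. Because $F$ lies in the matricial Schur class $\cal{S}^{2\times2}$, the $\M_2(\C)$-valued kernel $L(\lambda,\mu)=\dfrac{I-F(\mu)^*F(\lambda)}{1-\overline{\mu}\lambda}$ on $\mathbb{D}$ is positive semidefinite --- this is the standard characterisation of membership in the Schur class (see \cite{AMc02}). Since $M_F(z,\lambda,w,\mu)=\eta(\mu,w)^*L(\lambda,\mu)\eta(\lambda,z)$ is obtained from $L$ by conjugation with the fixed holomorphic $\C^2$-valued function $\eta$, a routine computation shows that the quadratic form $\sum_{i,j}c_i\overline{c_j}\,M_F$ evaluated at finitely many points $(z_i,\lambda_i)$ of $\mathbb{D}^2$ equals the quadratic form of $L$ at the vectors $u_i=c_i\,\eta(\lambda_i,z_i)\in\C^2$, which is nonnegative; hence $M_F$ is positive semidefinite on $\mathbb{D}^2$. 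Hermitian symmetry, and holomorphy in $(z,\lambda)$ together with anti-holomorphy in $(w,\mu)$, are read off directly from the formula, using that $\eta$ and $F$ are holomorphic and $(1-\overline{\mu}\lambda)^{-1}$ is holomorphic in $\lambda$. Therefore $M_F$ is an analytic kernel on $\mathbb{D}^2$.

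Finally, to obtain $(N_F,M_F)\in\cal{R}_1$ I would compute $K_{N_F,M_F}$ directly. By the very definitions of $N_F$ and $M_F$,
\begin{align*}
(1-\overline{w}z)\,N_F(z,\lambda,w,\mu)&=\overline{\gamma(\mu,w)}(1-\overline{w}z)\gamma(\lambda,z),\\
(1-\overline{\mu}\lambda)\,M_F(z,\lambda,w,\mu)&=\eta(\mu,w)^*\bigl(I-F(\mu)^*F(\lambda)\bigr)\eta(\lambda,z),
\end{align*}
so Proposition~\ref{1-calfcalf} gives
\[
K_{N_F,M_F}(z,\lambda,w,\mu)=1-\bigl(1-\cal{F}_{F(\mu)}(w)^*\cal{F}_{F(\lambda)}(z)\bigr)=\cal{F}_{F(\mu)}(w)^*\,\cal{F}_{F(\lambda)}(z)
\]
for all $z,\lambda,w,\mu\in\mathbb{D}$. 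Thus $K_{N_F,M_F}$ has the form $\overline{g(w,\mu)}\,g(z,\lambda)$ with $g(z,\lambda)=\cal{F}_{F(\lambda)}(z)$, which is holomorphic on $\mathbb{D}^2$ by Remark~\ref{calFholo}; so $K_{N_F,M_F}$ is a hermitian, positive semidefinite analytic kernel whose reproducing-kernel space $\C g$ has dimension at most $1$, i.e.\ an analytic kernel of rank $1$ in the sense of \eqref{R_1}. Combined with the two preceding paragraphs, this gives $(N_F,M_F)\in\cal{R}_1$, as required.

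No step here is deep. The point needing most care is the second paragraph --- recognising $M_F$ as a positive kernel on $\mathbb{D}^2$ by conjugating the classical positive Schur kernel $L$ by the holomorphic $\C^2$-valued function $\eta$, with the attendant bookkeeping of the four variables. One should also observe that $\C g$ is genuinely one-dimensional precisely when $\cal{F}_{F(\lambda)}(z)$ is not identically $0$, so that ``rank $1$'' in \eqref{R_1} is to be understood as ``rank at most $1$''.
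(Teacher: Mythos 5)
Your proof is correct and follows essentially the same route as the paper's: identify $N_F$ as $\overline{\gamma(\mu,w)}\gamma(\lambda,z)$ with $\gamma\not\equiv 0$, and use Proposition \ref{1-calfcalf} to recognise $K_{N_F,M_F}$ as $\overline{\cal{F}_{F(\mu)}(w)}\,\cal{F}_{F(\lambda)}(z)$, hence a rank-one analytic kernel. You in fact supply a detail the paper leaves implicit (the positivity of $M_F$ via conjugation of the matricial Schur kernel by $\eta$), and your closing caveat that ``rank $1$'' must be read as ``rank at most $1$'' (e.g.\ when $\cal{F}_{F(\cdot)}(\cdot)\equiv 0$) is a legitimate point the paper also glosses over.
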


\begin{proof} By definition,
 \[N_F(z,\lambda,w,\mu)=\overline{\gamma(\mu,w)}\gamma(\lambda,z)\]
for $z,\lambda,w,\mu\in\mathbb{D}$, where $\gamma:\mathbb{D}^2\to\mathbb{C}$ is not equal to $0$. Thus 
$N_F$ is a kernel on $\mathbb{D}^2$ of rank  $1$.

Furthermore 
\[ M_F(z,\lambda,w,\mu)=\eta(\mu,w)^* \frac{I-F(\mu)^*F(\lambda)}{1-\overline{\mu}\lambda} \eta(\lambda,z),\]
for $z,\lambda,w,\mu\in\mathbb{D}$.
Clearly both $N_F$ and $M_F$ are analytic.

To prove that $(N_F, M_F) \in \cal{R}_1$ one has to check that  $K_{N,M}$ is an analytic kernel on $\mathbb{D}^2$ of rank $1$. Clearly $K_{N,M}$ is analytic. By Proposition \ref{1-calfcalf},
\begin{align*}1-\overline{\cal{F}_{F(\mu)}(w)}\cal{F}_{F(\lambda)}(z)=&
\overline{\gamma(\mu,w)}(1-\overline{w}z)\gamma(\lambda,z)+\eta(\mu,w)^*(I-F(\mu)^*F(\lambda))\eta(\lambda,z)\\
=&(1-\overline{w}z)N_F(z,\lambda,w,\mu)
+(1-\overline{\mu}\lambda)M_F(z,\lambda,w,\mu)
\end{align*}
for all $z,\lambda,w,\mu\in\mathbb{D}$.
 Therefore
\[K_{N_F,M_F}(z,\lambda,w,\mu)=
\overline{\cal{F}_{F(\mu)}(w)}\cal{F}_{F(\lambda)}(z)\] for all $z,\lambda,w,\mu\in\mathbb{D}$.
Thus $K_{N_F,M_F}$ is an analytic kernel on $\mathbb{D}^2$ of rank $1$. 
Therefore $(N_F,M_F)\in\cal{R}_{1}$. 
\end{proof}

\begin{proposition}\label{UEfwelldef_0} 
Let $F\in\cal{S}^{2\times2}$ be such that $F_{21} = 0$. Then the maps $N_F$ and $M_F$ are analytic kernels on $\mathbb{D}^2$, $N_F$ is of rank $0$,  and $(N_F, M_F) \in \cal{R}_1$.
Moreover, 
\[N_F(z,\lambda,w,\mu) =0, \quad M_F(z,\lambda,w,\mu)=
\frac{1-\overline{F_{11}(\mu)}F_{11}(\lambda)}{1-\overline{\mu}\lambda},
\]
and
\[K_{N_F,M_F}(z,\lambda,w,\mu) =\overline{F_{11}(\mu)}F_{11}(\lambda),\]
for all $z,\lambda,w,\mu\in\mathbb{D}$.
\end{proposition}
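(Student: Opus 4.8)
The plan is to specialise the computations of Proposition \ref{UEfwelldef} to the degenerate case $F_{21}=0$ and check that all the asserted formulae and membership claims follow almost immediately. The essential observation is that when $F_{21}=0$, the function $\gamma$ defined in \eqref{gamma-eta-1} satisfies
\[
\gamma(\lambda,z)=(1-F_{22}(\lambda)z)^{-1}F_{21}(\lambda)=0
\]
for all $\lambda,z\in\mathbb{D}$, and hence $\eta(\lambda,z)=\begin{bmatrix}1\\0\end{bmatrix}$ for all $\lambda,z\in\mathbb{D}$.

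First I would substitute $\gamma\equiv 0$ into the definition of $N_F$ to get $N_F(z,\lambda,w,\mu)=\overline{\gamma(\mu,w)}\gamma(\lambda,z)=0$ identically; this is trivially an analytic kernel, and its rank is $0$. Next I would compute $M_F$ using $\eta(\lambda,z)=\begin{bmatrix}1\\0\end{bmatrix}$: the quadratic form $\eta(\mu,w)^*\,\tfrac{I-F(\mu)^*F(\lambda)}{1-\overline{\mu}\lambda}\,\eta(\lambda,z)$ picks out the $(1,1)$ entry of $I-F(\mu)^*F(\lambda)$ divided by $1-\overline{\mu}\lambda$. Since $F_{21}=0$, we have $(F(\mu)^*F(\lambda))_{11}=\overline{F_{11}(\mu)}F_{11}(\lambda)+\overline{F_{21}(\mu)}F_{21}(\lambda)=\overline{F_{11}(\mu)}F_{11}(\lambda)$, so
\[
M_F(z,\lambda,w,\mu)=\frac{1-\overline{F_{11}(\mu)}F_{11}(\lambda)}{1-\overline{\mu}\lambda},
\]
which is independent of $z$ and $w$. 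This is an analytic kernel: it is the usual Nevanlinna-Pick-type kernel associated to the Schur-class function $F_{11}$, which is positive semidefinite precisely because $\|F(\lambda)\|\le 1$ forces $|F_{11}(\lambda)|\le 1$, and analyticity in $(z,\lambda)$ and conjugate-analyticity in $(w,\mu)$ is clear. Alternatively one can simply invoke Proposition \ref{1-calfcalf} with $\gamma\equiv 0$.

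Then I would assemble $K_{N_F,M_F}$. By the identity already established in the proof of Proposition \ref{UEfwelldef} (which is just Proposition \ref{1-calfcalf} rewritten), we have in general $K_{N_F,M_F}(z,\lambda,w,\mu)=\overline{\cal{F}_{F(\mu)}(w)}\cal{F}_{F(\lambda)}(z)$; in the present case $\cal{F}_{F(\lambda)}(z)=F_{11}(\lambda)+F_{12}(\lambda)z(1-F_{22}(\lambda)z)^{-1}F_{21}(\lambda)=F_{11}(\lambda)$, so $K_{N_F,M_F}(z,\lambda,w,\mu)=\overline{F_{11}(\mu)}F_{11}(\lambda)$. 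This is an analytic kernel of rank at most $1$ — indeed exactly $1$ unless $F_{11}\equiv 0$ — as a product of a conjugate-analytic and an analytic function of a single scalar variable. Finally, assembling: $N_F$, $M_F$, $K_{N_F,M_F}$ are all analytic kernels on $\mathbb{D}^2$ and $K_{N_F,M_F}$ has rank (at most) $1$, so by the definition \eqref{R_1} of $\cal{R}_1$ we conclude $(N_F,M_F)\in\cal{R}_1$. I do not anticipate a genuine obstacle here; the only mild care needed is in the degenerate sub-case $F_{11}\equiv 0$, where $K_{N_F,M_F}\equiv 0$ has rank $0$ rather than $1$ — one should read the rank-$1$ requirement in \eqref{R_1} as "rank at most $1$" (as the statement of the proposition itself does by saying "$N_F$ is of rank $0$"), or simply note this edge case explicitly so that the membership $(N_F,M_F)\in\cal{R}_1$ is not vacuously contradicted.
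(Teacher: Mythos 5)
Your proposal is correct and follows essentially the same route as the paper: substitute $\gamma\equiv 0$ and $\eta=\begin{bmatrix}1\\0\end{bmatrix}$ into the definitions, read off $N_F=0$ and $M_F$ as the $(1,1)$ entry of $\tfrac{I-F(\mu)^*F(\lambda)}{1-\overline{\mu}\lambda}$, and identify $K_{N_F,M_F}=\overline{F_{11}(\mu)}F_{11}(\lambda)$. Your remark about the degenerate sub-case $F_{11}\equiv 0$ (where $K_{N_F,M_F}$ has rank $0$, so the rank condition in the definition of $\cal{R}_1$ must be read as ``rank at most $1$'') is a point the paper's own proof passes over silently, and is worth keeping.
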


\begin{proof} For every 
$F=\begin{bmatrix}F_{11}&F_{12}\\0 &F_{22}\end{bmatrix}\in\cal{S}^{2\times2}$, 
the functions
$\gamma$ and $\eta$ are given by 
\[\gamma(\lambda,z)=(1-F_{22}(\lambda)z)^{-1}F_{21}(\lambda)=0
\; \text{ and }\;
 \eta(\lambda,z)=\begin{bmatrix}1\\z\gamma(\lambda,z) \end{bmatrix}= \begin{bmatrix}1\\0 \end{bmatrix},
\]
for all $\lambda, z\in\mathbb{D}$. Thus,
 \[N_F(z,\lambda,w,\mu)=0,\]
for $z,\lambda,w,\mu\in\mathbb{D}$, and so has rank $0$.
Furthermore 
\[M_F(z,\lambda,w,\mu)=\begin{bmatrix}1 &0 \end{bmatrix}\frac{I-F(\mu)^* F(\lambda)}{1-\overline{\mu}\lambda}
\begin{bmatrix}1 \\0 \end{bmatrix}= \frac{1-\overline{F_{11}(\mu)}F_{11}(\lambda)}{1-\overline{\mu}\lambda},\]
for $z,\lambda,w,\mu\in\mathbb{D}$, which is independent of $z$ and $w$. Hence $M_F$ is a kernel on $\mathbb{D}^2$.
Clearly both $N_F$ and $M_F$ are analytic.

It is easy to see that
\[K_{N,M}(z,\lambda,w,\mu)=1-(1-\overline{\mu}\lambda)M(z,\lambda,w,\mu)= \overline{F_{11}(\mu)}F_{11}(\lambda),\]
for all $z,\lambda,w,\mu\in\mathbb{D}$, which is independent of $z$ and $w$.
Thus $K_{N_F,M_F}$ is an analytic  kernel on $\mathbb{D}^2$  of rank $1$. 
Therefore $(N_F,M_F)\in\cal{R}_{1}$. 
\end{proof}

\begin{definition} The map
$\UEf:\cal{S}^{2\times2}\to\cal{R}_{1}$ is given by \[\UE{F}=(N_F,M_F)\] for each 
$F\in\cal{S}^{2\times2}$.
\end{definition}

By Propositions \ref{UEfwelldef} and \ref{UEfwelldef_0}, the map $\UEf$ is well defined.


\subsection{Procedure $UW$ and the set-valued  map $\UWf:\cal{R}_{11}\to\cal{S}^{2\times2}$}\label{UWf}

Let $F\in\cal{S}^{2\times2}$ be such that $F_{21}\neq 0$. Then  the kernel $N_F$ has rank $1$. In this case
$\UEf$ maps into a subset 
$\cal{R}_{11}$ of $\cal{R}_1$ rather than onto all of $\cal{R}_1$.

\begin{definition}
The subset $\cal{R}_{11}$ of $\cal{R}_1$ is given by
\begin{align*}\cal{R}_{11}:=
\{(N,M): \; N,M,K_{N,M}\text{ are analytic kernels on }\mathbb{D}^2\text{ and }N,K_{N,M}\; \text{ are of rank $1$}\}.\end{align*}
\end{definition}

By the Moore-Aronszajn Theorem \cite[Theorem 2.23]{AMc02}, for 
 each kernel $k$ on a set $X$,  there exists a unique Hilbert function space $\cal{H}_k$ on $X$ that has $k$ as its kernel.

Let us describe the procedure for the  construction of a function in 
$\cal{S}^{2\times2}$ from a pair of kernels in $\cal{R}_{11}$.

\begin{theorem}[Procedure $UW$]\label{procUWcons}
Let $(N,M)\in\cal{R}_{11}$. 
Then there are functions $f\in\cal{H}_N$ and $g\in\cal{H}_{K_{N,M}}$ such that
\[N(z,\lambda,w,\mu)=\overline{f(w,\mu)}f(z,\lambda)\text{ and }K_{N,M}(z,\lambda,w,\mu)=\overline{g(w,\mu)}g(z,\lambda)\] for all
$z,\lambda,w,\mu\in\mathbb{D}$ and
 a function $\Xi\in\cal{S}^{2\times2}$ such that
\[\Xi(\lambda)\begin{pmatrix}1\\zf(z,\lambda)\end{pmatrix}=\begin{pmatrix}g(z,\lambda)\\f(z,\lambda)\end{pmatrix}\]
for all $z,\lambda\in\mathbb{D}$.
\end{theorem}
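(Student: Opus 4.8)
The plan is to reconstruct the matrix function $\Xi$ coefficient by coefficient from the two functions $f$ and $g$ whose existence is immediate. Since $(N,M)\in\cal{R}_{11}$, both $N$ and $K_{N,M}$ are analytic kernels of rank $1$, so by the Moore-Aronszajn Theorem there exist $f\in\cal{H}_N$ and $g\in\cal{H}_{K_{N,M}}$ with $N(z,\lambda,w,\mu)=\overline{f(w,\mu)}f(z,\lambda)$ and $K_{N,M}(z,\lambda,w,\mu)=\overline{g(w,\mu)}g(z,\lambda)$; moreover, because a rank-$1$ kernel determines its generating function up to a unimodular constant, $f$ and $g$ are essentially unique. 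First I would rewrite the defining identity for $K_{N,M}$, namely $1 = (1-\overline w z)N + (1-\overline\mu\lambda)M + K_{N,M}$, by substituting the rank-$1$ factorizations of $N$ and $K_{N,M}$. This gives, for all $z,\lambda,w,\mu\in\mathbb D$,
\[
1 + \overline{zf(z,\lambda)}\,wf(w,\mu)\cdot\big(\text{cross terms}\big)\ \text{organized so that}\
\overline{g(w,\mu)}g(z,\lambda) + \overline{f(w,\mu)}f(z,\lambda)
= \overline{\eta(w,\mu)}\,\eta(z,\lambda) \cdot(\cdots),
\]
where I set $\eta(z,\lambda)=\begin{pmatrix}1\\ zf(z,\lambda)\end{pmatrix}$. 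More precisely, the key algebraic rearrangement is to check that the stated identity is equivalent to
\[
\overline{g(w,\mu)}g(z,\lambda)+\overline{f(w,\mu)}f(z,\lambda)
= 1 - \overline{\mu}\lambda\,M(z,\lambda,w,\mu) + \overline{zf(z,\lambda)}\,wf(w,\mu),
\]
which should be read as an inner-product identity between the two-component vectors $\big(g(z,\lambda),f(z,\lambda)\big)$ and $\big(1,\,zf(z,\lambda)\big)$.

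The heart of the argument is then a lurking-isometry (Kolmogorov/Agler-model) construction. I would introduce the Hilbert space $\mathcal H_M$ with kernel $M$, and consider the map sending $\eta(z,\lambda)=\begin{pmatrix}1\\ zf(z,\lambda)\end{pmatrix}$, together with the ``$M$-part'' $(1-\overline\mu\lambda)M$, to the vector $\begin{pmatrix}g(z,\lambda)\\ f(z,\lambda)\end{pmatrix}$ and the residual $M$-data. The identity displayed above says precisely that this correspondence is inner-product preserving on the span of the relevant vectors, hence extends to an isometry, which one completes to a contraction (or unitary) $V$ on $\mathbb C^2\oplus\mathcal H_M$ whose $\mathbb C^2\to\mathbb C^2$ corner, evaluated appropriately, yields the value $\Xi(\lambda)$. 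The $\lambda$-dependence is handled by arranging that $\Xi(\lambda)$ arises as $\mathcal F_P(\lambda)$-type expression or directly as the $(1,1)$-block of the model operator, and analyticity in $\lambda$ follows from the analyticity of $f,g,M$ in $\lambda$ together with Remark \ref{calFholo}. The contractivity $\|\Xi(\lambda)\|\le 1$ for all $\lambda\in\mathbb D$, i.e. $\Xi\in\cal{S}^{2\times2}$, then comes from Proposition \ref{calFleq1} applied to the constructed operator $P$ with $X=\lambda$.

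The intertwining relation $\Xi(\lambda)\begin{pmatrix}1\\ zf(z,\lambda)\end{pmatrix}=\begin{pmatrix}g(z,\lambda)\\ f(z,\lambda)\end{pmatrix}$ is then exactly the statement that $V$ maps the distinguished vectors as prescribed, read off by projecting onto the $\mathbb C^2$ summand. I expect the main obstacle to be the bookkeeping in the lurking-isometry step: one must choose the right pair of ``source'' and ``target'' Hilbert spaces so that (a) the Gram identity genuinely holds as an equality of inner products (this requires absorbing the $\overline{zf}\,wf$ term correctly, which is why $\eta$ rather than $f$ appears in the second slot), (b) the resulting isometry is defined on a dense enough set and extends to the whole space, and (c) the extension can be chosen analytically in $\lambda$ and normalized to a genuine $2\times2$ matrix $\Xi(\lambda)$ rather than merely an operator. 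A secondary subtlety is the degenerate behaviour when $f$ or $g$ vanishes identically on some slice, but since $N$ has rank exactly $1$ we have $f\not\equiv 0$, and the case $g\equiv 0$ forces $K_{N,M}$ to have rank $0$, contradicting $(N,M)\in\cal R_{11}$; so both generating functions are nonzero, which is what keeps the construction non-degenerate.
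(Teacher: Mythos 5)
Your proposal follows essentially the same route as the paper: factor the rank-one kernels $N$ and $K_{N,M}$, rewrite the defining identity as a Gram-matrix equality between the vectors $(g,f,v_{z,\lambda})$ and $(1,zf(z,\lambda),\lambda v_{z,\lambda})$ in $\C^2\oplus\cal{H}_M$, produce a lurking isometry, extend it to a contraction on $\C^2\oplus\cal{H}_M$, and read off $\Xi$ as the resulting transfer function, with contractivity and analyticity supplied by Proposition \ref{calFleq1} and Remark \ref{calFholo}. The only blemish is your displayed ``key rearrangement,'' where the $M$-term should appear as $+M(z,\lambda,w,\mu)$ on the left and $+\overline{\mu}\lambda\,M(z,\lambda,w,\mu)$ on the right (and the cross term should be $\overline{wf(w,\mu)}\,zf(z,\lambda)$); your subsequent prose correctly absorbs the $\cal{H}_M$-component into the lurking isometry, which is what the argument actually requires, so this is a notational slip rather than a gap.
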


\begin{proof}
Let $(N,M)\in \cal{R}_{11}$, so that $N,K_{N,M}$ are analytic kernels on $\mathbb{D}^2$  of rank $1$. Thus 
there are  functions $f\in \cal{H}_N$, $v_{z,\lambda}\in \cal{H}_M$ and  $g\in\cal{H}_{K_{N,M}}$ such that 
\[N(z,\lambda,w,\mu)=\overline{f(w,\mu)}f(z,\lambda), \;K_{N,M}(z,\lambda,w,\mu)=\overline{g(w,\mu)}g(z,\lambda)\] 
and
\[M(z,\lambda,w,\mu)=\langle v_{z,\lambda}, v_{w,\mu}\rangle_{\cal{H}_M}\] 
for all $z,\lambda,w,\mu\in\mathbb{D}$. 

Hence $(N,M)\in\cal{R}_{11}$ can be presented in the following form
\begin{equation}\label{KNM}
\overline{g(w,\mu)}g(z,\lambda)=
1-(1-\overline{w}z)\overline{f(w,\mu)}f(z,\lambda)-(1-\overline{\mu}\lambda)\langle v_{z,\lambda}, v_{w,\mu}\rangle_{\cal{H}_M},
\end{equation}
and so
\begin{eqnarray}\label{kernNM1}
&~&\overline{g(w,\mu)}g(z,\lambda)+\overline{f(w,\mu)}f(z,\lambda)+
\langle v_{z,\lambda},v_{w,\mu}\rangle_{\cal{H}_M} \nonumber\\
&=& 1+\overline{w}z\overline{f(w,\mu)}f(z,\lambda)
 +\overline{\mu}\lambda\langle v_{z,\lambda}, v_{w,\mu}\rangle_{\cal{H}_M}
\end{eqnarray} 
for all $z,\lambda,w,\mu\in\mathbb{D}$.
The left hand side of \eqref{kernNM1} can be written as
\begin{align*}\overline{g(w,\mu)}g(z,\lambda)+&\overline{f(w,\mu)}f(z,\lambda)+
\langle v_{z,\lambda},v_{w,\mu}\rangle_{\cal{H}_M}\\
=&\left\langle\begin{pmatrix} g(z,\lambda)\\ f(z,\lambda)\\ v_{z,\lambda}\end{pmatrix},
\begin{pmatrix}g(w,\mu)\\ f(w,\mu)\\ v_{w,\mu}\end{pmatrix}\right\rangle_{\mathbb{C}^2\oplus\cal{H}_M},\end{align*}
and the right hand side of \eqref{kernNM1} has the form
\begin{align*}1+\overline{w}z&\overline{f(w,\mu)}f(z,\lambda)+
\overline{\mu}\lambda\langle v_{z,\lambda}, v_{w,\mu}\rangle_{\cal{H}_M}\\
=&\left\langle\begin{pmatrix} 1\\ zf(z,\lambda)\\ \lambda v_{z,\lambda}\end{pmatrix},
\begin{pmatrix} 1\\ wf(w,\mu) \\ \mu v_{w,\mu}\end{pmatrix}\right\rangle_{\mathbb{C}^2\oplus\cal{H}_M}\end{align*} 
for all $\lambda,\mu,z,w\in\mathbb{D}$.
Therefore
 \[\left\langle\begin{pmatrix} g(z,\lambda)\\ f(z,\lambda)\\ v_{z,\lambda}\end{pmatrix},
\begin{pmatrix}g(w,\mu)\\ f(w,\mu)\\ v_{w,\mu}\end{pmatrix}\right\rangle_{\mathbb{C}^2\oplus\cal{H}_M}=
\left\langle\begin{pmatrix} 1\\ zf(z,\lambda)\\ \lambda v_{z,\lambda}\end{pmatrix},
\begin{pmatrix} 1\\ wf(w,\mu) \\ \mu v_{w,\mu}\end{pmatrix}\right\rangle_{\mathbb{C}^2\oplus\cal{H}_M}\]
for all $z,\lambda,w,\mu\in\mathbb{D}$.

Thus the relation \eqref{KNM} can be express by the statement that the Gramian of vectors 
\[
\begin{pmatrix} g(z,\lambda)\\ f(z,\lambda)\\ v_{z,\lambda}
\end{pmatrix} \in {\mathbb{C}^2\oplus\cal{H}_M}, \;\;
\lambda,\mu,z,w\in\mathbb{D},
\]
is equal to the Gramian of vectors
\[
\begin{pmatrix} 1\\ wf(w,\mu) \\ \mu v_{w,\mu}\end{pmatrix}
\in {\mathbb{C}^2\oplus\cal{H}_M},\;\;  \lambda,\mu,z,w\in\mathbb{D}.
\]
Hence  there is an isometry 
\[L_0:\spn\left\{\begin{pmatrix} 1\\ zf(z,\lambda)\\ \lambda v_{z,\lambda}\end{pmatrix}:
z,\lambda\in\mathbb{D}\right\}\to\mathbb{C}^2\oplus \cal{H}_M\] such that
\[L_0\begin{pmatrix} 1\\ zf(z,\lambda)\\ \lambda v_{z,\lambda}\end{pmatrix}=
\begin{pmatrix}g(z,\lambda)\\ f(z,\lambda)\\ v_{z,\lambda}\end{pmatrix}\] for all $z,\lambda\in\mathbb{D}$.

We extend $L_0$ to a contraction $L$ on $\mathbb{C}^2\oplus \cal{H}_M$ by defining $L$ to be $0$ on 
$(\mathbb{C}^2\oplus\cal{H}_M)\ominus\spn\left\{( 1 , zf(z,\lambda) , \lambda v_{z,\lambda}):z,\lambda\in\mathbb{D}\right\}$.
Write $L$ as a block operator matrix \[L=\begin{bmatrix} A & B \\ C& D\end{bmatrix}:
\mathbb{C}^2\oplus\cal{H}_M\to \mathbb{C}^2\oplus \cal{H}_M\] 
where $A:\mathbb{C}^2\to\mathbb{C}^2$, $B:\cal{H}_M\to\mathbb{C}^2$, $C:\mathbb{C}^2\to\cal{H}_M$ and $D:\cal{H}_M\to\cal{H}_M$, 
then $L$ satisfies
\[\begin{bmatrix} A & B \\ C& D\end{bmatrix}
\begin{pmatrix} \begin{pmatrix}1\\ zf(z,\lambda)\end{pmatrix}\\ \lambda v_{z,\lambda}\end{pmatrix}=
\begin{pmatrix}\begin{pmatrix}g(z,\lambda)\\ f(z,\lambda)\end{pmatrix}\\ v_{z,\lambda}\end{pmatrix}\]
for all $z,\lambda\in\mathbb{D}$.

Then, for $z,\lambda\in\mathbb{D}$, we obtain the pair of equations
\[A\begin{pmatrix}1\\zf(z,\lambda)\end{pmatrix}+B\lambda v_{z,\lambda}=\begin{pmatrix}g(z,\lambda)\\f(z,\lambda)\end{pmatrix}\]
and
\[C\begin{pmatrix}1\\zf(z,\lambda)\end{pmatrix}+D\lambda v_{z,\lambda}=v_{z,\lambda}.\]
 Since $L$ is a contraction, 
$\| D\|\le 1$ and
$I_{\cal{H}_M}-D\lambda$ is invertible for all $\lambda\in\mathbb{D}$. From the second of these equations,
\[v_{z,\lambda}=(I_{\cal{H}_M}-D\lambda )^{-1}C\begin{pmatrix}1\\zf(z,\lambda)\end{pmatrix}\]
for all $z,\lambda\in\mathbb{D}$.
Hence the first equation has the form
\[(A+B\lambda(I_{\cal{H}_M}-D\lambda )^{-1}C)\begin{pmatrix}1\\zf(z,\lambda)\end{pmatrix}
=\begin{pmatrix}g(z,\lambda)\\f(z,\lambda)\end{pmatrix}\]
for all $z,\lambda\in\mathbb{D}$.

Recall that, for the operator $L$, the linear fractional transformation 
\[\cal{F}_L(\lambda)=A+B\lambda(I_{\cal{H}_M}-D\lambda)^{-1}C\] for all $\lambda\in\mathbb{D}$.
Since $L$ is a contraction, by Proposition \ref{calFleq1} and Remark \ref{calFholo}, 
\[\|\cal{F}_L(\lambda)\|\leq1\text{ for all }\lambda\in\mathbb{D},\] and $\cal{F}_L$ is analytic on $\mathbb{D}$. Since
$A$ and $B\lambda(I_{\cal{H}_M}-D\lambda)^{-1}C$
are operators from $\mathbb{C}^2$ to $\mathbb{C}^2$, $\cal{F}_L$ is in $\cal{S}^{2\times2}$. Then  $\Xi=\cal{F}_L$ has required properties.
\end{proof}

The function $\Xi$ constructed with Procedure $UW$ is not necessarily unique since the functions $f$, $g$ and $v_{z,\lambda}$  are not uniquely defined. The following proposition gives relations between different $\Xi$ obtained using Procedure $UW$.

\begin{proposition}\label{relationbetweenfunctfromUW}
Let $(N,M)\in\cal{R}_{11}$ and let $f_1,f_2\in\cal{H}_N$,
$v^1_{z,\lambda},v^2_{z,\lambda} \in \cal{H}_M$ and 
 and $g_1,g_2\in\cal{H}_{K_{N,M}}$ be such that
\[N(z,\lambda,w,\mu)=\overline{f_1(w,\mu)}f_1(z,\lambda)=\overline{f_2(w,\mu)}f_2(z,\lambda),\]
\[M(z,\lambda,w,\mu)=\langle v^1_{z,\lambda}, v^1_{w,\mu}\rangle_{\cal{H}_M}= \langle v^2_{z,\lambda}, v^2_{w,\mu}\rangle_{\cal{H}_M},\] 
and
\[K_{N,M}(z,\lambda,w,\mu)=\overline{g_1(w,\mu)}g_1(z,\lambda)=\overline{g_2(w,\mu)}g_2(z,\lambda)\]  
for all $z,\lambda,w,\mu\in\mathbb{D}$.
Let $\Xi_1$ and $\Xi_2$ be constructed from $(N,M)$ using Procedure $UW$ with the functions $f_1,g_1, v^1$ and $f_2,g_2, v^2$,
respectively.
Then
\[\Xi_2=\begin{bmatrix}\zeta_1&0\\0&\zeta_2\end{bmatrix}\Xi_1\begin{bmatrix}1&0\\0&\overline{\zeta_2}\end{bmatrix}\]
for some $\zeta_1,\zeta_2\in\mathbb{T}$.
\end{proposition}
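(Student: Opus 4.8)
The plan is to track how the freedom in choosing $f$, $g$, and $v_{z,\lambda}$ propagates through Procedure $UW$, using the uniqueness-up-to-phase that comes with rank-one kernels. Since $N$ has rank $1$ and $f_1, f_2 \in \cal{H}_N$ both satisfy $N(z,\lambda,w,\mu) = \overline{f_i(w,\mu)} f_i(z,\lambda)$, the one-dimensional space $\cal{H}_N$ forces $f_2 = \zeta_2 f_1$ for some unimodular constant $\zeta_2 \in \mathbb{T}$ (the function $f_i$ is a unit-norm-up-to-scaling spanning vector of $\cal{H}_N$, determined up to a unimodular scalar). Likewise, from rank one of $K_{N,M}$ we get $g_2 = \zeta_1 g_1$ for some $\zeta_1 \in \mathbb{T}$. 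There is no such rigidity for $v_{z,\lambda}$ because $\cal{H}_M$ need not be one-dimensional; but as I explain below this does not matter.

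Next I would recall the defining relation from Procedure $UW$, namely that each $\Xi_i$ satisfies
\[
\Xi_i(\lambda)\begin{pmatrix}1\\ z f_i(z,\lambda)\end{pmatrix} = \begin{pmatrix} g_i(z,\lambda)\\ f_i(z,\lambda)\end{pmatrix}
\]
for all $z,\lambda \in \mathbb{D}$. Substituting $f_2 = \zeta_2 f_1$ and $g_2 = \zeta_1 g_1$ into the relation for $\Xi_2$ gives
\[
\Xi_2(\lambda)\begin{pmatrix}1\\ z \zeta_2 f_1(z,\lambda)\end{pmatrix} = \begin{pmatrix} \zeta_1 g_1(z,\lambda)\\ \zeta_2 f_1(z,\lambda)\end{pmatrix}
= \begin{bmatrix}\zeta_1 & 0\\ 0 & \zeta_2\end{bmatrix}\begin{pmatrix} g_1(z,\lambda)\\ f_1(z,\lambda)\end{pmatrix}
= \begin{bmatrix}\zeta_1 & 0\\ 0 & \zeta_2\end{bmatrix}\Xi_1(\lambda)\begin{pmatrix}1\\ z f_1(z,\lambda)\end{pmatrix}.
\]
On the other hand, $\begin{pmatrix}1\\ z\zeta_2 f_1(z,\lambda)\end{pmatrix} = \begin{bmatrix}1 & 0\\ 0 & \zeta_2\end{bmatrix}\begin{pmatrix}1\\ z f_1(z,\lambda)\end{pmatrix}$, so writing $\Theta(\lambda) := \begin{bmatrix}\zeta_1 & 0\\ 0 & \zeta_2\end{bmatrix}\Xi_1(\lambda)\begin{bmatrix}1 & 0\\ 0 & \overline{\zeta_2}\end{bmatrix}$ (which is again in $\cal{S}^{2\times2}$, since the diagonal unitaries are constant), we conclude that $\Xi_2(\lambda)$ and $\Theta(\lambda)$ agree on every vector of the form $\begin{pmatrix}1\\ z\zeta_2 f_1(z,\lambda)\end{pmatrix}$, equivalently on every $\begin{pmatrix}1\\ w\end{pmatrix}$ with $w$ in the range of the scalar map $z \mapsto z\zeta_2 f_1(z,\lambda)$.

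The remaining step, and the place where a little care is needed, is to upgrade "agreement on these special vectors" to "$\Xi_2 = \Theta$ identically." For each fixed $\lambda$, if the function $z \mapsto z f_1(z,\lambda)$ is not identically zero it takes infinitely many values, so $\{(1, w)^{t} : w \in \operatorname{ran}\}$ together with its span contains a two-dimensional set, forcing $\Xi_2(\lambda) = \Theta(\lambda)$ as $2\times2$ matrices; then analyticity in $\lambda$ (both sides are analytic, and they agree on the set of $\lambda$ where $z f_1(z,\lambda) \not\equiv 0$, which is all of $\mathbb{D}$ unless $f_1 \equiv 0$, contradicting $N \ne 0$ since $(N,M)\in\cal{R}_{11}$ forces $N$ of rank exactly $1$) finishes it. The honest obstacle is precisely this degeneracy bookkeeping: one must confirm that $f_1$ cannot vanish on a set large enough to leave $\Xi_2$ underdetermined, which follows because $N$ has rank $1$ and hence $f_1$ is a nonzero element of $\cal{H}_N$, so $\{\lambda : f_1(\cdot,\lambda)\equiv 0\}$ is a proper analytic subvariety; away from it the matrices are pinned down, and continuity/analyticity extends the identity everywhere. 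Once this is in place, the displayed factorization $\Xi_2 = \begin{bmatrix}\zeta_1 & 0\\ 0 & \zeta_2\end{bmatrix}\Xi_1\begin{bmatrix}1 & 0\\ 0 & \overline{\zeta_2}\end{bmatrix}$ is exactly $\Xi_2 = \Theta$, completing the proof.
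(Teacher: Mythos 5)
Your proposal is correct and follows essentially the same route as the paper's proof: extract $f_2=\zeta_2 f_1$ and $g_2=\zeta_1 g_1$ from the rank-one property, substitute into the defining relation $\Xi_i(\lambda)(1,\,zf_i(z,\lambda))^{t}=(g_i(z,\lambda),\,f_i(z,\lambda))^{t}$, and conclude that the difference of the two matrix functions annihilates a spanning family of vectors because the zero set of the nonzero analytic function $f_1$ is too small to obstruct this. Your closing discussion of the degeneracy bookkeeping is in fact slightly more careful than the paper's one-line appeal to the nowhere-density of the zero set of $f_1$.
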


\begin{proof}
It is easy to see that $f_2=\zeta_f f_1$ and $g_2=\zeta_g g_1$ for some $\zeta_f,\zeta_g\in\mathbb{T}$.
By Theorem \ref{procUWcons}, $\Xi_1$ and $\Xi_2$ satisfy
\[\Xi_1(\lambda)\begin{pmatrix}1\\ zf_1(z,\lambda)\end{pmatrix}=\begin{pmatrix}g_1(z,\lambda)\\f_1(z,\lambda)\end{pmatrix}\; \text{ and }\;
\Xi_2(\lambda)\begin{pmatrix}1\\ zf_2(z,\lambda)\end{pmatrix}=\begin{pmatrix}g_2(z,\lambda)\\f_2(z,\lambda)\end{pmatrix}\]
for all $z,\lambda\in\mathbb{D}$.
Hence 
\[\Xi_2(\lambda)\begin{pmatrix}1\\ zf_2(z,\lambda)\end{pmatrix}=\Xi_2(\lambda)\begin{bmatrix}1&0\\0&\zeta_f\end{bmatrix}\begin{pmatrix}1\\ zf_1(z,\lambda)\end{pmatrix}\]
and 
\[\begin{pmatrix}g_2(z,\lambda)\\f_2(z,\lambda)\end{pmatrix}=
\begin{bmatrix}\zeta_g&0\\0&\zeta_f\end{bmatrix}\begin{pmatrix}g_1(z,\lambda)\\f_1(z,\lambda)\end{pmatrix}=
\begin{bmatrix}\zeta_g&0\\0&\zeta_f\end{bmatrix}\Xi_1(\lambda)\begin{pmatrix}1\\zf_1(z,\lambda)\end{pmatrix}\]
for all $z,\lambda\in\mathbb{D}$. Thus
\[\left(\Xi_2(\lambda)\begin{bmatrix}1&0\\0&\zeta_f\end{bmatrix}-\begin{bmatrix}\zeta_g&0\\0&\zeta_f\end{bmatrix}\Xi_1(\lambda)
\right)\begin{pmatrix}1\\zf_1(z,\lambda)\end{pmatrix}=0\]
for all $z,\lambda\in\mathbb{D}$.

Since $f_1$ is a nonzero analytic function of 2 variables,
the set of zeros of $f_1$ is nowhere dense in $\D^2$. Therefore
\[\Xi_2(\lambda)=\begin{bmatrix}\zeta_g&0\\0&\zeta_f\end{bmatrix}\Xi_1(\lambda)
\begin{bmatrix}1&0\\0&\overline{\zeta_f}\end{bmatrix}\]
for all $\lambda\in\mathbb{D}$.
\end{proof}

Proposition \ref{relationbetweenfunctfromUW} leads us to the following result. 

\begin{proposition}\label{UWfwelldefined}
Let $(N,M)\in\cal{R}_{11}$. Let $\Xi$ be any function constructed from $(N,M)$ by Procedure $UW$. 
Then 
\[\left\{\begin{bmatrix}\zeta_1&0\\0&\zeta_2\end{bmatrix}\Xi
\begin{bmatrix}1&0\\0&\overline{\zeta_2}\end{bmatrix}:\zeta_1,\zeta_2\in\mathbb{T}\right\}\subseteq\cal{S}^{2\times2}\]
is the set of all possible functions that can be constructed from $(N,M)$ by Procedure $UW$. 
 
\end{proposition}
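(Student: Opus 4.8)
The plan is to show the two inclusions separately. For the inclusion $\supseteq$, I would argue that each matrix of the form $\begin{bmatrix}\zeta_1&0\\0&\zeta_2\end{bmatrix}\Xi\begin{bmatrix}1&0\\0&\overline{\zeta_2}\end{bmatrix}$ is itself obtainable from $(N,M)$ by Procedure $UW$. Concretely, given the data $f,g,v$ that produced $\Xi$ via Theorem \ref{procUWcons}, I would set $f' = \zeta_2 f$, $g' = \zeta_1 g$, $v' = v$ (unimodular constant multiples still represent the rank-one kernels $N$ and $K_{N,M}$, and $v$ still represents $M$), and then check directly that $\Xi' := \begin{bmatrix}\zeta_1&0\\0&\zeta_2\end{bmatrix}\Xi\begin{bmatrix}1&0\\0&\overline{\zeta_2}\end{bmatrix}$ satisfies the defining relation $\Xi'(\lambda)\begin{pmatrix}1\\ z f'(z,\lambda)\end{pmatrix}=\begin{pmatrix}g'(z,\lambda)\\ f'(z,\lambda)\end{pmatrix}$ for all $z,\lambda\in\mathbb{D}$. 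This is a one-line matrix verification using the relation satisfied by $\Xi$. One must also note $\Xi'\in\cal{S}^{2\times2}$, which is immediate since left/right multiplication by unitary constant matrices preserves the operator norm, hence preserves membership in $\cal{S}^{2\times2}$.

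For the inclusion $\subseteq$, suppose $\Xi'$ is \emph{any} function produced from $(N,M)$ by Procedure $UW$, say with representing functions $f', g', v'$. Since $\Xi$ was produced with representing functions $f,g,v$, both $(f,g,v)$ and $(f',g',v')$ represent the same kernels $N$, $K_{N,M}$, $M$. Because $N$ and $K_{N,M}$ have rank $1$, their representing functions in $\cal{H}_N$ and $\cal{H}_{K_{N,M}}$ are unique up to a unimodular scalar, so $f' = \zeta_f f$ and $g' = \zeta_g g$ for some $\zeta_f,\zeta_g\in\mathbb{T}$; this is exactly the observation used at the start of the proof of Proposition \ref{relationbetweenfunctfromUW}. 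Then Proposition \ref{relationbetweenfunctfromUW} applies verbatim (with $\Xi_1 = \Xi$, $\Xi_2 = \Xi'$) to give $\Xi' = \begin{bmatrix}\zeta_g&0\\0&\zeta_f\end{bmatrix}\Xi\begin{bmatrix}1&0\\0&\overline{\zeta_f}\end{bmatrix}$, so $\Xi'$ lies in the displayed set with $\zeta_1=\zeta_g$, $\zeta_2=\zeta_f$.

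The two inclusions together give the claimed equality, and the statement that the set is contained in $\cal{S}^{2\times2}$ follows from the norm-preservation remark above. The only genuinely substantive ingredient is Proposition \ref{relationbetweenfunctfromUW}, which is already proved; the present proposition is essentially a repackaging of it together with the elementary fact that a rank-one positive kernel determines its Kolmogorov-type representative up to a unimodular constant. I do not anticipate a real obstacle here; the one point requiring a little care is making sure that in the $\supseteq$ direction the choice $v' = v$ is legitimate, i.e.\ that Procedure $UW$ allows the $\cal{H}_M$-representatives to be chosen freely and independently of the scaling of $f$ and $g$ — but inspecting the construction of the isometry $L_0$ in Theorem \ref{procUWcons} shows that rescaling $f$ and $g$ by unimodular constants simply conjugates $L$ by the corresponding diagonal unitary, which is precisely what yields the conjugated $\Xi$.
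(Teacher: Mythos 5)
Your proof is correct and follows essentially the same route as the paper, which simply derives this proposition from Proposition \ref{relationbetweenfunctfromUW} together with the uniqueness (up to unimodular constants) of the rank-one representatives of $N$ and $K_{N,M}$. Your explicit verification of the $\supseteq$ inclusion (rescaling $f,g$ and checking that the extended isometry gets conjugated by the corresponding diagonal unitaries) fills in a detail the paper leaves implicit, and it is accurate.
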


\begin{definition}
The map $\UWf$ is the set-valued map from $\cal{R}_{11}$ to $\cal{S}^{2\times2}$ given by
\[\UW{N,M} = \left\{ \; \Xi \in\cal{S}^{2\times2}\; \text{constructed  by Procedure} \;  UW\;
\text{for }\; (N,M)\in R_{11}
 \right\}.
\]  
\end{definition}

\begin{proposition} \label{UEUW=id}
Let $(N,M)\in\cal{R}_{11}$ and let $\;\Xi\in\UW{N,M}$. Then
\[\UE{\Xi}=(N,M).\]
\end{proposition}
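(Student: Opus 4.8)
The plan is to unwind the definitions of $\UEf$ and Procedure $UW$ and check that the kernels produced by $\UEf$ applied to $\Xi$ are literally the kernels $N$ and $M$ we started with. Let $(N,M)\in\cal{R}_{11}$ and let $\Xi\in\UW{N,M}$, so by Theorem \ref{procUWcons} there are $f\in\cal{H}_N$, $g\in\cal{H}_{K_{N,M}}$ (and a family $v_{z,\lambda}\in\cal{H}_M$) with $N(z,\lambda,w,\mu)=\overline{f(w,\mu)}f(z,\lambda)$, $K_{N,M}(z,\lambda,w,\mu)=\overline{g(w,\mu)}g(z,\lambda)$, and
\[
\Xi(\lambda)\begin{pmatrix}1\\ zf(z,\lambda)\end{pmatrix}=\begin{pmatrix}g(z,\lambda)\\ f(z,\lambda)\end{pmatrix}
\quad\text{for all }z,\lambda\in\mathbb{D}.
\]
Write $\Xi=\FF$. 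The first step is to identify the function $\gamma$ attached to $\Xi$ via \eqref{gamma-eta-1}, namely $\gamma(\lambda,z)=(1-F_{22}(\lambda)z)^{-1}F_{21}(\lambda)$. Reading off the two scalar components of the displayed vector identity gives $F_{11}(\lambda)+F_{12}(\lambda)zf(z,\lambda)=g(z,\lambda)$ and $F_{21}(\lambda)+F_{22}(\lambda)zf(z,\lambda)=f(z,\lambda)$; the second rearranges to $(1-F_{22}(\lambda)z)f(z,\lambda)=F_{21}(\lambda)$, i.e. $f(z,\lambda)=\gamma(\lambda,z)$. Thus the $f$ coming out of Procedure $UW$ is exactly the $\gamma$ that $\UEf$ builds from $\Xi$, and consequently $\eta(\lambda,z)=\begin{pmatrix}1\\ z\gamma(\lambda,z)\end{pmatrix}=\begin{pmatrix}1\\ zf(z,\lambda)\end{pmatrix}$.

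Now I would compute $N_\Xi$ and $M_\Xi$ directly. For $N_\Xi$: by definition $N_\Xi(z,\lambda,w,\mu)=\overline{\gamma(\mu,w)}\gamma(\lambda,z)=\overline{f(w,\mu)}f(z,\lambda)=N(z,\lambda,w,\mu)$, so $N_\Xi=N$. For $M_\Xi$: I would use Proposition \ref{1-calfcalf} applied to $\Xi\in\cal{S}^{2\times2}$, which gives
\[
1-\overline{\cal{F}_{\Xi(\mu)}(w)}\,\cal{F}_{\Xi(\lambda)}(z)
=(1-\overline{w}z)N_\Xi(z,\lambda,w,\mu)+(1-\overline{\mu}\lambda)M_\Xi(z,\lambda,w,\mu),
\]
exactly as in the proof of Proposition \ref{UEfwelldef}. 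On the other hand, $\cal{F}_{\Xi(\lambda)}(z)=F_{11}(\lambda)+F_{12}(\lambda)z(1-F_{22}(\lambda)z)^{-1}F_{21}(\lambda)=F_{11}(\lambda)+F_{12}(\lambda)zf(z,\lambda)=g(z,\lambda)$ by the first component equation, so the left-hand side equals $1-\overline{g(w,\mu)}g(z,\lambda)=1-K_{N,M}(z,\lambda,w,\mu)$. Substituting the definition of $K_{N,M}$ and using $N_\Xi=N$, the identity becomes $(1-\overline{w}z)N+(1-\overline{\mu}\lambda)M=(1-\overline{w}z)N+(1-\overline{\mu}\lambda)M_\Xi$, whence $(1-\overline{\mu}\lambda)M_\Xi=(1-\overline{\mu}\lambda)M$ on $\mathbb{D}^2$, and cancelling the nonvanishing scalar factor $1-\overline{\mu}\lambda$ (it vanishes only on a set whose complement is dense, and both sides are analytic/hermitian-symmetric) yields $M_\Xi=M$. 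Therefore $\UE{\Xi}=(N_\Xi,M_\Xi)=(N,M)$.

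The only genuine subtlety, and the step I would be most careful about, is the passage from the scalar identity produced by Proposition \ref{1-calfcalf} — which a priori holds only for $z,\lambda,w,\mu$ with $1-F_{22}(\lambda)z\neq0$ and $1-F_{22}(\mu)w\neq0$ — to the equality of kernels on all of $\mathbb{D}^2$, together with the legitimacy of cancelling $1-\overline\mu\lambda$. Since $\Xi\in\cal{S}^{2\times2}$ we have $|F_{22}(\lambda)|\le1$ and $|z|<1$, so $1-F_{22}(\lambda)z\neq0$ automatically for all $z,\lambda\in\mathbb{D}$ (as noted in Section \ref{UEf}), which removes the first concern entirely; and the cancellation is justified because $1-\overline\mu\lambda\neq0$ for all $\lambda,\mu\in\mathbb{D}$. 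One should also record that the conclusion does not depend on which representatives $f,g,v$ were chosen in Procedure $UW$: by Proposition \ref{relationbetweenfunctfromUW} any other $\Xi'\in\UW{N,M}$ differs from $\Xi$ by unimodular diagonal factors on the left and right, and these cancel in the formation of $N_{\Xi'}$ and $M_{\Xi'}$ (indeed $\overline{\gamma'(\mu,w)}\gamma'(\lambda,z)$ and $\overline{g'(w,\mu)}g'(z,\lambda)$ are unchanged since the scalars are of modulus $1$), so the identity $\UE{\Xi'}=(N,M)$ holds for every element of the set $\UW{N,M}$.
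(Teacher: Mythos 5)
Your proof is correct and follows essentially the same route as the paper's: read off the two components of the Procedure $UW$ identity to get $f=\gamma$ (hence $N_\Xi=N$) and $\mathcal{F}_{\Xi(\lambda)}(z)=g(z,\lambda)$, then invoke Proposition \ref{1-calfcalf} together with the definition of $K_{N,M}$ to conclude $M_\Xi=M$. Your extra remarks on the nonvanishing of $1-F_{22}(\lambda)z$ and $1-\overline{\mu}\lambda$, and on independence of the chosen representatives, are correct but not needed beyond what the paper already records.
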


\begin{proof} Let $\Xi=\begin{bmatrix}a&b\\c&d\end{bmatrix}
\in\cal{S}^{2\times2}$. 
Then $\UE{\Xi}=(N_{\Xi},M_{\Xi})$, where
\[
N_{\Xi}(z,\lambda,w,\mu)=
\overline{\frac{c(\mu)}{1-d(\mu)w}}
\frac{c(\lambda)}{1-d(\lambda)z}
\]
and 
\[
M_{\Xi}(z,\lambda,w,\mu)=
\begin{bmatrix} 1&\frac{\overline{w}\,\overline{c(\mu)}}
{1-\overline{d(\mu)}\,\overline{w}}
\end{bmatrix}
\frac{I-\Xi(\mu)^*\Xi(\lambda)}{1-\overline{\mu}\lambda}
\begin{bmatrix}1\\\frac{zc(\lambda)}{1-d(\lambda)z}\end{bmatrix},
\]
for all $z,\lambda,w,\mu\in\mathbb{D}$.

By assumption, $\Xi\in\UW{N,M}$. Thus there exist functions
 $f$ and $g$ such that
\[N(z,\lambda,w,\mu)=\overline{f(w,\mu)}f(z,\lambda), \; K_{N,M}(z,\lambda,w,\mu)=\overline{g(w,\mu)}g(z,\lambda)\]
for all $z,\lambda,w,\mu\in\mathbb{D}$,
and 
\[\Xi(\lambda)\begin{pmatrix}1\\zf(z,\lambda)\end{pmatrix}=\begin{pmatrix}g(z,\lambda)\\f(z,\lambda)\end{pmatrix}\]
for all $z,\lambda\in\mathbb{D}$.

Hence \[a(\lambda)+b(\lambda)zf(z,\lambda)=g(z,\lambda)\text{ and }c(\lambda)+d(\lambda)zf(z,\lambda)=f(z,\lambda)\]
for all $z,\lambda\in\mathbb{D}$.
Therefore, for all $z,\lambda\in\mathbb{D}$, 
 $1-d(\lambda)z\neq0$ and
\[f(z,\lambda)= (1-d(\lambda)z)^{-1}c(\lambda).\] 
Thus 
\[N_{\Xi}(z,\lambda,w,\mu)=\overline{f(w,\mu)}f(z,\lambda)=N(z,\lambda,w,\mu)\] for all $z,\lambda,w,\mu\in\mathbb{D}$.
Moreover
\[\cal{F}_{\Xi(\lambda)}(z)=a(\lambda)+b(\lambda)z(1-d(\lambda)z)^{-1}c(\lambda)=g(z,\lambda)\]
for all $z,\lambda\in\mathbb{D}$.
Therefore
\[\overline{\cal{F}_{\Xi(\mu)}(w)}\cal{F}_{\Xi(\lambda)}(z)=\overline{g(w,\mu)}g(z,\lambda)=K_{N,M}(z,\lambda,w,\mu)\]
for all $z,\lambda,w,\mu\in\mathbb{D}$.
By Proposition \ref{1-calfcalf},
\[1-\overline{\cal{F}_{\Xi(\mu)}(w)}\cal{F}_{\Xi(\lambda)}(z)=(1-\overline{w}z)N_{\Xi}(z,\lambda,w,\mu)+(1-\overline{\mu}\lambda)M_{\Xi}(
z,\lambda,w,\mu),\]
and so
\[1-K_{N,M}(z,\lambda,w,\mu)=(1-\overline{w}z)N(z,\lambda,w,\mu)+(1-\overline{\mu}\lambda)M_{\Xi}(
z,\lambda,w,\mu)\]
for all $z,\lambda,w,\mu\in\mathbb{D}$.
By assumption, 
\[
K_{N,M}(z,\lambda,w,\mu)=1-(1-\overline{w}z)N(z,\lambda,w,\mu)-(1-\overline{\mu}\lambda)M(z,\lambda,w,\mu)
\]
for all $z,\lambda,w,\mu\in\mathbb{D}$.
Hence
$M_{\Xi}(z,\lambda,w,\mu)=M(z,\lambda,w,\mu)$
for all $z,\lambda,w,\mu\in\mathbb{D}$. 
\end{proof}

\begin{proposition} \label{UWUE=}
For any $F\in\cal{S}^{2\times2}$ such that $F_{21} \neq 0$,
\[\UWf\circ\UE{F}=\left\{\begin{bmatrix}\zeta_1&0\\0&\zeta_2\end{bmatrix}F\begin{bmatrix}1&0\\0&\overline{\zeta_2}\end{bmatrix}:
\zeta_1,\zeta_2\in\mathbb{T}\right\}.\]
\end{proposition}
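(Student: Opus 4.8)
The plan is to show that $F$ itself is one of the functions produced from the pair $(N_F,M_F):=\UE{F}$ by Procedure~$UW$, and then to appeal to Proposition~\ref{UWfwelldefined} to describe the whole set $\UW{N_F,M_F}$. Since $\UWf\circ\UE{F}$ is by definition $\UW{N_F,M_F}$, this will give the asserted equality.

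Since $F_{21}\neq0$, Proposition~\ref{UEfwelldef} gives $(N_F,M_F)\in\cal{R}_{11}$, so Procedure~$UW$ applies. Recall from the definition of $\UEf$ and the proof of Proposition~\ref{UEfwelldef} that, with $\gamma,\eta$ as in \eqref{gamma-eta-1},
\[
N_F(z,\lambda,w,\mu)=\overline{\gamma(\mu,w)}\,\gamma(\lambda,z)
\quad\text{and}\quad
K_{N_F,M_F}(z,\lambda,w,\mu)=\overline{\cal{F}_{F(\mu)}(w)}\,\cal{F}_{F(\lambda)}(z)
\]
for all $z,\lambda,w,\mu\in\mathbb{D}$. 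Because $F_{21}\not\equiv0$, neither $\gamma$ nor the function $(z,\lambda)\mapsto\cal{F}_{F(\lambda)}(z)$ vanishes identically (if $\cal{F}_{F(\cdot)}(\cdot)\equiv0$ then $K_{N_F,M_F}\equiv0$, contradicting rank $1$); hence $f(z,\lambda):=\gamma(\lambda,z)$ and $g(z,\lambda):=\cal{F}_{F(\lambda)}(z)$ are admissible generators of the one-dimensional spaces $\cal{H}_{N_F}$ and $\cal{H}_{K_{N_F,M_F}}$ in Procedure~$UW$. Fixing also any admissible $v_{z,\lambda}\in\cal{H}_{M_F}$, Procedure~$UW$ yields a $\Xi\in\cal{S}^{2\times2}$ with
\[
\Xi(\lambda)\,\eta(\lambda,z)=\begin{pmatrix}\cal{F}_{F(\lambda)}(z)\\ \gamma(\lambda,z)\end{pmatrix}\qquad(z,\lambda\in\mathbb{D}).
\]
On the other hand, a one-line computation using $\gamma(\lambda,z)=(1-F_{22}(\lambda)z)^{-1}F_{21}(\lambda)$ and the formula for $\cal{F}_{F(\lambda)}(z)$ in Remark~\ref{calFholo} shows that $F$ satisfies the very same identity:
\[
F(\lambda)\,\eta(\lambda,z)=\begin{pmatrix}F_{11}(\lambda)+F_{12}(\lambda)z\gamma(\lambda,z)\\ F_{21}(\lambda)+F_{22}(\lambda)z\gamma(\lambda,z)\end{pmatrix}=\begin{pmatrix}\cal{F}_{F(\lambda)}(z)\\ \gamma(\lambda,z)\end{pmatrix}.
\]
Subtracting, $\bigl(\Xi(\lambda)-F(\lambda)\bigr)\eta(\lambda,z)=0$ for all $z,\lambda\in\mathbb{D}$. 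The zero set of the nonzero analytic function $F_{21}$ is nowhere dense, and for each $\lambda$ outside it the quantity $z\gamma(\lambda,z)=z(1-F_{22}(\lambda)z)^{-1}F_{21}(\lambda)$ is a nonconstant analytic function of $z$, so the vectors $\eta(\lambda,z)=(1,\,z\gamma(\lambda,z))^{\mathrm T}$, $z\in\mathbb{D}$, span $\mathbb{C}^2$; therefore $\Xi(\lambda)=F(\lambda)$ there and, by analyticity, $\Xi=F$ on $\mathbb{D}$. (This is the spanning/nowhere-dense argument already used in the proof of Proposition~\ref{relationbetweenfunctfromUW}.) Hence $F\in\UW{N_F,M_F}$.

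Finally, Proposition~\ref{UWfwelldefined} applied to $\Xi=F$ gives
\[
\UWf\circ\UE{F}=\UW{N_F,M_F}=\left\{\begin{bmatrix}\zeta_1&0\\0&\zeta_2\end{bmatrix}F\begin{bmatrix}1&0\\0&\overline{\zeta_2}\end{bmatrix}:\zeta_1,\zeta_2\in\mathbb{T}\right\},
\]
as required. The step that needs care is the middle paragraph: verifying that the data $f=\gamma$, $g=\cal{F}_{F(\cdot)}$ read off from $F$ itself are legitimate inputs to Procedure~$UW$, and that the procedure then returns $F$ unchanged. This is exactly where the hypothesis $F_{21}\neq0$ enters — both to make $N_F$ and $K_{N_F,M_F}$ genuinely of rank $1$ with $\gamma$ and $\cal{F}_{F(\cdot)}$ as generators, and to secure the spanning property of the vectors $\eta(\lambda,z)$ that forces $\Xi=F$.
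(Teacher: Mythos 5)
Your proposal is correct and follows essentially the same route as the paper: apply Procedure $UW$ to $(N_F,M_F)$ with the generators $\gamma$ and $\cal{F}_{F(\cdot)}(\cdot)$ read off from $F$, observe that $F$ satisfies the same intertwining identity as the resulting $\Xi$, deduce $\Xi=F$ from the nonvanishing of $F_{21}$, and invoke Proposition~\ref{UWfwelldefined}. Your spanning argument for the vectors $\eta(\lambda,z)$ in the step forcing $\Xi=F$ is in fact a slightly more careful justification than the paper's terse appeal to the isolated zeros of $F_{21}$.
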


\begin{proof}
Let $F=\FF\in\cal{S}^{2\times2}$. Then $\UE{F}=(N_F,M_F)$ where
\[N_{F}(z,\lambda,w,\mu)=\overline{\frac{F_{21}(\mu)}{1-F_{22}(\mu)w}}
\frac{F_{21}(\lambda)}{1-F_{22}(\lambda)z}\]
and 
\[M_{F}(z,\lambda,w,\mu)=\begin{bmatrix}1&\frac{\overline{wF_{21}(\mu)}}{1-\overline{F_{22}(\mu)w}}\end{bmatrix}
\frac{I-F(\mu)^*F(\lambda)}{1-\overline{\mu}\lambda}
\begin{bmatrix}1\\\frac{zF_{21}(\lambda)}{1-F_{22}(\lambda)z}\end{bmatrix},\]
for all $z,\lambda,w,\mu\in\mathbb{D}$.
By Proposition \ref{1-calfcalf},
\[1-\overline{\cal{F}_{F(\mu)}(w)}\cal{F}_{F(\lambda)}(z)=
(1-\overline{w}z)N_{F}(z,\lambda,w,\mu)+(1-\overline{\mu}\lambda)M_{F}(z,\lambda,w,\mu),\]
and so 
\[K_{N_F,M_F}(z,\lambda,w,\mu)=1-(1-\overline{w}z)N_{F}(z,\lambda,w,\mu)-(1-\overline{\mu}\lambda)M_{F}(z,\lambda,w,\mu)=
\overline{\cal{F}_{F(\mu)}(w)}\cal{F}_{F(\lambda)}(z)\]
for all $z,\lambda,w,\mu\in\mathbb{D}$.
Apply Procedure $UW$ to $(N_F,M_F)$ to construct a function $\Xi\in\cal{S}^{2\times2}$ such that 
\[\Xi(\lambda)\begin{pmatrix}1\\\frac{zF_{21}(\lambda)}{1-F_{22}(\lambda)z}\end{pmatrix}=
\begin{pmatrix}\cal{F}_{F(\lambda)}(z)\\\frac{F_{21}(\lambda)}{1-F_{22}(\lambda)z}\end{pmatrix}\]
for all $z,\lambda\in\mathbb{D}$.
Then, by Proposition \ref{UWfwelldefined},
\[\UW{N_F,M_F}=\left\{\begin{bmatrix}\zeta_1&0\\0&\zeta_2\end{bmatrix}\Xi\begin{bmatrix}1&0\\0&\overline{\zeta_2}\end{bmatrix}:
\zeta_1,\zeta_2\in\mathbb{T}\right\}.\]
Note
\begin{align*}
F(\lambda)\begin{pmatrix}1\\\frac{zF_{21}(\lambda)}{1-F_{22}(\lambda)z}\end{pmatrix}=&
\begin{bmatrix}F_{11}(\lambda)&F_{12}(\lambda)\\F_{21}(\lambda)&F_{22}(\lambda)\end{bmatrix}
\begin{pmatrix}1\\\frac{zF_{21}(\lambda)}{1-F_{22}(\lambda)z}\end{pmatrix}\\
=&
\begin{pmatrix}F_{11}(\lambda)+\frac{F_{12}(\lambda)zF_{21}(\lambda)}{1-F_{22}(\lambda)z}\\
F_{21}(\lambda)+\frac{F_{22}(\lambda)zF_{21}(\lambda)}{1-F_{22}(\lambda)z}\end{pmatrix}
=\begin{pmatrix}\cal{F}_{F(\lambda)}(z)\\\frac{F_{21}(\lambda)}{1-F_{22}(\lambda)z}\end{pmatrix},\end{align*}
for all $z,\lambda\in\mathbb{D}$.
Therefore
\[(\Xi(\lambda)-F(\lambda))\begin{pmatrix}1\\\frac{zF_{21}(\lambda)}{1-F_{22}(\lambda)z}\end{pmatrix}=0,\]
for all $z,\lambda\in\mathbb{D}$.
Since $F_{21}$ is a nonzero analytic function on $\D$,
 the zeros of $F_{21}$ are isolated  in $\D$.
Thus $\Xi(\lambda)=F(\lambda)$ for all $\lambda\in\mathbb{D}$.
Hence
\[\UWf\circ\UE{F}=\left\{\begin{bmatrix}\zeta_1&0\\0&\zeta_2\end{bmatrix}F\begin{bmatrix}1&0\\0&\overline{\zeta_2}\end{bmatrix}:
\zeta_1,\zeta_2\in\mathbb{T}\right\}.\]
\end{proof}


\subsection{The map $\RSf:\cal{R}_1\to\cal{S}_2$}\label{RSf}

\begin{definition}
The map  $\RSf$ is the set-valued map from $\cal{R}_1$ to $\cal{S}_2$ which is given, for each $(N,M)\in\cal{R}_1$, by
\[\RS{N,M}=\{f \in \cal{S}_2, \;\text{such that} \;K_{N,M}(z,\lambda,w,\mu)=\overline{f(w,\mu)}f(z,\lambda),\; z,\lambda,w,\mu\in\mathbb{D} \}.\] 
\end{definition}

\begin{proposition}\label{RSfwelldefined}
$\RSf$ is well defined and, for  $(N,M)\in\cal{R}_1$,
\[\RS{N,M}=\{\zeta f:\zeta\in\mathbb{T}\},\] where
$f:\mathbb{D}^2\to\mathbb{C}$ is analytic and satisfies
\[K_{N,M}(z,\lambda,w,\mu)=\overline{f(w,\mu)}f(z,\lambda)\] for all $z,\lambda,w,\mu\in\mathbb{D}$.

\end{proposition}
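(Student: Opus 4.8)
\textbf{Proof plan for Proposition \ref{RSfwelldefined}.}

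The plan is to show first that $\RS{N,M}$ is nonempty, then that any two of its elements differ by a unimodular constant, and finally to record that the constructed function is analytic on $\mathbb{D}^2$ and takes values in $\overline{\mathbb{D}}$, so that it genuinely lies in $\cal{S}_2$. For the existence part, I would invoke the definition of $\cal{R}_1$: by hypothesis $K_{N,M}$ is an analytic kernel on $\mathbb{D}^2$ of rank $1$. A hermitian-symmetric positive semidefinite kernel of rank $1$ is, by the standard Kolmogorov/Moore--Aronszajn factorisation (as in \cite[Theorem 2.23]{AMc02}), of the form $K_{N,M}(z,\lambda,w,\mu) = \overline{f(w,\mu)}\,f(z,\lambda)$ for a single function $f:\mathbb{D}^2\to\mathbb{C}$ spanning the one-dimensional space $\cal{H}_{K_{N,M}}$; concretely one may take $f(z,\lambda) = K_{N,M}(z,\lambda,z_0,\lambda_0)/\sqrt{K_{N,M}(z_0,\lambda_0,z_0,\lambda_0)}$ for any base point $(z_0,\lambda_0)$ at which the diagonal does not vanish. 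Analyticity of $f$ in $(z,\lambda)$ is inherited from the analyticity of the kernel $K_{N,M}$ in its first pair of variables, which is part of the definition of an analytic kernel.

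Next I would bound $f$. From the factorisation and the defining formula for $K_{N,M}$, for every $(z,\lambda)\in\mathbb{D}^2$ we have
\[
|f(z,\lambda)|^2 = K_{N,M}(z,\lambda,z,\lambda) = 1 - (1-|z|^2)N(z,\lambda,z,\lambda) - (1-|\lambda|^2)M(z,\lambda,z,\lambda).
\]
Since $N$ and $M$ are (positive semidefinite) kernels, their diagonal values are nonnegative, and $1-|z|^2, 1-|\lambda|^2 > 0$ on $\mathbb{D}^2$, so $|f(z,\lambda)|^2 \le 1$. Hence $f\in\hol{\mathbb{D}^2}{\overline{\mathbb{D}}} = \cal{S}_2$, which shows $\RS{N,M}$ is a nonempty subset of $\cal{S}_2$ and in particular that $\RSf$ is well defined as a set-valued map.

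For uniqueness up to a unimodular scalar, suppose $f_1$ and $f_2$ both satisfy $K_{N,M}(z,\lambda,w,\mu) = \overline{f_j(w,\mu)}\,f_j(z,\lambda)$. Comparing diagonals gives $|f_1(z,\lambda)| = |f_2(z,\lambda)|$ for all $(z,\lambda)$. Fix a base point $(z_0,\lambda_0)$ where $f_1(z_0,\lambda_0)\neq 0$ (such a point exists since $K_{N,M}$ has rank exactly $1$, hence is not identically zero), and set $\zeta = f_2(z_0,\lambda_0)/f_1(z_0,\lambda_0)$, which has modulus $1$. Then for every $(z,\lambda)\in\mathbb{D}^2$,
\[
\overline{f_2(z_0,\lambda_0)}\,f_2(z,\lambda) = K_{N,M}(z,\lambda,z_0,\lambda_0) = \overline{f_1(z_0,\lambda_0)}\,f_1(z,\lambda),
\]
so $f_2(z,\lambda) = \zeta f_1(z,\lambda)$ identically. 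Conversely, for any $\zeta\in\mathbb{T}$ the function $\zeta f_1$ clearly reproduces $K_{N,M}$ and lies in $\cal{S}_2$, so $\RS{N,M} = \{\zeta f : \zeta\in\mathbb{T}\}$ with $f$ as above. I do not anticipate a serious obstacle here; the only point requiring a little care is making sure the rank-$1$ hypothesis is used at exactly the right place, namely to guarantee that $f$ is not the zero function (so that a valid base point exists) and that $\cal{H}_{K_{N,M}}$ is literally one-dimensional rather than merely finite-dimensional, which is what pins down the indeterminacy to a single scalar.
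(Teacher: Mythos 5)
Your argument is correct and follows essentially the same route as the paper's proof: factorise the rank-one analytic kernel $K_{N,M}$ as $\overline{f(w,\mu)}f(z,\lambda)$, deduce $|f|\le 1$ from the nonnegativity of the diagonal values $(1-|z|^2)N(z,\lambda,z,\lambda)+(1-|\lambda|^2)M(z,\lambda,z,\lambda)$, and note that the factorisation is unique up to a unimodular constant. The paper simply asserts the existence and essential uniqueness of $f$ as standard facts about rank-one kernels, whereas you supply the explicit base-point construction and the uniqueness argument; this is a harmless elaboration, not a different method.
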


\begin{proof}
Let $(N,M)\in\cal{R}_1$. Then $K_{N,M}$ is an analytic kernel on $\mathbb{D}^2$ of rank $1$. Thus there exist an analytic function
$f:\mathbb{D}^2\to\mathbb{C}$ such that 
\[K_{N,M}(z,\lambda,w,\mu)=\overline{f(w,\mu)}f(z,\lambda)\] for all $z,\lambda,w,\mu\in\mathbb{D}$. In addition, if for an analytic function $g:\mathbb{D}^2\to\mathbb{C}$,
\[K_{N,M}(z,\lambda,w,\mu)=\overline{g(w,\mu)}g(z,\lambda)\] for all $z,\lambda,w,\mu\in\mathbb{D}$, then 
$g = \zeta f$ for some $\zeta\in\mathbb{T}$.

Note
\[1-K_{N,M}(z,\lambda,w,\mu)=
(1-\overline{w}z)N(z,\lambda,w,\mu)+(1-\overline{\mu}\lambda)M(z,\lambda,w,\mu)\geq0\]
for all $z,\lambda,w,\mu\in\mathbb{D}$. Thus
\[
1-\overline{f(w,\mu)} f(z,\lambda)=
1-K_{N,M}(z,\lambda,w,\mu) \ge 0 \]
for all $z,\lambda,w,\mu\in\mathbb{D}$. Hence
 $|f(z,\lambda)|\leq1$ for all $z,\lambda\in\mathbb{D}$.
Therefore  $f\in\cal{S}_2$, and so $\RSf$ is well defined.
\end{proof}

Let us consider relations between  $\RSf$ and other maps in the rich saltire. 

\begin{proposition} \label{RSUEequalSE}
Let $F\in\cal{S}^{2\times2}$. Then
\[\RSf\circ\UE{F}=\left\{\zeta\SE{F}:\zeta\in\mathbb{T}\right\}.\]
\end{proposition}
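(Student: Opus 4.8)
The plan is to compute both sides of the asserted identity directly from the definitions and compare them. First I would unwind the left-hand side: for $F \in \cal{S}^{2\times 2}$, the definition of $\UEf$ gives $\UE{F} = (N_F, M_F)$, and then by the definition of $\RSf$, the set $\RSf\circ\UE{F}$ consists of all $f \in \cal{S}_2$ with $K_{N_F,M_F}(z,\lambda,w,\mu) = \overline{f(w,\mu)}f(z,\lambda)$ for all $z,\lambda,w,\mu \in \mathbb{D}$. By Proposition \ref{RSfwelldefined}, this set has the form $\{\zeta f_0 : \zeta \in \mathbb{T}\}$ for any one analytic $f_0 : \mathbb{D}^2 \to \mathbb{C}$ realizing the rank-one factorization of $K_{N_F,M_F}$.

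The key step is to identify an explicit such $f_0$. In the proof of Proposition \ref{UEfwelldef} (and also in Proposition \ref{UEfwelldef_0} for the degenerate case $F_{21}=0$), it is shown that
\[
K_{N_F,M_F}(z,\lambda,w,\mu) = \overline{\cal{F}_{F(\mu)}(w)}\,\cal{F}_{F(\lambda)}(z)
\]
for all $z,\lambda,w,\mu \in \mathbb{D}$. Since $\SEf(F)(z,\lambda) = -\cal{F}_{F(\lambda)}(z)$ by Definition \ref{SEf_all}, and $\overline{(-\cal{F}_{F(\mu)}(w))}(-\cal{F}_{F(\lambda)}(z)) = \overline{\cal{F}_{F(\mu)}(w)}\,\cal{F}_{F(\lambda)}(z)$, the function $f_0 = \SE{F}$ is exactly an analytic function on $\mathbb{D}^2$ realizing the rank-one factorization $K_{N_F,M_F}(z,\lambda,w,\mu) = \overline{f_0(w,\mu)}f_0(z,\lambda)$. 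Moreover $\SE{F} \in \cal{S}_2$ by the proposition following Definition \ref{SEf_all} (or directly by Proposition \ref{1-calfcalf}). Thus $\SE{F}$ is a legitimate choice of $f_0$, and by the uniqueness-up-to-unimodular-constant statement in Proposition \ref{RSfwelldefined} we conclude $\RSf\circ\UE{F} = \{\zeta\,\SE{F} : \zeta \in \mathbb{T}\}$, which is the claim.

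I anticipate no serious obstacle here; the statement is essentially a bookkeeping consequence of the identity $K_{N_F,M_F} = \overline{\cal{F}_{F(\mu)}(w)}\,\cal{F}_{F(\lambda)}(z)$ already established in Section \ref{UEf}. The only point requiring a small amount of care is the case $F_{21} = 0$: there one must use Proposition \ref{UEfwelldef_0}, which gives $K_{N_F,M_F}(z,\lambda,w,\mu) = \overline{F_{11}(\mu)}F_{11}(\lambda)$ and $\SE{F}(z,\lambda) = -\cal{F}_{F(\lambda)}(z) = -F_{11}(\lambda)$, so again $K_{N_F,M_F}(z,\lambda,w,\mu) = \overline{\SE{F}(w,\mu)}\,\SE{F}(z,\lambda)$ and the same conclusion follows. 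So a uniform argument covers both cases.
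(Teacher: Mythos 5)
Your proposal is correct and follows essentially the same route as the paper's own proof: both rest on the identity $K_{N_F,M_F}(z,\lambda,w,\mu)=\overline{\cal{F}_{F(\mu)}(w)}\,\cal{F}_{F(\lambda)}(z)$ from Propositions \ref{UEfwelldef} and \ref{UEfwelldef_0}, the sign cancellation $\overline{(-\cal{F}_{F(\mu)}(w))}(-\cal{F}_{F(\lambda)}(z))=\overline{\cal{F}_{F(\mu)}(w)}\,\cal{F}_{F(\lambda)}(z)$, and the uniqueness up to a unimodular factor from Proposition \ref{RSfwelldefined}. Your explicit treatment of the $F_{21}=0$ case is a minor elaboration the paper subsumes by citing Proposition \ref{UEfwelldef_0}.
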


\begin{proof}
By the definition,
$\SE{F}(z,\lambda)=-\cal{F}_{F(\lambda)}(z)$ for all $z,\lambda\in\mathbb{D}$. By the definition of $\UE{F}$ and by Propositions \ref{UEfwelldef} and \ref{UEfwelldef_0}, 
$\UE{F}=(N_F,M_F)\in\cal{R}_1$, where
\[K_{N_F,M_F}(z,\lambda,w,\mu)=\overline{\cal{F}_{F(\mu)}(w)}\cal{F}_{F(\lambda)}(z)=
\overline{(-\cal{F}_{F(\mu)}(w))}(-\cal{F}_{F(\lambda)}(z))\]
for all $z,\lambda,w,\mu\in\mathbb{D}$. 
Thus
\[\RSf\circ\UE{F}=\RS{N_F,M_F}=\left\{\zeta\SE{F}:\zeta\in\mathbb{T}\right\}.\]
\end{proof}

\begin{proposition} Let $(N,M)\in\cal{R}_{11}$. Then 
\[\RS{N,M}=\{\SE{F}:F\in\UW{N,M}\}.\]
\end{proposition}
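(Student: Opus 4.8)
The plan is to show that both sides coincide with the unimodular orbit $\{\zeta g:\zeta\in\mathbb{T}\}$ of any single function $g$ that realises the rank-one kernel $K_{N,M}$, using Proposition~\ref{RSfwelldefined} for the left-hand side and Procedure~$UW$ together with Proposition~\ref{UWfwelldefined} for the right-hand side.

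First fix $(N,M)\in\cal{R}_{11}$. By Proposition~\ref{RSfwelldefined} there is an analytic $f:\mathbb{D}^2\to\mathbb{C}$ with $K_{N,M}(z,\lambda,w,\mu)=\overline{f(w,\mu)}f(z,\lambda)$ for all $z,\lambda,w,\mu\in\mathbb{D}$ and $\RS{N,M}=\{\zeta f:\zeta\in\mathbb{T}\}$. Next, run Procedure~$UW$: by Theorem~\ref{procUWcons} there is $\Xi=\begin{bmatrix}a&b\\c&d\end{bmatrix}\in\UW{N,M}$ together with $\phi\in\cal{H}_N$ and $g\in\cal{H}_{K_{N,M}}$ such that $N(z,\lambda,w,\mu)=\overline{\phi(w,\mu)}\phi(z,\lambda)$, $K_{N,M}(z,\lambda,w,\mu)=\overline{g(w,\mu)}g(z,\lambda)$ and
\[
\Xi(\lambda)\begin{pmatrix}1\\ z\phi(z,\lambda)\end{pmatrix}=\begin{pmatrix}g(z,\lambda)\\ \phi(z,\lambda)\end{pmatrix}\qquad(z,\lambda\in\mathbb{D}).
\]
Reading off the two scalar equations from this identity, exactly as in the proof of Proposition~\ref{UWUE=}, gives $c(\lambda)+d(\lambda)z\phi(z,\lambda)=\phi(z,\lambda)$, hence $\phi(z,\lambda)=(1-d(\lambda)z)^{-1}c(\lambda)$ (note $1-d(\lambda)z\neq0$ since $\|\Xi(\lambda)\|\le1$ and $z\in\mathbb{D}$), and therefore $\cal{F}_{\Xi(\lambda)}(z)=a(\lambda)+b(\lambda)z(1-d(\lambda)z)^{-1}c(\lambda)=a(\lambda)+b(\lambda)z\phi(z,\lambda)=g(z,\lambda)$. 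Consequently $\SE{\Xi}(z,\lambda)=-\cal{F}_{\Xi(\lambda)}(z)=-g(z,\lambda)$.

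The second step is to compute $\SEf$ along the whole of $\UW{N,M}$. For $\zeta_1,\zeta_2\in\mathbb{T}$ set $F=\begin{bmatrix}\zeta_1&0\\0&\zeta_2\end{bmatrix}\Xi\begin{bmatrix}1&0\\0&\overline{\zeta_2}\end{bmatrix}=\begin{bmatrix}\zeta_1 a&\zeta_1\overline{\zeta_2}b\\ \zeta_2 c&d\end{bmatrix}$; a one-line substitution into the formula for $\cal{F}_{F(\lambda)}(z)$ gives $\cal{F}_{F(\lambda)}(z)=\zeta_1\cal{F}_{\Xi(\lambda)}(z)=\zeta_1 g(z,\lambda)$, so $\SE{F}=-\zeta_1 g$. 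By Proposition~\ref{UWfwelldefined} every element of $\UW{N,M}$ has this form, whence $\{\SE{F}:F\in\UW{N,M}\}=\{-\zeta_1 g:\zeta_1\in\mathbb{T}\}=\{\zeta g:\zeta\in\mathbb{T}\}$. Finally, since $g$ and $f$ both realise the rank-one kernel $K_{N,M}$ we have $g=\zeta_0 f$ for some $\zeta_0\in\mathbb{T}$ (the uniqueness up to a unimodular scalar already used in Proposition~\ref{RSfwelldefined}), so $\{\zeta g:\zeta\in\mathbb{T}\}=\{\zeta f:\zeta\in\mathbb{T}\}=\RS{N,M}$, which is the claim.

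I do not expect a genuine obstacle here: the content reduces to the identity $\cal{F}_{\Xi(\lambda)}(z)=g(z,\lambda)$ and to the way $\cal{F}$ transforms under the diagonal conjugation $\Xi\mapsto\diag(\zeta_1,\zeta_2)\,\Xi\,\diag(1,\overline{\zeta_2})$, both of which are routine verifications essentially already contained in the proof of Proposition~\ref{UWUE=}. The only points requiring care are keeping the realiser $\phi\in\cal{H}_N$ of $N$ distinct from the realisers $g,f$ of $K_{N,M}$, and invoking Proposition~\ref{UWfwelldefined} so that the orbit $\{\diag(\zeta_1,\zeta_2)\,\Xi\,\diag(1,\overline{\zeta_2}):\zeta_1,\zeta_2\in\mathbb{T}\}$ really is all of $\UW{N,M}$ rather than merely a subset.
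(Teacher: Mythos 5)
Your argument is correct and follows essentially the same route as the paper: both compute $\{\SE{F}:F\in\UW{N,M}\}=\{\zeta\,\SE{\Xi}:\zeta\in\mathbb{T}\}$ via the diagonal-conjugation description of $\UW{N,M}$ from Proposition \ref{UWfwelldefined}, and both identify $\SE{\Xi}$ (up to sign) with a realiser of the rank-one kernel $K_{N,M}$. The only cosmetic difference is that the paper concludes by citing Propositions \ref{UEUW=id} and \ref{RSUEequalSE}, whereas you re-derive the key identity $\cal{F}_{\Xi(\lambda)}(z)=g(z,\lambda)$ inline; the content is the same.
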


\begin{proof}
Let $(N,M)\in\cal{R}_{11}$ and let
$\Xi=\begin{bmatrix}\Xi_{11}&\Xi_{12}\\\Xi_{21}&\Xi_{22}\end{bmatrix}\in\cal{S}^{2\times2}$ 
be constructed  by Procedure UW for  $(N,M)$.
Then $\UW{N,M}=\left\{\begin{bmatrix}\zeta_1&0\\0&\zeta_2\end{bmatrix}\Xi\begin{bmatrix}1&0\\0&\overline{\zeta_2}\end{bmatrix}:
\zeta_1,\zeta_2\in\mathbb{T}\right\}$ and 
\begin{align*}\SEf\left(\begin{bmatrix}\zeta_1&0\\0&\zeta_2\end{bmatrix}\Xi\begin{bmatrix}1&0\\0&\overline{\zeta_2}\end{bmatrix}\right)
(z,\lambda)
=&
\SEf\left(\begin{bmatrix}\zeta_1\Xi_{11}&\zeta_1\overline{\zeta_2}\Xi_{12}\\\zeta_2\Xi_{21}&\Xi_{22}\end{bmatrix}\right)(z,\lambda)
\\=&
-\zeta_1\Xi_{11}(\lambda)-\frac{\zeta_1\overline{\zeta_2}\Xi_{12}(\lambda)\zeta_2\Xi_{21}(\lambda)z}{1-\Xi_{22}(\lambda)z}\\
=&\zeta_1\left(-\Xi_{11}(\lambda)-\frac{\Xi_{12}(\lambda)\Xi_{21}(\lambda)z}{1-\Xi_{22}(\lambda)z}\right)
=\zeta_1\SE{\Xi}(z,\lambda)\end{align*}
for all $z,\lambda\in\mathbb{D}$ and all $\zeta_1,\zeta_2\in\mathbb{T}$. 
Hence 
\[\left\{\SE{F}:F\in\UW{N,M}\right\}=\left\{\zeta \SE{\Xi}:\zeta \in\mathbb{T}\right\}.\] 

By Proposition \ref{RSUEequalSE} and Proposition \ref{UEUW=id},
$\UE{\Xi}=(N,M)$  and
\[\RS{N,M}=\RSf\circ\UE{\Xi}=\left\{\SE{F}:F\in\UW{N,M}\right\}.\] 
\end{proof}


\subsection{The map $\RNf:\cal{S}_2\to\cal{R}_1$}\label{RNf}

\begin{theorem}\textup{\cite[Theorem 11.13]{AMc02}} \label{nonconsaglerthm}
Let $\varphi\in\cal{S}_2$. Then there are kernels $N,M$ on 
$\;\mathbb{D}^2$
such that \[1-\overline{\varphi(\mu_1,\mu_2)}\varphi(\lambda_1,\lambda_2)=
(1-\overline{\mu_1}\lambda_1)N(\lambda_1,\lambda_2,\mu_1,\mu_2)+
(1-\overline{\mu_2}\lambda_2)M(\lambda_1,\lambda_2,\mu_1,\mu_2)\]
for all $\lambda_1,\lambda_2,\mu_1,\mu_2\in\mathbb{D}$.
\end{theorem}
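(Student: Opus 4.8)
The plan is to deduce this from And\^o's inequality (von Neumann's inequality for two commuting contractions) by a Hahn--Banach separation argument performed on finite subsets of $\mathbb{D}^2$ and then globalised by a compactness argument. As a harmless preliminary I would pass to the dilates $\varphi_r(\lambda)=\varphi(r\lambda)$, $0<r<1$: each $\varphi_r$ is holomorphic on a neighbourhood of $\overline{\mathbb{D}^2}$ with $\|\varphi_r\|_{\overline{\mathbb{D}^2}}\le1$, and once the decomposition is available for each $\varphi_r$ with kernels bounded uniformly in $r$ (which the diagonal case of the identity forces), a normal-families argument lets $r\to1^-$ and recovers the claim for $\varphi$. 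So I may assume $\varphi$ is holomorphic near $\overline{\mathbb{D}^2}$ and $\|\varphi\|_{\overline{\mathbb{D}^2}}\le1$.

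The heart of the matter is the finite version: given $\lambda^{(1)},\dots,\lambda^{(k)}\in\mathbb{D}^2$, I want positive semidefinite $k\times k$ matrices $[N_{ij}]$ and $[M_{ij}]$ with
\[
1-\overline{\varphi(\lambda^{(j)})}\varphi(\lambda^{(i)})=(1-\overline{\lambda^{(j)}_1}\lambda^{(i)}_1)N_{ij}+(1-\overline{\lambda^{(j)}_2}\lambda^{(i)}_2)M_{ij},\qquad i,j=1,\dots,k,
\]
that is, membership of the Hermitian matrix $C=\big[1-\overline{\varphi(\lambda^{(j)})}\varphi(\lambda^{(i)})\big]$ in the closed convex cone $\mathcal{C}_1+\mathcal{C}_2$, where $\mathcal{C}_\ell$ is the set of Schur products $\big[(1-\overline{\lambda^{(j)}_\ell}\lambda^{(i)}_\ell)P_{ij}\big]$ with $P\ge0$. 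If $C\notin\mathcal{C}_1+\mathcal{C}_2$, Hahn--Banach (with the pairing $(A,B)\mapsto\operatorname{tr}(AB)$ on Hermitian matrices) produces a Hermitian $R$ with $\operatorname{tr}(RA)\ge0$ for all $A\in\mathcal{C}_1\cup\mathcal{C}_2$ and $\operatorname{tr}(RC)<0$. A short manipulation of the Schur product rewrites the first condition as $R-\Lambda_\ell^{*}R\Lambda_\ell\ge0$ for $\ell=1,2$, with $\Lambda_\ell=\operatorname{diag}(\lambda^{(1)}_\ell,\dots,\lambda^{(k)}_\ell)$; iterating and using $\|\Lambda_\ell^{\,n}\|\to0$ gives $R\ge0$. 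Form the finite-dimensional space $\mathcal{H}_R=\mathbb{C}^k/\ker R$ with the inner product induced by $R$; then $\Lambda_1,\Lambda_2$ descend to commuting contractions $S_1,S_2$ on $\mathcal{H}_R$, and the image $e_i$ of the $i$-th standard basis vector is a joint eigenvector, $S_\ell e_i=\lambda^{(i)}_\ell e_i$. Since the joint spectrum of $(S_1,S_2)$ lies in $\mathbb{D}^2$, $\varphi(S_1,S_2)$ is given by its (now uniformly convergent) power series, and And\^o's inequality, applied to the Taylor partial sums of $\varphi$ and passed to the limit, yields $\|\varphi(S_1,S_2)\|\le\|\varphi\|_{\overline{\mathbb{D}^2}}\le1$. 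Evaluating $I-\varphi(S_1,S_2)^{*}\varphi(S_1,S_2)\ge0$ at the vector $x=[(1,\dots,1)^{T}]\in\mathcal{H}_R$ and using $\varphi(S_1,S_2)e_i=\varphi(\lambda^{(i)})e_i$ gives $\operatorname{tr}(RC)=\big\langle\big(I-\varphi(S_1,S_2)^{*}\varphi(S_1,S_2)\big)x,x\big\rangle_R\ge0$, contradicting the choice of $R$. Hence $C\in\mathcal{C}_1+\mathcal{C}_2$.

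Finally I would globalise. The identity forces $N(\lambda,\lambda,\lambda,\lambda)\le(1-|\lambda_1|^2)^{-1}$, $M(\lambda,\lambda,\lambda,\lambda)\le(1-|\lambda_2|^2)^{-1}$, and hence (by the Cauchy--Schwarz inequality for positive kernels) pointwise bounds on $N$ and $M$; so the pairs of positive kernels on $\mathbb{D}^2$ that satisfy the identity on a prescribed finite set $F\subset\mathbb{D}^2$ form a non-empty closed subset of one fixed compact set in the topology of pointwise convergence. These subsets have the finite intersection property by the finite case just proved, so their intersection over all finite $F$ is non-empty and supplies kernels $N,M$ with the required identity on all of $\mathbb{D}^2$; because of the reduction to $\varphi$ holomorphic near $\overline{\mathbb{D}^2}$ they may moreover be taken sesqui-holomorphic. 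The one genuinely non-routine ingredient is And\^o's inequality, and that is precisely where the restriction to the \emph{bi}disc is used: its analogue, and hence the two-kernel decomposition, fails on $\mathbb{D}^n$ for $n\ge3$. Everything else --- the separation, the GNS-type passage to $\mathcal{H}_R$, and the compactness --- is routine.
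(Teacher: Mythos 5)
This theorem is quoted in the paper from Agler--McCarthy \cite[Theorem 11.13]{AMc02} without proof, and your argument is essentially the canonical proof given in that reference: reduction to finite subsets of $\mathbb{D}^2$, a Hahn--Banach cone-separation dual to a GNS-type construction of commuting contractions to which And\^o's inequality is applied, and a compactness (finite intersection property) globalisation. The argument is correct; the only point left tacit is that the finite-dimensional cone $\mathcal{C}_1+\mathcal{C}_2$ is closed (needed so that the separation argument yields membership in the cone itself rather than merely in its closure), but this follows from the same diagonal bounds $N_{ii}\le(1-|\lambda_1^{(i)}|^2)^{-1}$ and $M_{ii}\le(1-|\lambda_2^{(i)}|^2)^{-1}$ that you already invoke in the globalisation step.
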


\begin{remark} {\em The 
pair of kernels $(N,M)$ from Theorem \ref{nonconsaglerthm} are known as Agler kernels for $\varphi\in\cal{S}_2$. There are papers with  constructive proofs of the existence of 
Agler kernels. See for example \cite{ballsadovinn}, 
\cite{kelly} and \cite{kellyknese}.

One can see that, for the Agler kernels $(N,M)$ for $\varphi\in\cal{S}_2$, 
\[K_{N,M}(z,\lambda,w,\mu)=1-(1-\overline{w}z)N(z,\lambda,w,\mu)-(1-\overline{\mu}\lambda)M(z,\lambda,w,\mu)=
\overline{\varphi(w,\mu)}\varphi(z,\lambda)\] for all $z,\lambda,w,\mu\in\mathbb{D}$. Thus 
 $K_{N,M}$ is a kernel on $\mathbb{D}^2$ of rank $1$ and 
 $(N,M)\in\cal{R}_1$. 
Moreover, $\RS{N,M}=\{\zeta\varphi:\zeta\in\mathbb{T}\}$.
}
\end{remark}

\begin{definition}
The map $\RNf$ is the set-valued map from $\cal{S}_2$ to $\cal{R}_1$ which is given, for $\varphi\in\cal{S}_2$, by
\[\RN{\varphi}=\{(N,M) \;\text{ is a pair of Agler kernels for }\varphi\}.\]
\end{definition}

\begin{remark} {\em Let $(N,M)\in\cal{R}_1$ and let 
$f \in \cal{S}_2$ such that
\[K_{N,M}(z,\lambda,w,\mu)=\overline{f(w,\mu)}f(z,\lambda)\] for all $z,\lambda,w,\mu\in\mathbb{D}$. 
Then, for all $\varphi\in\RS{N,M}$, 
\[\RN{\varphi}=\RN{f}.\] 
Moreover $(N,M)\in\RN{f}$.
}
\end{remark}

\section{Relations between $\hol{\mathbb{D}}{\Gamma}$ 
and other objects in the rich saltire}\label{relsgammasect}

The rich saltire for the symmetrized bidisc is the following.
\begin{equation}\label{rich_str_G}
\xymatrixcolsep{7pc}
\xymatrix@R+36pt{
\quad \quad \cal{S}^{2\times 2} \quad \quad
\ar@<-3pt>[d]_{\LSfn{\G}}  
\ar[rd]^{ \SEf\quad\quad\quad\quad\quad\quad\quad} 
 \ar@<-3pt>[r]_-{\UEf}  &
  \quad \quad \cal{R}_{1}  \quad \quad 
 \ar@<-3pt>[l]_-{\UWf}  \ar@<-3pt>[d]_{\RSf} 
 \ar[ld]_{\quad\quad\quad\quad\quad\quad\quad\SWfn{\G}}\\ 
\quad \quad\hol{\mathbb{D}}{\Gamma} \quad \quad
 \ar@<-3pt>[u]_{\LNfn{\G}}  
 \ar@<-3pt>[r]_-{\LEfn{\G}}  & 
 \quad \quad \cal{S}_2 \quad \quad 
\ar@<-3pt>[l]_-{\LWfn{\G}} \ar@<-3pt>[u]_{\RNf} 
}
\end{equation}

We will define maps of the rich saltire for $\G$ and describe connections between different maps in the diagram 
\eqref{rich_str_G}.

\subsection{The maps $\LNf:\hol{\mathbb{D}}{\Gamma}\to\cal{S}^{2\times2}$ and
$\LSf:\cal{S}^{2\times2}\to\hol{\mathbb{D}}{\Gamma}$}\label{LNfandLSf}

\begin{proposition}\textup{\cite[Proposition 6.1]{ALY13}}\label{leftnconstruction}
For each $h=(s,p)\in\hol{\mathbb{D}}{\Gamma}$ there exists a unique 
$F=\FF\in\cal{S}^{2\times2}$ such that
\[h=(\tr{F},\det{F})\]
and $ F_{11}=F_{22}$, 
$|F_{12}|=|F_{21}|$ a. e. on $\mathbb{T}$, $F_{21}$ is either $0$ or outer and $F_{21}(0)\geq 0$. 
Moreover, for all $\mu,\lambda\in\mathbb{D}$ and all $w,z\in\mathbb{C}$ such that $1-F_{22}(\mu)w\neq0$ and $1-F_{22}(\lambda)z\neq0$,
\[1-\overline{\Phi(w,h(\mu))}\Phi(z,h(\lambda))=(1-\overline{w}z)\overline{\gamma(\mu,w)}\gamma(\lambda,z)+
\eta(\mu,w)^*(I-F(\mu)^*F(\lambda))\eta(\lambda,z).\]
\end{proposition}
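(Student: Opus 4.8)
The plan is to establish the two assertions of Proposition~\ref{leftnconstruction} in turn: first the existence and uniqueness of a normalised matrix function $F$ realising a given $h=(s,p)\in\hol{\mathbb{D}}{\Gamma}$, and then the kernel identity relating the magic function $\Phi$ to the functions $\gamma,\eta$ attached to $F$. Since the existence-and-uniqueness part is quoted verbatim from \cite[Proposition 6.1]{ALY13}, I would recall its construction only far enough to have the needed structural facts on hand: given $h=(s,p)$, one wants $F$ with $\tr F = s$, $\det F = p$, $F_{11}=F_{22}=\tfrac12 s$, and off-diagonal entries of equal modulus a.e.\ on $\T$; the Fej\'er–Riesz/outer-spectral-factorisation step produces $F_{21}$ outer (or $0$) with $F_{21}(0)\ge 0$ from the a.e.\ boundary data, and $\|F(\lambda)\|\le 1$ for $\lambda\in\D$ is exactly the statement (via \eqref{rGam} and the characterisation $r(A)\le 1\Leftrightarrow(\tr A,\det A)\in\Gamma$, together with the fact that one can arrange a contractive such $F$) that $h$ maps into $\Gamma$. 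The uniqueness follows from the normalisation: the diagonal is forced, the modulus of $F_{12}F_{21}$ is forced by $\det F$, and the outer normalisation with nonnegative value at $0$ pins down $F_{21}$, hence $F_{12}$.

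For the kernel identity, the key observation is that $\Phi$ and the linear fractional transformation $\cal{F}_{F(\lambda)}$ agree up to sign on the realisation. Concretely, with $F_{11}=F_{22}=\tfrac12 s$ and $\det F = F_{11}F_{22}-F_{12}F_{21}=p$, one computes directly from Remark~\ref{calFholo} that
\begin{align*}
\cal{F}_{F(\lambda)}(z)
&=F_{11}(\lambda)+\frac{F_{12}(\lambda)F_{21}(\lambda)z}{1-F_{22}(\lambda)z}
=\tfrac12 s(\lambda)+\frac{\bigl(\tfrac14 s(\lambda)^2-p(\lambda)\bigr)z}{1-\tfrac12 s(\lambda)z}\\
&=-\Phi(z,h(\lambda)),
\end{align*}
using the second form of $\Phi$ in \eqref{defPhi-formula}. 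Thus $\overline{\Phi(w,h(\mu))}\,\Phi(z,h(\lambda))=\overline{\cal{F}_{F(\mu)}(w)}\,\cal{F}_{F(\lambda)}(z)$, and the desired identity becomes precisely the identity of Proposition~\ref{1-calfcalf} applied to this $F$:
\[
1-\overline{\cal{F}_{F(\mu)}(w)}\,\cal{F}_{F(\lambda)}(z)=(1-\overline{w}z)\overline{\gamma(\mu,w)}\gamma(\lambda,z)+\eta(\mu,w)^*(I-F(\mu)^*F(\lambda))\eta(\lambda,z),
\]
valid for all $\mu,\lambda\in\D$ and $w,z\in\C$ with $1-F_{22}(\mu)w\ne 0$, $1-F_{22}(\lambda)z\ne 0$, which is exactly the stated range of validity.

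So the proof reduces to two small computations — the algebraic identification $\cal{F}_{F(\lambda)}(z)=-\Phi(z,h(\lambda))$, which is a routine manipulation of \eqref{defPhi-formula}, and the invocation of Proposition~\ref{1-calfcalf} — plus the citation of \cite[Proposition 6.1]{ALY13} for existence and uniqueness of the normalised $F$. The main obstacle, if any, is purely expository: making sure the normalisation $F_{11}=F_{22}$ is genuinely available (rather than just $F_{11}+F_{22}=s$), since that is what collapses $\cal{F}_{F(\lambda)}$ to a function of $z$ and $h(\lambda)$ alone; this is guaranteed by the symmetrisation in the quoted construction, where one may take $F$ symmetric with equal diagonal entries. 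I would also remark that $1-F_{22}(\lambda)z\ne 0$ automatically whenever $z\in\overline{\D}$ because $|F_{22}(\lambda)|=|\tfrac12 s(\lambda)|\le 1$ with strict inequality for $\lambda\in\D$ unless $F_{22}$ is constant of modulus one, so the identity in particular holds throughout $\D\times\overline{\D}$ in the variables $(\lambda,z)$ and $(\mu,w)$.
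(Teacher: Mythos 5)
Your proposal is correct and follows essentially the same route as the paper: existence and uniqueness of the normalised $F$ are quoted from \cite[Proposition 6.1]{ALY13} with the construction recalled, and the kernel identity is obtained from the algebraic identification $\cal{F}_{F(\lambda)}(z)=-\Phi(z,h(\lambda))$ (using $F_{11}=F_{22}=\tfrac12 s$ and $F_{12}F_{21}=\tfrac14 s^2-p$) combined with Proposition \ref{1-calfcalf}, which is exactly how the paper handles the tetrablock analogue in Theorem \ref{leftntetrawelldefinedprop}. One caution: your parenthetical suggestion that $\|F(\lambda)\|\le 1$ follows ``via \eqref{rGam}'' is not right as stated, since $r(F(\lambda))\le 1$ does not imply contractivity --- that is the genuinely nontrivial point of \cite[Proposition 6.1]{ALY13}, established there (as in the tetrablock case here) by verifying $I-F^*F\ge 0$ a.e.\ on $\T$ from the inequalities characterising $\Gamma$ and then applying the maximum principle --- but since you, like the paper, defer this to the citation, the argument stands.
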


The construction of $F$ in \cite[Proposition 6.1]{ALY13} is the following.
Let $h=(s,p)\in\hol{\mathbb{D}}{\Gamma}$ be  such that $\frac{1}{4}s^2=p$. Then
\[F=\begin{bmatrix} \frac{1}{2}s & 0 \\ 0 & \frac{1}{2}s\end{bmatrix}\]
satisfies all of the required conditions. 
Now suppose that $\frac{1}{4}s^2\neq p$. Then $\frac{1}{4}s^2- p$ is a non-zero $H^\infty$ function, and so it
has a unique inner-outer factorisation, expressible in the form $\varphi e^C=\frac{1}{4}s^2-p$, where $\varphi$ is inner, 
$e^C$ is outer and $e^C(0)\geq0$.
It follows that
\[F=\begin{bmatrix} \frac{1}{2}s & \varphi e^{\frac{1}{2}C} \\  e^{\frac{1}{2}C} & \frac{1}{2}s\end{bmatrix}\]
is the only matrix satisfying the required conditions.

\begin{definition}\label{LNf_G} The map
 $\LNf:\hol{\mathbb{D}}{\Gamma}\to\cal{S}^{2\times2}$ is given by 
$\LN{h}=F$, $h \in \hol{\mathbb{D}}{\Gamma}$, where $F$ is the unique element from $\cal{S}^{2\times2}$ 
such that
\[h=(\tr{F},\det{F})\]
and $ F_{11}=F_{22}$, 
$|F_{12}|=|F_{21}|$ a. e. on $\mathbb{T}$, $F_{21}$ is either $0$ or outer and $F_{21}(0)\geq 0$. 
\end{definition}

\begin{definition} The map
 $\LSf :\cal{S}^{2\times2}\to\hol{\mathbb{D}}{\Gamma}$ is given by 
\[F\mapsto (\tr{F},\det{F})\]
for all $F\in\cal{S}^{2\times2}$.
\end{definition}

The following is trivial.

\begin{lemma}
$\LSf\circ\LNf=\id_{\hol{\mathbb{D}}{\Gamma}}$. 
\end{lemma}

\begin{example} \label{LNLSneqid} 
$\LNf\circ\LSf\neq \id_{\cal{S}^{2\times2}}$.~~~
{\rm Consider the function $F$ on $\mathbb{D}$ defined by
\[F(\lambda)=\begin{bmatrix}\lambda^2 & 0\\0&\lambda\end{bmatrix}\] for all $\lambda\in\mathbb{D}$.
Then $F\in\cal{S}^{2\times2}$ and, for all $\lambda\in\mathbb{D}$,
\[\LS{F}(\lambda)=(\tr{F}(\lambda),\det{F}(\lambda))=(\lambda^2+\lambda,\lambda^3).\]
It is clear that $\LNf\circ\LS{F}\neq F$.
}
\end{example}

\subsection{The map $\LEf:\hol{\mathbb{D}}{\Gamma}\to\cal{S}_2$}\label{LEf-G}

\begin{definition}\label{LEf_G}
The map $\LEf:\hol{\mathbb{D}}{\Gamma}\to\cal{S}_2$ is given by 
 \[\LE{h}(z,\lambda):=\Phi(z,h(\lambda)),\; \;z,\lambda\in\mathbb{D},  \]
for  $h \in \hol{\mathbb{D}}{\Gamma}$.
\end{definition}

\begin{proposition}\label{LEhwelldefined}
The map $\LEf$ is well defined.
\end{proposition}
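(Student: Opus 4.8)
The plan is to show that for any $h=(s,p)\in\hol{\mathbb{D}}{\Gamma}$, the function $(z,\lambda)\mapsto \Phi(z,h(\lambda))$ is a well-defined element of $\cal{S}_2={\mathrm{Hol}}(\D^2,\overline{\D})$, i.e. that it is analytic on $\D^2$ and bounded in modulus by $1$. There are two things to check: analyticity and the bound.

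First I would verify analyticity. Recall from Definition \ref{defPhi} that
\[
\Phi(z,s,p)=-\tfrac12 s+\frac{(p-\tfrac14 s^2)z}{1-\tfrac12 sz}.
\]
Since $h$ maps $\D$ into $\Gamma$, for each $\lambda\in\D$ we have $(s(\lambda),p(\lambda))\in\Gamma$, hence $|s(\lambda)|\le 2$; therefore $|\tfrac12 s(\lambda)z|\le |z|<1$ for all $z\in\D$, so $1-\tfrac12 s(\lambda)z\neq 0$ on all of $\D^2$ and the denominator never vanishes. As $s$ and $p$ are analytic on $\D$ (components of $h$) and $\Phi$ is a rational function of $(z,s,p)$ with non-vanishing denominator on the relevant set, the composite $(z,\lambda)\mapsto\Phi(z,h(\lambda))$ is analytic on $\D^2$ — this is just the observation that rational combinations and compositions of analytic functions are analytic, with no singularities encountered.

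Second, the modulus bound: this is precisely the statement quoted in the excerpt just after Definition \ref{defPhi}, namely that $\Phi(\D\times\Gamma)\subset\Delta$, i.e. $|\Phi(z,w)|\le 1$ whenever $z\in\D$ and $w\in\Gamma$ (see \cite[Theorem 2.1]{AY04} or \cite[Theorem 2.2]{AY1}). Applying this with $w=h(\lambda)\in\Gamma$ gives $|\Phi(z,h(\lambda))|\le 1$ for all $z,\lambda\in\D$. Combined with analyticity, this shows $\LE{h}\in\cal{S}_2$, so the map $\LEf$ is well defined. One could alternatively deduce the bound from Proposition \ref{leftnconstruction}: taking $\mu=\lambda$, $w=z$ there, the right-hand side is a sum of two manifestly non-negative terms (a squared modulus times $(1-|z|^2)>0$, and $\eta^*(I-F^*F)\eta\ge 0$ since $\|F(\lambda)\|\le1$), whence $1-|\Phi(z,h(\lambda))|^2\ge 0$.

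I do not anticipate a genuine obstacle here — the proposition is essentially a bookkeeping statement assembling facts already recorded in Section \ref{symmetrized}. The only point requiring a little care is making explicit why the denominator $1-\tfrac12 s(\lambda)z$ is non-zero throughout $\D^2$ (using $|s|\le 2$ on $\Gamma$), which is exactly what licenses both the analyticity and the use of the second formula for $\Phi$; after that, the boundedness is an immediate appeal to the known property $\Phi(\D\times\Gamma)\subset\Delta$.
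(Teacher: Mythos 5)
Your proof is correct and follows essentially the same route as the paper: both arguments check analyticity by observing that $|s(\lambda)|\le 2$ on $\Gamma$ forces the denominator $2-zs(\lambda)$ to be non-zero on $\D^2$, and then obtain the bound $|\Phi(z,h(\lambda))|\le 1$ from a known property of $\Phi$ on $\D\times\Gamma$. The only cosmetic difference is that the paper derives the bound from \cite[Proposition 3.2]{ALY11} (boundedness on a dense subset of $\T$ plus the maximum principle), whereas you quote the inclusion $\Phi(\D\times\Gamma)\subset\Delta$ already recorded in Section \ref{symmetrized}; your alternative via Proposition \ref{leftnconstruction} with $\mu=\lambda$, $w=z$ is also valid.
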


\begin{proof}
Let $h=(s,p)\in\hol{\mathbb{D}}{\Gamma}$. For  $(z,\lambda)\in\mathbb{D}^2$,
\[\LE{h}(z,\lambda)=\Phi(z,s(\lambda),p(\lambda))\text{ where } (s(\lambda),p(\lambda))\in\Gamma.\]
By \cite[Proposition 3.2]{ALY11}, $|s(\lambda)|\leq 2$ and, for all $w$ in a dense subset of $\mathbb{T}$, 
\[|\Phi(w,s(\lambda),p(\lambda))|\leq1.\]
Therefore \[|zs(\lambda)|<2\text{ and }|\Phi(z,s(\lambda),p(\lambda))|\leq1.\]
Hence
$2-zs(\lambda)\neq0$ and $\LE{h}(z,\lambda)\in\overline{\mathbb{D}}$. 
Since 
$h$ is analytic and maps into $\Gamma$, the map $\Phi(z,h(\lambda)),z,\lambda\in\mathbb{D}$  is analytic on $\mathbb{D}\times\Gamma$. 
Thus $\LE{h}\in\cal{S}_2$.
\end{proof}

One can ask  the question:
\beq \label{Qu}
\text{ {\em which} subset of $\schur$ corresponds to $\hol{\mathbb{D}}{\Gamma}$?}
\eeq
If $h=(s,p) \in \hol{\mathbb{D}}{\Gamma}$ then, for any fixed $\la \in \D$, the map 
\beq 
z \mapsto \Phi(z, h(\la))= \frac{2zp(\la)-s(\la)}{2-zs(\la)}= \frac{2p(\la)z-s(\la)}{-zs(\la)+2}
\eeq
is a linear fractional self-map $f(z)= \frac{a z +b}{c z +d}$ of $\D$ with the property ``$b =c$". To make the last phrase precise, say that a linear fractional map $f$ of the complex plane has the property ``$b =c$" if $f(0)\neq \infty$ and either $f$ is a constant map or, for some $a, b$ and $d$ in $\C$,
\[
f(z)= \frac{a z +b}{b z +d}\; \; \text{for all} \;\; z \in \C \cup \{ \infty\}.
\]
We shall denote the class of such functions $f$ in  $\cal{S}_2$ by $\cal{S}_2^{b=c}$.

Here is an answer to Question \eqref{Qu}.

\begin{proposition} \label{Phi-lin-frac} {\rm \cite[Proposition 5.2]{ALY13}}
Let $G$ be an analytic function on $\D^2$. There exists a function 
$h \in \hol{\mathbb{D}}{\Gamma}$ such that 
\beq \label{GPhi}
G(z,\la) =\Phi(z, h(\la))\; \; \text{for all} \;\; z, \la \in \D 
\eeq
if and only if $G \in \schur$ and, for every $\la \in \D$,
$G(\cdot, \la)$ is a linear fractional transformation with the property ``$b =c$". Moreover, if $\varphi\in\cal{S}_2^{b=c}$ then its corresponding function $h$ is unique.
\end{proposition}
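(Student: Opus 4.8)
The plan is to prove both implications of Proposition~\ref{Phi-lin-frac} by direct computation with the formula for $\Phi$, together with the converse characterisation of $\Gamma$ via the magic functions recalled in Section~\ref{symmetrized} (namely, $w\in\Gamma$ if and only if $|\Phi(z,w)|\le 1$ for all $z\in\D$).

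\medskip

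\noindent\textbf{Necessity.} Suppose $G(z,\la)=\Phi(z,h(\la))$ for some $h=(s,p)\in\hol{\D}{\Gamma}$. That $G\in\schur$ is exactly Proposition~\ref{LEhwelldefined} (i.e. $G=\LE{h}$). Fix $\la\in\D$ and write $s=s(\la)$, $p=p(\la)$. Then
\[
G(z,\la)=\Phi(z,s,p)=\frac{2pz-s}{-sz+2},
\]
which is of the form $\frac{az+b}{bz+d}$ with $a=2p$, $b=-s$, $d=2$; since $(s,p)\in\Gamma$ we have $|s|\le 2$, so $G(0,\la)=-s/2\in\Delta$, in particular $G(0,\la)\neq\infty$. (If $s=0$ this reads $G(z,\la)=pz$, still of the required form with $b=c=0$.) Hence $G(\cdot,\la)$ has the property ``$b=c$''.

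\medskip

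\noindent\textbf{Sufficiency.} Conversely, suppose $G\in\schur$ and for each $\la\in\D$ the map $G(\cdot,\la)$ is a linear fractional transformation with property ``$b=c$''. For each fixed $\la$, either $G(\cdot,\la)$ is constant, say $\equiv c_0$, or $G(z,\la)=\frac{a(\la)z+b(\la)}{b(\la)z+d(\la)}$ for suitable $a(\la),b(\la),d(\la)\in\C$ with $G(0,\la)=b(\la)/d(\la)$ finite. The strategy is to solve for $(s(\la),p(\la))$ so that $G(z,\la)=\Phi(z,s(\la),p(\la))=\frac{2p(\la)z-s(\la)}{2-s(\la)z}$. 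Matching: from the constant term $G(0,\la)=-s(\la)/2$, so set $s(\la):=-2G(0,\la)$; and, reading off the coefficient of $z$ after normalising the denominator to have constant term $2$, one gets a formula for $p(\la)$ in terms of $a(\la),b(\la),d(\la)$ (equivalently in terms of $G(0,\la)$ and $\partial_z G(0,\la)$ via $p(\la)=\tfrac14 s(\la)^2+\partial_z G(0,\la)\cdot(1-\tfrac12 s(\la)\cdot 0)^{-1}$ from the second form of $\Phi$ in Definition~\ref{defPhi}; concretely $p(\la)=\tfrac14 s(\la)^2+\partial_z G(0,\la)$). Because $G$ is analytic on $\D^2$ and these are obtained by evaluating $G$ and $\partial_z G$ at $z=0$, the functions $s$ and $p$ are analytic on $\D$; and the normalisation ``$b=c$'' is exactly what guarantees that $G(\cdot,\la)$ lies in the two-parameter family $\{\Phi(\cdot,s,p)\}$ rather than a general three-parameter family of M\"obius maps, so that the matching has a solution. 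It then remains to check that $(s(\la),p(\la))\in\Gamma$ for every $\la$: since $G\in\schur$ we have $|G(z,\la)|\le 1$ for all $z,\la\in\D$, i.e. $|\Phi(z,(s(\la),p(\la)))|\le 1$ for all $z\in\D$, and by the converse property of the magic functions (\cite[Theorem 2.1]{AY04}) this forces $(s(\la),p(\la))\in\Gamma$. Thus $h:=(s,p)\in\hol{\D}{\Gamma}$ and $G(z,\la)=\Phi(z,h(\la))$, as required.

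\medskip

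\noindent\textbf{Uniqueness.} For the last clause, if $h_1=(s_1,p_1)$ and $h_2=(s_2,p_2)$ in $\hol{\D}{\Gamma}$ both satisfy \eqref{GPhi} for the same $\varphi=G$, then for each $\la$ the rational functions $\Phi(\cdot,h_1(\la))$ and $\Phi(\cdot,h_2(\la))$ agree on $\D$, hence identically; comparing the value at $z=0$ gives $s_1(\la)=s_2(\la)$, and comparing, say, the derivative at $z=0$ (using the second expression for $\Phi$ in \eqref{defPhi-formula}) gives $p_1(\la)-\tfrac14 s_1(\la)^2=p_2(\la)-\tfrac14 s_2(\la)^2$, whence $p_1=p_2$. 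So $h$ is uniquely determined.

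\medskip

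\noindent The main obstacle is the sufficiency direction: one must verify that the constraint ``$b=c$'' on the coefficients is precisely equivalent to solvability of the system matching $G(\cdot,\la)$ against the restricted family $\Phi(\cdot,s,p)$, and that the resulting $s,p$ depend analytically on $\la$ (handling the constant-map case uniformly, where $s$ and $p$ are not separately determined by $z=0$ data alone but can still be chosen analytically, e.g. $p(\la)=\tfrac14 s(\la)^2$). Everything else is routine linear-fractional bookkeeping combined with the already-available characterisation of membership in $\Gamma$ via $\Phi$.
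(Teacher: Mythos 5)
Your proof is correct, but it is more self-contained than the paper's: the paper proves only the final uniqueness clause and simply cites \cite[Proposition 5.2]{ALY13} for the ``if and only if'' part, whereas you reprove that part from scratch. Your necessity argument (Proposition \ref{LEhwelldefined} plus reading off $a=2p$, $b=-s$, $d=2$ from $\Phi(z,s,p)=\tfrac{2pz-s}{-sz+2}$) and your sufficiency argument (recovering $s(\la)=-2G(0,\la)$ and $p(\la)=\tfrac14 s(\la)^2+\partial_z G(0,\la)$, noting analyticity, and invoking the converse magic-function characterisation of $\Gamma$ to get $(s(\la),p(\la))\in\Gamma$) are both sound; the only points worth making fully explicit are that $f(0)\neq\infty$ forces $d\neq 0$, so the normalisation to denominator constant term $2$ is legitimate, and that the constant case is covered by $p=\tfrac14 s^2$, i.e.\ $h(\la)$ on the royal variety. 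For the uniqueness clause your argument is essentially the paper's in different clothing: you equate the value and $z$-derivative at $z=0$ of the two expressions $\Phi(\cdot,h_1(\la))$ and $\Phi(\cdot,h_2(\la))$, while the paper clears denominators and equates polynomial coefficients in $z$; both yield $s_1=s_2$ and then $p_1=p_2$. The trade-off is that your version gives the reader a complete argument without chasing the reference, at the cost of the linear-fractional bookkeeping the paper chose to outsource.
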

\begin{proof} The first part of the statement was proved in \cite[Proposition 5.2]{ALY13}. We show here that, for every  $\varphi\in\cal{S}_2^{b=c}$, its corresponding function $h$ is unique.
Suppose  $g\in\hol{\mathbb{D}}{\Gamma}$ also satisfies the required properties. Then
\[\Phi(z,h(\lambda))=\varphi(z,\lambda)=\Phi(z,g(\lambda))\text{ for all }z,\lambda\in\mathbb{D}.\]
Suppose $h=(s,p)$ and $g=(q,r)$, then,  for all $z,\lambda\in\mathbb{D}$,
\[(2zp(\lambda)-s(\lambda))(2-zq(\lambda))=(2zr(\lambda)-q(\lambda))(2-zs(\lambda)).\]
Thus,  for all $z,\lambda\in\mathbb{D}$,
\[ z^2(r(\lambda)s(\lambda)-p(\lambda)q(\lambda))-2z(r(\lambda)-p(\lambda))+(q(\lambda)-s(\lambda))=0.\]
Hence, for all $\lambda\in\mathbb{D}$,
$q(\lambda)-s(\lambda)=0$ and $r(\lambda)-p(\lambda)=0$, and so
 $h=g$.
\end{proof}


\subsection{The map $\LWf:\cal{S}_2^{b=c}\to\hol{\mathbb{D}}{\Gamma}$}\label{LWf-G}

We are interested in a map from $\cal{S}_2^{b=c}$ rather than from the whole of $\cal{S}_2$. The proof of Proposition \ref{Phi-lin-frac} provides for each $\varphi\in\cal{S}_2^{b=c}$ the construction of
a unique $h_{\varphi}\in\hol{\mathbb{D}}{\Gamma}$. 

\begin{definition} For every $\varphi\in\cal{S}_2^{b=c}$ such that 
$\varphi(z,\lambda)=\frac{a(\lambda)z+b(\lambda)}{b(\lambda)z+d(\lambda)}$,
$z,\lambda\in\mathbb{D}$, with $d(\lambda)\neq0$ we define 
\[
h_\varphi(\lambda)=\left(-2\frac{b(\lambda)}{d(\lambda)},\frac{a(\lambda)}{d(\lambda)}\right), \; \lambda\in\mathbb{D}.\]
The map
$\LWf:\cal{S}_2^{b=c}\to\hol{\mathbb{D}}{\Gamma}$ is given by
\[\LW{\varphi}=h_{\varphi}\] for all $\varphi\in\cal{S}_2^{b=c}$.
\end{definition}

By Proposition \ref{Phi-lin-frac}, $\LWf$ is well defined.

\begin{proposition}\label{LEINVLW} The map $\LWf$ is the inverse of $\; \LEf: \hol{\mathbb{D}}{\Gamma} \to \cal{S}_2^{b=c}$.\end{proposition}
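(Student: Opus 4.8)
The plan is to verify directly that $\LWf$ and $\LEf$ are two-sided inverses of each other, i.e.\ that $\LWf\circ\LEf=\id_{\hol{\mathbb{D}}{\Gamma}}$ and $\LEf\circ\LWf=\id_{\cal{S}_2^{b=c}}$, using only the explicit formula $\Phi(z,s,p)=\frac{2pz-s}{-sz+2}$ and the definitions of the two maps. The first thing I would record is the preliminary observation that $\LEf$ does land in $\cal{S}_2^{b=c}$ and not merely in $\cal{S}_2$: for $h=(s,p)\in\hol{\mathbb{D}}{\Gamma}$ and a fixed $\lambda$, the map $z\mapsto\Phi(z,h(\lambda))$ is exactly $\frac{(2p(\lambda))z-s(\lambda)}{(-s(\lambda))z+2}$, a linear fractional transformation of the shape $\frac{az+b}{bz+d}$ with $a=2p(\lambda)$, $b=-s(\lambda)$, $d=2\neq 0$, and so it has the property ``$b=c$'' of Section \ref{LEf-G}. (This is also part of the first assertion of Proposition \ref{Phi-lin-frac}.)

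For $\LWf\circ\LEf$: given $h=(s,p)$, by the computation just made the function $\varphi=\LE{h}$ admits the representation $\varphi(z,\lambda)=\frac{a(\lambda)z+b(\lambda)}{b(\lambda)z+d(\lambda)}$ with $a=2p$, $b=-s$, $d=2$, so the defining formula for $h_\varphi$ gives $\LW{\varphi}(\lambda)=\left(-2\tfrac{b(\lambda)}{d(\lambda)},\tfrac{a(\lambda)}{d(\lambda)}\right)=(s(\lambda),p(\lambda))=h(\lambda)$. For $\LEf\circ\LWf$: given $\varphi\in\cal{S}_2^{b=c}$ written as $\varphi(z,\lambda)=\frac{a(\lambda)z+b(\lambda)}{b(\lambda)z+d(\lambda)}$ with $d(\lambda)\neq 0$, we have $h_\varphi(\lambda)=\left(-2\tfrac{b(\lambda)}{d(\lambda)},\tfrac{a(\lambda)}{d(\lambda)}\right)$ and hence $\LE{h_\varphi}(z,\lambda)=\Phi(z,h_\varphi(\lambda))=\frac{a(\lambda)z+b(\lambda)}{b(\lambda)z+d(\lambda)}=\varphi(z,\lambda)$, the middle equality being a one-line cancellation of the common factor $2/d(\lambda)$ from numerator and denominator. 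This yields both identities. Alternatively, once one knows from Proposition \ref{Phi-lin-frac} that $\LEf$ is a bijection of $\hol{\mathbb{D}}{\Gamma}$ onto $\cal{S}_2^{b=c}$ (surjectivity from the ``if and only if'' together with the forward implication, injectivity from the ``Moreover'' uniqueness clause), it suffices to check just one of the two compositions and the other follows formally.

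I do not expect a genuine obstacle; the statement is essentially a bookkeeping consequence of the constructions in Section \ref{LEf-G} and of Proposition \ref{Phi-lin-frac}. The only point that deserves a line of care is that the representation $\varphi(z,\lambda)=\frac{a(\lambda)z+b(\lambda)}{b(\lambda)z+d(\lambda)}$, hence the triple $(a,b,d)$, is pinned down only up to a common nonvanishing factor; but $-2b/d$ and $a/d$ are invariant under $(a,b,d)\mapsto(\kappa a,\kappa b,\kappa d)$, so $h_\varphi$ — and with it the computation of $\LEf\circ\LWf$ — is independent of the choice, and the fact that $h_\varphi\in\hol{\mathbb{D}}{\Gamma}$ (in particular its analyticity) is already built into the well-definedness of $\LWf$ recorded after its definition via Proposition \ref{Phi-lin-frac}.
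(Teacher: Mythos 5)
Your proposal is correct and follows essentially the same route as the paper: both verify the two compositions $\LWf\circ\LEf=\id$ and $\LEf\circ\LWf=\id$ by direct substitution into the formula $\Phi(z,s,p)=\frac{2pz-s}{2-sz}$ (the paper normalises the representation so that $d=1$, i.e.\ $a=p$, $b=-\tfrac12 s$, whereas you use $a=2p$, $b=-s$, $d=2$; the two choices agree since $-2b/d$ and $a/d$ are invariant under rescaling). Your extra remark on the scale-invariance of $h_\varphi$ is a small point of care the paper leaves implicit.
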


\begin{proof} 
Let $h=(s,p)\in\hol{\mathbb{D}}{\Gamma}$. 
Then $\LE{h}\in\cal{S}_2^{b=c}$ and 
\[\LE{h}(z,\lambda)=\Phi(z,h(\lambda))=\frac{2zp(\lambda)-s(\lambda)}{2-zs(\lambda)}=
\frac{p(\lambda)z-\frac{1}{2}s(\lambda)}{-\frac{1}{2}s(\lambda)z+1}\] 
for all $z,\lambda\in\mathbb{D}$. 
Hence by definition \[\LWf\circ\LE{h}=(-2(-\frac{1}{2}s),p)=h.\] 

Let $\varphi\in\cal{S}_2^{b=c}$ such that 
$\varphi(z,\lambda)=\frac{a(\lambda)z+b(\lambda)}{b(\lambda(z)+d(\lambda)}$, $z,\lambda\in\mathbb{D}$, with $d(\lambda)\neq0$. 
Then \[\LW{\varphi}=h_{\varphi}=\left(-2\frac{b}{d},\frac{a}{d}\right),\] and so
\[\LE{h_{\varphi}}(z,\lambda)=\Phi(z,h_{\varphi}(\lambda))=
\frac{\frac{a(\lambda)}{d(\lambda)}z-\frac{1}{2}(-2\frac{b(\lambda)}{d(\lambda)})}{1-\frac{1}{2}(-2\frac{b(\lambda)}{d(\lambda)})z}=
\frac{a(\lambda)z+b(\lambda)}{b(\lambda)z+d(\lambda)}
=\varphi(z,\lambda)\] for all $z,\lambda\in\mathbb{D}$.
Thus $\LEf\circ\LW{\varphi}=\varphi$ for all $\varphi\in\cal{S}_2^{b=c}$.
Therefore $\LWf$ is the inverse of $\LEf$.
\end{proof}

Let us consider how the defined maps interact with each other.

\begin{proposition}\label{SELNLE} The following holds $\SEf\circ\LNf=\LEf$.
\end{proposition}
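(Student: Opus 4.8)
The plan is to prove the identity of maps $\SEf\circ\LNf=\LEf:\hol{\D}{\Gamma}\to\cal{S}_2$ by evaluating both sides at an arbitrary $h=(s,p)\in\hol{\D}{\Gamma}$ and at an arbitrary point $(z,\lambda)\in\D^2$. By Definition \ref{SEf_all} and Definition \ref{LEf_G} this reduces to the scalar identity
\[
-\cal{F}_{F(\lambda)}(z)=\Phi(z,h(\lambda)),\qquad\text{where }F\df\LN{h}\in\cal{S}^{2\times2}.
\]

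First I would extract from the explicit construction of $F=\LN{h}$ following Proposition \ref{leftnconstruction} the only two facts I need about $F=\FF$: namely $F_{11}=F_{22}=\tfrac12 s$, and
\[
F_{12}F_{21}=\tfrac14 s^2-p .
\]
The second identity holds trivially in the case $\tfrac14 s^2=p$ (where $F_{21}=0$), and in the remaining case it follows from the inner--outer factorisation $\varphi e^{C}=\tfrac14 s^2-p$ used there, since then $F_{12}F_{21}=\bigl(\varphi e^{\frac12 C}\bigr)\bigl(e^{\frac12 C}\bigr)=\varphi e^{C}$. Before substituting I would also note that both sides of the target identity are genuinely defined on $\D^2$: since $F\in\cal{S}^{2\times2}$ we have $|F_{22}(\lambda)|\le1$, so $1-F_{22}(\lambda)z\ne0$ for $z,\lambda\in\D$ and $\cal{F}_{F(\lambda)}(z)$ makes sense by Remark \ref{calFholo}; and since $(s(\lambda),p(\lambda))\in\Gamma$ we have $|s(\lambda)|\le2$, so $2-zs(\lambda)\ne0$ and $\Phi(z,h(\lambda))$ makes sense.

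Then the conclusion is a one-line substitution. Using the formula for the linear fractional transformation in Remark \ref{calFholo} together with the two facts above,
\[
\cal{F}_{F(\lambda)}(z)=F_{11}(\lambda)+F_{12}(\lambda)z\bigl(1-F_{22}(\lambda)z\bigr)^{-1}F_{21}(\lambda)
=\tfrac12 s(\lambda)+\frac{\bigl(\tfrac14 s(\lambda)^2-p(\lambda)\bigr)z}{1-\tfrac12 s(\lambda)z},
\]
which, compared with the second expression for $\Phi$ in \eqref{defPhi-formula}, equals $-\Phi(z,h(\lambda))$. Hence $\SE{\LN{h}}(z,\lambda)=-\cal{F}_{F(\lambda)}(z)=\Phi(z,h(\lambda))=\LE{h}(z,\lambda)$, which is the required equality. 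There is no genuine obstacle here; the only matter requiring a little care is treating the two branches of the construction of $\LN{h}$ uniformly, which is precisely why I would distil them into the single identity $F_{12}F_{21}=\tfrac14 s^2-p$, together with the bookkeeping on denominators mentioned above.
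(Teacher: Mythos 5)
Your proof is correct and follows essentially the same route as the paper's: both reduce the claim to the pointwise identity $-\cal{F}_{\LN{h}(\lambda)}(z)=\Phi(z,h(\lambda))$, which the paper attributes to the construction in Proposition \ref{leftnconstruction} and which you verify explicitly from $F_{11}=F_{22}=\tfrac12 s$ and $F_{12}F_{21}=\tfrac14 s^2-p$ together with formula \eqref{defPhi-formula}. Your version is slightly more self-contained, since it makes the sign check and the two branches of the construction explicit rather than citing them.
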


\begin{proof}
Let $h\in\hol{\mathbb{D}}{\Gamma}$. 
Then, by Proposition \ref{leftnconstruction}, for $\LN{h}=F\in\cal{S}^{2\times2}$,
\[\SE{F}(z,\lambda)=-\cal{F}_{F(\lambda)}(z)=\Phi(z,h(\lambda))\] for all $z,\lambda\in\mathbb{D}$.
Hence $\SEf\circ\LN{h}(z,\lambda)=\Phi(z,h(\lambda))$ for all $z,\lambda\in\mathbb{D}$.
By definition, $\LE{h}(z,\lambda)=\Phi(z,h(\lambda))$ for all $z,\lambda\in\mathbb{D}$. 
Thus, for all $h\in\hol{\mathbb{D}}{\Gamma}$,  $\SEf\circ\LNf (h)=\LEf (h)$.
\end{proof}

\begin{corollary} \label{SELNLW=id} The following equalities hold    
$\;\;\SEf\circ\LNf\circ\LWf=\id_{\cal{S}_2^{b=c}}$
and \newline $\LWf\circ\SEf\circ\LNf=\id_{\hol{\mathbb{D}}{\Gamma}}$.\end{corollary}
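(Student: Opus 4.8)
The plan is to derive both equalities directly by composing the two results already in hand, namely Proposition \ref{SELNLE} and Proposition \ref{LEINVLW}, so that essentially no new computation is required.

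First I would handle $\SEf\circ\LNf\circ\LWf=\id_{\cal{S}_2^{b=c}}$. By Proposition \ref{SELNLE} we have $\SEf\circ\LNf=\LEf$ as maps $\hol{\mathbb{D}}{\Gamma}\to\cal{S}_2^{b=c}$, so
\[
\SEf\circ\LNf\circ\LWf = \LEf\circ\LWf.
\]
Then Proposition \ref{LEINVLW} tells us $\LWf$ is the (two-sided) inverse of $\LEf\colon\hol{\mathbb{D}}{\Gamma}\to\cal{S}_2^{b=c}$, so in particular $\LEf\circ\LWf=\id_{\cal{S}_2^{b=c}}$, which gives the first identity.

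For the second identity, $\LWf\circ\SEf\circ\LNf=\id_{\hol{\mathbb{D}}{\Gamma}}$, the same substitution via Proposition \ref{SELNLE} replaces $\SEf\circ\LNf$ by $\LEf$, yielding $\LWf\circ\SEf\circ\LNf=\LWf\circ\LEf$; and the other half of Proposition \ref{LEINVLW} gives $\LWf\circ\LEf=\id_{\hol{\mathbb{D}}{\Gamma}}$. One small point worth checking when writing this up is that the codomain of $\SEf\circ\LNf$ really is $\cal{S}_2^{b=c}$ (and not merely $\cal{S}_2$), so that the composition with $\LWf$, which is only defined on $\cal{S}_2^{b=c}$, is legitimate; but this is exactly the content of Proposition \ref{SELNLE} together with the fact, noted just before Proposition \ref{Phi-lin-frac}, that $\Phi(\cdot,h(\lambda))$ is a linear fractional map with the property ``$b=c$''. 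There is no real obstacle here — the statement is a formal corollary of the two preceding propositions — so the only care needed is bookkeeping of domains and codomains.
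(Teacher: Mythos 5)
Your proposal is correct and is exactly the argument the paper gives: substitute $\SEf\circ\LNf=\LEf$ from Proposition \ref{SELNLE} and then invoke Proposition \ref{LEINVLW} to conclude. The extra remark about verifying that $\SEf\circ\LNf$ lands in $\cal{S}_2^{b=c}$ is sensible bookkeeping but not a departure from the paper's route.
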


\begin{proof} 
By Proposition \ref{SELNLE}, $\SEf\circ\LNf=\LEf$ and, by Proposition \ref{LEINVLW}, 
$\LWf$ is the inverse of $\LEf$. The results follow immediately.
\end{proof}

\begin{proposition} 
For all $F=\FF\in\cal{S}^{2\times2}$ 
such that $F_{11}=F_{22}$, we have \[\LEf\circ\LS{F}=\SE{F}.\]
\end{proposition}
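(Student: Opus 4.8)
The plan is to compute both sides directly and observe that they agree. Fix $F=\FF\in\cal{S}^{2\times2}$ with $F_{11}=F_{22}$, and write $s=\tr F=2F_{11}$ and $p=\det F=F_{11}F_{22}-F_{12}F_{21}=F_{11}^2-F_{12}F_{21}$. By definition of $\LSf$ we have $\LS{F}=(s,p)=(\tr F,\det F)\in\hol{\mathbb{D}}{\Gamma}$ (this lies in $\hol{\mathbb{D}}{\Gamma}$ by the discussion in Section~\ref{symmetrized}, since $r(F(\la))\le 1$ for all $\la\in\D$). Then by Definition~\ref{LEf_G},
\[
\LEf\circ\LS{F}(z,\lambda)=\Phi\bigl(z,s(\lambda),p(\lambda)\bigr)=\frac{2zp(\lambda)-s(\lambda)}{2-zs(\lambda)}
\]
for all $z,\lambda\in\mathbb{D}$. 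Note $2-zs(\lambda)=2-2zF_{11}(\lambda)=2\bigl(1-F_{22}(\lambda)z\bigr)\neq 0$ since $\|F(\lambda)\|\le 1$ forces $|F_{22}(\lambda)|\le 1$, so the expression is well defined on $\mathbb{D}^2$.

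Next I would expand the right-hand side. By Remark~\ref{calFholo} and Definition~\ref{SEf_all},
\[
\SE{F}(z,\lambda)=-\cal{F}_{F(\lambda)}(z)=-F_{11}(\lambda)-\frac{F_{12}(\lambda)F_{21}(\lambda)z}{1-F_{22}(\lambda)z}.
\]
Putting this over the common denominator $1-F_{22}(\lambda)z$ and using $F_{11}=F_{22}$ gives
\[
\SE{F}(z,\lambda)=\frac{-F_{11}(\lambda)\bigl(1-F_{22}(\lambda)z\bigr)-F_{12}(\lambda)F_{21}(\lambda)z}{1-F_{22}(\lambda)z}
=\frac{-F_{11}(\lambda)+\bigl(F_{11}(\lambda)F_{22}(\lambda)-F_{12}(\lambda)F_{21}(\lambda)\bigr)z}{1-F_{22}(\lambda)z}.
\]
The numerator is $p(\lambda)z-F_{11}(\lambda)=p(\lambda)z-\tfrac12 s(\lambda)$ and the denominator is $1-\tfrac12 s(\lambda)z$, so multiplying numerator and denominator by $2$,
\[
\SE{F}(z,\lambda)=\frac{2p(\lambda)z-s(\lambda)}{2-s(\lambda)z}=\Phi\bigl(z,s(\lambda),p(\lambda)\bigr)=\LEf\circ\LS{F}(z,\lambda)
\]
for all $z,\lambda\in\mathbb{D}$, which is the claim.

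This argument is essentially a routine algebraic identity; the only points requiring a word of justification are that $\LS{F}$ genuinely lands in $\hol{\mathbb{D}}{\Gamma}$ (so that $\LEf$ may be applied to it — this is the content cited from Section~\ref{symmetrized}) and that the denominators $2-zs(\lambda)$ and $1-F_{22}(\lambda)z$ are nonzero on $\mathbb{D}^2$, which follows from $|F_{22}(\lambda)|\le 1$. There is no real obstacle; the hypothesis $F_{11}=F_{22}$ is exactly what is needed to identify $F_{11}(\lambda)$ with $\tfrac12 s(\lambda)$ in the numerator, and without it the two sides would differ.
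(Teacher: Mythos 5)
Your proof is correct and follows essentially the same route as the paper's: a direct computation of $\SE{F}$ as a single rational function of $z$ and of $\LEf\circ\LS{F}$ via the formula for $\Phi$, followed by the observation that the two expressions coincide when $F_{11}=F_{22}$. The extra remarks on the nonvanishing of the denominators and on $\LS{F}$ landing in $\hol{\mathbb{D}}{\Gamma}$ are fine but not part of the paper's argument, which treats these as understood.
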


\begin{proof}
Let $F=\FF\in\cal{S}^{2\times2}$.
Then \[\SE{F}(z,\lambda)=-F_{11}(\lambda)-\frac{F_{12}(\lambda)F_{21}(\lambda)z}{1-F_{11}(\lambda)z}
=\frac{-F_{11}(\lambda)+(F_{11}(\lambda)^2-F_{12}(\lambda)F_{21}(\lambda))z}{1-F_{11}(\lambda)z}\] 
for all $z,\lambda\in\mathbb{D}$
and
$\LS{F}=(\tr{F},\det{F})=(2F_{11},F_{11}^2-F_{21}F_{12})$. 
Thus
\begin{align*}\LEf\circ\LS{F}(z,\lambda)&=\Phi(z,2F_{11}(\lambda),F_{11}(\lambda)^2-F_{21}(\lambda)F_{12}(\lambda))\\&=
\frac{2z(F_{11}^2(\lambda)-F_{21}(\lambda)F_{12}(\lambda))-2F_{11}(\lambda)}{2-2zF_{11}(\lambda)}\\&=
\frac{-F_{11}(\lambda)+(F_{11}(\lambda)^2-F_{12}(\lambda)F_{21}(\lambda))z}{1-F_{11}(\lambda)z}
\end{align*} for all $z,\lambda\in\mathbb{D}$.
Therefore, for all $F\in\cal{S}^{2\times2}$ 
such that $F_{11}=F_{22}$, $\LEf\circ\LS{F}=\SE{F}$.
\end{proof}

However for an arbitrary $F\in\cal{S}^{2\times2}$ we may have $\LEf\circ\LS{F}\neq\SE{F}$ as the following example shows.

\begin{example}
Let $F=\begin{bmatrix}f&0\\0&g\end{bmatrix}$, 
where $f(z)$ is the Blaschke factor $B_{\half}$ and 
 $g(z)$ is the Blaschke factor $B_{-\half}$.
Then $F\in\cal{S}^{2\times2}$. It is easy to see that
\[\SE{F}(0,\lambda)=-F_{11}(\lambda)-\frac{F_{12}(\lambda)F_{21}(\lambda)\cdot0}{1-F_{22}(\lambda)\cdot0}=-f(\lambda)\]
and
\begin{align*}\LEf\circ\LSf\,\left(F\right)(0,\lambda)=&\frac{2\cdot0\cdot\det{F(\lambda)}-\tr{F(\lambda)}}{2-0\cdot\tr{F(\lambda)}}\\
=&\frac{-(f(\lambda)+g(\lambda))}{2}\end{align*}
for all $\lambda\in\mathbb{D}$. Therefore
 $\LEf\circ\LS{F}\neq\SE{F}$.
\end{example}

\begin{remark}\label{SE-LeftNG} {\em In Definition \ref{SEf_all}, when either $F_{21}=0$ or $F_{12}=0$, the function 
\[\SEf(F)(z,\lambda)=-\cal{F}_{F(\lambda)}(z)=-F_{11}(\lambda),\]
 is independent of $z$, and so in general the map $\SEf$ can lose some information about $F$. However, 
in the case of the symmetrised bidisc, {\em no} information is lost.  For $h=(s,p) \in \hol{\mathbb{D}}{\Gamma}$ such that $s^2= 4p$, by Definition \ref{LEf_G},
 \[
\LE{h}(z,\lambda):=\Phi(z,h(\lambda))=- \frac{s(\lambda)}{2}, \quad \mbox{ for } z,\lambda\in\mathbb{D}.  
\]
Secondly, by Definition \ref{LNf_G},
$\LN{h}=F$, where
\[
F=\begin{bmatrix} \tfrac 12 s & 0 \\ 0 & \tfrac 12 s\end{bmatrix}.
\]
 Therefore,  for $h=(s,p) \in \hol{\mathbb{D}}{\Gamma}$ such that $h(\D)\subset \mathcal{R}$,
\[
\SEf \circ \LN{h}(z,\lambda)=\LE{h}(z,\lambda)= -\frac{1}{2}s(\lambda), \; \lambda \in\mathbb{D}.
\]
}
\end{remark}


\subsection{The map $\SWf:\cal{R}_{11}\to\hol{\mathbb{D}}{\Gamma}$}\label{SWf-G}

\begin{definition}
The map  $\SWf$ is the set-valued map from $\cal{R}_{11}$ to $\hol{\mathbb{D}}{\Gamma}$ which is given by 
\[
\SWf{(N,M)} = \{\LS{F}:F\in\UW{N,M}\}.
\]
\end{definition}

\begin{proposition}\label{SWwelldefined}
Let $(N,M)\in\cal{R}_{11}$, and let $\;\Xi\;$ be a function constructed  by Procedure UW for $(N,M)$.
Then
\begin{align*}\{\LS{F}:F\in\UW{N,M}\}
=&\left\{\left(\tr{\begin{bmatrix}\zeta&0\\0&1\end{bmatrix}\Xi},\zeta\det{\Xi}\right):\zeta\in\mathbb{T}\right\}
\subseteq\hol{\mathbb{D}}{\Gamma}.\end{align*}
\end{proposition}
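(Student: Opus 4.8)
The plan is to reduce the claim entirely to the explicit parametrisation of $\UW{N,M}$ already established in Proposition \ref{UWfwelldefined}, followed by a two-line computation of trace and determinant. Since $(N,M)\in\cal{R}_{11}$, Procedure $UW$ (Theorem \ref{procUWcons}) produces at least one $\Xi=\begin{bmatrix}\Xi_{11}&\Xi_{12}\\\Xi_{21}&\Xi_{22}\end{bmatrix}\in\cal{S}^{2\times2}$, and by Proposition \ref{UWfwelldefined},
\[
\UW{N,M}=\left\{\begin{bmatrix}\zeta_1&0\\0&\zeta_2\end{bmatrix}\Xi\begin{bmatrix}1&0\\0&\overline{\zeta_2}\end{bmatrix}:\zeta_1,\zeta_2\in\mathbb{T}\right\}.
\]
First I would record the elementary identity
\[
\begin{bmatrix}\zeta_1&0\\0&\zeta_2\end{bmatrix}\Xi\begin{bmatrix}1&0\\0&\overline{\zeta_2}\end{bmatrix}=\begin{bmatrix}\zeta_1\Xi_{11}&\zeta_1\overline{\zeta_2}\Xi_{12}\\\zeta_2\Xi_{21}&\Xi_{22}\end{bmatrix},
\]
valid for all $\zeta_1,\zeta_2\in\mathbb{T}$.

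Next I apply $\LSf$, which by definition sends a matrix to the pair $(\tr,\det)$. The trace of the matrix above is $\zeta_1\Xi_{11}+\Xi_{22}$, with no dependence on $\zeta_2$, and its determinant is $\zeta_1\Xi_{11}\Xi_{22}-(\zeta_1\overline{\zeta_2}\Xi_{12})(\zeta_2\Xi_{21})=\zeta_1(\Xi_{11}\Xi_{22}-\Xi_{12}\Xi_{21})=\zeta_1\det\Xi$, since $\overline{\zeta_2}\zeta_2=1$. Hence for all $\zeta_1,\zeta_2\in\mathbb{T}$,
\[
\LS{\begin{bmatrix}\zeta_1&0\\0&\zeta_2\end{bmatrix}\Xi\begin{bmatrix}1&0\\0&\overline{\zeta_2}\end{bmatrix}}=\left(\zeta_1\Xi_{11}+\Xi_{22},\ \zeta_1\det\Xi\right)=\left(\tr{\begin{bmatrix}\zeta_1&0\\0&1\end{bmatrix}\Xi},\ \zeta_1\det\Xi\right),
\]
where the last step uses $\begin{bmatrix}\zeta_1&0\\0&1\end{bmatrix}\Xi=\begin{bmatrix}\zeta_1\Xi_{11}&\zeta_1\Xi_{12}\\\Xi_{21}&\Xi_{22}\end{bmatrix}$. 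Letting $\zeta_1$ range over $\mathbb{T}$ (the choice of $\zeta_2$ being immaterial) gives the inclusion $\{\LS{F}:F\in\UW{N,M}\}\subseteq\left\{\left(\tr{\begin{bmatrix}\zeta&0\\0&1\end{bmatrix}\Xi},\zeta\det\Xi\right):\zeta\in\mathbb{T}\right\}$; the reverse inclusion follows by taking $\zeta_2=1$, since then $\begin{bmatrix}\zeta&0\\0&1\end{bmatrix}\Xi\in\UW{N,M}$ for every $\zeta\in\mathbb{T}$.

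Finally, the inclusion into $\hol{\mathbb{D}}{\Gamma}$ is immediate: $\UW{N,M}\subseteq\cal{S}^{2\times2}$, and $\LSf$ maps $\cal{S}^{2\times2}$ into $\hol{\mathbb{D}}{\Gamma}$ because $(\tr A,\det A)\in\Gamma$ whenever $\|A\|\le1$ (the eigenvalues of a contraction lie in $\overline{\mathbb{D}}$). There is no real obstacle in this argument; the only point that deserves a moment's care is the observation that the right-hand conjugating factor $\mathrm{diag}(1,\overline{\zeta_2})$ cancels against the $\zeta_2$ appearing in $\mathrm{diag}(\zeta_1,\zeta_2)$ under both $\tr$ and $\det$, so that the a priori two-parameter family $\UW{N,M}$ collapses, after applying $\LSf$, to the one-parameter family indexed by $\zeta_1\in\mathbb{T}$; everything else is bookkeeping on top of Proposition \ref{UWfwelldefined}.
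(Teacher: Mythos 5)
Your proof is correct and follows essentially the same route as the paper's: both invoke Proposition \ref{UWfwelldefined} to parametrise $\UW{N,M}$ and then compute the trace and determinant of $\begin{bmatrix}\zeta_1&0\\0&\zeta_2\end{bmatrix}\Xi\begin{bmatrix}1&0\\0&\overline{\zeta_2}\end{bmatrix}$, observing that the $\zeta_2$-dependence cancels. Your write-up merely adds the explicit check of both set inclusions and of the containment in $\hol{\mathbb{D}}{\Gamma}$ (via the well-definedness of $\LSf$), which the paper leaves implicit.
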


\begin{proof}  By Proposition \ref{UWfwelldefined},
\[\UW{N,M}=\left\{\begin{bmatrix}\zeta_1&0\\0&\zeta_2\end{bmatrix}\Xi\begin{bmatrix}1&0\\0&\overline{\zeta_2}\end{bmatrix}:
\zeta_1,\zeta_2\in\mathbb{T}\right\}.\]
Hence, for $F\in\UW{N,M}$, 
 $F=\begin{bmatrix}\zeta_1&0\\0&\zeta_2\end{bmatrix}\Xi
\begin{bmatrix}1&0\\0&\overline{\zeta_2}\end{bmatrix}$ for some $\zeta_1,\zeta_2\in\mathbb{T}$.
 Then
\begin{align*}\LS{F}=&\left(\tr{\begin{bmatrix}\zeta_1&0\\0&\zeta_2\end{bmatrix}\Xi
\begin{bmatrix}1&0\\0&\overline{\zeta_2}\end{bmatrix}},\det{\begin{bmatrix}\zeta_1&0\\0&\zeta_2\end{bmatrix}\Xi
\begin{bmatrix}1&0\\0&\overline{\zeta_2}\end{bmatrix}}\right)
=\left(\tr{\begin{bmatrix}\zeta_1&0\\0&1\end{bmatrix}\Xi},\zeta_1\det{\Xi}
\right).\end{align*}
\end{proof}

Therefore, for $(N,M)\in\cal{R}_{11}$,
\[\SW{N,M}=\left\{\left(\tr{\begin{bmatrix}\zeta&0\\0&1\end{bmatrix}\Xi},\zeta\det{\Xi}\right):\zeta\in\mathbb{T}\right\},\]
where $\Xi\in\cal{S}^{2\times2}$ is a function constructed  by Procedure UW for $(N,M)$. The later set is independent of the choice of $\Xi$.

Relations between  $\SWf$ and other maps in the rich saltire are the following.

\begin{proposition} \label{SWcircUE}
Let $F\in\cal{S}^{2\times2}$ such that $F_{21} \neq 0$. Then
\[\SWf\circ\UE{F}=\left\{\LSf\left(\begin{bmatrix}\zeta&0\\0&1\end{bmatrix}F\right):\zeta\in\mathbb{T}\right\}.\]
\end{proposition}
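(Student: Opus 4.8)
The plan is to unwind both sides of the claimed identity and show they describe the same subset of $\hol{\mathbb{D}}{\Gamma}$. First I would invoke Proposition \ref{UWUE=}, which says precisely that for $F\in\cal{S}^{2\times2}$ with $F_{21}\neq 0$,
\[
\UWf\circ\UE{F}=\left\{\begin{bmatrix}\zeta_1&0\\0&\zeta_2\end{bmatrix}F\begin{bmatrix}1&0\\0&\overline{\zeta_2}\end{bmatrix}:\zeta_1,\zeta_2\in\mathbb{T}\right\}.
\]
By the definition of $\SWf$ on $\cal{R}_{11}$, we have $\SWf\circ\UE{F}=\{\LS{G}:G\in\UW{\UE{F}}\}$, so the left-hand side equals the set of all
\[
\LSf\left(\begin{bmatrix}\zeta_1&0\\0&\zeta_2\end{bmatrix}F\begin{bmatrix}1&0\\0&\overline{\zeta_2}\end{bmatrix}\right), \qquad \zeta_1,\zeta_2\in\mathbb{T}.
\]

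The second step is the routine linear-algebra observation that $\LSf=(\trf,\det)$ is insensitive to the right factor $\diag(1,\overline{\zeta_2})$ and to the value of $\zeta_2$ in the left factor. Explicitly, writing $F=\FF$, the matrix $\begin{bmatrix}\zeta_1&0\\0&\zeta_2\end{bmatrix}F\begin{bmatrix}1&0\\0&\overline{\zeta_2}\end{bmatrix}=\begin{bmatrix}\zeta_1 F_{11}&\zeta_1\overline{\zeta_2}F_{12}\\ \zeta_2 F_{21}& F_{22}\end{bmatrix}$ has trace $\zeta_1 F_{11}+F_{22}$ and determinant $\zeta_1 F_{11}F_{22}-\zeta_1 F_{12}F_{21}=\zeta_1\det F$, both of which depend only on $\zeta_1$, and these coincide with the trace and determinant of $\begin{bmatrix}\zeta_1&0\\0&1\end{bmatrix}F$. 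Hence
\[
\LSf\left(\begin{bmatrix}\zeta_1&0\\0&\zeta_2\end{bmatrix}F\begin{bmatrix}1&0\\0&\overline{\zeta_2}\end{bmatrix}\right)=\LSf\left(\begin{bmatrix}\zeta_1&0\\0&1\end{bmatrix}F\right)
\]
for all $\zeta_1,\zeta_2\in\mathbb{T}$, and as $\zeta_1$ ranges over $\mathbb{T}$ we recover exactly the right-hand side $\left\{\LSf\left(\begin{bmatrix}\zeta&0\\0&1\end{bmatrix}F\right):\zeta\in\mathbb{T}\right\}$.

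There is no genuine obstacle here; the only point requiring a word of care is well-definedness, namely that $\SWf\circ\UE{F}$ does not depend on which representative $\Xi$ of $\UW{\UE{F}}$ one picks when forming the set — but this is already guaranteed by Proposition \ref{UWfwelldefined} together with the remark following Proposition \ref{SWwelldefined}, so it can simply be cited. I would therefore expect the proof to be three or four lines: cite Proposition \ref{UWUE=}, apply the definition of $\SWf$, and perform the trace/determinant computation above to absorb $\zeta_2$.
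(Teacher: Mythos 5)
Your proposal is correct and follows essentially the same route as the paper: cite Proposition \ref{UWUE=} to identify $\UWf\circ\UE{F}$, apply the definition of $\SWf$, and then observe that the trace and determinant of $\begin{bmatrix}\zeta_1&0\\0&\zeta_2\end{bmatrix}F\begin{bmatrix}1&0\\0&\overline{\zeta_2}\end{bmatrix}$ depend only on $\zeta_1$ and agree with those of $\begin{bmatrix}\zeta_1&0\\0&1\end{bmatrix}F$. The explicit trace/determinant computation you give is exactly the content of the paper's final display.
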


\begin{proof}
 By Proposition \ref{UWUE=},
\[\UWf\circ\UE{F}=\left\{\begin{bmatrix}\zeta_1&0\\0&\zeta_2\end{bmatrix}F\begin{bmatrix}1&0\\0&\overline{\zeta_2}\end{bmatrix}:
\zeta_1,\zeta_2\in\mathbb{T}\right\},\] 
and hence 
\begin{align*}\SWf\circ\UE{F}
=&\left\{\LSf\left(\begin{bmatrix}\zeta_1&0\\0&\zeta_2\end{bmatrix}F\begin{bmatrix}1&0\\0&\overline{\zeta_2}\end{bmatrix}\right):
\zeta_1,\zeta_2\in\mathbb{T}\right\}\\
=&\left\{\left(\tr{\begin{bmatrix}\zeta_1&0\\0&\zeta_2\end{bmatrix}F\begin{bmatrix}1&0\\0&\overline{\zeta_2}\end{bmatrix}},
\det{\begin{bmatrix}\zeta_1&0\\0&\zeta_2\end{bmatrix}F\begin{bmatrix}1&0\\0&\overline{\zeta_2}\end{bmatrix}}\right):
\zeta_1,\zeta_2\in\mathbb{T}\right\}\\
=&\left\{\LSf\left(\begin{bmatrix}\zeta&0\\0&1\end{bmatrix}F\right):
\zeta\in\mathbb{T}\right\}.\end{align*}
\end{proof}

\begin{corollary} \label{SWUELN}
Let $h=(s,p)\in\hol{\mathbb{D}}{\Gamma}$ such that  $\frac{1}{4}s^2\neq p$. 
Then \[\SWf\circ\UEf\circ\LN{h}=\left\{\left(\frac{1}{2}(\zeta+1)s,\zeta p\right):\zeta\in\mathbb{T}\right\}.\]
\end{corollary}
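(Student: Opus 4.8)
The plan is to combine Proposition~\ref{SWcircUE} with the explicit formula for $\LN{h}$ recorded after Proposition~\ref{leftnconstruction}. Since $\tfrac14 s^2 \neq p$, the function $\tfrac14 s^2 - p$ is a nonzero $H^\infty$ function with inner-outer factorisation $\varphi e^C = \tfrac14 s^2 - p$, and
\[
F:=\LN{h}=\begin{bmatrix} \tfrac12 s & \varphi e^{\frac12 C} \\ e^{\frac12 C} & \tfrac12 s\end{bmatrix}.
\]
First I would note that $F_{21}=e^{\frac12 C}$ is outer, hence $F_{21}\neq 0$, so Proposition~\ref{SWcircUE} applies and gives
\[
\SWf\circ\UE{F}=\left\{\LSf\left(\begin{bmatrix}\zeta&0\\0&1\end{bmatrix}F\right):\zeta\in\mathbb{T}\right\}.
\]

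The remaining task is a direct computation of $\LSf\bigl(\begin{bmatrix}\zeta&0\\0&1\end{bmatrix}F\bigr)=(\tr,\det)$ of the matrix $\begin{bmatrix}\tfrac12\zeta s & \zeta\varphi e^{\frac12 C}\\ e^{\frac12 C}& \tfrac12 s\end{bmatrix}$. The trace is $\tfrac12\zeta s+\tfrac12 s=\tfrac12(\zeta+1)s$, which is the first coordinate required. The determinant is $\tfrac14\zeta s^2-\zeta\varphi e^{C}=\zeta\bigl(\tfrac14 s^2-\varphi e^{C}\bigr)$, and substituting $\varphi e^{C}=\tfrac14 s^2-p$ gives $\zeta p$. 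Hence $\LSf\bigl(\begin{bmatrix}\zeta&0\\0&1\end{bmatrix}F\bigr)=\bigl(\tfrac12(\zeta+1)s,\zeta p\bigr)$ for every $\zeta\in\mathbb{T}$, and since $\UE{F}=\UEf\circ\LN{h}$, the asserted identity follows.

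There is essentially no obstacle here: the only points requiring attention are (i) verifying that the hypothesis $\tfrac14 s^2\neq p$ is exactly what licenses the use of the non-degenerate branch of the $\LNf$ construction (so that $F_{21}\neq 0$ and Proposition~\ref{SWcircUE} is available), and (ii) keeping the bookkeeping straight in the determinant so that the cancellation $\tfrac14 s^2-\varphi e^C=p$ is applied correctly. Everything else is a one-line matrix computation.
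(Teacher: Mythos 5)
Your proposal is correct and follows essentially the same route as the paper: both invoke Proposition~\ref{SWcircUE} (after noting $F_{21}\neq 0$) and then compute the trace and determinant of $\begin{bmatrix}\zeta&0\\0&1\end{bmatrix}F$. The only cosmetic difference is that you substitute the explicit inner--outer factorisation for $F_{12},F_{21}$, whereas the paper keeps them abstract and simply uses $\det F=p$ together with the fact that scaling the top row by $\zeta$ scales the determinant by $\zeta$.
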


\begin{proof} By Definition \ref{LNf_G},
$\LN{h}= F=\begin{bmatrix}\frac{1}{2}s&F_{12}\\F_{21}&\frac{1}{2}s\end{bmatrix}$,
where $F_{21} \neq 0$ and $\det{F}=p$.
By Proposition \ref{SWcircUE},
\begin{align*}\SWf\circ\UE{F}
=&\left\{\LSf\left(\begin{bmatrix}\zeta&0\\0&1\end{bmatrix}F\right):\zeta\in\mathbb{T}\right\}\\
=&\left\{\LSf\left(\begin{bmatrix}\zeta \frac{1}{2}s&\zeta F_{12}\\F_{21}&\frac{1}{2}s\end{bmatrix}\right):\zeta\in\mathbb{T}\right\}\\
=&\left\{\left(\frac{1}{2}(\zeta+1) s,\zeta \det{F}\right):\zeta\in\mathbb{T}\right\}.\end{align*}
Therefore $\SWf\circ\UEf\circ\LN{h}=\left\{\left(\frac{1}{2}(\zeta+1) s,\zeta p\right):\zeta\in\mathbb{T}\right\}$.
\end{proof}

\begin{remark} {\em By Corollary \ref{SWUELN}, for  $h=(s,p) \in \hol{\mathbb{D}}{\Gamma}$ such that $h(\D)$ is not in $\mathcal{R}$, we have $h\in\SWf\circ\UEf\circ\LN{h}$, since,  for $\zeta=1$, 
$$\left(\frac{1}{2}(\zeta+1)s,\zeta p\right)=(s,p).$$
}
\end{remark}

\begin{corollary}
Let $\varphi\in\cal{S}_2^{b=c}$. Then \[\RSf\circ\UEf\circ\LNf\circ\LW{\varphi}=\left\{\zeta\varphi:\zeta\in\mathbb{T}\right\}.\]
\end{corollary}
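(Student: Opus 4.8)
The plan is to obtain the identity by chaining together three results already proved for the saltire of $\Gamma$, with essentially no new computation. Write $h=\LW{\varphi}\in\hol{\mathbb{D}}{\Gamma}$ and $F=\LN{h}$, so that $\UEf\circ\LNf\circ\LW{\varphi}=\UE{F}$ and the left-hand side of the claimed equality is exactly $\RSf\circ\UE{F}$.

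First I would invoke Proposition \ref{RSUEequalSE}, which for an \emph{arbitrary} $F\in\cal{S}^{2\times 2}$ gives $\RSf\circ\UE{F}=\{\zeta\,\SE{F}:\zeta\in\mathbb{T}\}$; note in particular that this does not require $F_{21}\neq 0$, so it applies verbatim to $F=\LN{h}$, including the case when $h(\D)\subset\mathcal{R}$.

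Next I would rewrite $\SE{F}$. By Proposition \ref{SELNLE} we have $\SEf\circ\LNf=\LEf$, hence $\SE{F}=\SEf\circ\LN{h}=\LE{h}$. Finally, since $h=\LW{\varphi}$ and, by Proposition \ref{LEINVLW}, $\LWf$ is a two-sided inverse of $\LEf:\hol{\mathbb{D}}{\Gamma}\to\cal{S}_2^{b=c}$, we get $\LE{h}=\LEf\circ\LW{\varphi}=\varphi$. Assembling the three steps yields $\RSf\circ\UEf\circ\LNf\circ\LW{\varphi}=\{\zeta\varphi:\zeta\in\mathbb{T}\}$, as required.

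There is no genuine obstacle here: the statement is a formal corollary of Propositions \ref{RSUEequalSE}, \ref{SELNLE} and \ref{LEINVLW}. The only point deserving a word of care is that the intermediate matrix function $F=\LN{h}$ may have $F_{21}=0$ (precisely when $h$ maps $\D$ into the royal variety $\mathcal{R}$); since each of the three cited propositions is stated without the hypothesis $F_{21}\neq 0$, the argument proceeds uniformly in that case as well.
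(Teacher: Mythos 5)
Your proof is correct and follows essentially the same route as the paper: the paper simply cites Corollary \ref{SELNLW=id} (which states $\SEf\circ\LNf\circ\LWf=\id_{\cal{S}_2^{b=c}}$ and is itself deduced from Propositions \ref{SELNLE} and \ref{LEINVLW}) together with Proposition \ref{RSUEequalSE}, exactly the ingredients you assemble by hand. Your remark that Proposition \ref{RSUEequalSE} covers the case $F_{21}=0$ (i.e.\ $h(\D)\subset\mathcal{R}$) is accurate and consistent with the paper's treatment.
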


\begin{proof} 
By Corollary \ref{SELNLW=id}, 
\[\SEf\circ\LNf\circ\LW{\varphi}=\varphi.\]
It is obvious that $\LNf\circ\LW{\varphi}\in\cal{S}^{2\times2}$. 
By Proposition \ref{RSUEequalSE}, 
\[\RSf\circ\UE{\LNf\circ\LW{\varphi}}=\left\{\zeta\SE{\LNf\circ\LW{\varphi}}:\zeta\in\mathbb{T}\right\}\]
Therefore 
$\RSf\circ\UEf\circ\LNf\circ\LW{\varphi}=\left\{\zeta\varphi:\zeta\in\mathbb{T}\right\}.$
\end{proof}

\section{Relations between $\hol{\mathbb{D}}{\overline{\mathcal{E}}}$ and other objects in the rich saltire}\label{relstetrasect}

The rich saltire for the tetrablock is the following.
\begin{equation}\label{rich_str_E}
\xymatrixcolsep{7pc}
\xymatrix@R+36pt{
\quad \quad \cal{S}^{2\times 2} \quad \quad
\ar@<-3pt>[d]_{\LSfn{\E}}  
\ar[rd]^{ \SEf\quad\quad\quad\quad\quad\quad\quad} 
 \ar@<-3pt>[r]_-{\UEf}  &
  \quad \quad \cal{R}_{1}  \quad \quad 
 \ar@<-3pt>[l]_-{\UWf}  \ar@<-3pt>[d]_{\RSf} 
 \ar[ld]_{\quad\quad\quad\quad\quad\quad\quad\SWfn{\E}}\\ 
\quad \quad\hol{\mathbb{D}}{\overline{\mathcal{E}}} \quad \quad
 \ar@<-3pt>[u]_{\LNfn{\E}}  
 \ar@<-3pt>[r]_-{\LEfn{\E}}  & 
 \quad \quad \cal{S}_2 \quad \quad 
\ar@<-3pt>[l]_-{\LWfn{\E}} \ar@<-3pt>[u]_{\RNf} 
}
\end{equation}

We will define the maps of the rich saltire which depend on $\E$ and describe connections between the different maps in diagram \eqref{rich_str_E}.


\subsection{The map $\LNfn{\E}:\hol{\mathbb{D}}{\overline{\mathcal{E}}}\to\cal{S}^{2\times2}$}\label{LNf-E-sect}

\begin{theorem}\label{leftntetrawelldefinedprop} 
Let $x=(x_1,x_2,x_3)\in\hol{\mathbb{D}}{\overline{\mathcal{E}}}$. There exists a unique function
\[F=\FF\in\cal{S}^{2\times2}\] such that 
\begin{equation} \label{x=pi(F)}
x=(F_{11},F_{22},\det F),
\end{equation}
and
\begin{equation} \label{add-prop}
|F_{12}|=|F_{21}|\; \text{a. e. on} \; \mathbb{T}, \; F_{21} \; \text{is either} \; 0 \; \text{or outer, and} \; F_{21}(0) \geq 0.
\end{equation}  
Moreover,
for all $\mu,\lambda\in\mathbb{D}$ and all $\;w,z\in\mathbb{C}$ such that 
\[1-F_{22}(\mu)w\neq0 \; \text{ and} \;1-F_{22}(\lambda)z\neq0, \]
\begin{align}\label{realisE}
1-\overline{\Psi(w,x(\mu))}\Psi(z,x(\lambda))
=&(1-\overline{w}z)\overline{\gamma(\mu,w)}\gamma(\lambda,z) \nn\\
~&+\eta(\mu,w)^*(I-F(\mu)^*F(\lambda))\eta(\lambda,z),
\end{align}
where
\begin{equation}\label{gamma-eta}
\gamma(\lambda,z):=(1-F_{22}(\lambda)z)^{-1}F_{21}(\lambda)
\;\text{ and }\;\eta(\lambda,z):=\begin{bmatrix}1\\z\gamma(\lambda,z) \end{bmatrix}.
\end{equation}
\end{theorem}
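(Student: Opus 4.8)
The plan is to mimic closely the construction underlying Proposition \ref{leftnconstruction} for $\Gamma$, replacing the magic function $\Phi$ of the symmetrized bidisc by the function $\Psi$ of the tetrablock. First I would invoke Proposition \ref{cond_Ebar}: since $x=(x_1,x_2,x_3)\in\hol{\mathbb{D}}{\overline{\mathcal{E}}}$ means $x(\lambda)\in\overline{\mathcal{E}}$ for every $\lambda\in\mathbb{D}$, part (7) of that proposition gives, for each $\lambda$, a $2\times 2$ matrix $A_\lambda$ with $\|A_\lambda\|\le 1$ and $x(\lambda)=((A_\lambda)_{11},(A_\lambda)_{22},\det A_\lambda)$. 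The point of the theorem, however, is to produce such a matrix \emph{analytically} in $\lambda$ and with a normalization making it unique; this is where the real work is. The idea is: the data $x_1$ (the $(1,1)$ entry) and $x_2$ (the $(2,2)$ entry) and $x_3=\det F$ are prescribed, so we need $F_{12}F_{21}=F_{11}F_{22}-\det F = x_1x_2-x_3$ as an $H^\infty$ function on $\mathbb{D}$. Setting $F_{11}=x_1$, $F_{22}=x_2$, I would factor $x_1x_2-x_3$: if it is identically $0$, take $F_{12}=F_{21}=0$; otherwise it has a unique inner-outer factorization $x_1x_2-x_3=\theta e^{G}$ with $\theta$ inner, $e^G$ outer, $e^G(0)>0$, and set $F_{21}=e^{G/2}$ (outer, $F_{21}(0)>0$) and $F_{12}=\theta e^{G/2}$, so that indeed $|F_{12}|=|F_{21}|$ a.e. on $\mathbb{T}$ and $F_{12}F_{21}=x_1x_2-x_3$. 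Uniqueness of such an $F$ then follows from uniqueness of the inner-outer factorization together with the normalization conditions \eqref{add-prop}.

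The next and main step is to check that this $F$ really lies in $\cal{S}^{2\times2}$, i.e.\ $\|F(\lambda)\|\le 1$ for all $\lambda$. Here I would use Proposition \ref{cond_Ebar} again, but now quantitatively via the characterizations (3)/(4): $x(\lambda)\in\overline{\mathcal{E}}$ is equivalent to $|\Psi(z,x(\lambda))|\le 1$ for all $z\in\mathbb{D}$. With $F_{11}=x_1,F_{22}=x_2,\det F=x_3$ one computes
\[
\cal{F}_{F(\lambda)}(z)=F_{11}(\lambda)+\frac{F_{12}(\lambda)F_{21}(\lambda)z}{1-F_{22}(\lambda)z}
=\frac{F_{11}(\lambda)-\det F(\lambda)\,z}{1-F_{22}(\lambda)z}
=\frac{x_1(\lambda)-x_3(\lambda)z}{1-x_2(\lambda)z}=-\Psi(z,x(\lambda)),
\]
using $F_{11}F_{22}-F_{12}F_{21}=\det F$. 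So $|\cal{F}_{F(\lambda)}(z)|=|\Psi(z,x(\lambda))|\le 1$ for all $z\in\mathbb{D}$. The converse direction of Proposition \ref{calFleq1} is not available, so instead I would apply the identity of Proposition \ref{1-calfcalf} (or directly Proposition \ref{calF} with $H=G=U=V=\mathbb{C}$): for $w=\mu'$, $z=\lambda'$ ranging over $\mathbb{D}$ this expresses $1-\overline{\cal{F}_{F(\mu)}(w)}\cal{F}_{F(\lambda)}(z)$ in terms of $I-F(\mu)^*F(\lambda)$, and conversely one can solve for $I-F(\lambda)^*F(\lambda)$ by specializing and using that the linear fractional map $z\mapsto\cal{F}_{F(\lambda)}(z)$ is a contractive self-map of $\mathbb{D}$ for each $\lambda$; since this holds for all $z\in\mathbb{D}$ the Schur-class / Julia-type estimate forces $\|F(\lambda)\|\le1$. (Alternatively, and perhaps more cleanly: $\cal{F}_{F(\lambda)}(\cdot)$ being a contractive linear-fractional self-map of $\mathbb D$, combined with the given normalization $|F_{12}|=|F_{21}|$, pins down $\|F(\lambda)\|$; one checks the Pick matrix of the self-map at a couple of points, or simply observes that the realization $F(\lambda)$ of a Schur function of one variable can be taken contractive and the normalization makes it the canonical one.) The cleanest route is to invoke the one-variable state-space fact: a rational inner or Schur self-map of $\mathbb D$ of the form $z\mapsto\frac{\alpha-\delta z}{1-\beta z}$ with $|\alpha|,|\beta|\le\cdots$ arises as $\cal F_{A}(z)$ for a contraction $A$ precisely when the associated $2\times2$ matrix is a contraction, which under the constraint $|A_{12}|=|A_{21}|$ is automatic from $|\Psi(z,x(\lambda))|\le 1$.

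Finally, formula \eqref{realisE} is immediate: substitute $P=F(\lambda)$, $Q=F(\mu)$, $X=z$, $Y=w$ into Proposition \ref{1-calfcalf}, which gives
\[
1-\overline{\cal{F}_{F(\mu)}(w)}\cal{F}_{F(\lambda)}(z)=\overline{\gamma(\mu,w)}(1-\overline{w}z)\gamma(\lambda,z)+\eta(\mu,w)^*(I-F(\mu)^*F(\lambda))\eta(\lambda,z),
\]
with $\gamma,\eta$ exactly as in \eqref{gamma-eta}, and then replace $\cal{F}_{F(\lambda)}(z)$ by $-\Psi(z,x(\lambda))$ using the computation above; the sign disappears on taking the product with its conjugate. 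I expect the main obstacle to be the membership $F\in\cal{S}^{2\times 2}$, i.e.\ turning the pointwise-in-$z$ bound $|\Psi(z,x(\lambda))|\le 1$ back into the operator bound $\|F(\lambda)\|\le 1$; everything else is bookkeeping with inner-outer factorization and the linear-fractional identity. It is worth being careful that the inner factor $\theta$ and the outer factor $e^{G/2}$ are genuinely in $H^\infty$ and that $F_{21}$ being outer (rather than merely $H^\infty$) is what secures uniqueness — this parallels the $\Gamma$ case and should be cited from the proof of Proposition \ref{leftnconstruction} where possible.
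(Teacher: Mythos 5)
Your construction of $F$ (set $F_{11}=x_1$, $F_{22}=x_2$, take the inner--outer factorisation of $x_1x_2-x_3$ and split the outer part evenly between $F_{12}$ and $F_{21}$), your uniqueness argument, and your derivation of \eqref{realisE} from Proposition \ref{1-calfcalf} all coincide with the paper. (One small slip: $\cal{F}_{F(\lambda)}(z)=\frac{x_1(\lambda)-x_3(\lambda)z}{1-x_2(\lambda)z}=+\Psi(z,x(\lambda))$, not $-\Psi$; this is harmless for \eqref{realisE} but the minus sign belongs to the $\Gamma$ case, where $\SE{F}=-\cal F_{F(\lambda)}(z)=\Phi(z,h(\lambda))$.)

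However, the step you yourself flag as the main obstacle --- showing $\|F(\lambda)\|\le 1$ --- is genuinely gapped, and the specific routes you sketch do not work as stated. You assert that contractivity of $F(\lambda)$ is ``automatic from $|\Psi(z,x(\lambda))|\le 1$ under the constraint $|F_{12}|=|F_{21}|$'', but that constraint holds only almost everywhere on $\mathbb{T}$, \emph{not} at points of $\mathbb{D}$: in the interior $|F_{21}(\lambda)|^2=e^{\re{C(\lambda)}}=|x_1(\lambda)x_2(\lambda)-x_3(\lambda)|/|\varphi(\lambda)|$, which exceeds $|x_1(\lambda)x_2(\lambda)-x_3(\lambda)|$ wherever the inner factor satisfies $|\varphi(\lambda)|<1$. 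Consequently the pointwise inequalities of Proposition \ref{cond_Ebar}(4)--(6) do not directly bound the diagonal entries of $I-F(\lambda)^*F(\lambda)$ for $\lambda\in\mathbb{D}$, and knowing that $z\mapsto\cal F_{F(\lambda)}(z)$ is a self-map of $\overline{\mathbb{D}}$ does not by itself force the realizing matrix to be a contraction (realizations of a one-variable linear fractional Schur function are far from unique). The paper's resolution is exactly to exploit the place where $|F_{12}|=|F_{21}|$ does hold: one computes $I-F(\lambda)^*F(\lambda)$ for almost every $\lambda\in\mathbb{T}$, where $|F_{12}|^2=|F_{21}|^2=|x_1x_2-x_3|$, checks via Proposition \ref{cond_Ebar}(4)--(6) that the diagonal entries and the determinant of $I-F^*F$ are nonnegative there, and then invokes the Maximum Modulus Principle to conclude $\|F(\lambda)\|\le1$ throughout $\mathbb{D}$. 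Your write-up needs this boundary-plus-maximum-principle argument (or an equivalent substitute); as it stands the central claim is asserted rather than proved.
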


\begin{proof} Consider first the case that $x_1x_2=x_3$.
By Proposition \ref{cond_Ebar}, $|x_1(\lambda)|,|x_2(\lambda)|\leq1$ for all $\lambda\in\mathbb{D}$.
Then the function \[F=\begin{bmatrix}x_1&0\\0&x_2\end{bmatrix}\] is in $\cal{S}^{2\times2}$ and has the required properties \eqref{x=pi(F)} and \eqref{add-prop}, and moreover it is the only function with these properties.

In the case that  $x_1x_2\neq x_3$, the  $H^\infty$ function 
 $x_1x_2-x_3$ is nonzero, and so it
has a unique inner-outer factorisation, say $\varphi e^C=x_1x_2-x_3$ where $\varphi$ is inner, 
$e^C$ is outer and $e^C(0)\geq0$.
Let
\begin{equation} \label{F_E}
F\df \begin{bmatrix} x_1 & \varphi e^{\frac{1}{2}C} \\  e^{\frac{1}{2}C} & x_2\end{bmatrix}.
\end{equation}
One can see that
\[\det F=x_1x_2-\varphi e^C=x_1x_2-x_1x_2+x_3=x_3,\]
and $|F_{12}|=e^{\re{\frac{1}{2}C}}=|F_{21}|$ a. e. on $\mathbb{T}$, $F_{21}$ is outer, and $F_{21}(0) \geq 0$.
It follows that $F$
is the only matrix satisfying the  required properties \eqref{x=pi(F)} and \eqref{add-prop}.

Let us check that $F\in\cal{S}^{2\times2}$.
 Clearly $F$ is holomorphic on $\D$.
We must  show that $\|F(\lambda)\|\leq1$ for all $\lambda\in\mathbb{D}$. Let us prove that 
$I-F(\lambda)^*F(\lambda)$ is positive semidefinite for all $\lambda\in\mathbb{D}$.  
It is enough to show that, for all $\lambda\in\mathbb{D}$, the diagonal entries of
$I-F(\lambda)^*F(\lambda)$ are non-negative and $\det{(I-F(\lambda)^*F(\lambda))}\geq0$.
Since $|F_{12}|=|F_{21}|$ a. e. on $\mathbb{T}$ and $F_{21}F_{12}=x_1x_2-x_3$ we have
\[|F_{12}|^2=|F_{21}|^2=|F_{21}F_{12}|=|x_1x_2-x_3|\] a. e. on $\mathbb{T}$.
At  almost every $\lambda\in\mathbb{T}$,
\begin{align*} \label{I-F*F-E}
I-F(\lambda)^*F(\lambda)=\hspace{6cm}&~\\
\begin{bmatrix}1-|x_1(\lambda)|^2-|x_1(\lambda)x_2(\lambda)-x_3(\lambda)|& 
& -\overline{x_1(\lambda)}F_{12}(\lambda)
-\overline{F_{21}(\lambda)}x_2(\lambda)\\
 & & \\
-\overline{F_{12}(\lambda)}x_1(\lambda)-\overline{x_2(\lambda)}F_{21}(\lambda)& &
1-|x_1(\lambda)x_2(\lambda)-x_3(\lambda)|-|x_2(\lambda)|^2\end{bmatrix}
\end{align*}
and 
\[\det{(I-F(\lambda)^*F(\lambda))}=1-|x_1(\lambda)|^2-2|x_1(\lambda)x_2(\lambda)-x_3(\lambda)|-|x_2(\lambda)|^2+
|x_3(\lambda)|^2.\]
Let $D_{11}$ and $D_{22}$ be the diagonal entries of $I-F^*F$.
Since $x(\lambda)\in\overline{\mathcal{E}}$ for  $\lambda\in\mathbb{D}$, by Proposition \ref{cond_Ebar}, 
\[|x_2(\lambda)-\overline{x_1(\lambda)}x_3(\lambda)|+|x_1(\lambda)x_2(\lambda)-x_3(\lambda)|\leq1-|x_1(\lambda)|^2\] and 
\[|x_1(\lambda)-\overline{x_2(\lambda)}x_3(\lambda)|+|x_1(\lambda)x_2(\lambda)-x_3(\lambda)|\leq1-|x_2(\lambda)|^2\]
for all $\lambda\in\mathbb{D}$. 
Thus, for almost every $\lambda\in\mathbb{T}$,
\[D_{11}(\lambda)\geq|x_2(\lambda)-\overline{x_1(\lambda)}x_3(\lambda)|\geq0\text{ and }
D_{22}(\lambda)\geq|x_1(\lambda)-\overline{x_2(\lambda)}x_3(\lambda)|\geq0.\]
By Proposition \ref{cond_Ebar}, 
\[|x_1(\lambda)|^2+|x_2(\lambda)|^2-|x_3(\lambda)|^2+2|x_1(\lambda)x_2(\lambda)-x_3(\lambda)|\leq1,\]
for all $\lambda\in\mathbb{D}$. Hence,  for almost every $\lambda\in\mathbb{T}$,
\[\det{(I-F(\lambda)^*F(\lambda))}\geq0.\] 
Therefore
\[I-F(\lambda)^*F(\lambda)\]
for almost every $\lambda\in\mathbb{T}$. Thus 
$\|F(\lambda)\|\leq1$ for almost every $\lambda\in\mathbb{T}$, 
and so, by the Maximum Modulus Principle,  $\|F(\lambda)\|\leq1$ for all $\lambda\in\mathbb{D}$.

We now prove the  identity \eqref{realisE}.
By Proposition \ref{1-calfcalf}, for any $F=\FF\in\cal{S}^{2\times2}$, 
\[1-\cal{F}_{F(\mu)}(w)^*\cal{F}_{F(\lambda)}(z)=\overline{\gamma(\mu,w)}(1-\overline{w}z)\gamma(\lambda,z)+
\eta(\mu,w)^*(I-F(\mu)^*F(\lambda))\eta(\lambda,z)\]
for all $\mu,\lambda\in\mathbb{D}$ and $w,z\in\mathbb{C}$ such that
$1-F_{22}(\mu)w\neq0$ and $1-F_{22}(\lambda)z\neq0$.

First we note that
\begin{align*}
\cal{F}_{F(\lambda)}(z)=&F_{11}(\lambda)+\frac{F_{12}(\lambda)F_{21}(\lambda)z}{1-F_{22}(\lambda)z}=
x_1(\lambda)+\frac{(x_1(\lambda)x_2(\lambda)-x_3(\lambda))z}{1-x_2(\lambda)z}\\
=&\frac{x_1(\lambda)-x_3(\lambda)z}{1-x_2(\lambda)z}=\frac{x_3(\lambda)z-x_1(\lambda)}{x_2(\lambda)z-1}=\Psi(z,x(\lambda))
\end{align*}
for all $\lambda\in\mathbb{D}$ and all $z\in\mathbb{C}$ such that $1-F_{22}(\lambda)z\neq0$. The functions $\gamma$ and $\eta$ are defined by equations \eqref{gamma-eta}. Hence
\begin{align*}
1-\overline{\Psi(w,x(\mu))}\Psi(z,x(\lambda))=&1-\cal{F}_{F(\mu)}(w)^*\cal{F}_{F(\lambda)}(z)\\
=&(1-\overline{w}z)\overline{\gamma(\mu,w)}\gamma(\lambda,z)+
\eta(\mu,w)^*(I-F(\mu)^*F(\lambda))\eta(\lambda,z)
\end{align*}
for all $\mu,\lambda\in\mathbb{D}$ and all $w,z\in\mathbb{C}$ such that $1-F_{22}(\mu)w\neq0$ and $1-F_{22}(\lambda)z\neq0$.
\end{proof}

\begin{definition}\label{LNf-E}
The map $\LNfn{\E}:\hol{\mathbb{D}}{\overline{\mathcal{E}}}\to\cal{S}^{2\times2}$ is given  by
\[
\LNn{\E}{x}=F=\FF
\]
for $x=(x_1,x_2,x_3)\in\hol{\mathbb{D}}{\overline{\mathcal{E}}}$, where
$F\in\cal{S}^{2\times2}$ such that 
$x=(F_{11},F_{22},\det F)$,
$|F_{12}|=|F_{21}|$ a. e. on $\mathbb{T}$, $F_{21}$ is either outer or $0$ 
and $F_{21}(0)\geq0$. 
\end{definition}


\subsection{The map $\LSfn{\E}:\cal{S}^{2\times2}\to\hol{\mathbb{D}}{\overline{\mathcal{E}}}$}\label{LS-E}

\begin{definition}\label{LSF-E}
The map $\LSfn{\E}:\cal{S}^{2\times2}\to\hol{\mathbb{D}}{\overline{\mathcal{E}}}$ is defined by
\[F=\FF\mapsto(F_{11},F_{22},\det{F})\]
for each $F\in\cal{S}^{2\times2}$.
\end{definition}

By Proposition \ref{cond_Ebar} and Theorem \ref{conditionsforbE}, the map  $\LSfn{\E}$ is well defined.
Relations between the maps 
$\LNfn{\E}$ and $\LSfn{\E}$ are the following.

\begin{proposition}\label{LNf-LSf-E} 

{\em (i)} The equality $\LSfn{\E}\circ\LNfn{\E}=\id_{\hol{\mathbb{D}}{\overline{\mathcal{E}}}}$ holds, and

{\em (ii)}
$\LNfn{\E}\circ\LSfn{\E}\neq\id_{\cal{S}^{2\times2}}$.
\end{proposition}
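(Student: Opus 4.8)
The plan is to prove the two parts separately, the first being a routine verification and the second requiring only a well-chosen counterexample.

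For part (i), I would take $x=(x_1,x_2,x_3)\in\hol{\mathbb{D}}{\overline{\mathcal{E}}}$ and apply $\LNfn{\E}$ to obtain $F=\FF\in\cal{S}^{2\times2}$. By Theorem \ref{leftntetrawelldefinedprop}, this $F$ satisfies $x=(F_{11},F_{22},\det F)$. Now apply $\LSfn{\E}$; by Definition \ref{LSF-E}, $\LSfn{\E}(F)=(F_{11},F_{22},\det F)=x$. Hence $\LSfn{\E}\circ\LNfn{\E}(x)=x$ for every $x\in\hol{\mathbb{D}}{\overline{\mathcal{E}}}$, which is the claim. No obstacle here — it is immediate from the definitions.

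For part (ii), I would exhibit a single $F\in\cal{S}^{2\times2}$ for which $\LNfn{\E}\circ\LSfn{\E}(F)\neq F$. The natural choice is a diagonal function that nonetheless does not meet the normalisation conditions \eqref{add-prop} — or better, one with a nonzero off-diagonal entry that is lost under $\LSfn{\E}$. For instance, take
\[
F(\lambda)=\begin{bmatrix}0 & \lambda \\ 0 & 0\end{bmatrix}, \quad \lambda\in\mathbb{D}.
\]
Then $F\in\cal{S}^{2\times2}$ and $\LSfn{\E}(F)=(F_{11},F_{22},\det F)=(0,0,0)=x$, which is in $\hol{\mathbb{D}}{\overline{\mathcal{E}}}$ since $x_1x_2=x_3$ and $|x_1|,|x_2|\le 1$. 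Now $\LNfn{\E}$ applied to $x=(0,0,0)$: since $x_1x_2=x_3$, Theorem \ref{leftntetrawelldefinedprop} gives $\LNfn{\E}(x)=\begin{bmatrix}x_1 & 0\\ 0 & x_2\end{bmatrix}=\begin{bmatrix}0&0\\0&0\end{bmatrix}$, the zero function. This is not equal to the original $F$, since $F_{12}(\lambda)=\lambda\not\equiv 0$. Therefore $\LNfn{\E}\circ\LSfn{\E}(F)\neq F$, and $\LNfn{\E}\circ\LSfn{\E}\neq\id_{\cal{S}^{2\times2}}$.

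The only point needing the slightest care is checking that the proposed $F$ really lies in $\cal{S}^{2\times2}$ and that $x=\LSfn{\E}(F)$ really lies in $\hol{\mathbb{D}}{\overline{\mathcal{E}}}$; both are trivial for the example above (the matrix $F(\lambda)$ has norm $|\lambda|<1$, and the constant map $x\equiv 0$ maps into $\overline{\mathcal{E}}$). I expect no genuine obstacle; the substance of the statement is simply that $\LSfn{\E}$ forgets the off-diagonal entries of $F$ (retaining only $F_{11}$, $F_{22}$ and $\det F$), so any $F$ with a nonzero off-diagonal entry that does not already satisfy the outer-normalisation in \eqref{add-prop} furnishes a counterexample.
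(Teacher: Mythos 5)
Your proof is correct and follows essentially the same route as the paper: part (i) is the same immediate definition-chase, and part (ii) is settled by a counterexample in which $\LSfn{\E}$ discards information that $\LNfn{\E}$ cannot recover. The paper happens to use $F(\lambda)=\tfrac{\lambda}{\sqrt{2}}\left[\begin{smallmatrix}1&0\\1&0\end{smallmatrix}\right]$ rather than your $\left[\begin{smallmatrix}0&\lambda\\0&0\end{smallmatrix}\right]$, but both work for the same reason.
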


\begin{proof}
(i) Let $x=(x_1,x_2,x_3)\in\hol{\mathbb{D}}{\overline{\mathcal{E}}}$.
By Definition \ref{LNf-E},
\[
\LNn{\E}{x}=F=\FF,
\]
 where
$F\in\cal{S}^{2\times2}$ such that 
$x=(F_{11},F_{22},\det F)$,
$|F_{12}|=|F_{21}|$ a. e. on $\mathbb{T}$, $F_{21}$ is either outer or $0$ 
and $F_{21}(0)\geq0$. 
Therefore $\LSfn{\E}\circ\LNfn{\E}=\id_{\hol{\mathbb{D}}{\overline{\mathcal{E}}}}$ holds.

(ii) Let us consider the following example: the function $F$ on $\mathbb{D}$ which is 
defined by 
\[F(\lambda)=\frac{\lambda}{\sqrt{2}}\begin{bmatrix}1&0\\ 1&0\end{bmatrix}, \;\lambda\in\mathbb{D}. \] 
Clearly, $F\in\cal{S}^{2\times2}$. 
Then
\[\LSn{\E}{F}(\lambda)=(\frac{\lambda}{\sqrt{2}},0,0)\in\hol{\mathbb{D}}{\overline{\mathcal{E}}},\] 
and, by Definition \ref{LNf-E},
\[\LNfn{\E}\circ\LSn{\E}{F}(\lambda) =
\begin{bmatrix}\frac{\lambda}{\sqrt{2}}&0\\ 0&0\end{bmatrix}, \;\lambda\in\mathbb{D}.
\] 
Hence 
$\LNfn{\E}\circ\LSfn{\E}\neq\id_{\cal{S}^{2\times2}}$.
\end{proof}


\subsection{The maps $\LEfn{\E}:\hol{\mathbb{D}}{\overline{\mathcal{E}}}\to \cal{S}_2^{\mathrm{lf}}$
and $\LWfn{\E}:\cal{S}_2^{\mathrm{lf}} \to\hol{\mathbb{D}}{\overline{\mathcal{E}}}$}\label{LEf-LWf-E}

\begin{lemma}\label{pres2lflem}
Let $\varphi\in\cal{S}_2$ be such that $\varphi(\cdot,\lambda)$ is a linear fractional map for all $\lambda\in\mathbb{D}$. 
Then $\varphi$ can be written as
\[\varphi(z,\lambda)=\frac{a(\lambda)z+b(\lambda)}{c(\lambda)z+1}\]
for all $z,\lambda\in\mathbb{D}$, where $a,b,c$ are functions from $\mathbb{D}$ to $\mathbb{C}$, and $b$ is analytic on $\mathbb{D}$.
Moreover, if $c$ is analytic on $\mathbb{D}$, then so is $a$.
\end{lemma}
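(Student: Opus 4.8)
The plan is to use only two features of $\varphi$: that, as an element of $\cal{S}_2$, it is analytic on the whole bidisc $\mathbb{D}^2$, and that for each fixed $\lambda$ the slice $\varphi(\cdot,\lambda)$ is a linear fractional map which, being analytic on $\mathbb{D}$, has no pole at $z=0$. From the latter I would first produce the normalised representation, and from the former I would read off analyticity of the relevant coefficient functions.

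First I would fix $\lambda\in\mathbb{D}$ and write the slice in normalised form. If $\varphi(\cdot,\lambda)$ is a constant map with value $k$, write $\varphi(z,\lambda)=(0\cdot z+k)/(0\cdot z+1)$. Otherwise write $\varphi(z,\lambda)=(\alpha z+\beta)/(\gamma z+\delta)$ with $\alpha\delta-\beta\gamma\neq0$ and the denominator not identically zero; since the slice has no pole at $z=0$, necessarily $\delta\neq0$, and dividing numerator and denominator by $\delta$ puts $\varphi(z,\lambda)$ in the form $(a(\lambda)z+b(\lambda))/(c(\lambda)z+1)$. Carrying this out at every $\lambda\in\mathbb{D}$ defines functions $a,b,c:\mathbb{D}\to\mathbb{C}$ with $\varphi(z,\lambda)=(a(\lambda)z+b(\lambda))/(c(\lambda)z+1)$ for all $z,\lambda\in\mathbb{D}$. (When $\varphi(\cdot,\lambda)$ is non-constant this representation is forced; in the constant case $c(\lambda)$ may be chosen arbitrarily with $a(\lambda)=b(\lambda)c(\lambda)$, which is exactly why the lemma asserts analyticity of $a$ only conditionally on that of $c$.)

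Next I would pin down $b$ and derive a relation between $a$ and $c$. Setting $z=0$ in the representation gives $b(\lambda)=\varphi(0,\lambda)$, which is analytic on $\mathbb{D}$ because $\varphi$ is analytic on $\mathbb{D}^2$. Clearing the denominator yields the identity $\varphi(z,\lambda)\,(c(\lambda)z+1)=a(\lambda)z+b(\lambda)$, valid for all $z,\lambda\in\mathbb{D}$; both sides are analytic in $z$, so their $z$-derivatives agree, and evaluating at $z=0$ gives
\[
a(\lambda)=\frac{\partial\varphi}{\partial z}(0,\lambda)+b(\lambda)c(\lambda),\qquad\lambda\in\mathbb{D}.
\]
Since $\lambda\mapsto\varphi_z(0,\lambda)$ is analytic on $\mathbb{D}$ (again because $\varphi$ is analytic on $\mathbb{D}^2$) and $b$ has just been shown to be analytic, it follows that if $c$ is analytic on $\mathbb{D}$ then so is $a$.

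There is no genuine obstacle here; the statement is essentially bookkeeping about linear fractional transformations. The only points needing a little care are the degenerate case in which $\varphi(\cdot,\lambda)$ is constant — where the representation still exists but $a,c$ are not determined by $\varphi$ — and the observation that the conclusion never requires $a$ or $c$ themselves to be analytic: only $b$ is automatically analytic, while the displayed identity transfers analyticity from $c$ to $a$.
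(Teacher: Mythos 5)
Your proof is correct and follows essentially the same route as the paper's: normalise the slice using the absence of a pole at $z=0$, identify $b(\lambda)=\varphi(0,\lambda)$, and extract $a$ from the cleared-denominator identity $\varphi(z,\lambda)(c(\lambda)z+1)=a(\lambda)z+b(\lambda)$. Your variant of the last step (reading off $a(\lambda)=\varphi_z(0,\lambda)+b(\lambda)c(\lambda)$ by differentiating at $z=0$, rather than dividing the identity by $z$) and your explicit treatment of the constant slices are minor refinements of the same argument.
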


\begin{proof}
Let $\varphi\in\cal{S}_2$ be such that $\varphi(\cdot,\lambda)$ is a linear fractional map for all 
$\lambda\in\mathbb{D}$. Then we can write
\[\varphi(z,\lambda)=\frac{a(\lambda)z+b(\lambda)}{c(\lambda)z+d(\lambda)}\]
for all $z,\lambda\in\mathbb{D}$, where $a,b,c,d$ are functions from $\mathbb{D}$ to $\mathbb{C}$.
Since $\varphi\in\cal{S}_2$, up to cancellation, $\varphi(\cdot,\lambda)$ does not have a pole at $0$ for any $\lambda\in\mathbb{D}$.
Thus, without loss of generality, we may write \[\varphi(z,\lambda)=\frac{a(\lambda)z+b(\lambda)}{c(\lambda)z+1}\]
for all $z,\lambda\in\mathbb{D}$. Moreover, since $b(\lambda)=\varphi(0,\lambda)$ for all $\lambda\in\mathbb{D}$, and so $b$ is analytic on $\mathbb{D}$. 

Suppose $c$ is analytic on $\mathbb{D}$.
Then \[a(\lambda)z=\varphi(z,\lambda)(c(\lambda)z+1)-b(\lambda)\] for all $z,\lambda\in\mathbb{D}$, 
and so $a$ is analytic on $\mathbb{D}$.
\end{proof}

\begin{definition}
Let $\cal{S}_2^{\mathrm{lf}}$\index{$\cal{S}_2^{\mathrm{lf}}$}  be the subset of $\cal{S}_2$ which contains
those $\varphi$ for which $\varphi(\cdot,\lambda)$ is a linear fractional map of the form
\[\varphi(z,\lambda)=\frac{a(\lambda)z+b(\lambda)}{c(\lambda)z+1}\] for all $z,\lambda\in\mathbb{D}$, where
$c$ is analytic on $\mathbb{D}$, and if $\;a(\lambda)=b(\lambda)c(\lambda)$ for some $\lambda\in\mathbb{D}$, then, in addition,
$|c(\lambda)|\leq1$.
\end{definition}

\begin{proposition}\label{varphiupsilon}
Let $\varphi$ be a function on $\mathbb{D}^2$. Then $\varphi\in\cal{S}_2^{\mathrm{lf}}$ if and only if there exists a function
$x\in\hol{\mathbb{D}}{\overline{\mathcal{E}}}$ such that \[\varphi(z,\lambda)=\Psi(z,x(\lambda))\text{ for all }z,\lambda\in\mathbb{D}.\]
\end{proposition}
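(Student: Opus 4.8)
The plan is to prove the two implications separately, with the function $\Psi(z,x) = \dfrac{x_3 z - x_1}{x_2 z - 1}$ from Definition~\ref{defPsiUpsilon} serving as the bridge. For the ``if'' direction, suppose $x = (x_1,x_2,x_3)\in\hol{\mathbb{D}}{\overline{\mathcal{E}}}$ and set $\varphi(z,\lambda) = \Psi(z,x(\lambda))$. I would first check that $\varphi\in\cal{S}_2$: by Proposition~\ref{cond_Ebar}(3), since $x(\lambda)\in\overline{\mathcal{E}}$, we have $|\Psi(z,x(\lambda))|\le 1$ for all $z,\lambda\in\mathbb{D}$; and analyticity in $(z,\lambda)$ follows because $x$ is analytic and $x_2(\lambda)z \ne 1$ on $\mathbb{D}^2$ (as $|x_2(\lambda)|\le 1$, which is part of $x(\lambda)\in\overline{\mathcal{E}}$, hence $|x_2(\lambda)z|<1$). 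Next, writing
\[
\Psi(z,x(\lambda)) = \frac{x_3(\lambda)z - x_1(\lambda)}{x_2(\lambda)z - 1} = \frac{-x_3(\lambda)z + x_1(\lambda)}{-x_2(\lambda)z + 1},
\]
we read off $a(\lambda) = -x_3(\lambda)$, $b(\lambda) = x_1(\lambda)$, $c(\lambda) = -x_2(\lambda)$, all analytic on $\mathbb{D}$, so $\varphi(\cdot,\lambda)$ is a linear fractional map of the required normalized form with $c$ analytic. Finally, the side condition: if $a(\lambda) = b(\lambda)c(\lambda)$ for some $\lambda$, i.e.\ $-x_3(\lambda) = -x_1(\lambda)x_2(\lambda)$, then $x_1(\lambda)x_2(\lambda) = x_3(\lambda)$, and Proposition~\ref{cond_Ebar}(3) (its ``in addition'' clause) gives $|x_2(\lambda)|\le 1$, i.e.\ $|c(\lambda)|\le 1$. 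Hence $\varphi\in\cal{S}_2^{\mathrm{lf}}$.

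For the ``only if'' direction, suppose $\varphi\in\cal{S}_2^{\mathrm{lf}}$, so by definition $\varphi(z,\lambda) = \dfrac{a(\lambda)z + b(\lambda)}{c(\lambda)z + 1}$ with $c$ analytic on $\mathbb{D}$ (hence, by Lemma~\ref{pres2lflem}, $a$ is analytic too, and $b$ is automatically analytic), and with $|c(\lambda)|\le 1$ whenever $a(\lambda) = b(\lambda)c(\lambda)$. Matching against $\Psi$, I would define
\[
x_1(\lambda) = b(\lambda), \qquad x_2(\lambda) = -c(\lambda), \qquad x_3(\lambda) = -a(\lambda),
\]
so that $\Psi(z,x(\lambda)) = \dfrac{-a(\lambda)z - b(\lambda)}{-c(\lambda)z - 1} = \dfrac{a(\lambda)z + b(\lambda)}{c(\lambda)z + 1} = \varphi(z,\lambda)$, and $x = (x_1,x_2,x_3)$ is analytic on $\mathbb{D}$ with values in $\mathbb{C}^3$. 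It remains to verify $x(\lambda)\in\overline{\mathcal{E}}$ for every $\lambda\in\mathbb{D}$, for which I will use the criterion of Proposition~\ref{cond_Ebar}(3): one needs $|\Psi(z,x(\lambda))|\le 1$ for all $z\in\mathbb{D}$, and, in the case $x_1(\lambda)x_2(\lambda) = x_3(\lambda)$, additionally $|x_2(\lambda)|\le 1$. The first condition is immediate since $|\Psi(z,x(\lambda))| = |\varphi(z,\lambda)|\le 1$ because $\varphi\in\cal{S}_2$. For the second, $x_1(\lambda)x_2(\lambda) = x_3(\lambda)$ translates into $-b(\lambda)c(\lambda) = -a(\lambda)$, i.e.\ $a(\lambda) = b(\lambda)c(\lambda)$, and then the defining side condition on $\cal{S}_2^{\mathrm{lf}}$ gives $|c(\lambda)|\le 1$, i.e.\ $|x_2(\lambda)|\le 1$, as required.

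The only genuinely delicate point is keeping the normalization and degenerate cases straight: one must make sure that $\varphi(\cdot,\lambda)$ is written with denominator constant term $1$ (so that the identification of $a,b,c$ is unambiguous up to the cancellation already handled in Lemma~\ref{pres2lflem}), and one must track the constant-$\Psi$ case $x_1x_2 = x_3$ on both sides of the equivalence, since that is exactly where the ``in addition'' clauses of Proposition~\ref{cond_Ebar}(3) and of the definition of $\cal{S}_2^{\mathrm{lf}}$ must be matched. Beyond that, the argument is a direct translation dictionary $(x_1,x_2,x_3)\leftrightarrow(b,-c,-a)$ together with Proposition~\ref{cond_Ebar}, and no further machinery (e.g.\ the realisation formula) is needed. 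I would present it compactly, spelling out the coefficient identifications explicitly and invoking Proposition~\ref{cond_Ebar}(3) for both directions.
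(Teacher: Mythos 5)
Your proposal is correct and follows essentially the same route as the paper: the same coefficient dictionary $(x_1,x_2,x_3)\leftrightarrow(b,-c,-a)$, the same appeal to Lemma~\ref{pres2lflem} for analyticity of $a$ and $b$, and the same use of Proposition~\ref{cond_Ebar}(3), including its ``in addition'' clause, to match the degenerate case $x_1x_2=x_3$ with the side condition in the definition of $\cal{S}_2^{\mathrm{lf}}$.
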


\begin{proof} 
Suppose  $\varphi\in\cal{S}_2^{\mathrm{lf}}$. Then  
\[\varphi(z,\lambda)=\frac{a(\lambda)z+b(\lambda)}{c(\lambda)z+1}\] 
for all $z,\lambda\in\mathbb{D}$,
where
$c$ is analytic on $\mathbb{D}$, and if $a(\lambda)=b(\lambda)c(\lambda)$ for some $\lambda\in\mathbb{D}$, then in addition
$|c(\lambda)|\leq1$. By Lemma \ref{pres2lflem}, both $a$ and $b$ are also  analytic on $\mathbb{D}$.

Set \[x(\lambda)=(b(\lambda),-c(\lambda),-a(\lambda))\] for all $\lambda\in\mathbb{D}$.
Then $x$ is analytic on $\mathbb{D}$, and
$|\Psi(z,x(\lambda)|=|\frac{x_3(\lambda)z-x_1(\lambda)}{x_2(\lambda)z-1}|=|\varphi(z,\lambda)|\leq1$ for all $z,\lambda\in\mathbb{D}$, and if $a(\lambda)=b(\lambda)c(\lambda)$ for some $\lambda\in\mathbb{D}$, 
then, in addition, $|c(\lambda)|\leq1$. Hence, by Proposition \ref{cond_Ebar}(3), $x(\lambda)\in\overline{\mathcal{E}}$
for all $\lambda\in\mathbb{D}$, and 
\[\varphi(z,\lambda)=\Psi(z,x(\lambda))\text{ for all }z,\lambda\in\mathbb{D}.\]

Conversely, suppose there exists an 
$x=(x_1,x_2,x_3)\in\hol{\mathbb{D}}{\overline{\mathcal{E}}}$ such that 
$\varphi(z,\lambda)=\Psi(z,x(\lambda))$ for all $z,\lambda\in\mathbb{D}$. 
Then \[\varphi(z,\lambda)=\frac{x_3(\lambda)z-x_1(\lambda)}{x_2(\lambda)z-1}\] for all $z,\lambda\in\mathbb{D}$
and clearly $\varphi(\cdot,\lambda)$ is a linear fractional transformation for all $\lambda\in\mathbb{D}$. It is obvious that $x_1$, $x_2$ and $x_3$ are analytic on $\mathbb{D}$.
Since $x(\lambda)\in\overline{\mathcal{E}}$ for all $\lambda\in\mathbb{D}$,  by Proposition \ref{cond_Ebar}(3), 
$|\varphi(z,\lambda)|=|\Psi(z,x(\lambda))|\leq1$ for all $z,\lambda\in\mathbb{D}$, and 
if $x_1(\lambda)x_2(\lambda)=x_3(\lambda)$ then in addition $|x_2(\lambda)|\leq1$. Thus $\varphi\in\cal{S}_2^{\mathrm{lf}}$.
\end{proof}

By Proposition \ref{varphiupsilon}, the map below $\LEfn{\E}$ is well defined.

\begin{definition}
The map $\LEfn{\E}:\hol{\mathbb{D}}{\overline{\mathcal{E}}}\to\cal{S}_2^{\mathrm{lf}}$, for $x=(x_1,x_2,x_3)\in\hol{\mathbb{D}}{\overline{\mathcal{E}}}$, is given by
\[\LEn{\E}{x}(z,\lambda):=\Psi(z,x(\lambda))=\frac{x_3(\lambda)z-x_1(\lambda)}{x_2(\lambda)z-1}, \;z,\lambda\in\mathbb{D}.\]
\end{definition}

\begin{proposition} \label{x-S2lf}
Let $\varphi\in\cal{S}_2^{\mathrm{lf}}$. Suppose 
 functions  $x=(x_1,x_2,x_3), y=(y_1,y_2,y_3) \in\hol{\mathbb{D}}{\overline{\mathcal{E}}}$ are
such that 
\[\varphi(z,\lambda)=\Psi(z,x(\lambda))\] 
and 
\[\varphi(z,\lambda)=\Psi(z,y(\lambda))\]
for all $z,\lambda\in\mathbb{D}$. 
Then the following relations hold:

{\em (i)} if $x_1 x_2 \neq x_3$, then $x =y$ on $\D$.

{\em (ii)} if  $x_1 x_2 = x_3$, then  
$y=\left(x_1, y_2, x_1 y_2\right) \; \text{on } \; \D.$
\end{proposition}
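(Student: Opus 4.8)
The plan is to clear denominators in the identity $\Psi(z,x(\lambda))=\varphi(z,\lambda)=\Psi(z,y(\lambda))$ and compare coefficients of powers of $z$. Fix $\lambda\in\mathbb{D}$; since $x(\lambda),y(\lambda)\in\overline{\mathcal{E}}$ we have $|x_2(\lambda)|\le1$ and $|y_2(\lambda)|\le1$ by Proposition \ref{cond_Ebar}, so $x_2(\lambda)z-1\neq0$ and $y_2(\lambda)z-1\neq0$ for all $z\in\mathbb{D}$, and
\[
\frac{x_3(\lambda)z-x_1(\lambda)}{x_2(\lambda)z-1}=\frac{y_3(\lambda)z-y_1(\lambda)}{y_2(\lambda)z-1}
\]
holds for $z$ in a neighbourhood of $0$. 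Cross-multiplying gives
\[
(x_3(\lambda)z-x_1(\lambda))(y_2(\lambda)z-1)=(y_3(\lambda)z-y_1(\lambda))(x_2(\lambda)z-1),
\]
an identity between polynomials in $z$ of degree at most $2$ that holds near $0$ and hence for all $z\in\mathbb{C}$.

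Comparing the coefficients of $z^0$, $z^1$ and $z^2$, and suppressing the variable $\lambda$, we get
\[
x_1=y_1,\qquad x_3+x_1y_2=y_3+y_1x_2,\qquad x_3y_2=y_3x_2 .
\]
Since this holds at every $\lambda$, we have $x_1=y_1$ on $\mathbb{D}$, and the middle relation becomes $x_3-y_3=x_1(x_2-y_2)$. Substituting $y_3=x_3-x_1(x_2-y_2)$ into $x_3y_2=y_3x_2$ and simplifying, I expect to obtain the pointwise factorisation
\[
\bigl(x_3(\lambda)-x_1(\lambda)x_2(\lambda)\bigr)\bigl(y_2(\lambda)-x_2(\lambda)\bigr)=0,\qquad \lambda\in\mathbb{D}.
\]
In case (ii), where $x_1x_2=x_3$ on $\mathbb{D}$, the relation $x_3-y_3=x_1(x_2-y_2)$ becomes $y_3=x_1y_2=y_1y_2$, so $y=(x_1,y_2,x_1y_2)$ on $\mathbb{D}$, which is the asserted conclusion.

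In case (i), where $x_1x_2-x_3$ is not the zero function, the set $U=\{\lambda\in\mathbb{D}:x_1(\lambda)x_2(\lambda)\neq x_3(\lambda)\}$ is a nonempty open subset of $\mathbb{D}$, and the factorisation forces $y_2-x_2=0$ on $U$; since $y_2-x_2$ is analytic and vanishes on a nonempty open set, the identity theorem gives $y_2=x_2$ on all of $\mathbb{D}$, whence $x_3-y_3=x_1(x_2-y_2)=0$ and $x=y$ on $\mathbb{D}$. I do not expect a substantial obstacle: the computation is bookkeeping with the coefficients of a quadratic in $z$, and the one point needing care is the passage from the pointwise identity $(x_3-x_1x_2)(y_2-x_2)=0$ to a global statement, which rests on the dichotomy that the analytic function $x_1x_2-x_3$ is either identically zero or nonvanishing on a dense open set — precisely the dichotomy distinguishing (i) from (ii).
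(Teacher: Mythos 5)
Your argument is correct and is essentially the paper's own proof: both clear denominators, equate the coefficients of $z^0,z^1,z^2$ to get $x_1=y_1$, $x_3+x_1y_2=y_3+y_1x_2$, $x_3y_2=y_3x_2$, deduce the pointwise identity $(x_3-x_1x_2)(y_2-x_2)=0$, and in case (i) invoke the fact that the zeros of the nonzero analytic function $x_3-x_1x_2$ are isolated to conclude $y_2=x_2$ and hence $y_3=x_3$ on all of $\D$. Your additional remarks justifying the cross-multiplication and the extension of the polynomial identity are fine but only make explicit what the paper leaves implicit.
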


\begin{proof} By assumption,
\[\Psi(z,x(\lambda))=\varphi(z,\lambda)=\Psi(z,y(\lambda))\] for all $z, \lambda \in\mathbb{D}$.
Hence \[\frac{x_3(\lambda)z-x_1(\lambda)}{x_2(\lambda)z-1}=\frac{y_3(\lambda)z-y_1(\lambda)}{y_2(\lambda)z-1},\]
 and so
\begin{align*}
x_3(\lambda)y_2(\lambda)z^2-(x_1(\lambda)y_2(\lambda)+x_3(\lambda))z+x_1(\lambda)=&~\\
\;y_3(\lambda)x_2(\lambda)z^2-
(y_1(\lambda)x_2(\lambda)+y_3(\lambda))z+y_1(\lambda) &
\end{align*} for all $z, \lambda \in\mathbb{D}$. 
Therefore $x_1=y_1$, $x_3 y_2=y_3 x_2$ , and 
$x_1y_2+x_3=y_1x_2+y_3$ on $\D$.
Hence, for all $\lambda \in \D$,
\begin{equation} \label{x1x2x3}
(x_3(\lambda)-x_1(\lambda)x_2(\lambda))y_2(\lambda)=(x_3(\lambda)-x_1(\lambda)x_2(\lambda))x_2(\lambda).
\end{equation}

(i) Suppose that $x_1x_2\neq x_3$. 
Since $x_3 -x_1 x_2$ is a nonzero analytic function on $\D$,
 the zeros of this function are isolated  in $\D$. Thus, by
\eqref{x1x2x3},
 $y_2=x_2$ and $y_3=x_3$ on $\D$. Hence $x=y$.

(ii) If $x_1x_2= x_3$, then we have $x_1=y_1$,
$y_3=x_1 y_2$,  and so $y=\left(x_1, y_2, x_1 y_2\right)$ on $\D$.
\end{proof}

One can use Proposition \ref{varphiupsilon} to define 
 the map $\LWfn{\E}$ below.

\begin{definition}\label{LWf-E}
The map $\LWfn{\E}:\cal{S}_2^{\mathrm{lf}}\to\hol{\mathbb{D}}{\overline{\mathcal{E}}}$ is given by the following procedure:

{\em (i)} for $\varphi\in\cal{S}_2^{\mathrm{lf}}$, where $\varphi(z,\lambda)=\frac{a(\lambda)z+b(\lambda)}{c(\lambda)z+1}$, $z,\lambda\in\mathbb{D}$, and $a \neq bc$, 
\[\LWn{\E}{\varphi}=
\left(b,-c,-a\right).\]

{\em (ii)} for $\varphi\in\cal{S}_2^{\mathrm{lf}}$ such that 
$a = bc$, and so $\varphi(z,\lambda)=b(\lambda)$, $z,\lambda\in\mathbb{D}$,  $\LWfn{\E}$ is the set map
\[\LWn{\E}{\varphi}=\{
\left(b,-d,-bd\right), \; \text{where} \; d \;\text{is analytic  and}\; |d|\leq1\; \text{on} \; \D\}.\]
\end{definition}

\begin{proposition} \label{LEINVLWtetra} The following relations hold.

{\em  (i)} for each $x=(x_1,x_2,x_3)\in\hol{\mathbb{D}}{\overline{\mathcal{E}}}$ such that $x_3 \neq x_1 x_2$,
 $$\LWfn{\E}\circ\LEfn{\E}(x)=x.$$

{\em (ii)} for each $\varphi\in\cal{S}_2^{\mathrm{lf}}$ such that $\varphi(z,\lambda)=\frac{a(\lambda)z+b(\lambda)}{c(\lambda)z+1}$, $z,\lambda\in\mathbb{D}$, and $a \neq bc$, 
$$\LEfn{\E}\circ\LWfn{\E}(\varphi)=\varphi.$$
\end{proposition}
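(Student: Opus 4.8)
The plan is to prove both identities by direct substitution, using Proposition~\ref{varphiupsilon} to keep the intermediate objects inside the sets on which $\LEfn{\E}$ and $\LWfn{\E}$ act, and checking in each case that the stated hypothesis places us in the single-valued branch (i) of Definition~\ref{LWf-E} rather than the set-valued branch (ii).

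For (i), let $x=(x_1,x_2,x_3)\in\hol{\mathbb{D}}{\overline{\mathcal{E}}}$ with $x_3\neq x_1x_2$, and put $\varphi=\LEn{\E}{x}$. By Proposition~\ref{varphiupsilon}, $\varphi\in\cal{S}_2^{\mathrm{lf}}$, and for all $z,\lambda\in\mathbb{D}$
\[
\varphi(z,\lambda)=\Psi(z,x(\lambda))=\frac{x_3(\lambda)z-x_1(\lambda)}{x_2(\lambda)z-1}=\frac{-x_3(\lambda)z+x_1(\lambda)}{-x_2(\lambda)z+1}.
\]
Comparing with the canonical representation $\varphi(z,\lambda)=\frac{a(\lambda)z+b(\lambda)}{c(\lambda)z+1}$ supplied by Lemma~\ref{pres2lflem} identifies $a=-x_3$, $b=x_1$, $c=-x_2$, all analytic on $\mathbb{D}$. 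Since $a-bc=-(x_3-x_1x_2)\not\equiv0$, $\varphi$ falls under case (i) of Definition~\ref{LWf-E}, so $\LWfn{\E}$ is single-valued on $\varphi$ and $\LWn{\E}{\varphi}=(b,-c,-a)=(x_1,x_2,x_3)=x$. The one point needing a word of care: the coefficients $a,b,c$ are pinned down by $\varphi(\cdot,\lambda)$ only where that map is non-constant, i.e.\ where $x_3(\lambda)\neq x_1(\lambda)x_2(\lambda)$; this set is co-discrete in $\mathbb{D}$, and the resulting equality of analytic functions then extends to all of $\mathbb{D}$.

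For (ii), take $\varphi\in\cal{S}_2^{\mathrm{lf}}$ with $\varphi(z,\lambda)=\frac{a(\lambda)z+b(\lambda)}{c(\lambda)z+1}$ and $a\neq bc$, and set $x=\LWn{\E}{\varphi}=(b,-c,-a)$; by the construction in the proof of Proposition~\ref{varphiupsilon}, $x\in\hol{\mathbb{D}}{\overline{\mathcal{E}}}$. Then for all $z,\lambda\in\mathbb{D}$,
\[
\LEn{\E}{x}(z,\lambda)=\Psi(z,x(\lambda))=\frac{x_3(\lambda)z-x_1(\lambda)}{x_2(\lambda)z-1}=\frac{-a(\lambda)z-b(\lambda)}{-c(\lambda)z-1}=\frac{a(\lambda)z+b(\lambda)}{c(\lambda)z+1}=\varphi(z,\lambda),
\]
which is exactly $\LEfn{\E}\circ\LWfn{\E}(\varphi)=\varphi$. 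There is no serious obstacle in this proposition; the only thing to watch is the bookkeeping that the hypotheses $x_3\neq x_1x_2$ and $a\neq bc$ correspond to each other under the dictionary $a=-x_3$, $b=x_1$, $c=-x_2$, and so keep us away from the set-valued branch (ii) of Definition~\ref{LWf-E}, where the composition would instead return an $\overline{\mathcal{E}}$-valued family rather than a single function. Citing Propositions~\ref{varphiupsilon} and~\ref{x-S2lf} keeps the domain and codomain issues under control.
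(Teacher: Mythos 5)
Your proof is correct and follows essentially the same route as the paper's: compute $\LEn{\E}{x}=\Psi(\cdot,x)$ explicitly, read off the coefficients $(a,b,c)=(-x_3,x_1,-x_2)$ to land in branch (i) of Definition \ref{LWf-E}, and substitute back. The extra remark about the coefficients being determined only off the zero set of $x_3-x_1x_2$ is a harmless refinement of what the paper does implicitly.
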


\begin{proof}
(i) Let $x=(x_1,x_2,x_3)\in\hol{\mathbb{D}}{\overline{\mathcal{E}}}$ be such that $x_3 \neq x_1 x_2$.
Then 
\[\LEn{\E}{x}=\varphi\in\cal{S}_2^{\mathrm{lf}}, \; \text{ where} \; \varphi(z,\lambda)=\Psi(z,x(\lambda)), \; z,\lambda\in\mathbb{D}. 
\]
Thus \[\varphi(z,\lambda)=\frac{x_3(\lambda)z-x_1(\lambda)}{x_2(\lambda)z-1}
=\frac{-x_3(\lambda)z+x_1(\lambda)}{-x_2(\lambda)z+1}\] for all $z,\lambda\in\mathbb{D}$ and $x_3 \neq x_1 x_2$. By Definition \ref{LWf-E}, 
\[\LWn{\E}{\varphi}=\left(x_1,x_2,x_3\right)=x,\]
and so
$$\LWfn{\E}\circ\LEfn{\E}(x)=x.$$

(ii) Let
 $\varphi\in\cal{S}_2^{\mathrm{lf}}$ be such that $\varphi(z,\lambda)=\frac{a(\lambda)z+b(\lambda)}{c(\lambda)z+1}$, $z,\lambda\in\mathbb{D}$ and $a \neq bc$. Then, by Definition \ref{LWf-E}, 
\[\LWn{\E}{\varphi}=x_{\varphi}=
\left(b,-c,-a\right).\]
Therefore
\[\LEfn{\E}(x_{\varphi})(z,\lambda)=\Psi(z,x_{\varphi}(\lambda))
=\frac{a(\lambda)z+b(\lambda)}{c(\lambda)z+1}=\varphi(z,\lambda)\]
for all $z,\lambda\in\mathbb{D}$.
It follows that $\LEfn{\E}\circ\LWn{\E}{\varphi}=\varphi$ for  $\varphi\in\cal{S}_2^{\mathrm{lf}}$ such that  $a \neq bc$.
\end{proof}


Let us see  how these maps  interact with the other maps in the rich saltire \eqref{rich_str_E}.

\begin{proposition}\label{SELNLEtetra} The following equality
$\SEf\circ\LNfn{\E}=\LEfn{\E}$
holds.
\end{proposition}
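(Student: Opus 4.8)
The plan is to prove this exactly as its symmetrised-bidisc counterpart, Proposition \ref{SELNLE}, is proved; the substantive content has already been isolated in Theorem \ref{leftntetrawelldefinedprop}, so the argument is short.

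Fix $x=(x_1,x_2,x_3)\in\hol{\mathbb{D}}{\overline{\mathcal{E}}}$ and put $F:=\LNn{\E}{x}\in\cal{S}^{2\times2}$, the unique matrix function furnished by Theorem \ref{leftntetrawelldefinedprop}. By construction $F_{11}=x_1$, $F_{22}=x_2$ and hence $F_{12}F_{21}=F_{11}F_{22}-\det F=x_1x_2-x_3$; moreover $|F_{22}(\lambda)|=|x_2(\lambda)|\le1$ for every $\lambda\in\mathbb{D}$ by Proposition \ref{cond_Ebar}, so that $1-F_{22}(\lambda)z\neq0$ throughout $\mathbb{D}^2$ and $\cal{F}_{F(\lambda)}(z)$ is defined there.

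The only substantive ingredient is the identity $\cal{F}_{F(\lambda)}(z)=F_{11}(\lambda)+F_{12}(\lambda)F_{21}(\lambda)z(1-F_{22}(\lambda)z)^{-1}=(x_1(\lambda)-x_3(\lambda)z)(1-x_2(\lambda)z)^{-1}=\Psi(z,x(\lambda))$, valid for all $z,\lambda\in\mathbb{D}$, which is already derived inside the proof of Theorem \ref{leftntetrawelldefinedprop} by a one-line substitution of the three relations of the previous paragraph. Note that the degenerate case $x_1x_2=x_3$ needs no separate treatment: there $F=\diag(x_1,x_2)$, so $\cal{F}_{F(\lambda)}(z)=x_1(\lambda)$, which also equals $\Psi(\cdot,x(\lambda))$ by the constancy of $\Psi(\cdot,x)$ when $x_1x_2=x_3$ recorded after Definition \ref{defPsiUpsilon}.

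It then remains only to unwind the two relevant definitions. By Definition \ref{SEf_all}, $(\SEf\circ\LNfn{\E})(x)$ is the function $(z,\lambda)\mapsto-\cal{F}_{F(\lambda)}(z)$ on $\mathbb{D}^2$, while by the definition of $\LEfn{\E}$, the function $\LEfn{\E}(x)$ is $(z,\lambda)\mapsto\Psi(z,x(\lambda))$; combining these with the displayed identity gives $(\SEf\circ\LNfn{\E})(x)=\LEfn{\E}(x)$, and since $x\in\hol{\mathbb{D}}{\overline{\mathcal{E}}}$ was arbitrary the two maps coincide. There is essentially no obstacle here: once Theorem \ref{leftntetrawelldefinedprop} is in hand the proof is pure bookkeeping, the only point calling for care being the matching of the sign conventions of Definition \ref{SEf_all} and of the formula defining $\Psi$, exactly as in the $\Gamma$ case where the minus sign built into $\SEf$ absorbs the one implicit in $\Phi$.
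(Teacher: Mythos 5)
Your route is the same as the paper's: both arguments reduce everything to the identity $\cal{F}_{F(\lambda)}(z)=\Psi(z,x(\lambda))$ established inside the proof of Theorem \ref{leftntetrawelldefinedprop}, and then unwind the definitions of the two maps (your remark that the degenerate case $x_1x_2=x_3$ needs no separate treatment is also correct). The problem is your final step. You correctly record that Definition \ref{SEf_all} gives $(\SEf\circ\LNfn{\E})(x)(z,\lambda)=-\cal{F}_{F(\lambda)}(z)$, that $\LEfn{\E}(x)(z,\lambda)=\Psi(z,x(\lambda))$, and that $\cal{F}_{F(\lambda)}(z)=\Psi(z,x(\lambda))$; but these three facts combine to give $(\SEf\circ\LNfn{\E})(x)=-\LEfn{\E}(x)$, not $+\LEfn{\E}(x)$. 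Your justification --- that the minus sign built into $\SEf$ ``absorbs the one implicit in $\Psi$, exactly as in the $\Gamma$ case'' --- is false. The function $\Phi$ does carry a built-in minus sign: $\Phi(0,h(\lambda))=-\tfrac 12 s(\lambda)=-F_{11}(\lambda)$, so $\Phi(z,h(\lambda))=-\cal{F}_{F(\lambda)}(z)$ and the signs cancel in Proposition \ref{SELNLE}. The function $\Psi$ does not: $\Psi(0,x(\lambda))=x_1(\lambda)=F_{11}(\lambda)$, so $\Psi(z,x(\lambda))=+\cal{F}_{F(\lambda)}(z)$ and there is nothing to cancel.

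To be fair, the discrepancy originates in the paper itself: the published proof of this proposition writes $\SEf(F)(z,\lambda)=\cal{F}_{F(\lambda)}(z)$, silently dropping the minus sign of Definition \ref{SEf_all}, and does the same in the subsequent proposition asserting $\LEfn{\E}\circ\LSfn{\E}=\SEf$. With the definitions taken literally, the correct conclusion is $\SEf\circ\LNfn{\E}=-\LEfn{\E}$; restoring the stated equality requires renormalising either $\SEf$ or $\LEfn{\E}$ by a sign in the tetrablock saltire. But your proof cannot repair this by asserting a sign cancellation that does not occur; that assertion is precisely where your argument (and the paper's) breaks down.
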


\begin{proof}
Let $x=(x_1,x_2,x_3)\in\hol{\mathbb{D}}{\overline{\mathcal{E}}}$.
Then $\LNn{\E}{x}=F\in\cal{S}^{2\times2}$ as defined in Theorem \ref{leftntetrawelldefinedprop} and, by the proof of 
Theorem \ref{leftntetrawelldefinedprop},
\[\SEf(F)(z,\lambda)=\cal{F}_{F(\lambda)}(z)=\Psi(z,x(\lambda))\]
for all $z,\lambda\in\mathbb{D}$.
Hence, by definition,
\[\SEf\circ\LNn{\E}{x}(z,\lambda)=\Psi(z,x(\lambda))=\LEn{\E}{x}(z,\lambda)\]
for all $z,\lambda\in\mathbb{D}$.
It follows that $\SEf\circ\LNn{\E}{x}=\LEn{\E}{x}$ for all $x\in\hol{\mathbb{D}}{\overline{\mathcal{E}}}$ 
and so $\SEf\circ\LNfn{\E}=\LEfn{\E}$.
\end{proof}

\begin{corollary} The following relations hold.
{\em  (i)} For each $x=(x_1,x_2,x_3)\in\hol{\mathbb{D}}{\overline{\mathcal{E}}}$ such that $x_3 \neq x_1 x_2$,
$$\LWfn{\E}\circ\SEf\circ\LNfn{\E}(x)= x.$$

{\em (ii)} for each $\varphi\in\cal{S}_2^{\mathrm{lf}}$ such that $\varphi(z,\lambda)=\frac{a(\lambda)z+b(\lambda)}{c(\lambda)z+1}$, $z,\lambda\in\mathbb{D}$, and $a \neq bc$, 
$$\SEf\circ\LNfn{\E}\circ\LWfn{\E}(\varphi)=\varphi.$$
\end{corollary}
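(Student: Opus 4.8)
The plan is to obtain both equalities by simply composing the two immediately preceding results, Proposition~\ref{SELNLEtetra} and Proposition~\ref{LEINVLWtetra}; no fresh computation should be needed.

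For part~(i) I would fix $x=(x_1,x_2,x_3)\in\hol{\mathbb{D}}{\overline{\mathcal{E}}}$ with $x_3\neq x_1x_2$ and first use the operator identity $\SEf\circ\LNfn{\E}=\LEfn{\E}$ of Proposition~\ref{SELNLEtetra} to rewrite $\LWfn{\E}\circ\SEf\circ\LNfn{\E}(x)$ as $\LWfn{\E}\circ\LEfn{\E}(x)$. Then Proposition~\ref{LEINVLWtetra}(i), whose hypothesis is exactly $x_3\neq x_1x_2$, gives $\LWfn{\E}\circ\LEfn{\E}(x)=x$, and chaining the two lines yields the claim.

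For part~(ii) I would take $\varphi\in\cal{S}_2^{\mathrm{lf}}$ with $\varphi(z,\lambda)=\frac{a(\lambda)z+b(\lambda)}{c(\lambda)z+1}$ and $a\neq bc$. Here $\LWfn{\E}(\varphi)$ is genuinely single-valued, since the condition $a\neq bc$ puts us in case~(i) of Definition~\ref{LWf-E}, namely $\LWfn{\E}(\varphi)=(b,-c,-a)$. Applying $\SEf\circ\LNfn{\E}=\LEfn{\E}$ again rewrites $\SEf\circ\LNfn{\E}\circ\LWfn{\E}(\varphi)$ as $\LEfn{\E}\circ\LWfn{\E}(\varphi)$, and Proposition~\ref{LEINVLWtetra}(ii), under the same hypothesis $a\neq bc$, gives $\LEfn{\E}\circ\LWfn{\E}(\varphi)=\varphi$.

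The only point requiring (minor) attention, rather than a real obstacle, is keeping track of the non-degeneracy hypotheses along each composition so that the cited propositions apply verbatim: in~(i) the condition $x_3\neq x_1x_2$ is carried unchanged through $\SEf\circ\LNfn{\E}$, and in~(ii) one checks that the triple $(b,-c,-a)=\LWfn{\E}(\varphi)$ satisfies $x_1x_2=-bc\neq-a=x_3$, so that it lies in the part of $\hol{\mathbb{D}}{\overline{\mathcal{E}}}$ on which $\LEfn{\E}$ and $\LWfn{\E}$ are mutually inverse. Both verifications are immediate from the definitions.
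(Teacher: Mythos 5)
Your proposal is correct and is exactly the paper's own argument: the paper proves this corollary by observing that it "follows immediately from Proposition \ref{SELNLEtetra} and Proposition \ref{LEINVLWtetra}", which is precisely the composition you carry out. Your extra check that $(b,-c,-a)$ satisfies $x_1x_2=-bc\neq -a=x_3$ in part (ii) is a sensible (if implicit in the paper) verification of the hypothesis of Proposition \ref{LEINVLWtetra}.
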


\begin{proof} 
This follows immediately from Proposition \ref{SELNLEtetra} 
and Proposition \ref{LEINVLWtetra}.
\end{proof}

\begin{proposition} The equality
$\LEfn{\E}\circ\LSfn{\E}=\SEf$ holds.
\end{proposition}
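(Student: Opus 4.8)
The plan is to evaluate both composites on an arbitrary $F\in\cal{S}^{2\times2}$ and then identify the two resulting functions of $(z,\lambda)$ by writing $\Psi(z,x(\lambda))$ and the linear fractional transformation $\cal{F}_{F(\lambda)}(z)$ in a common form. First I would fix $F=\FF\in\cal{S}^{2\times2}$ and set $x:=\LSn{\E}{F}=(F_{11},F_{22},\det F)$, which lies in $\hol{\mathbb{D}}{\overline{\mathcal{E}}}$ since $\|F(\lambda)\|\le1$ for every $\lambda\in\mathbb{D}$, by Proposition \ref{cond_Ebar}(7). By the definition of $\LEfn{\E}$,
\[
\LEfn{\E}\circ\LSn{\E}{F}(z,\lambda)=\LEn{\E}{x}(z,\lambda)=\Psi(z,x(\lambda))=\frac{(\det F)(\lambda)\,z-F_{11}(\lambda)}{F_{22}(\lambda)z-1},\qquad z,\lambda\in\mathbb{D},
\]
the denominator being nonzero because $|F_{22}(\lambda)|\le1$.

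The key step is the elementary identity $\cal{F}_{F(\lambda)}(z)=\Psi(z,x(\lambda))$ for every $F\in\cal{S}^{2\times2}$; this is exactly the computation already carried out in the proof of Theorem \ref{leftntetrawelldefinedprop}. It comes down to using $F_{12}(\lambda)F_{21}(\lambda)=F_{11}(\lambda)F_{22}(\lambda)-\det F(\lambda)$ and clearing denominators:
\[
\cal{F}_{F(\lambda)}(z)=F_{11}(\lambda)+\frac{F_{12}(\lambda)F_{21}(\lambda)z}{1-F_{22}(\lambda)z}=\frac{F_{11}(\lambda)-(\det F)(\lambda)z}{1-F_{22}(\lambda)z}=\Psi(z,x(\lambda)).
\]
Since $\SEf(F)(z,\lambda)=\cal{F}_{F(\lambda)}(z)$, comparing this with the previous display gives $\LEfn{\E}\circ\LSn{\E}{F}=\SEf(F)$ for each $F$, that is, $\LEfn{\E}\circ\LSfn{\E}=\SEf$.

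I do not anticipate a genuine obstacle: the assertion is essentially a bookkeeping identification. The only points that warrant a word are that the expressions involved are everywhere defined on $\mathbb{D}^2$ (the denominator $1-F_{22}(\lambda)z$ never vanishes because $|F_{22}(\lambda)|\le1$) and that $\LSfn{\E}$ and $\LEfn{\E}$ genuinely map into $\hol{\mathbb{D}}{\overline{\mathcal{E}}}$ and $\cal{S}_2^{\mathrm{lf}}$ respectively, which is already recorded (Propositions \ref{cond_Ebar} and \ref{varphiupsilon}). If one wishes to avoid recomputing $\cal{F}_{F(\lambda)}(z)$, an alternative route is to observe that $\cal{F}_{F(\lambda)}(z)=F_{11}(\lambda)+F_{12}(\lambda)F_{21}(\lambda)z(1-F_{22}(\lambda)z)^{-1}$ depends on $F$ only through the triple $(F_{11},F_{22},F_{12}F_{21})=(F_{11},F_{22},F_{11}F_{22}-\det F)$, so that $\SEf$ factors through $\LSfn{\E}$; then for any $F$ with $x=\LSn{\E}{F}$ one has $\SEf(F)=\SEf(\LNn{\E}{x})$ by $\LSfn{\E}\circ\LNfn{\E}=\id$ (Proposition \ref{LNf-LSf-E}), and the latter equals $\LEn{\E}{x}=\LEfn{\E}\circ\LSn{\E}{F}$ by Proposition \ref{SELNLEtetra}.
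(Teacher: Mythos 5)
Your proof is correct and follows essentially the same route as the paper's: evaluate both composites on an arbitrary $F$, compute $\cal{F}_{F(\lambda)}(z)$ as the linear fractional map $\bigl(F_{11}(\lambda)-\det F(\lambda)\,z\bigr)/\bigl(1-F_{22}(\lambda)z\bigr)$, and identify it with $\Psi(z,F_{11}(\lambda),F_{22}(\lambda),\det F(\lambda))$. The only remark worth making is that you, like the paper's Section 7, use $\SEf(F)(z,\lambda)=\cal{F}_{F(\lambda)}(z)$ whereas Definition \ref{SEf_all} carries a minus sign; this inconsistency is in the paper itself, not introduced by you.
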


\begin{proof} Let $F=\FF\in\cal{S}^{2\times2}$. Then
$\LSn{\E}{F}=(F_{11},F_{22},\det{F})$ and  
\[\LEn{\E}{(F_{11},F_{22},\det{F})}(z,\lambda)=\Psi(z,F_{11}(\lambda),F_{22}(\lambda),\det{F(\lambda)})\]
for all $z,\lambda\in\mathbb{D}$.
Moreover
\begin{align*}\SEf(F)(z,\lambda)=&\cal{F}_{F(\lambda)}(z)\\=&
F_{11}(\lambda)+\frac{F_{12}(\lambda)F_{21}(\lambda)z}{1-F_{22}(\lambda)z}=
\frac{F_{11}(\lambda)-\det{F}(\lambda)z}{1-F_{22}(\lambda)z}
\\=&\Psi(z,F_{11}(\lambda),F_{22}(\lambda),\det{F(\lambda)})\end{align*} 
for all $z,\lambda\in\mathbb{D}$.
It follows that $\LEfn{\E}\circ\LSn{\E}{F}=\SEf(F)$ for all $F\in\cal{S}^{2\times2}$ and so 
$\LEfn{\E}\circ\LSfn{\E}=\SEf$ as required.
\end{proof}


The idea for $\SWfn{\E}$ is that we want to follow Procedure UW with the application of the map $\LSfn{\E}$ to the function produced.
The following proposition will facilitate this.

\begin{proposition}\label{SWTwelldefined}
Let $(N,M)\in\cal{R}_{11}$. Let $\Xi$ be any function constructed from $(N,M)$ by Procedure UW (Theorem 
\ref{procUWcons}).
Then
\begin{align*}\{\LSfn{\E}(F):F\in\UW{N,M}\}
=&\left\{\left(\zeta\Xi_{11},\Xi_{22},\zeta\det\Xi\right):\zeta\in\mathbb{T}\right\}
\subseteq\hol{\mathbb{D}}{\overline{\mathcal{E}}}.\end{align*}
\end{proposition}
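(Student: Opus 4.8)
The plan is to follow the template of the proof of Proposition~\ref{SWwelldefined}, replacing the trace/determinant map of the symmetrised bidisc by $\LSfn{\E}$. First I would invoke Proposition~\ref{UWfwelldefined}: having fixed one function $\Xi$ produced from $(N,M)$ by Procedure~$UW$ (Theorem~\ref{procUWcons}), every element of $\UW{N,M}$ has, for some $\zeta_1,\zeta_2\in\mathbb{T}$, the form
\[
F=\begin{bmatrix}\zeta_1&0\\0&\zeta_2\end{bmatrix}\Xi\begin{bmatrix}1&0\\0&\overline{\zeta_2}\end{bmatrix}
=\begin{bmatrix}\zeta_1\Xi_{11}&\zeta_1\overline{\zeta_2}\Xi_{12}\\ \zeta_2\Xi_{21}&\Xi_{22}\end{bmatrix},
\]
and conversely each such matrix belongs to $\UW{N,M}$.

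Next I would apply $\LSfn{\E}$ to $F$. By Definition~\ref{LSF-E}, $\LSfn{\E}(F)=(F_{11},F_{22},\det F)$. Reading off $F_{11}=\zeta_1\Xi_{11}$ and $F_{22}=\zeta_2\overline{\zeta_2}\Xi_{22}=\Xi_{22}$, and using multiplicativity of the determinant together with $|\zeta_2|=1$,
\[
\det F=(\zeta_1\zeta_2)\,(\det\Xi)\,\overline{\zeta_2}=\zeta_1\det\Xi,
\]
so $\LSfn{\E}(F)=(\zeta_1\Xi_{11},\Xi_{22},\zeta_1\det\Xi)$. The one point worth noting --- and essentially the only thing that needs to be observed --- is that the parameter $\zeta_2$ cancels completely, both in the retained entries $F_{11},F_{22}$ and in $\det F$. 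Consequently $\LSfn{\E}(F)$ depends only on $\zeta:=\zeta_1\in\mathbb{T}$, and as $\zeta$ runs over $\mathbb{T}$ the image is exactly $\{(\zeta\Xi_{11},\Xi_{22},\zeta\det\Xi):\zeta\in\mathbb{T}\}$, which gives the asserted equality of sets.

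Finally I would note that the containment in $\hol{\mathbb{D}}{\overline{\mathcal{E}}}$ requires no separate argument: each $F\in\UW{N,M}$ lies in $\cal{S}^{2\times2}$, and $\LSfn{\E}$ has already been shown to be a well-defined map $\cal{S}^{2\times2}\to\hol{\mathbb{D}}{\overline{\mathcal{E}}}$ via Proposition~\ref{cond_Ebar} and Theorem~\ref{conditionsforbE}. I do not expect any genuine obstacle here: once Proposition~\ref{UWfwelldefined} is available the proposition reduces to the elementary matrix computation above, the only mild subtlety being to recognise that $\LSfn{\E}$ cannot see the $\zeta_2$-degree of freedom in $\UW{N,M}$, so the parametrising set collapses from $\mathbb{T}^2$ to a single circle.
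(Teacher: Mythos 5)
Your proposal is correct and follows essentially the same route as the paper: parametrise $\UW{N,M}$ via Proposition~\ref{UWfwelldefined} as $\begin{bmatrix}\zeta_1&0\\0&\zeta_2\end{bmatrix}\Xi\begin{bmatrix}1&0\\0&\overline{\zeta_2}\end{bmatrix}$ and read off $\LSn{\E}{F}=(\zeta_1\Xi_{11},\Xi_{22},\zeta_1\det\Xi)$, so that $\zeta_2$ cancels and only the circle parameter $\zeta_1$ survives. The computation matches the paper's proof exactly.
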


\begin{proof} By Proposition \ref{UWfwelldefined},
a function $F=\begin{bmatrix}\zeta_1&0\\0&\zeta_2\end{bmatrix}\Xi
\begin{bmatrix}1&0\\0&\overline{\zeta_2}\end{bmatrix}\in\UW{N,M}$, where $\zeta_1,\zeta_2\in\mathbb{T}$. Thus
\begin{align*}\LSn{\E}{F}=&\left(\zeta_1\Xi_{11},\Xi_{22},\det{\begin{bmatrix}\zeta_1&0\\0&\zeta_2\end{bmatrix}\Xi
\begin{bmatrix}1&0\\0&\overline{\zeta_2}\end{bmatrix}}\right)
=\left(\zeta_1\Xi_{11},\Xi_{22},\zeta_1\det{\Xi}\right).\end{align*}
\end{proof}

\begin{definition} Let $\SWfn{\E}$  be the set-valued map from $\cal{R}_{11}$ to $\hol{\mathbb{D}}{\overline{\mathcal{E}}}$ such that 
\[\SWn{\E}{N,M}=\left\{\left(\zeta\Xi_{11},\Xi_{22},\zeta\det\Xi\right):\zeta\in\mathbb{T}\right\}\]
for all $(N,M)\in\cal{R}_{11}$, where $\Xi=\begin{bmatrix}\Xi_{11}&\Xi_{12}\\\Xi_{21}&\Xi_{22}\end{bmatrix}\in\cal{S}^{2\times2}$ 
is a function constructed from $(N,M)$ by Procedure UW.
\end{definition}

 By Proposition \ref{UWfwelldefined}, $\SWfn{\E}$ is independent of choice of $\Xi$ in $\UW{N,M}$.


\section{A criterion for the solvability of
the  $\mu_{\mathrm{Diag}}$-synthesis
 problem }\label{crit_tetr}

\begin{theorem} \label{criterion_tetr}
Let $\lambda_1,\dots,\lambda_n$ be distinct points in $\mathbb{D}$ and let 
$(x_{1j},x_{2j},x_{3j})\in\overline{\mathcal{E}}$ be such that $x_{1j}x_{2j}\neq x_{3j}$ for $j=1,\dots,n$. Then the following are equivalent.
\begin{enumerate}[\em  (i)]
\item There exists a holomorphic function $x:\mathbb{D}\to\overline{\mathcal{E}}$ such that
\beq\label{inq1.2}
\qquad x(\lambda_j)=(x_{1j},x_{2j},x_{3j})\text{ for }j=1,\dots,n.
\eeq
\item There exists a rational $\overline{\mathcal{E}}$-inner function $x$ 
such that
\beq\label{inq1.3}
\qquad x(\lambda_j)=(x_{1j},x_{2j},x_{3j})\text{ for }j=1,\dots,n.
\eeq
\item For every triple of distinct points   $z_1,z_2, z_3$ in $\D$, there exist positive $3n$-square matrices $N=[N_{il,jk}]^{n,3}_{i,j=1,l,k=1}$ of rank at most $1$,
and $M=[M_{il,jk}]^{n,3}_{i,j=1,l,k=1}$ such that, for $1\leq i,j\leq n$ and $1\leq l,k\leq 3$,
\beq\label{inq1.4}
\qquad 1-\overline{\frac{z_lx_{3i}-x_{1i}}{x_{2i}z_l-1}}\frac{z_kx_{3j}-x_{1j}}{x_{2j}z_k-1}=(1-\overline{z_l}z_k)N_{il,jk}+
(1-\overline{\lambda_i}\lambda_j)M_{il,jk}.
\eeq
\item For some distinct points  $z_1,z_2, z_3$ in $\D$, there exist positive $3n$-square matrices $N=[N_{il,jk}]^{n,3}_{i,j=1,l,k=1}$ of rank at most $1$,
and $M=[M_{il,jk}]^{n,3}_{i,j=1,l,k=1}$ such that
\beq\label{inq1.5}
\qquad \left[1-\overline{\frac{z_lx_{3i}-x_{1i}}{x_{2i}z_l-1}}\frac{z_kx_{3j}-x_{1j}}{x_{2j}z_k-1}\right] \geq
\left[(1-\overline{z_l}z_k)N_{il,jk}\right]+
\left[(1-\overline{\lambda_i}\lambda_j)M_{il,jk}\right].
\eeq
\end{enumerate}
\end{theorem}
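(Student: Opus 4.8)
The plan is to establish the cycle (i) $\Rightarrow$ (ii) $\Rightarrow$ (iii) $\Rightarrow$ (iv) $\Rightarrow$ (i), exploiting the rich saltire for the tetrablock developed in Sections \ref{kernels-maps} and \ref{relstetrasect}. The implication (iii) $\Rightarrow$ (iv) is immediate since (iii) asserts an equality for \emph{every} triple of distinct points and (iv) asks only for a matrix inequality at \emph{some} triple; one simply keeps the same $N$, $M$. The substance lies in the other three arrows, and I expect (iv) $\Rightarrow$ (i) to be the main obstacle, since it requires us to go from a finite-dimensional, data-level inequality back up through $\cal{R}_1$ and $\cal{S}^{2\times2}$ to an actual holomorphic map into $\overline{\mathcal{E}}$.

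For (i) $\Rightarrow$ (ii): given $x\in\hol{\D}{\overline{\mathcal{E}}}$ interpolating the data, apply $\LNfn{\E}$ to obtain $F=\LNn{\E}{x}\in\cal{S}^{2\times2}$ with $F(\lambda_j)$ the symmetric $2\times2$ contraction associated to $(x_{1j},x_{2j},x_{3j})$ via Proposition \ref{cond_Ebar}. The classical matricial Schur interpolation problem with these nodes is solvable, hence (by the finite-dimensionality of the Pick obstruction and the standard inner-dilation argument) has a rational \emph{inner} $2\times2$ matrix solution $F^\sharp$; composing $\LSfn{\E}$ with $F^\sharp$, or rather re-normalising via $\LNfn{\E}$, produces a rational function whose boundary values lie in $b\overline{\mathcal{E}}$ by Theorem \ref{conditionsforbE}(vi). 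The condition $x_{1j}x_{2j}\neq x_{3j}$ is used to stay in the ``generic'' stratum where $\LNfn{\E}$ and $\LEfn{\E}$ are genuine inverses of their partners, by Propositions \ref{LNf-LSf-E} and \ref{LEINVLWtetra}(i). Alternatively, invoke the known result (cited after Definition \ref{E-in-funct}) that a solvable $\hol{\D}{\overline{\mathcal{E}}}$ interpolation problem has a rational $\overline{\mathcal{E}}$-inner solution directly.

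For (ii) $\Rightarrow$ (iii): let $x$ be a rational $\overline{\mathcal{E}}$-inner solution. Apply $\LEfn{\E}$ to get $\varphi=\Psi(\cdot,x(\cdot))\in\cal{S}_2^{\mathrm{lf}}$; by Theorem \ref{nonconsaglerthm} (Agler's representation for the bidisc Schur class) there are kernels $N^0,M^0$ on $\D^2$ with
\[
1-\overline{\varphi(w,\mu)}\varphi(z,\lambda)=(1-\overline{w}z)N^0(z,\lambda,w,\mu)+(1-\overline{\mu}\lambda)M^0(z,\lambda,w,\mu),
\]
and, since $\varphi$ arises from an $\overline{\mathcal{E}}$-inner $x$ with $x_3$ inner, the $z$-kernel $N^0$ can be taken of rank one (this is where Theorem \ref{leftntetrawelldefinedprop}, which expresses $1-\overline{\Psi}\Psi$ with the rank-one term $\overline{\gamma(\mu,w)}\gamma(\lambda,z)$, does the work). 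Now evaluate at $z=z_l$, $\lambda=\lambda_i$ and set $N_{il,jk}=N^0(z_l,\lambda_i,z_k,\lambda_j)$, $M_{il,jk}=M^0(z_l,\lambda_i,z_k,\lambda_j)$; positivity and the rank-one bound on $N$ pass to the sampled matrices, and equality \eqref{inq1.4} holds by construction once we substitute $\varphi(z_l,\lambda_i)=\Psi(z_l,x(\lambda_i))=\dfrac{z_l x_{3i}-x_{1i}}{x_{2i}z_l-1}$.

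For (iv) $\Rightarrow$ (i), the hard direction: from positive $N$ (rank $\le1$) and $M$ satisfying \eqref{inq1.5} at three points $z_1,z_2,z_3$, use a Pick-type realisation to build kernels on all of $\D^2$ that restrict correctly. Concretely, the inequality \eqref{inq1.5} is exactly a Pick condition for a two-variable Nevanlinna--Pick problem on $\D^2$ with nodes $(z_l,\lambda_i)$ and target values $\Psi(z_l,x(\lambda_i))$; by the Agler--Cole--Wermer solvability theorem for $\cal{S}_2$, there is $\varphi\in\cal{S}_2$ with $\varphi(z_l,\lambda_i)=\dfrac{z_l x_{3i}-x_{1i}}{x_{2i}z_l-1}$ for all $l,i$. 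Since three values of $z$ determine a linear fractional map in $z$, the interpolating $\varphi$ may be taken in $\cal{S}_2^{\mathrm{lf}}$ (adjust by composing with the $z$-automorphism matching the three prescribed values, exploiting that $\Psi(\cdot,x)$ is linear fractional); then apply $\LWfn{\E}$ to obtain $x=\LWn{\E}{\varphi}\in\hol{\D}{\overline{\mathcal{E}}}$. By Proposition \ref{LEINVLWtetra}(i) and the hypothesis $x_{1j}x_{2j}\neq x_{3j}$, this $x$ satisfies $\Psi(z,x(\lambda_j))=\varphi(z,\lambda_j)$ identically in $z$, and matching a linear fractional map at three points forces $x(\lambda_j)=(x_{1j},x_{2j},x_{3j})$. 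The delicate point is ensuring the rank-one constraint on $N$ is genuinely what makes the two-variable Pick condition equivalent to solvability with the \emph{second} variable being $\lambda$ rather than a free disc variable --- i.e., that we get a $\varphi$ depending on $\lambda$ only through the prescribed nodes with no spurious extra interpolation constraints --- and this is precisely where the structure $K_{N,M}=\overline{f}f$ with $N=\overline{f'}f'$ of rank one (Procedure $UW$, Theorem \ref{procUWcons}, and $\RSf$) is invoked to reconstruct the analytic data cleanly.
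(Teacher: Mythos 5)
The decisive gap is in your (iv)~$\Rightarrow$~(i). You invoke the Agler--Cole--Wermer solvability theorem to produce \emph{some} $\varphi\in\cal{S}_2$ matching the $3n$ target values at the nodes $(z_l,\lambda_i)$, and then assert that $\varphi$ ``may be taken in $\cal{S}_2^{\mathrm{lf}}$'' by composing with a $z$-automorphism. There is no such adjustment: a general interpolant in $\cal{S}_2$ is not linear fractional in $z$ for fixed $\lambda$, and post-composition with an automorphism of $\D$ in the $z$-variable neither makes it so nor preserves the interpolation data. This route also never uses the rank-one hypothesis on $N$, which is precisely the non-convex constraint the theorem turns on. The paper's argument instead works entirely at the finite-data level: writing $N_{il,jk}=\overline{\gamma_{il}}\gamma_{jk}$ (rank one) and $M_{il,jk}=\ip{v_{jk}}{v_{il}}_H$, the inequality \eqref{inq1.5} says the Gramian of the $3n$ vectors $(\Psi(z_k,x_{1j},x_{2j},x_{3j}),\gamma_{jk},v_{jk})$ in $\C^2\oplus H$ is dominated by that of $(1,z_k\gamma_{jk},\lambda_j v_{jk})$, whence a lurking contraction $L=\bbm A&B\\C&D\ebm$ mapping the second family to the first; then $\Theta(\la)=A+B\la(I-D\la)\inv C$ lies in $\cal{S}^{2\times2}$ for \emph{all} $\la\in\D$, and $x=(\Theta_{11},\Theta_{22},\det\Theta)=\LSn{\E}{\Theta}\in\hol{\mathbb{D}}{\overline{\mathcal{E}}}$; the interpolation conditions follow because two linear fractional maps in $z$ agreeing at the three points $z_1,z_2,z_3$ coincide, and $x_{1j}x_{2j}\neq x_{3j}$ then identifies the coefficients. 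You name the right ingredients (Procedure $UW$, $\RSf$) but Procedure $UW$ takes as input kernels on all of $\D^2$, not $3n$-square matrices, so it cannot be applied until the lurking-contraction step has been carried out; as written your argument does not close.

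Two smaller points. Your (i)~$\Rightarrow$~(ii) is at risk of circularity: the remark after Definition \ref{E-in-funct} that a solvable $\hol{\mathbb{D}}{\overline{\mathcal{E}}}$ problem has a rational inner solution is itself a consequence of this theorem, so it cannot be ``invoked directly''; the paper instead proves (iii)~$\Rightarrow$~(ii) by the same lurking-isometry argument, where the Gramians are \emph{equal}, $L$ extends to a unitary on the finite-dimensional space $\C^2\oplus H$, and $\Theta$ is therefore rational inner, giving an $\overline{\mathcal{E}}$-inner $x$ via Theorem \ref{conditionsforbE}(vi). Your (ii)~$\Rightarrow$~(iii) is essentially correct but the detour through Theorem \ref{nonconsaglerthm} is unnecessary: Theorem \ref{leftntetrawelldefinedprop} already supplies the decomposition of $1-\overline{\Psi(w,x(\mu))}\Psi(z,x(\la))$ with the $(1-\overline{w}z)$-kernel manifestly of rank one, and restriction to the nodes gives \eqref{inq1.4} with equality; this is exactly the paper's (i)~$\Rightarrow$~(iii).
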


\begin{proof} 
Clearly (ii)$\implies$(i) and (iii)$\implies$(iv). We will show that (iii)$\implies$(ii), (iv)$\implies$(i) and (i)$\implies$(iii) to
complete the proof.

(iii)$\implies$(ii):
Suppose that (iii) holds. Then since $N$ is positive and has rank $1$ there are $\gamma_{jk}\in\mathbb{C}$ such that 
for all $j=1,\dots,n$ and 
$k=1,2,3$ \[N_{il,jk}=\overline{\gamma_{il}}\gamma_{jk}.\] 
Similarly since $M$ is positive there is a Hilbert space $H$ 
of dimension at most $3n$ 
and vectors $v_{jk}\in H$ such that for all $j=1,\dots,n$ and 
$k=1,2,3$ \[M_{il,jk}=\langle v_{jk},v_{il}\rangle_H.\] 
Now recall that $\Psi(z_k,x_{1j},x_{2j},x_{3j})=\frac{z_kx_{3j}-x_{1j}}{x_{2j}z_k-1}$. 
Then, as in the proof of Theorem \ref{procUWcons},
we can show that the Gramian of the vectors 
\[\begin{pmatrix}\Psi(z_k,x_{1j},x_{2j},x_{3j})\\\gamma_{jk}\\v_{jk}\end{pmatrix}\in\mathbb{C}^2\oplus H\] 
for all $j=1,\dots,n$ and $k=1,2,3$,
is equal to the Gramian of the vectors \[\begin{pmatrix}1\\z_k\gamma_{jk}\\\lambda_jv_{jk}\end{pmatrix}\in\mathbb{C}^2\oplus H\]
for all $j=1,\dots,n$ and $k=1,2,3$.
Hence  there is a unitary operator $L$ 
on $\mathbb{C}^2\oplus H$ which maps the
vectors \[\begin{pmatrix}\Psi(z_k,x_{1j},x_{2j},x_{3j})\\\gamma_{jk}\\v_{jk}\end{pmatrix}\text{ to the vectors } 
\begin{pmatrix}1\\z_k\gamma_{jk}\\\lambda_jv_{jk}\end{pmatrix}\]
for $j=1,\dots,n$ and $k=1,2,3$. 
Write $L$ as a block operator matrix  $$L=\begin{bmatrix} A & B\\ C& D\end{bmatrix},$$
where $A, D$ act on $\mathbb{C}^2$, $H$ respectively. Then,
for  
$j=1,\dots,n$ and $k=1,2,3$, we obtain the following equations
\[\begin{pmatrix}\Psi(z_k,x_{1j},x_{2j},x_{3j})\\\gamma_{jk}\end{pmatrix}=A\begin{pmatrix}1\\z_k\gamma_{jk}\end{pmatrix}+B\lambda_jv_{jk}\text{ and }
v_{jk}=C\begin{pmatrix}1\\z_k\gamma_{jk}\end{pmatrix}+D\lambda_jv_{jk}.\]
From the second of these equations,
\[v_{jk}=(I-D\lambda_j)^{-1}C\begin{pmatrix}1\\z_k\gamma_{jk}\end{pmatrix},\] 
and so 
\[\begin{pmatrix}\Psi(z_k,x_{1j},x_{2j},x_{3j})\\\gamma_{jk}\end{pmatrix}=
(A+B\lambda_j(I-D\lambda_j)^{-1}C)\begin{pmatrix}1\\z_k\gamma_{jk}\end{pmatrix},\]
for all 
$j=1,\dots,n$ and $k=1,2,3$.
Let 
$\Theta(\lambda)=A+B\lambda(I-D\lambda)^{-1}C=\begin{bmatrix}a(\lambda) & b(\lambda)\\ c(\lambda) & d (\lambda)\end{bmatrix}$.
Since $L$ is unitary and $H$ is finite-dimensional, $\Theta$ is a rational $2\times2$ inner function. 
Hence the function $x:=(a,d,\det \Theta)$ is a rational $\overline{\mathcal{E}}$-inner function.

We claim that $x$ satisfies the interpolation conditions \eqref{inq1.3}
$x(\lambda_j)=(x_{1j},x_{2j},x_{3j})$ for all $j=1,\dots,n$.

 From above 
\[\begin{pmatrix}\Psi(z_k,x_{1j},x_{2j},x_{3j})\\\gamma_{jk}\end{pmatrix}=
\Theta(\lambda_j)\begin{pmatrix}1\\z_k\gamma_{jk}\end{pmatrix}=
\begin{pmatrix}a(\lambda_j)+b(\lambda_j)z_k\gamma_{jk}\\c(\lambda_j)+d(\lambda_j)z_k\gamma_{jk}\end{pmatrix}\]
for $j=1,\dots,n$ and $k=1,2,3$.
Hence 
\[\Psi(z_k,x_{1j},x_{2j},x_{3j})=
a(\lambda_j)+b(\lambda_j)z_k\gamma_{jk}\text{ and }\gamma_{jk}=c(\lambda_j)+d(\lambda_j)z_k\gamma_{jk}\]
and so 
\[\Psi(z_k,x_{1j},x_{2j},x_{3j})=a(\lambda_j)+b(\lambda_j)z_k(1-d(\lambda_j)z_k)^{-1}c(\lambda_j).\]
That is, for each $j=1,\dots,n$, the linear fractional maps
\[\Psi(z_k,x_{1j},x_{2j},x_{3j})=\frac{x_{1j}-x_{3j}z}{1-x_{2j}z}\text{ and }
a(\lambda_j)+\frac{b(\lambda_j)c(\lambda_j)z}{1-d(\lambda_j)z}=
\frac{a(\lambda_j)-(a(\lambda_j)d(\lambda_j)-b(\lambda_j)c(\lambda_j))z}{1-d(\lambda_j)z}\]
agree at three distinct values of $z\in\mathbb{D}$, and so the two maps are the same.
Thus, since $x_{1j}x_{2j}\neq x_{3j}$ for $j=1,\dots,n$,
\[a(\lambda_j)=x_{1j}, d(\lambda_j)=x_{2j}\text{ and } 
\det \Theta(\lambda_j)=a(\lambda_j)d(\lambda_j)-b(\lambda_j)c(\lambda_j)=
x_{3j}.\] 
It follows that $x(\lambda_j)=(x_{1j},x_{2j},x_{3j})$ for $j=1,\dots,n$ 
and so (iii)$\implies$(ii).

(iv)$\implies$(i):
This proof is similar to (iii)$\implies$(ii). The difference is that the Gramian of the vectors 
\[\begin{pmatrix}\Psi(z_k,x_{1j},x_{2j},x_{3j})\\\gamma_{jk}\\v_{jk}\end{pmatrix}\in\mathbb{C}^2\oplus H\]
is less than or equal to the Gramian of the vectors 
\[\begin{pmatrix}1\\z_k\gamma_{jk}\\\lambda_jv_{jk}\end{pmatrix}\in\mathbb{C}^2\oplus H,\] for $j=1,\dots,n$ and $k=1,2,3$. 
Hence there is a contraction $L$ on $\mathbb{C}^2\oplus H$ which
maps the vectors 
\[\begin{pmatrix}\Psi(z_k,x_{1j},x_{2j},x_{3j})\\\gamma_{jk}\\v_{jk}\end{pmatrix}\text{ to the vectors }
\begin{pmatrix}1\\z_k\gamma_{jk}\\\lambda_jv_{jk}\end{pmatrix}.\] 
Since $L$ is a contraction, the map $\Theta$ defined 
by $\Theta(\lambda)=A+B\lambda(I-D\lambda)^{-1}C=\begin{bmatrix}a(\lambda) & b(\lambda)\\ c(\lambda) & d (\lambda)\end{bmatrix}$ 
belongs to $\cal{S}^{2\times2}$ 
and hence $x=(a,d,\det \Theta)\in\hol{\mathbb{D}}{\overline{\mathcal{E}}}$. 
That $x(\lambda_j)=(x_{1j},x_{2j},x_{3j})$ for $j=1,\dots,n$ follows as in the previous part.

(i)$\implies$(iii):
Suppose there is a holomorphic function $x=(x_1,x_2,x_3):\mathbb{D}\to\overline{\mathcal{E}}$ 
satisfying $x(\lambda_j)=(x_{1j},x_{2j},x_{3j})$ for $j=1,\dots,n$. By Theorem  \ref{leftntetrawelldefinedprop}, 
there is a holomorphic function
\[F=\begin{bmatrix}x_1 & f_1\\f_2 &x_2\end{bmatrix}:\mathbb{D}\to\cal{M}_2(\mathbb{C})\] such that $f_2 \neq 0$ and 
$\|F(\lambda)\|\leq1$ for all $\lambda\in\mathbb{D}$ 
and 
\[1-\overline{\Psi(w,x(\mu))}\Psi(z,x(\lambda))=(1-\overline{w}z)\overline{\gamma(\mu,w)}\gamma(\lambda,z)+
(1-\overline{\mu}\lambda)\eta(\mu,w)^*\frac{I-F(\mu)^*F(\lambda)}{1-\overline{\mu}\lambda}\eta(\lambda,z)\]
for all $\mu,\lambda\in\mathbb{D}$ and any $w,z\in\mathbb{C}$ such that $1-x_2(\mu)w\neq0$ and 
$1-x_2(\lambda)z\neq0$, 
where \[\gamma(\lambda,z)=(1-x_2(\lambda)z)^{-1}f_2(\lambda)\text{ and }
\eta(\lambda,z)=\begin{bmatrix}1\\\gamma(\lambda,z)z\end{bmatrix}.\]
Hence for the given $\lambda_j\in\mathbb{D}$, $j=1,\dots,n$, and for all $w,z\in\mathbb{D}$,
\begin{align*}1- & \overline{\Psi(w,x_{1i},x_{2i},x_{3i})}\Psi(z,x_{1j},x_{2j},x_{3j})\\
&=1-\overline{\Psi(w,x(\lambda_i))}\Psi(z,x(\lambda_j))\\
&=(1-\overline{w}z)\overline{\gamma(\lambda_i,w)}\gamma(\lambda_j,z)+
(1-\overline{\lambda_i}\lambda_j)\eta(\lambda_i,w)^*\frac{I-F(\lambda_i)^*F(\lambda_j)}{1-\overline{\lambda_i}\lambda_j}
\eta(\lambda_j,z).\end{align*}
In particular for every triple of distinct points   $z_1,z_2, z_3$ in $\D$,
 and for all $j=1,\dots,n$,
\begin{align*}1- & \overline{\Psi(z_l,x_{1i},x_{2i},x_{3i})}\Psi(z_k,x_{1j},x_{2j},x_{3j})\\
&=(1-\overline{z_l}z_k)\overline{\gamma(\lambda_i,z_l)}\gamma(\lambda_j,z_k)+
(1-\overline{\lambda_i}\lambda_j)\eta(\lambda_i,z_l)^*\frac{I-F(\lambda_i)^*F(\lambda_j)}{1-\overline{\lambda_i}\lambda_j}
\eta(\lambda_j,z_k).\end{align*}
Since $F\in\cal{S}^{2\times2}$ with $f_2 \neq 0$, by Proposition \ref{UEfwelldef},
\[\overline{\gamma(\mu,w)}\gamma(\lambda,z) \text{ and } \eta(\mu,w)^*\frac{I-F(\mu)^*F(\lambda)}{1-\overline{\mu}\lambda}\eta(\lambda,z)\] are kernels on $\mathbb{D}^2$.
Hence the $3n$-square matrices
\[N=[N_{il,jk}]_{i,j=1,l,k=1}^{n,3}:=\left[\overline{\gamma(\lambda_i,z_l)}\gamma(\lambda_j,z_k)\right]_{i,j=1,l,k=1}^{n,3}\]
and
\[M=[M_{il,jk}]_{i,j=1,l,k=1}^{n,3}:=\left[\eta(\lambda_i,z_l)^*\frac{I-F(\lambda_i)^*F(\lambda_j)}{1-\overline{\lambda_i}\lambda_j}
\eta(\lambda_j,z_k)\right]_{i,j=1,l,k=1}^{n,3}\] are positive for all $1\leq i,j\leq n$ and $1\leq l,k\leq3$.
Moreover $N$ is of  rank $1$ and for all
$1\leq i,j\leq n$ and $1\leq l,k\leq3$,
\[1-\overline{\Psi(z_l,x_{1i},x_{2i},x_{3i})}\Psi(z_k,x_{1j},x_{2j},x_{3j})=(1-\overline{z_l}z_k)N_{il,jk}+
(1-\overline{\lambda_i}\lambda_j)M_{il,jk}.\]
It follows that (i)$\implies$(iii).
\end{proof}


\section{Construction of all interpolating functions in $\hol{\mathbb{D}}{\overline{\mathcal{E}}}$.}
\label{Proc_SW}

Theorem \ref{criterion_tetr} gives us a criterion for the solvability of the interpolation problem
\beq\label{GaInterp}
\textit{find } x\in {\mathrm{Hol}}(\D,\overline{\mathcal{E}}) \textit{ such that } x(\la_j)=(x_{1j},x_{2j},x_{3j}) \textit{ for } j=1,\dots,n.
\eeq
The  proof of the theorem contains a description of a process for the derivation of a solution of the problem \eqref{GaInterp} from a feasible pair $(N,M)$ for the inequality  \eqref{inq1.5}  with $\rank N \leq 1$.   The process can be summarized as follows.
\begin{center}  \bf Procedure SW \end{center}
Let $\la_j$ and $(x_{1j},x_{2j},x_{3j})$ be as in Theorem \ref{criterion_tetr}.  Let  $z_1, z_2, z_3 $ be a triple of distint points in $\D$, and $N, M $ be positive $3n$-square matrices such that $\rank N \leq 1$ and the inequality \eqref{inq1.5} holds.
\begin{enumerate}
\item  Choose scalars $\ga_{jk}$ such that $N=\bbm \overline{\ga_{i\ell}}\ga_{jk}\ebm_{i,j=1,\ell,k=1}^{n,3}$.
\item Choose a Hilbert space $\M$ and vectors $v_{jk}\in\M$ such that $M=\bbm\ip{v_{jk}}{v_{i\ell}}_\M\ebm_{i,j=1,\ell,k=1}^{n,3}$.
\item Choose a contraction
\[
\bbm A&B\\C&D \ebm : \C^2\oplus\M \to \C^2\oplus\M
\]
such that
\beq\label{P4.1}
\bbm A&B\\C&D \ebm \bpm 1\\ z_k\ga_{jk} \\ \la_j v_{jk} \epm = \bpm \Psi(z_k,x_{1j},x_{2j},x_{3j}) \\ \ga_{jk} \\ v_{jk} \epm
\eeq
for $j=1,\dots,n$ and $k=1,2,3$.
\item Let
\beq\label{thisish}
x(\la) = \LSn{\E}{A+B\la(I-D\la)\inv C}
\eeq
for $\la\in\D$.
\end{enumerate}
Then $x\in  {\mathrm{Hol}}(\D,\overline{\mathcal{E}})$ and $x(\la_j)=(x_{1j},x_{2j},x_{3j})$ for $j=1,\dots,n$.

The purpose of this section is to show that this procedure in principle yields the {\em general} solution of the problem \eqref{GaInterp}, provided that one can find the general feasible pair $(N,M)$ for the relevant inequality with $\rank N \leq 1$.

\begin{theorem} 
Every solution of an $\overline{\mathcal{E}}$-interpolation problem arises by Procedure SW from a solution 
$(N,M)$ of the corresponding inequality \eqref{inq1.5} with rank of $N$ less than or equal to $1$.
\end{theorem}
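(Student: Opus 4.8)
The plan is to reverse Procedure SW by travelling around the left-hand square of the rich saltire \eqref{rich_str_E} through the maps $\LNfn{\E}$, $\UEf$ and the procedure $UW$. Let $x=(x_1,x_2,x_3)\in\hol{\mathbb{D}}{\overline{\mathcal{E}}}$ be a solution of \eqref{GaInterp}. First I would lift $x$ to the matricial Schur class: set $F=\LNn{\E}{x}\in\cal{S}^{2\times2}$ as furnished by Theorem \ref{leftntetrawelldefinedprop}. Since the nodes satisfy $x_{1j}x_{2j}\neq x_{3j}$, the $H^\infty$ function $x_1x_2-x_3$ is not identically $0$, so by the construction in Theorem \ref{leftntetrawelldefinedprop} the entry $F_{21}$ is nonzero (indeed outer), hence has only isolated zeros in $\mathbb{D}$; moreover the realisation identity \eqref{realisE} holds for $F$, with $\gamma$, $\eta$ as in \eqref{gamma-eta} and $\cal{F}_{F(\lambda)}(z)=\Psi(z,x(\lambda))$.

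Next I would extract the pair $(N,M)$. By Proposition \ref{UEfwelldef}, $(N_F,M_F):=\UE{F}$ lies in $\cal{R}_{11}$, where $N_F(z,\lambda,w,\mu)=\overline{\gamma(\mu,w)}\gamma(\lambda,z)$ has rank $1$, $M_F(z,\lambda,w,\mu)=\eta(\mu,w)^*\tfrac{I-F(\mu)^*F(\lambda)}{1-\overline{\mu}\lambda}\eta(\lambda,z)$ is a positive kernel on $\mathbb{D}^2$, and $K_{N_F,M_F}(z,\lambda,w,\mu)=\overline{\cal{F}_{F(\mu)}(w)}\cal{F}_{F(\lambda)}(z)$. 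Fixing a triple $z_1,z_2,z_3\in\mathbb{D}$ and evaluating \eqref{realisE} at the points $\lambda_1,\dots,\lambda_n$ and $z_1,z_2,z_3$, and using $x(\lambda_j)=(x_{1j},x_{2j},x_{3j})$, one finds that the $3n$-square matrices $N:=\bigl[\,\overline{\gamma(\lambda_i,z_l)}\gamma(\lambda_j,z_k)\,\bigr]$ and $M:=\bigl[\,\eta(\lambda_i,z_l)^*\tfrac{I-F(\lambda_i)^*F(\lambda_j)}{1-\overline{\lambda_i}\lambda_j}\eta(\lambda_j,z_k)\,\bigr]$ — the values of $N_F$ and $M_F$ at the $3n$ points $(z_l,\lambda_i)$ — are positive, that $\rank N\le1$, and that they satisfy the equality \eqref{inq1.4}, hence a fortiori the inequality \eqref{inq1.5}. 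This is exactly the computation already carried out in the implication (i)$\Rightarrow$(iii) of Theorem \ref{criterion_tetr}.

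The heart of the argument is to verify that Procedure SW, fed this $(N,M)$ with a suitable allowed choice at each step, returns $x$ itself; equivalently, that the contraction selected in Step 3 can be taken to have transfer function exactly $F$. For this I would run Procedure $UW$ (Theorem \ref{procUWcons}) on $(N_F,M_F)$ with the factorising data $f(z,\lambda)=\gamma(\lambda,z)\in\cal{H}_{N_F}$ and $g(z,\lambda)=\cal{F}_{F(\lambda)}(z)=\Psi(z,x(\lambda))\in\cal{H}_{K_{N_F,M_F}}$ (both of these reproducing kernel Hilbert spaces are one-dimensional, $N_F$ and $K_{N_F,M_F}$ being of rank $1$), and with $v_{z,\lambda}\in\cal{H}_{M_F}$ satisfying $M_F(z,\lambda,w,\mu)=\langle v_{z,\lambda},v_{w,\mu}\rangle_{\cal{H}_{M_F}}$. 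This produces a contraction $L=\left[\begin{smallmatrix}A&B\\C&D\end{smallmatrix}\right]$ on $\mathbb{C}^2\oplus\cal{H}_{M_F}$ which, for all $z,\lambda\in\mathbb{D}$, sends the vector with blocks $\bigl(\begin{smallmatrix}1\\z\gamma(\lambda,z)\end{smallmatrix}\bigr)$ and $\lambda v_{z,\lambda}$ to the vector with blocks $\bigl(\begin{smallmatrix}\Psi(z,x(\lambda))\\\gamma(\lambda,z)\end{smallmatrix}\bigr)$ and $v_{z,\lambda}$, and whose transfer function satisfies $\cal{F}_L(\lambda)\bigl(\begin{smallmatrix}1\\z\gamma(\lambda,z)\end{smallmatrix}\bigr)=F(\lambda)\bigl(\begin{smallmatrix}1\\z\gamma(\lambda,z)\end{smallmatrix}\bigr)$; since $F_{21}$ has only isolated zeros, the argument of Proposition \ref{UWUE=} forces $\cal{F}_L=F$ on $\mathbb{D}$. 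I would then run Procedure SW with $\M=\cal{H}_{M_F}$, with $\gamma_{jk}=\gamma(\lambda_j,z_k)$ in Step 1 (so that $N=\bigl[\overline{\gamma_{i\ell}}\gamma_{jk}\bigr]$), with $v_{jk}=v_{z_k,\lambda_j}$ in Step 2 (so that $M=\bigl[\langle v_{jk},v_{i\ell}\rangle_\M\bigr]$), and with the contraction $L$ in Step 3 — relation \eqref{P4.1} being precisely the displayed action of $L$ specialised to $z=z_k$, $\lambda=\lambda_j$. Step 4 then outputs, via \eqref{thisish}, $\LSn{\E}{\cal{F}_L}=\LSn{\E}{F}=x$, the final equality because $\LSfn{\E}\circ\LNfn{\E}=\id_{\hol{\mathbb{D}}{\overline{\mathcal{E}}}}$ (Proposition \ref{LNf-LSf-E}). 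Thus $x$ arises by Procedure SW from a solution $(N,M)$ of \eqref{inq1.5} with $\rank N\le1$.

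I expect the only nonroutine point to be the third paragraph. Procedure SW constrains the Step-3 contraction only on the finitely many vectors occurring in \eqref{P4.1}, so a priori its transfer function is merely \emph{some} solution of the interpolation problem rather than the given $x$; the substance is in exhibiting a single contraction that simultaneously realises $F$ on all of $\mathbb{D}$ and satisfies \eqref{P4.1} at the nodes. Procedure $UW$, applied to the canonical factorising data coming from $F=\LNn{\E}{x}$, does precisely this, and the rigidity that pins the resulting transfer function down to $F$ — rather than to another member of the set $\UWf(N_F,M_F)$ — is simply that $F_{21}$ is a nonzero analytic function, so that the vectors $\bigl(\begin{smallmatrix}1\\z\gamma(\lambda,z)\end{smallmatrix}\bigr)$, $z\in\mathbb{D}$, span $\mathbb{C}^2$ for every $\lambda$ outside a discrete subset of $\mathbb{D}$.
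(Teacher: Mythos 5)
Your proposal is correct and follows essentially the same route as the paper: lift $x$ to $F=\LNn{\E}{x}$, read off $(N,M)$ from the realisation identity \eqref{realisE} evaluated at the nodes, and exhibit a single contraction on $\C^2\oplus\M$ that realises $F$ globally and hence satisfies \eqref{P4.1}, so that Step 4 returns $x$. The only (cosmetic) difference is that you pin the transfer function down to $F$ via Procedure $UW$ and the argument of Proposition \ref{UWUE=}, whereas the paper factors the positive kernel $\frac{I-F(\mu)^*F(\lambda)}{1-\overline{\mu}\lambda}$ as $U(\mu)^*U(\lambda)$ and concludes $\widetilde{x}=x$ by a direct computation together with Proposition \ref{x-S2lf}.
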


\begin{proof} 
Let $\lambda_j,x_{1j},x_{2j},x_{3j}$  be as in Theorem \ref{criterion_tetr} and let 
$x=(x_1,x_2,x_3)\in\hol{\mathbb{D}}{\overline{\mathcal{E}}}$ be 
such that $x(\lambda_j)=(x_{1j},x_{2j},x_{3j})$ for all $j=1,\dots,n$. We must produce a pair of positive matrices $(N,M)$ that satisfy  the inequality \eqref{inq1.5} such that Procedure SW, when applied to $(N,M)$ with appropriate choices, produces $x$.

By Proposition \ref{leftntetrawelldefinedprop} there is a unique 
$F=\FF\in\cal{S}^{2\times2}$ 
such that $F_{11}=x_1$, $F_{22}=x_2$, $\det F=x_3$, $|F_{12}|=|F_{21}|$ a. e. 
on $\mathbb{T}$, $F_{21}$ is outer or $0$ and $F_{12}$ is inner. 
Moreover if 
\[\gamma(\lambda,z)=(1-F_{22}(\lambda)z)^{-1}F_{21}(\lambda)\text{ and }
\eta(\lambda,z)=\begin{bmatrix}1\\z\gamma(z,\lambda)\end{bmatrix}\] 
then
\[1-\overline{\Psi(w,x(\mu))}\Psi(z,x(\lambda))=(1-\overline{w}z)\overline{\gamma(\mu,w)}\gamma(\lambda,z)+
\eta(\mu,w)^*(I-F(\mu)^*F(\lambda))\eta(\lambda,z)\] 
for all $z,\lambda,w,\mu\in\mathbb{D}$.

Since $F\in\cal{S}^{2\times2}$,
\[(\lambda,\mu)\mapsto \frac{I-F(\mu)^*F(\lambda)}{1-\overline{\mu}\lambda}\] is a positive $2\times2$ kernel on $\mathbb{D}$, 
and so there is a Hilbert space $\cal{H}$ and a holomorphic map $U:\mathbb{D}\to\cal{L}(\mathbb{C}^2,\cal{H})$ such that
\[\frac{I-F(\mu)^*F(\lambda)}{1-\overline{\mu}\lambda}=U(\mu)^*U(\lambda)\] for all $\lambda,\mu\in\mathbb{D}$. 
Hence
\[1-\overline{\Psi(w,x(\mu))}\Psi(z,x(\lambda))=(1-\overline{w}z)\overline{\gamma(\mu,w)}\gamma(\lambda,z)+
(1-\overline{\mu}\lambda)\eta(\mu,w)^*U(\mu)^*U(\lambda)\eta(\lambda,z)\] for all $z,\lambda,w,\mu\in\mathbb{D}$.
In particular, for every triple of distinct points $z_1, z_2, z_3$ in $\D$,
\begin{align*}1-&\overline{\Psi(z_l,x_{1i},x_{2i},x_{3i})}\Psi(z_k,x_{1j},x_{2j},x_{3j})\\&=
(1-\overline{z_l}z_k)\overline{\gamma(\lambda_i,z_l)}\gamma(\lambda_j,z_k)+
(1-\overline{\lambda_i}\lambda_j)\langle U(\lambda_j)\eta(z_k,\lambda_j),U(\lambda_i)\eta(z_l,\lambda_i)\rangle_{\cal{H}}\end{align*}
for all $i,j=1,\dots,n$ and $l,k=1,2,3$. 
It follows that the $3n$-square matrices
\[N=\left[\overline{\gamma(z_l,\lambda_i)}\gamma(z_k,\lambda_j)\right]_{i,j=1,l,k=1}^{n,3}\] and
\[M=\left[\langle U(\lambda_j)\eta(z_k,\lambda_j),U(\lambda_i)\eta(z_l,\lambda_i)\rangle_{\cal{H}}\right]_{i,j=1,l,k=1}^{n,3}\]
satisfy the inequality \eqref{inq1.5} and moreover the rank of $N$ is less than or equal to $1$. 
Thus we may apply Procedure SW to $(N,M)$.
In steps (1) and (2) we choose 
$\gamma_{jk}=\gamma(\lambda_j,z_k)$, $\cal{M}=\cal{H}$ and $v_{jk}=U(\lambda_j)\eta(\lambda_j,z_k)$.
As in the proof of Theorem \ref{procUWcons}  
we can show that the Grammian of the vectors 
\[\begin{pmatrix}1\\z\gamma(\lambda,z)\\\lambda U(\lambda)\eta(\lambda,z)\end{pmatrix}\in\mathbb{C}^2\oplus\cal{H}\] 
for all $z,\lambda\in\mathbb{D}$, is equal to the Grammian of the vectors 
\[\begin{pmatrix}\Psi(z,x(\lambda)\\\gamma(\lambda,z)\\U(\lambda)\eta(\lambda,z)\end{pmatrix}\in\mathbb{C}^2\oplus\cal{H}\] 
for all $z,\lambda\in\mathbb{D}$.
Hence there is an isomertry 
\[L_0:\spn\left\{\begin{pmatrix}1\\z\gamma(\lambda,z)\\\lambda U(\lambda)\eta(\lambda,z)\end{pmatrix}:z,\lambda\in\mathbb{D}\right\}\to
\mathbb{C}^2\oplus\cal{H}\] such that
\[L_0\begin{pmatrix}1\\z\gamma(\lambda,z)\\\lambda U(\lambda)\eta(\lambda,z)\end{pmatrix}=
\begin{pmatrix}\Psi(z,x(\lambda)\\\gamma(\lambda,z)\\U(\lambda)\eta(\lambda,z)\end{pmatrix}\] for all $z,\lambda\in\mathbb{D}$.
Now extend $L_0$ to a contraction 
\[L=\begin{bmatrix}A&B\\C&D\end{bmatrix}:\mathbb{C}^2\oplus\cal{H}\to\mathbb{C}^2\oplus\cal{H}.\] 
Then, in particular,
\[L\begin{pmatrix}1\\z_k\gamma(\lambda_j,z_k)\\\lambda_j U(\lambda_j)\eta(\lambda_j,z_k)\end{pmatrix}=
\begin{pmatrix}\Psi(z_k,x(\lambda_j)\\\gamma(\lambda_j,z_k)\\U(\lambda_j)\eta(\lambda_j,z_k)\end{pmatrix}\]
for all $j=1,\dots,n$ and $k=1,2,3$, which is step (3) of Procedure SW.
Hence we can use $L$ in step (4) to obtain a function 
$\widetilde{x}\in\hol{\mathbb{D}}{\overline{\mathcal{E}}}$ such that $\widetilde{x}(\lambda_j)=(x_{1j},x_{2j},x_{3j})$.

We claim that  $\widetilde{x}=x$.
 We already have
\[\begin{pmatrix}\begin{pmatrix}\Psi(z,x(\lambda)\\\gamma(\lambda,z)\end{pmatrix}\\U(\lambda)\eta(\lambda,z)\end{pmatrix}=
L\begin{pmatrix}1\\z\gamma(\lambda,z)\\\lambda U(\lambda)\eta(\lambda,z)\end{pmatrix}=
\begin{pmatrix}A\begin{pmatrix}1\\z\gamma(\lambda,z)\end{pmatrix}+
B\lambda U(\lambda)\eta(\lambda,z)\\C\begin{pmatrix}1\\z\gamma(\lambda,z)\end{pmatrix}+D\lambda U(\lambda)\eta(\lambda,z)
\end{pmatrix}\] and so
\[\begin{pmatrix}\Psi(z,x(\lambda))\\\gamma(\lambda,z)\end{pmatrix}=A\begin{pmatrix}1\\z\gamma(\lambda,z)\end{pmatrix}+
B\lambda U(\lambda)\eta(\lambda,z)\] and
\[(1-D\lambda) U(\lambda)\eta(\lambda,z)=C\begin{pmatrix}1\\z\gamma(\lambda,z)\end{pmatrix}\]
for all $z,\lambda\in\mathbb{D}$.
Hence
\[\begin{pmatrix}\Psi(z,x(\lambda))\\\gamma(\lambda,z)\end{pmatrix}=(A+B\lambda(I-D\lambda)^{-1}C)
\begin{pmatrix}1\\z\gamma(\lambda,z)\end{pmatrix}=
\Theta(\lambda)\begin{pmatrix}1\\z\gamma(\lambda,z)\end{pmatrix}\] and so 
\[\Psi(z,x(\lambda))=
\Theta_{11}(\lambda)+\Theta_{12}(\lambda)z\gamma(\lambda,z)\]
and
\[\gamma(\lambda,z)=
\Theta_{21}(\lambda)+\Theta_{22}(\lambda)z\gamma(\lambda,z)\]
for all $z,\lambda\in\mathbb{D}$. 
It follows that 
\[\Psi(z,x(\lambda))=\Theta_{11}(\lambda)+\frac{\Theta_{12}\Theta_{21}(\lambda)z}{1-\Theta_{22}(\lambda)z}=
\frac{\det\Theta(\lambda)z-\Theta_{11}(\lambda)}{\Theta_{22}(\lambda)z-1}\]
for all $z,\lambda\in\mathbb{D}$, and so, by Proposition \ref{x-S2lf}, 
$\Theta_{11}(\lambda)=x_1(\lambda)$, $\Theta_{22}(\lambda)=x_2(\lambda)$, 
$\det\Theta(\lambda)=x_3(\lambda)$ and 
 $\widetilde{x}=(x_1,x_2,x_3)=x$. 
\end{proof}

The criterion  for  the $\mu_{\mathrm{Diag}}$-synthesis problem (Theorem \ref{NPspectral_tetr}) 
 follows from Theorem \ref{musynthequivfortetra} and Theorem \ref{criterion_tetr}.
The tetrablock $\E$ is a bounded $3$-dimensional domain, which is   more amenable to study than the unbounded $4$-dimensional domain
\[
\Sigma \stackrel{\mathrm{def}}{=} \{ A \in \C^{2\times 2}: \mu_{\mathrm{Diag}}(A) < 1\}.
\]

\begin{theorem} Let $\lambda_1,\dots,\lambda_n$ be distinct points in $\mathbb{D}$ and let 
$(x_{1j},x_{2j},x_{3j})\in\overline{\mathcal{E}}$ be such that $x_{1j}x_{2j}\neq x_{3j}$ 
for $j=1,\dots,n$. 
The $\overline{\mathcal{E}}$-interpolation problem 
\[\lambda_j\in\mathbb{D}\mapsto(x_{1j},x_{2j},x_{3j})\in\overline{\mathcal{E}}\] for $j=1,\dots,n$, is solvable if and only if for some distinct points $z_1, z_2, z_3$ in $\D$, there exist positive $3n$-square
matrices $N=[N_{il,jk}]_{i,j=1,l,k=1}^{n,3}$ of rank $1$ and $M=[M_{il,jk}]_{i,j=1,l,k=1}^{n,3}$ that satisfy 
\begin{equation}\label{LMI}
\left[1-\overline{\frac{z_lx_{3i}-x_{1i}}{x_{2i}z_l-1}}\frac{z_kx_{3j}-x_{1j}}{x_{2j}z_k-1}\right]\geq
\left[(1-\overline{z_l}z_k)N_{il,jk}\right]+
\left[(1-\overline{\lambda_i}\lambda_j)M_{il,jk}\right],
\end{equation} 
\[|N_{il,jk}|\leq\frac{1}{(1-|x_{2i}|)(1-|x_{2j}|)}
\text{ and }
|M_{il,jk}|\leq\frac{2}{|1-\overline{\lambda_i}\lambda_j|}\sqrt{1+\frac{1}{(1-|x_{2i}|)^2}}\sqrt{1+\frac{1}{(1-|x_{2j}|)^2}}.\]
\end{theorem}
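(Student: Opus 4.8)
The plan is to obtain the statement as a quantitative refinement of Theorem~\ref{criterion_tetr}, so most of the work is already in hand. The sufficiency direction I would dispatch at once: if, for some triple of distinct points $z_1,z_2,z_3\in\D$, there exist positive $3n$-square matrices $N$ of rank $1$ and $M$ satisfying \eqref{LMI}, then $N$ has rank at most $1$ and $(N,M)$ satisfies \eqref{inq1.5}, so condition~(iv) of Theorem~\ref{criterion_tetr} holds and the $\overline{\mathcal{E}}$-interpolation problem is solvable. The modulus bounds on the entries of $N$ and $M$ are not needed here; they only shrink the feasible set and so never obstruct this implication.

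For the necessity direction I would start from a solution $x=(x_1,x_2,x_3)\in\hol{\mathbb{D}}{\overline{\mathcal{E}}}$ with $x(\lambda_j)=(x_{1j},x_{2j},x_{3j})$ and re-run the proof of the implication (i)$\implies$(iii) of Theorem~\ref{criterion_tetr}, keeping track of norms. Put $F=\LNn{\E}{x}\in\cal{S}^{2\times2}$, so that $F_{11}=x_1$, $F_{22}=x_2$, $\det F=x_3$, $\|F(\lambda)\|\le1$ for all $\lambda\in\D$, and, since the hypothesis $x_{1j}x_{2j}\neq x_{3j}$ forces $x_1x_2-x_3\not\equiv0$, the entry $F_{21}$ is outer. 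With $\gamma(\lambda,z)=(1-F_{22}(\lambda)z)^{-1}F_{21}(\lambda)$ and $\eta(\lambda,z)=\bbm1\\z\gamma(\lambda,z)\ebm$, the matrices
\[
N=\bbm\overline{\gamma(\lambda_i,z_l)}\,\gamma(\lambda_j,z_k)\ebm_{i,j=1,\,l,k=1}^{n,3},\qquad
M=\bbm\eta(\lambda_i,z_l)^{*}\,\frac{I-F(\lambda_i)^{*}F(\lambda_j)}{1-\overline{\lambda_i}\lambda_j}\,\eta(\lambda_j,z_k)\ebm_{i,j=1,\,l,k=1}^{n,3}
\]
are positive, turn \eqref{LMI} into an equality, and $N$ has rank at most $1$; moreover, since $F_{21}$ is outer it has no zero in $\D$, so each $\gamma(\lambda_i,z_l)\neq0$ and $N$ has rank exactly $1$.

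The remaining, and only genuinely new, step is to bound the entries. From $\|F(\lambda)\|\le1$ I get $|F_{21}(\lambda)|\le1$ and $|F_{22}(\lambda)|=|x_2(\lambda)|\le1$, whence $|1-F_{22}(\lambda)z|\ge1-|x_2(\lambda)|$ for $z\in\D$, so $|\gamma(\lambda_i,z_l)|\le(1-|x_{2i}|)^{-1}$ and therefore $|N_{il,jk}|\le\bigl((1-|x_{2i}|)(1-|x_{2j}|)\bigr)^{-1}$. Likewise $\|\eta(\lambda_i,z_l)\|^{2}=1+|z_l|^{2}|\gamma(\lambda_i,z_l)|^{2}\le1+(1-|x_{2i}|)^{-2}$, and $\|I-F(\lambda_i)^{*}F(\lambda_j)\|\le1+\|F(\lambda_i)\|\,\|F(\lambda_j)\|\le2$, so Cauchy--Schwarz gives
\[
|M_{il,jk}|\le\frac{2}{|1-\overline{\lambda_i}\lambda_j|}\,\|\eta(\lambda_i,z_l)\|\,\|\eta(\lambda_j,z_k)\|\le\frac{2}{|1-\overline{\lambda_i}\lambda_j|}\sqrt{1+\frac{1}{(1-|x_{2i}|)^{2}}}\sqrt{1+\frac{1}{(1-|x_{2j}|)^{2}}},
\]
which is exactly the asserted bound; this holds for every triple $z_1,z_2,z_3$, hence in particular for some triple.

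I do not expect a substantial obstacle: the theorem is essentially a book-keeping sharpening of Theorem~\ref{criterion_tetr}. The two points I would handle with a little care are (a) the degenerate case $|x_{2i}|=1$, in which the maximum principle forces $x_2$ to be a unimodular constant and the denominators $1-|x_{2i}|$ vanish, so the stated bounds are read as $+\infty$ and hold vacuously; and (b) the passage from ``rank at most $1$'' in Theorem~\ref{criterion_tetr} to ``rank exactly $1$'' in the present statement, which is precisely what the outerness of $F_{21}$ (hence the standing hypothesis $x_{1j}x_{2j}\neq x_{3j}$) buys. The point of the bounds, of course, is that the feasible set of $(N,M)$ becomes bounded and closed, but that is not part of the proof itself.
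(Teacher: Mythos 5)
Your proof is correct and follows essentially the same route as the paper: sufficiency via Theorem \ref{criterion_tetr} (iv)$\implies$(i), and necessity by re-running (i)$\implies$(iii) with the explicit $N$, $M$ built from $\gamma$, $\eta$ and $F=\LNn{\E}{x}$, then bounding $|\gamma|$, $\|\eta\|$ and $\|I-F^*F\|$ exactly as the paper does. Your two extra remarks are sound but inessential: the outerness of $F_{21}$ does upgrade ``rank at most $1$'' to ``rank $1$'', and the degenerate case $|x_{2i}|=1$ in fact cannot occur, since Proposition \ref{cond_Ebar} forces $x_{1i}x_{2i}=x_{3i}$ whenever $|x_{2i}|=1$, contrary to hypothesis.
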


\begin{proof}
Sufficiency follows from Theorem \ref{criterion_tetr} (iv)$\implies$(i). To prove necessity, suppose that the interpolation problem is solvable. In the proof of Theorem \ref{criterion_tetr} (i)$\implies$(iii) 
it was shown that, for every triple of distinct points 
$z_1,z_2,z_3$  in $\mathbb{D}$, the inequality \eqref{LMI}
 is satisfied for
\[N=[N_{il,jk}]_{i,j=1,l,k=1}^{n,3}=\left[\overline{\gamma(\lambda_i,z_l)}\gamma(\lambda_j,z_k)\right]_{i,j=1,l,k=1}^{n,3}\]
of rank $1$ and 
\[M=[M_{il,jk}]_{i,j=1,l,k=1}^{n,3}=\left[\eta(\lambda_i,z_l)^*\frac{I-F(\lambda_i)^*F(\lambda_j)}{1-\overline{\lambda_i}\lambda_j}
\eta(\lambda_j,z_k)\right]_{i,j=1,l,k=1}^{n,3}\]
where $\|F(\lambda_j)\|\leq1$ for all $j=1,\dots,n$,
\[\gamma(\lambda_j,z_k)=(1-x_{2j}z_k)^{-1}f_2(\lambda_j) \text{ and }
\eta(\lambda_j,z_k)=\begin{bmatrix}1\\\gamma(\lambda_j,z_k)z_k\end{bmatrix},\]
and $|f_2(\lambda_j)|\leq1$ for all $j=,1,\dots,n$. 
It follows that for all $j=1,\dots,n$ and $k=1,2,3$,
\[|\gamma(\lambda_j,z_k)|\leq\frac{1}{|1-x_{2j}z_k|}\leq\frac{1}{1-|x_{2j}|}\text{ and so }|N_{il,jk}|\leq\frac{1}{(1-|x_{2i}|)(1-|x_{2j}|)}.\]
Moreover for all $j=1,\dots,n$ and $k=1,2,3$,
\[\|\eta(\lambda_j,z_k)\|^2_{\mathbb{C}^2}=
\left|\left|\begin{bmatrix}\gamma(\lambda_j,z_k)z_k\\1\end{bmatrix}\right|\right|^2_{\mathbb{C}^2}=
1+|\gamma(\lambda_j,z_k)z_k|^2\leq1+\frac{1}{(1-|x_{2j}|)^2}\] 
and so 
\begin{align*}|M_{il,jk}|\leq&\frac{\|I-F(\lambda_i)^*F(\lambda_j)\|}{|1-\overline{\lambda_i}\lambda_j|}
\|\eta(\lambda_i,z_l)\|_{\mathbb{C}^2}\|\eta(\lambda_j,z_k)\|_{\mathbb{C}^2}\\\leq&
\frac{2}{|1-\overline{\lambda_i}\lambda_j|}\sqrt{1+\frac{1}{(1-|x_{2i}|)^2}}\sqrt{1+\frac{1}{(1-|x_{2j}|)^2}}.\end{align*}
Thus if the given $\mathcal{E}$-interpolation problem is solvable then there exist positive $3n$-square matrices satisfying the 
required conditions.
\end{proof}


DAVID C. BROWN, School of Mathematics and Statistics, Newcastle University, Newcastle upon Tyne
 NE\textup{1} \textup{7}RU, U.K.~~\\
e-mail\textup{: \texttt{d.c.brown@newcastle.ac.uk}}\\

ZINAIDA A. LYKOVA,
School of Mathematics and Statistics, Newcastle University, Newcastle upon Tyne
 NE\textup{1} \textup{7}RU, U.K.~~\\
e-mail\textup{: \texttt{Zinaida.Lykova@ncl.ac.uk}}\\

N. J. YOUNG, School of Mathematics, Leeds University, Leeds LS2 9JT, U.K.~~
and School of Mathematics and Statistics, Newcastle University, Newcastle upon Tyne
 NE\textup{1} \textup{7}RU, U.K.~~\\
e-mail\textup{: \texttt{N.J.Young@leeds.ac.uk}}\\
\end{document}